\theoremstyle{definition}
\newtheorem{definition}{Definition}[section]
\theoremstyle{plain}
\newtheorem{lemma}[definition]{Lemma}
\newtheorem{theorem}[definition]{Theorem}
\newtheorem{proposition}[definition]{Proposition}
\newtheorem{corollary}[definition]{Corollary}
\theoremstyle{remark}
\newtheorem{remark}[definition]{Remark}
\newtheorem{notation}[definition]{Notation}
\newtheorem{example}[definition]{Example}
\newcommand{\mylc}{\operatorname{\mbox{\bf an}}}
\newcommand{\mylcp}{\operatorname{\mbox{\bf p.an}}}
\newcommand{\val}{\operatorname{\mbox{\bf val}}}
\newcommand{\supp}{\operatorname{\mbox{\rm supp}}}
\newcommand{\myConv}{\operatorname{\mbox{\rm Conv}}}
\newcommand{\myVal}{\operatorname{\mbox{\rm Val}}}
\newcommand{\po}{\operatorname{\mbox{\rm PO}}}
\begin{document}

\title[Quasi-quadratic modules]{Quasi-quadratic modules in valuation rings and valued fields}
\author[M. Fujita]{Masato Fujita}
\address{Department of Liberal Arts,
Japan Coast Guard Academy,
5-1 Wakaba-cho, Kure, Hiroshima 737-8512, Japan}
\email{fujita.masato.p34@kyoto-u.jp}
\author[M. Kageyama]{Masaru Kageyama}
\address{Department of General Education, 
National Institute of Technology (KOSEN), Kure College,
2-2-11 Agaminami, Kure, Hiroshima 737-8506, Japan}
\email{mkageyama@kure-nct.ac.jp}

\begin{abstract}
We define quasi-quadratic modules in a commutative ring generalizing the notion of quadratic modules. 

The main theorem is a structure theorem of quasi-quadratic modules in a subring $A$ of a $2$-henselian valued field $(K,\val)$ whose residue class field $F$ is of characteristic $\neq 2$.
We further assume that the valuation ring $B$ is contained in $A$.
Set $H=\val(A^\times)$ and $G_{\geq e}=\{g \in G\;|\; g \geq e\}$.
The notation $\mathfrak X_R$ denotes the set of all the quasi-quadratic modules in a commutative ring $R$.
Our structure theorem asserts that there exists a one-to-one correspondence between $\mathfrak X_A$ and a subset 
$\mathcal T_F^{ H \cup G_{\geq e}}$ of $\prod_{g \in H \cup G_{\geq e}}\mathfrak X_F$.
We explicitly construct the map $\Theta: \mathfrak X_A \rightarrow \mathcal T_F^{ H \cup G_{\geq e}}$ and its inverse.
We also give explicit expressions of $\Theta(\mathcal M \cap \mathcal N)$ and $\Theta(\mathcal M+\mathcal N)$ for $\mathcal M, \mathcal N \in \mathfrak X_A$.

In addition, we briefly investigate the case in which the field $F$ is of characteristic two in the appendix as well.
\end{abstract}

\subjclass[2010]{Primary 13J30; Secondary 12J10, 12J15}

\keywords{quasi-quadratic module; valuation ring; valued field}

\maketitle
\tableofcontents

\section{Introduction}\label{sec:intro}

Quadratic modules in polynomial rings are extensively studied.
A major result in this direction gives a sufficient condition for a polynomial being positive on a basic closed semialgebraic set.
Many works have been done in this direction.
They are summarized in \cite{M,PD} and applied to polynomial optimization problems \cite{L2}.

Apart from them, quadratic modules in the ring of univariate formal power series $E[\![X]\!]$ were completely classified in \cite{AK} when $E$ is a euclidean field.
This result is due to the simple form of the elements in the ring $E[\![X]\!]$ and to the fact that a quadratic module is finitely generated.
The same classification as in \cite{AK} is obtained when the ring in consideration is the ring of univariate convergent power series $\mathbb R\{x\}$ and the univariate Nash ring with real coefficients following literally the same proof as \cite{AK}.
See \cite[p.106]{R} for the definition of Nash rings.
However, they are only true in nonsingular cases.
The authors demonstrated that quadratic modules in convergent power series rings defined on some singular curves are not always finitely generated \cite{FK}.

The authors anticipate that assertions similar to \cite{AK} hold true in a more general setting.
We consider valuation rings and valued fields in this paper.
Let us review the definitions and notations used in this paper before we state our fundamental assumptions.

A commutative ring in this paper means a commutative ring with the multiplicative identity element $1$.
Let $(G,\geq,\cdot,e)$ be a fully ordered abelian group with 
identity element $e$. As usual, we write $G$ for $(G,\geq,\cdot,e)$.
Let $(K,\val)$ be a valued field, where $K$ is a field and $\val:K \twoheadrightarrow G\cup\{\infty\}$ is a valuation.
The notation $B$ denotes the valuation ring with respect to $\val$ and $\pi:B \rightarrow F$ denotes the residue homomorphism, where $F$ is the residue class field.
The notation $\mathfrak{m}$ denotes the maximal ideal of the valuation ring $B$.
The multiplicative group of all units in a commutative ring $A$ is denoted by $A^{\times}$.
We use these notations through the paper unless explicitly specified.
The reference \cite{EP} provides a through introduction for valuation rings and valued fields.
We could not treat valuation rings and valued fields in full generality.

We employ the following two extra assumptions which are satisfied by the above univariate power series rings:
\begin{enumerate}[(I)]
\item The residue class field is of characteristic $\neq 2$.
\item Any strict units admit a square root.
\end{enumerate}
Here are the definitions of the relevant notions.
\begin{definition}
Recall that an element $x \in K$ is a unit in $B$ if and only if $\val(x)=e$.
A unit $x$ in $B$ is called a \textit{strict unit} if $\pi(x)=1$.
We say that an element $x$ in $K$ \textit{admits a square root} if there exists $y \in K$ with $x=y^2$. 
\end{definition}

Let $p$ be a rational prime. We call $(K,\val)$ {\it $p$-henselian} if the following statement holds:
Every polynomial $f(X)=X^n+X^{n-1}+a_{n-2}X^{n-2}+\cdots+a_0\in B[X]$ with
$a_{n-2},\ldots,a_0 \in\mathfrak{m}$ has a zero in $K$ if $f(X)$ splits in $K(p)$,
where $K(p)$ is the maximal Galois $p$-extension of $K$.
See \cite[p.94]{EP} for the definition of the maximal Galois $p$-extension  $K(p)$ of $K$.

It is well known that, for a valued field whose residue class field is of characteristic $\neq 2$, any strict units admit a square root if and only if the valued field $(K,\val)$ is $2$-henselian by \cite[Corollary 4.2.4]{EP}.
Various valued fields  fall into the family of the valued fields satisfying the above two assumptions.
For instance, the field of Hahn series $F (\!(G)\!)$ with natural valuation \cite{F} is categorized into this family when $F$ is a field of characteristic zero and $G$ is a fully ordered abelian group. 
In particular, the field $F(\!(G)\!)$ is the iterated Laurent series field $F(\!(t_1)\!) \cdots (\!(t_n)\!)$ when $G$ is $\mathbb Z^n$ with the lexicographic order under the order $t_1 > t_2 > \cdots > t_n$.

Unlike \cite{AK}, we focus on quasi-quadratic modules rather than quadratic modules.
A quasi-quadratic module in a commutative ring $A$ is defined as follows:

\begin{definition}
Let $A$ be a commutative ring.
A subset $M$ of $A$ is called a \textit{quasi-quadratic module} in $A$ if $M+M \subseteq M$ and $a^2 M \subseteq M$ for all $a \in A$.
Note that we always have $0 \in M$.
A quasi-quadratic module distinct from $A$ is called a {\it proper} quasi-quadratic module.
A quasi-quadratic module is called a {\it quadratic module} if it contains sums of squares of elements in $A$.
\end{definition}
We study quasi-quadratic modules rather than quadratic modules because we can get simpler structure theorems especially when $A=B$ and $A=K$.

We are now ready to explain the target of this paper.
Our target is to prove structure theorems for quasi-quadratic modules of a valued field $K$ and its valuation ring $B$ when the residue class field $F$ is of characteristic $\neq 2$ and any strict units admit square roots.
In order to treat $K$ and $B$ simultaneously, we consider a subring $A$ of $K$ containing $B$ in this paper.  
Set $H=\val(A^\times)$ and $G_{\geq e}=\{g \in G\;|\; g \geq e\}$.
The notation $\mathfrak X_R$ denotes the set of all quasi-quadratic modules in a commutative ring $R$.
Our structure theorem asserts that there exists a one-to-one correspondence between $\mathfrak X_A$ and a subset 
$\mathcal T_F^{ H \cup G_{\geq e}}$ of $\prod_{g \in H \cup G_{\geq e}}\mathfrak X_F$.
We explicitly construct the map $\Theta: \mathfrak X_A \rightarrow \mathcal T_F^{ H \cup G_{\geq e}}$ and its inverse.
We also give explicit expressions of $\Theta(\mathcal M \cap \mathcal N)$ and $\Theta(\mathcal M+\mathcal N)$ for $\mathcal M, \mathcal N \in \mathfrak X_A$.

Similar but not identical assertions are obtained when any strict units admit a square root and the residue class field is of characteristic two.
We will treat this case in the appendix.

The paper is organized as follows:
Section \ref{sec:preliminary} is a preliminary section.
We introduce several technical concepts necessary in the succeeding sections.
A pseudo-angular component map $\mylcp: K^\times \rightarrow F^\times$ defined in subsection \ref{sec:pseudo-angular} is a key concept.
Its definition is developed inspired by the notion of angular component maps given in \cite{P1,P2}, which are used for the model-theoretic study of henselian valued fields.
We demonstrate that a pseudo-angular component map always exists in our setting in Proposition \ref{prop:psudo-angular}.
Section \ref{sec:main} is the main part of this paper.
We demonstrate the structure theorem introduced above in this section.
The assertions in Section \ref{sec:main} become simpler in the case when the ring $A$ is the valuation ring $B$ and the valued field $K$.
We discuss them in Section \ref{sec:special}.
In particular, we derive Augutin and Knebusch's assertions in \cite{AK} from 
our results in subsection \ref{sec:AK}.
As a more special case, we consider the case in which the residue class field $F$ is euclidean and
classify monogenic quadratic modules of iterated Laurent series fields over a euclidean field.
The final section is an appendix.
We treat the case in which the residue class field $F$ is of characteristic two.
We can get a much simpler structure theorem in this case.

\section{Preliminaries}\label{sec:preliminary}
\subsection{Convex subgroups}
We first introduce the definitions and assertions on ordered abelian groups.
\begin{definition}[Convex subgroups]
A subgroup $H$ of an ordered abelian group $(G,e, \geq)$ is \textit{convex} if each $g \in G$ with $e \leq g \leq h \in H$ already belongs to $H$. 
\end{definition}

\begin{notation}
We introduce three notations for different equivalence classes.
Let $G$ be an ordered abelian group and $H$ be a convex subgroup of $G$.
The equivalence class of $g \in G$ in $G/G^2$ is denoted by $\overline{g}$.
The notation $[\![g]\!]$ denotes the equivalence class of $g \in G$ in $G/H^2$.
We can consider the equivalence class of an element of $G/H^2$ in $G/G^2$ because $H^2$ is a subgroup of $G^2$.
We also use the same notation $\overline{g}$ for the equivalence class of $g \in G/H^2$.
Finally, the notation $[ g ]$ denotes the equivalence class of $g \in G$ in $G/H$.
The same notation $[ g ]$ is also used for the equivalence class of $g \in G/H^2$ in $G/H$.
\end{notation}

\begin{lemma}\label{lem:convex_basic11}
Let $(G,e, \geq)$ be an ordered abelian group and $H$ be a convex subgroup.
For any $g,h \in G$, if $\overline{g}=\overline{h}$ and $[ g ] = [ h ]$, we have $[\![g]\!]=[\![h]\!]$.
\end{lemma}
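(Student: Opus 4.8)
The plan is to translate each of the three equivalence relations into a concrete statement about elements of $G$ and then extract the conclusion from the convexity of $H$. Writing the group multiplicatively with identity $e$, the hypothesis $\overline g=\overline h$ says $gh^{-1}\in G^2$, so we may fix $k\in G$ with $gh^{-1}=k^2$. The hypothesis $[g]=[h]$ says $gh^{-1}\in H$, hence $k^2\in H$. The desired conclusion $[\![g]\!]=[\![h]\!]$ is exactly $gh^{-1}\in H^2$, i.e. $k^2\in H^2$; for this it clearly suffices to prove that $k$ itself lies in $H$.

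To show $k\in H$, first reduce to the case $k\geq e$: if $k<e$, replace $k$ by $k^{-1}$, which changes neither the validity of $k^2\in H$ (as $H$ is a subgroup) nor the property $k\in H$ we are after, and $(k^{-1})^2=(k^2)^{-1}$ is still a square. So assume $k\geq e$. Multiplying this inequality by $k$ gives $k^2\geq k$, so that $e\leq k\leq k^2$ with $k^2\in H$. Since $H$ is convex, $k\in H$, and therefore $k^2=gh^{-1}\in H^2$, which is precisely $[\![g]\!]=[\![h]\!]$.

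This argument is short, and I do not anticipate a genuine obstacle; the only point requiring a little care is the normalization to $k\geq e$, which is needed so that the interval $e\leq k\leq k^2$ sits inside $H$ and convexity can be applied. One should also note at the outset that $G^2$ is a subgroup of the abelian group $G$ and $H^2$ a subgroup of $G^2$, so that all the quotients $G/G^2$, $G/H^2$, $G/H$ in the statement make sense; this is already recorded in the Notation preceding the lemma.
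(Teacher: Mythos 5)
Your argument is correct and is essentially the paper's own proof: both write $gh^{-1}$ (resp.\ $hg^{-1}$) as $k^2$ with $k^2\in H$, normalize so that $k\geq e$, and conclude $k\in H$ from $e\leq k\leq k^2\in H$ by convexity, whence $gh^{-1}\in H^2$. The only cosmetic difference is that the paper normalizes by assuming $g<h$ while you normalize the square root $k$ itself; the substance is identical.
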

\begin{proof}
We have nothing to prove when $g=h$.
We may assume that $g<h$ without loss of generality.
We can take $u \in G$ so that $h=gu^2$ and $u^2 \in H$ by the assumption.
We also have $u>e$.
We get $e<u<u^2 \in H$.
Since $H$ is convex, we have $u \in H$.
It means that $[\![g]\!]=[\![h]\!]$.
\end{proof}

\begin{lemma}\label{lem:convex_basic12}
Let $(G,e, \geq)$ be an ordered abelian group and $H$ be a convex subgroup.
Take $g_1,g_2,h \in G$ such that $[\![g_1]\!]=[\![g_2]\!]$, $\overline{h}=\overline{g_1}$ and $[\![h]\!] \neq [\![g_1]\!]$.
\begin{enumerate}
\item[(1)] If $h > g_1$, we have $h > g_2$.
\item[(2)] If $h < g_1$, we have $h < g_2$.
\end{enumerate}
\end{lemma}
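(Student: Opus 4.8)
The plan is to reduce both parts to the convexity of $H$ and a careful bookkeeping of the three equivalence relations $\overline{(\,\cdot\,)}$, $[\![\,\cdot\,]\!]$, and $[\,\cdot\,]$. I will prove part (1) in detail; part (2) is symmetric and follows by interchanging the roles of the inequalities (or by passing to inverses). So assume $h > g_1$. Since $[\![g_1]\!] = [\![g_2]\!]$, there is $v \in H$ with $g_2 = g_1 v^2$. Since $\overline{h} = \overline{g_1}$, there is $w \in G$ with $h = g_1 w^2$, and $h > g_1$ forces $w^2 > e$, hence $w > e$. The hypothesis $[\![h]\!] \neq [\![g_1]\!]$ says precisely that $w^2 \notin H$; combined with $w > e$ and the convexity of $H$ (which would otherwise force $w \in H$, hence $w^2 \in H$), we conclude that $w \notin H$ and in fact $w > H^{>e}$ in the Archimedean-class sense, i.e. $w > t$ for every $t \in H$ with $t > e$ — otherwise $e < w \le t \in H$ would give $w \in H$.

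The key step is then the comparison $h$ versus $g_2$, i.e. $g_1 w^2$ versus $g_1 v^2$, which reduces to comparing $w^2$ with $v^2$, equivalently $w$ with $v$ (the group is ordered, so squaring is monotone on the positive cone). We have $v \in H$, so $|v| := \max(v, v^{-1}) \in H$ and $|v| \ge e$. If $v \ge e$ then $v \le |v|$ and $|v| < w$ by the previous paragraph, so $v < w$, hence $v^2 < w^2$ and $g_2 < h$. If $v < e$ then $v < e < w$ directly, so again $v^2$: here one must be slightly careful since $v < e$ makes $v^2$ possibly large; but $v^{-1} \in H$ with $v^{-1} > e$, so $v^{-1} < w$, hence $w^{-1} < v < e < w$, giving $w^2 > v \cdot w > v \cdot v^{-1}\cdot(\text{something})$ — cleaner: from $e < w^{-1}? $ no. I would instead argue uniformly: $v^2 \in H$ since $v \in H$, so $v^2$ is bounded by some positive element of $H$, while $w^2 \notin H$ and $w^2 > e$ forces $w^2 > t$ for all $t \in H^{>e}$ by convexity; in particular $w^2 > v^2$. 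Therefore $h = g_1 w^2 > g_1 v^2 = g_2$, which is the claim.

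The main obstacle — really the only subtlety — is handling the sign of $v$ and $w$ correctly and making sure the monotonicity of squaring is applied only on the positive cone; once one observes that $w^2 \notin H$ together with $w^2 > e$ puts $w^2$ strictly above the entire positive part of $H$ by convexity, and that $v^2 \in H$, the inequality $w^2 > v^2$ is immediate and everything collapses. For part (2), assume $h < g_1$; then $w^2 < e$, so $w < e$ and $w^{-2} > e$ with $w^{-2} \notin H$, hence $w^{-2}$ lies strictly above all of $H^{>e}$, in particular above $v^{-2} \in H$ (or above $v^2$), giving $w^2 < v^2$ and thus $h = g_1 w^2 < g_1 v^2 = g_2$. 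This completes the proof.
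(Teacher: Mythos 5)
Your proof is correct and takes essentially the same route as the paper: both write $h=g_1w^2$, note that $w\notin H$ because $[\![h]\!]\neq[\![g_1]\!]$, and use convexity of $H$ to see that $w$ dominates the $H$-element $v$ coming from $[\![g_1]\!]=[\![g_2]\!]$, the only cosmetic difference being that the paper packages the comparison as $uv>e$ and $h=(uv)^2g_2$ while you compare $w^2$ with $v^2$ directly. One small imprecision: the hypothesis literally says $w^2\notin H^2$, not $w^2\notin H$; but this immediately yields $w\notin H$ (since $w\in H$ would give $w^2\in H^2$), which is all your convexity argument actually needs, so the proof stands.
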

\begin{proof}
We only prove the assertion (1).
The assertion (2) is proved similarly.

We can take $e < u \in G$ such that $h=g_1u^2$ by the assumption that $\overline{h}=\overline{g_1}$ and $h > g_1$.
We get $u \not\in H$ because $[\![h]\!] \neq [\![g_1]\!]$.
We have $g_1=g_2v^2$ for some $v \in H$ because $[\![g_1]\!]=[\![g_2]\!]$.
Since $H$ is convex, we have $v^{-1}<u$. It follows that $uv > e$.
Thus we obtain $h > g_2$ because $h=(uv)^2g_2$.
\end{proof}

\begin{proposition}\label{prop:convex_basic}
Let $(G,e, \geq)$ be an ordered abelian group and $H$ be a convex subgroup.
The binary relation $\preceq$ in $G/H$ is defined by 
\begin{center}
$[ g] \preceq [ h]$ if and only if $[ g]= [ h]$ or $g<h$.
\end{center}
The binary relation $\preceq$ is a well-defined total order in $G/H$, and $(G/H,[ e ],\preceq)$ is an ordered group.
\end{proposition}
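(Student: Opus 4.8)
The plan is to verify, in order, that $\preceq$ is well-defined, that it is a total order, and that it is compatible with the group operation on $G/H$. For well-definedness, suppose $[g]=[g']$ and $[h]=[h']$; I must show $[g]\preceq[h]$ iff $[g']\preceq[h']$. If $[g]=[h]$ then $[g']=[h']$ and both sides hold by the first clause, so assume $[g]\neq[h]$, hence $[g']\neq[h']$, and we are comparing via the strict inequalities $g<h$ versus $g'<h'$. Writing $g'=g a$ and $h'=h b$ with $a,b\in H$, suppose $g<h$ but $g'\geq h'$; since $[g']\neq[h']$ this forces $g'>h'$, i.e. $gab^{-1}>h$, so $g(ab^{-1})>h>g$, giving $ab^{-1}>e$. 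But $h>g$ means $hg^{-1}>e$, and $hg^{-1}\notin H$ (else $[g]=[h]$); meanwhile $g(ab^{-1})>h$ gives $ab^{-1}>hg^{-1}>e$ with $ab^{-1}\in H$, contradicting convexity of $H$ (an element of $H$ cannot exceed a positive element outside $H$). The symmetric argument handles the reverse implication, so $\preceq$ is well-defined.

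Next I would check totality: given $[g],[h]$, either $[g]=[h]$, or $g\neq h$ in a way that survives the quotient, and then exactly one of $g<h$, $g>h$ holds since $G$ is totally ordered; one must only note that $[g]\neq[h]$ is equivalent to $gh^{-1}\notin H$, which is unambiguous, so the trichotomy on $G/H$ is genuine. Reflexivity is immediate from the first clause, and antisymmetry follows because $[g]\prec[h]$ and $[h]\prec[g]$ would give $g<h<g$. For transitivity, if $[g]\preceq[h]\preceq[k]$, the only nontrivial case is $g<h$ and $h<k$ with all three classes distinct, whence $g<k$; if two of the classes coincide the conclusion collapses to the equality clause or to a single strict inequality, each handled directly.

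Finally, compatibility with multiplication: I must show $[g]\preceq[h]$ implies $[gk]\preceq[hk]$ for all $k\in G$. If $[g]=[h]$ then $[gk]=[hk]$; otherwise $g<h$ and, since $G$ is an ordered group, $gk<hk$, and $(gk)(hk)^{-1}=gh^{-1}\notin H$ so $[gk]\neq[hk]$, giving $[gk]\prec[hk]$. Together with the group structure already present on $G/H$ (since $H$ is a subgroup, though not necessarily convex-normal considerations are needed as $G$ is abelian), this shows $(G/H,[e],\preceq)$ is an ordered abelian group. I expect the main obstacle to be the well-definedness step: it is the only place where convexity of $H$ is genuinely used, and the case analysis ($[g]=[h]$ versus the strict case, and the sign bookkeeping $ab^{-1}>hg^{-1}>e$) must be laid out carefully to see that an element of $H$ would have to dominate a positive element lying outside $H$. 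Everything after that is routine transport of the order from $G$.
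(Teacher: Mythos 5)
Your proposal is correct and follows essentially the same route as the paper: the only substantive step is well-definedness, which you establish (by contradiction with explicit representatives) from the same convexity fact the paper uses, namely that a positive element $hg^{-1}\notin H$ cannot be bounded above by an element of $H$. The remaining verifications (totality, transitivity, compatibility with the group law) are routine, exactly as the paper treats them.
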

\begin{proof}
We first show the well-definedness of the binary relation $\preceq$.
We have only to show that the inequality $g<h$ and $[g] \neq [h]$ imply the inequalities $gu_1<hu_2$ for all $u_1, u_2 \in H$.
It is also equivalent to the condition that $gu<h$ for all $u \in H$.
We have $hg^{-1} \not\in H$ because $[g] \neq [h]$.
We get $gu<h$ for all $u \in H$ by the definition of convex subgroups.

It is obvious that $(G/H,[e],\preceq)$ is an ordered group once the well-definedness of $\preceq$ is proved.
\end{proof}

\begin{lemma}\label{lem:convex_basic2}
Let $A$ be a ring with $B \subseteq A \subseteq K$.
Set $H:=\val(A^\times)$. 
Then the following statements hold true:
\begin{enumerate}
\item[(1)] $H$ is a convex subgroup of $G$ and $\val(A)= H \cup G_{\geq e}$.
\item [(2)] Fix an element $g\in H\cup G_{\geq e}$. Then we have $Hg\cup G_{\geq g}\subseteq H\cup G_{\geq e}$,
where $Hg=\{hg\in G\;|\;h\in H\}$ and $G_{\geq g}=\{f \in G\;|\; f \geq g\}$.
\end{enumerate}
\end{lemma}
\begin{proof}
(1) The set $H$ is a group because $A^\times$ is a group and the restriction of $\val$ to $A^\times$ is a group homomorphism.

We next demonstrate that, for any $g \in \val(A \setminus A^\times)$ and $h \in H$, we have $g>h$.
Assume for contradiction that $g \leq h$.
Take $x \in A\setminus A^\times$ and $y \in A^\times$ so that $g=\val(x)$ and $h=\val(y)$.
We have $yx^{-1} \in B \subseteq A$ because $\val(yx^{-1})\geq e$.
On the other hand, we get $xy^{-1} \in A$ because $y \in A^\times$.
It follows that $xy^{-1} \in A^\times$ and $x \in A^\times$.
Contradiction.

It is easy to demonstrate that $H$ is convex.
Let $e \leq g \leq h$ and $h \in H$.
We have $g \in \val(B) \subseteq \val(A)$ because $e \leq g$.
We immediately get $g \in H$ by the above claim.
The inclusion $A \setminus A^\times \subseteq B \setminus B^\times$ and the definition of $H$ deduce the last equality $\val(A)= H \cup G_{\geq e}$.

(2) We can take an element $w\in A$ such that $g=\val(w)$ by (1).
The inclusion $Aw\subseteq A$ deduces the inclusion $\val(Aw)\subseteq\val(A)$.
It immediately follows from (1) that $(H\cup G_{\geq e})g\subseteq H\cup G_{\geq e}$.
Hence we have $Hg\cup G_{\geq g}\subseteq H\cup G_{\geq e}$, as desired.
\end{proof}

We introduce the notations
\begin{align*}
&\myConv(G):=\{H\;|\; H \text{ is a convex subgroup of }G\} \text{ and }\\
&\myVal(K,\val):=\{A\;|\; A \text{ is a subring with } B \subseteq A \subseteq K\}\text{.}
\end{align*}

\begin{proposition}\label{prop:cor_convex}
The map $\mathfrak V: \myConv(G) \rightarrow \myVal(K,\val)$ given by  
$$\mathfrak V(H)=\{x \in K\;|\; \val(x) \in H \text{ or } \val(x) \geq e\}$$
is a well-defined bijection whose inverse is given by $\val(A^\times)$ for any $A \in \myVal(K,\val)$.
\end{proposition}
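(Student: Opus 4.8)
The plan is to argue in three stages. First, I would check that $\mathfrak V$ is well-defined, i.e.\ that $\mathfrak V(H)$ is genuinely a subring of $K$ containing $B$ for every convex subgroup $H$ of $G$. Second, the assignment $\mathfrak W\colon A \mapsto \val(A^\times)$ sends $\myVal(K,\val)$ into $\myConv(G)$, which is precisely the content of Lemma~\ref{lem:convex_basic2}. Third, I would verify that $\mathfrak W$ and $\mathfrak V$ are mutually inverse.

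For the first stage, fix $H \in \myConv(G)$ and set $S = \mathfrak V(H) = \{x \in K : \val(x) \in H \cup G_{\geq e}\}$ (with the usual convention $\infty \geq e$, so that $0 \in S$). Trivially $1 \in S$, $-S = S$, and $B \subseteq S$, so the only nonobvious points are closure under multiplication and addition, and this is where convexity is used. For multiplication, given $\val(x), \val(y) \in H \cup G_{\geq e}$ I would split into cases according to which factor lies in $H$ and which in $G_{\geq e}$; the only case needing convexity is, up to symmetry, $\val(x) = h \in H$ with $h < e$ and $\val(y) = g \geq e$, where if $hg < e$ then $e < (hg)^{-1} = h^{-1}g^{-1} \leq h^{-1} \in H$ forces $hg \in H$. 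For addition I would use the ultrametric inequality $\val(x+y) \geq \min\{\val(x),\val(y)\}$: assuming $\val(x) \leq \val(y)$, if $\val(x) \geq e$ then $\val(x+y) \geq e$, and otherwise $\val(x) \in H$ with $\val(x) < e$, in which case $\val(x) \leq \val(x+y)$ either already gives $\val(x+y) \geq e$ or else $\val(x) \leq \val(x+y) < e$ puts $\val(x+y)$ into $H$ by convexity. Hence $S$ is a subring with $B \subseteq S \subseteq K$, so $S \in \myVal(K,\val)$.

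For the third stage, I would first identify the units of $S = \mathfrak V(H)$: since $K$ is a field, $x \in S$ is a unit of $S$ iff $\val(x^{-1}) = \val(x)^{-1} \in H \cup G_{\geq e}$ as well, and because $H$ is a subgroup this happens exactly when $\val(x) \in H$ (if $\val(x) \notin H$ then $\val(x) > e$, whence $\val(x)^{-1} < e$ and $\val(x)^{-1} \notin H \cup G_{\geq e}$). As $\val\colon K^\times \to G$ is surjective, $\val(S^\times) = H$, giving $\mathfrak W \circ \mathfrak V = \mathrm{id}$. Conversely, for $A \in \myVal(K,\val)$ set $H = \val(A^\times)$; Lemma~\ref{lem:convex_basic2} gives $\val(A) = H \cup G_{\geq e}$, so $\mathfrak V(H) = \{x \in K : \val(x) \in \val(A)\}$. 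The inclusion $A \subseteq \mathfrak V(H)$ is clear, and for the reverse inclusion, if $\val(x) = \val(a)$ with $a \in A \setminus \{0\}$ then $\val(xa^{-1}) = e$, so $xa^{-1} \in B \subseteq A$ and thus $x \in A$; together with $0 \in A$ this yields $\mathfrak V(H) \subseteq A$. Hence $\mathfrak V \circ \mathfrak W = \mathrm{id}$, and $\mathfrak V$ is a bijection with the stated inverse.

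The main obstacle is the case analysis in the first stage: one must arrange the subcases so that the convexity hypothesis is invoked precisely in the mixed situations (one valuation in $H$, the other only in $G_{\geq e}$) where the naive bounds do not suffice. Everything after that — computing $S^\times$ and checking the two composites — is routine bookkeeping resting on the surjectivity of $\val$ and on Lemma~\ref{lem:convex_basic2}.
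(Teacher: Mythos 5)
Your proposal is correct and follows essentially the same route as the paper: verify that $\mathfrak V(H)$ is a subring containing $B$ via the same convexity case analysis for products and sums, identify $\val(\mathfrak V(H)^\times)=H$ using surjectivity of $\val$, and recover $A=\mathfrak V(\val(A^\times))$ from Lemma~\ref{lem:convex_basic2} together with the $xa^{-1}\in B$ trick. The only difference is organizational (you phrase the final stage as checking both composites explicitly), not mathematical.
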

\begin{proof}
Take $H \in \myConv(G)$.
Set $G_0=H \cup G_{\geq e}$ and $A=\mathfrak V(H)$.
We first show that $A \in \myVal(K,\val)$.
Let $x$ and $y$ be arbitrary elements in $A$.
It is obvious that $-x \in A$.
We get $\val(x) \in G_0$ and $\val(y) \in G_0$.
We have $\val(x+y) \geq \min\{\val(x),\val(y)\} \in G_0$.
When one of $\val(x)$, $\val(y)$ and $\val(x+y)$ is not smaller than $e$, we obviously get $\val(x+y) \in G_0$.
When $\val(x+y)<e$, we get $\val(x+y) \in G_0$ because $H$ is convex.
It means that $x+y \in A$.
It is clear that $xy \in A$ when 
$\val(x)$, $\val(y) \in H$  or
$\val(x)$, $\val(y) \in G_{\geq e}$. If 
$\val(x) \in H \setminus G_{\geq e}$ and  $\val(y) \in G_{\geq e} \setminus H$,
we have $\val(x)^{-1}<\val(y)$ because $H$ is convex.
Thus we get $\val(xy)=\val(x)\val(y)>e$ and $xy\in A$.
It is obvious that $B$ is contained in $A$.
We have demonstrated that $A \in \myVal(K,\val)$.

Next we want to demonstrate that $\val((\mathfrak V(H))^\times)=\val(A^\times)=H$.
The definition of $\mathfrak V$ implies that $H \subseteq \val(A^\times)$.
We demonstrate the opposite inclusion.
We have $$G_0 \cap \{x \in G\;|\; x <e\} \subseteq H$$ by the definition of $G_0$.
Take $g \in \val(A^\times)$.
The inclusion is obvious when $g=e$.
We have $g^{-1} \in \val(A^\times)$ because $\val(A^\times)$ is a group.
If $g<e$, we get $g \in H$ because $g \in G_0$ and $g<e$.
If $g>e$, we get $g^{-1} \in H$ for the same reason and obtain $g \in H$ because $H$ is a group. 
We have proven the equality $\val((\mathfrak V(H))^\times)=H$.
This equality immediately yields the injectivity of the map $\mathfrak V$.

For any $A \in \myVal(K,\val)$, we set $H=\val(A^\times)$. Then $H$ is a convex subgroup by Lemma \ref{lem:convex_basic2}(1).
We have only to demonstrate that the equality $A=\mathfrak V(H)$ is satisfied so as to prove the surjectivity of $\mathfrak V$ and the map given by $\val(A^\times)$ is the inverse of $\mathfrak V$.
The inclusion $A \subseteq \mathfrak V(H)$ is obvious from the definition of $\mathfrak V$ and Lemma \ref{lem:convex_basic2}(1).
For the opposite inclusion, take $x \in \mathfrak V(H)$.
We can choose $y \in A$ so that $\val(x)=\val(y)$.
We have $x/y \in B \subseteq A$, and get $x = (x/y) \cdot y \in A$.
\end{proof}

\begin{corollary}\label{cor:cor_convex}
Let $A \in \myVal(K,\val)$ and set $H=\val(A^\times)$.
Then we have $A^\times=\{x \in K\;|\; \val(x) \in H\}$ and $A=\{x \in K\;|\; \val(x) \in H \text{ or }\val(x) \geq e\}$.
\end{corollary}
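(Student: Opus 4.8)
The plan is to read off both equalities from Proposition \ref{prop:cor_convex}. Since $A \in \myVal(K,\val)$, that proposition shows $H := \val(A^\times)$ is a convex subgroup of $G$ and that $\mathfrak V$ is a bijection whose inverse carries $A$ to $\val(A^\times)$. Applying $\mathfrak V$ to $H$ therefore gives $A = \mathfrak V(H) = \{x \in K \mid \val(x) \in H \text{ or } \val(x) \geq e\}$, which is precisely the second displayed equality. So the only work left is the description of $A^\times$.

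For $A^\times = \{x \in K \mid \val(x) \in H\}$ I would argue by double inclusion. The inclusion $\subseteq$ is immediate: if $x \in A^\times$ then $\val(x) \in \val(A^\times) = H$ by the definition of $H$. For $\supseteq$, suppose $\val(x) \in H$. Then $x \neq 0$, so $x^{-1}$ is defined and $\val(x^{-1}) = \val(x)^{-1} \in H$ because $H$ is a group. By the description of $A$ just obtained, both $x$ and $x^{-1}$ lie in $A$, hence $x \in A^\times$.

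The argument is entirely formal once Proposition \ref{prop:cor_convex} is available, so there is no genuine obstacle; the only point requiring a moment's care is noting that $\val(x) \in H$ forces $x \neq 0$ (as $\val(0) = \infty$ does not lie in $G$, hence not in $H$), which is what legitimizes passing to $x^{-1}$. One could alternatively bypass Proposition \ref{prop:cor_convex} and re-prove everything from Corollary \ref{cor:cor_convex}'s data using Lemma \ref{lem:convex_basic2} and convexity of $H$, but invoking the already-established bijection is cleaner and shorter.
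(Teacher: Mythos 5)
Your proposal is correct and follows essentially the same route as the paper: the second equality is read off from Proposition \ref{prop:cor_convex}, and the description of $A^\times$ then follows because $\val(A^\times)=H$ gives one inclusion and, for the other, $\val(x)\in H$ forces $\val(x^{-1})\in H$ (as $H$ is a group), so both $x$ and $x^{-1}$ lie in $A$. Your explicit remark that $\val(x)\in H$ rules out $x=0$ is a small but welcome addition of care over the paper's terser wording.
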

\begin{proof}
We get the equality $A=\{x \in K\;|\; \val(x) \in H \text{ or }\val(x) \geq e\}$ by Proposition \ref{prop:cor_convex}.
By this equality, an element $x \in A$ with $\val(x) \in H$ is a unit, and $x \in A$ with $\val(x) \not\in H$ is not a unit.
\end{proof}

\begin{corollary}\label{cor:cor_convex01}
Let $A \in \myVal(K,\val)$ and set $H=\val(A^\times)$.
Take an element $x\in K$ with $\val(x)\geq h$ for some $h\in H$. 
Then we have $x\in A$.
\end{corollary}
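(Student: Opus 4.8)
The plan is to reduce everything to the membership criterion for $A$ already established in Corollary \ref{cor:cor_convex}, namely $A=\{x\in K\mid \val(x)\in H\text{ or }\val(x)\geq e\}$. Granting that, it suffices to check that $\val(x)\in H$ or $\val(x)\geq e$ holds whenever $\val(x)\geq h$ for some $h\in H$. So the proof splits into the trivial case $\val(x)\geq e$ and the case $\val(x)<e$, and only the latter requires an argument.

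In the case $\val(x)<e$ I would use convexity of $H$. From $h\leq\val(x)<e$, passing to inverses in the ordered group $G$ yields $e<\val(x)^{-1}\leq h^{-1}$. Since $H$ is a subgroup, $h^{-1}\in H$; since $H$ is convex (Lemma \ref{lem:convex_basic2}), the element $\val(x)^{-1}$, which lies between the identity $e$ and $h^{-1}\in H$, must itself lie in $H$. Hence $\val(x)\in H$, and therefore $x\in A$ by Corollary \ref{cor:cor_convex}.

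Alternatively, and perhaps more transparently, one can argue multiplicatively without invoking the criterion for $A$: since $h\in H=\val(A^\times)$, pick $y\in A^\times$ with $\val(y)=h$. Then $\val(xy^{-1})=\val(x)\val(y)^{-1}\geq hh^{-1}=e$, so $xy^{-1}\in B\subseteq A$; as $y\in A$ as well, we conclude $x=(xy^{-1})\,y\in A$. This variant only uses Lemma \ref{lem:convex_basic2} (to know $h\in H$ is realized by a unit) and the ring structure of $A$.

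There is no genuine obstacle here: the statement is an immediate consequence of Lemma \ref{lem:convex_basic2} and Corollary \ref{cor:cor_convex}. The only point requiring care is to keep the group operation on $G$ written multiplicatively throughout and to apply convexity of $H$ in the correct direction (to $\val(x)^{-1}$, not to $\val(x)$, when $\val(x)<e$).
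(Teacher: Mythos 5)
Your proposal is correct, and your ``alternative'' multiplicative argument is exactly the paper's proof: choose $a\in A^\times$ with $\val(a)=h$, note $\val(xa^{-1})\geq e$ so $xa^{-1}\in A$, and conclude $x=(xa^{-1})a\in A$. Your first variant, which instead applies the membership criterion of Corollary \ref{cor:cor_convex} together with convexity of $H$ applied to $\val(x)^{-1}$ in the case $\val(x)<e$, is also valid; it is only a slight repackaging, since the convexity input is already built into Corollary \ref{cor:cor_convex} via Lemma \ref{lem:convex_basic2}.
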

\begin{proof}
There exists an element $a\in A^{\times}$ such that $h=\val(a)$.
Since $\val(xa^{-1})\geq e$, we have $xa^{-1}\in A$ by Corollary \ref{cor:cor_convex}.
Hence it follows that $x\in A$. 
\end{proof}

\begin{corollary}\label{cor:cor_convex1}
Let $A \in \myVal(K,\val)$ and set $H=\val(A^\times)$.
Let $\val_1:K \rightarrow G/H \cup \{\infty\}$ be the composition of the valuation $\val:K^\times \rightarrow G \cup \{\infty\}$ with the natural surjection $G \rightarrow G/H$.
The map $\val_1:K \rightarrow G/H\cup \{\infty\}$ is a valuation of $K$ whose valuation ring is $A$.
\end{corollary}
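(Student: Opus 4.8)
The plan is to verify the valuation axioms for $\val_1$ directly and then to compute its valuation ring by means of Corollary~\ref{cor:cor_convex}. Write $\rho\colon G\to G/H$ for the natural surjection, so that $\val_1=\rho\circ\val$ on $K^{\times}$ and $\val_1(0)=\infty$. By Proposition~\ref{prop:convex_basic} the triple $(G/H,[e],\preceq)$ is an ordered abelian group, which is exactly what makes $G/H\cup\{\infty\}$ an admissible value set. A preliminary observation I would record is that $\rho$ is order-preserving: if $g\le h$ in $G$ then either $g=h$, so $[g]=[h]$, or $g<h$, so $[g]\preceq[h]$ by the very definition of $\preceq$.

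Next I would check that $\val_1$ is a valuation. Its restriction to $K^{\times}$ is a surjective group homomorphism onto $G/H$, being the composite of the surjective homomorphisms $\val|_{K^{\times}}$ and $\rho$; hence $\val_1(xy)=\val_1(x)\val_1(y)$ for all $x,y\in K^{\times}$, and $\val_1$ is onto $G/H\cup\{\infty\}$. Clearly $\val_1(x)=\infty$ if and only if $\val(x)=\infty$, i.e.\ $x=0$. For the ultrametric inequality, take $x,y\in K$; the case $x+y=0$ is trivial, so assume $x+y\neq 0$. Since $G$ is totally ordered we may assume $\val(x)\le\val(y)$, and then $\min_{\preceq}\{\val_1(x),\val_1(y)\}=\val_1(x)$ because $\rho$ is order-preserving. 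From $\val(x+y)\ge\val(x)$ we obtain $\val_1(x+y)=\rho(\val(x+y))\succeq\rho(\val(x))=\val_1(x)$, which is the required inequality.

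Finally I would identify the valuation ring $O=\{x\in K\mid \val_1(x)\succeq[e]\}$ of $\val_1$. Unwinding the definition of $\preceq$, the condition $[e]\preceq[\val(x)]$ holds exactly when $[\val(x)]=[e]$ or $e<\val(x)$, that is, when $\val(x)\in H$ or $\val(x)\ge e$. By Corollary~\ref{cor:cor_convex} the set of $x\in K$ satisfying this is precisely $A$, so $O=A$, completing the proof.

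This corollary is elementary, and I do not anticipate a genuine obstacle; the only points demanding a little care are that the ``$\min$'' occurring in the ultrametric inequality for $\val_1$ must be taken with respect to $\preceq$, and that the descent of this inequality to the quotient relies on $\preceq$ being a \emph{total} order on $G/H$ (Proposition~\ref{prop:convex_basic}) together with the total order on $G$, which is what permits the reduction to the comparable case $\val(x)\le\val(y)$.
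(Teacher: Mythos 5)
Your proposal is correct and follows essentially the same route as the paper: the paper declares the verification of the valuation axioms for $\val_1$ routine (you simply carry it out, using that $\preceq$ is a well-defined total order from Proposition \ref{prop:convex_basic} and that the quotient map is order-preserving) and then identifies $\val_1^{-1}(\{[g]\;|\;[g]\succeq[e]\})$ with $A$ via Corollary \ref{cor:cor_convex}, exactly as you do.
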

\begin{proof}
The set $H$ is a convex subgroup of $G$ by Lemma \ref{lem:convex_basic2}(1).
The group $G/H$ is an ordered group under the ordering $\preceq$ by Proposition \ref{prop:convex_basic}.
Set $G_0=H \cup G_{\geq e}$.
It is a routine to demonstrate that $\val_1$ is a valuation.
We have 
\begin{align*}
&\val_1^{-1}(\{[g] \in G/H\;|\; [g] \succeq [e]\})
=\val_1^{-1}(\{[g]\in G/H\;|\; g\in G_0\})
=\val^{-1}(G_0)=A
\end{align*}
by Corollary \ref{cor:cor_convex}.
It means that the valuation ring of the valuation $\val_1$ is $A$.
\end{proof}

\begin{corollary}\label{cor:cor_convex2}
Let $A \in \myVal(K,\val)$ and set $H=\val(A^\times)$.
Let $K_A$ be the residue field of $A$.
Let $\val_2:K_A \rightarrow H \cup \{ \infty\}$ be the map given by $\val_2([x]_A)=\val(x)$ for all $x \in A^\times$ and $\val_2([x]_A)=\infty$ for all $x \in \mathfrak m_A$, where $\mathfrak m_A$ is the maximal ideal of $A$ and $[x]_A$ is the equivalence class in $K_A$ of $x$.
It is a well-defined valuation whose valuation ring is $B/(\mathfrak m_A \cap B)$.
\end{corollary}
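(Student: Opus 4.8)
The plan is to check, in order, that $\val_2$ is well defined, that it satisfies the valuation axioms (with value group $H$), and finally to identify its valuation ring. Throughout, the basic tool is Corollary \ref{cor:cor_convex}, which tells us that an element $x\in A$ is a unit exactly when $\val(x)\in H$ and lies in $\mathfrak m_A=A\setminus A^\times$ exactly when $\val(x)\notin H$ (equivalently $\val(x)\geq e$ but $\val(x)\notin H$); note also that $A$ is a valuation ring by Corollary \ref{cor:cor_convex1}, so $\mathfrak m_A$ is maximal and $K_A$ is a field.

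\emph{Well-definedness.} Every element of $K_A$ is $[x]_A$ for some $x\in A$, and such an $x$ is either a unit or lies in $\mathfrak m_A$, so the two clauses of the definition cover all cases. Suppose $[x]_A=[y]_A$ with $x,y\in A^\times$ and $x\neq y$; then $x-y\in\mathfrak m_A$. If we had $\val(x)\neq\val(y)$, then $\val(x-y)=\min\{\val(x),\val(y)\}\in H$ since $H$ is a subgroup, forcing $x-y\in A^\times$, a contradiction. Hence $\val(x)=\val(y)$, so the assignment on units is unambiguous; on $\mathfrak m_A$ it is unambiguous because $\mathfrak m_A$ is the kernel of $A\to K_A$. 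Finally $\val_2$ is surjective onto $H\cup\{\infty\}$: for $h\in H$ choose $a\in A^\times$ with $\val(a)=h$.

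\emph{Valuation axioms.} The equivalence $\val_2(\xi)=\infty\iff\xi=0$ is immediate. For multiplicativity, take representatives $x,y\in A$: if both are units then $[xy]_A=[x]_A[y]_A$ and $\val(xy)=\val(x)\val(y)$; if at least one lies in $\mathfrak m_A$, then $xy\in\mathfrak m_A$ and both sides are $\infty$. For the ultrametric inequality $\val_2(\xi+\eta)\geq\min\{\val_2(\xi),\val_2(\eta)\}$, again argue on representatives $x,y\in A$, using $[x+y]_A=\xi+\eta$: if both are units with $\val(x)\neq\val(y)$, then $x+y$ is a unit and $\val(x+y)=\min\{\val(x),\val(y)\}$; if both are units with $\val(x)=\val(y)=:g$, then $\val_2([x+y]_A)\geq g$ whether $x+y$ is a unit (then it equals $\val(x+y)\geq g$) or lies in $\mathfrak m_A$ (then it is $\infty$); and if one representative lies in $\mathfrak m_A$ the inequality is trivial since then $\xi+\eta$ equals the other class. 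Thus $\val_2$ is a valuation of $K_A$ with value group $H$.

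\emph{The valuation ring.} The inclusion $B\hookrightarrow A$ induces an injection $B/(\mathfrak m_A\cap B)\hookrightarrow A/\mathfrak m_A=K_A$ whose image is $\{[b]_A\;|\;b\in B\}$; I claim this image equals $O_{\val_2}=\{\xi\in K_A\;|\;\val_2(\xi)\geq e\}$. If $b\in B$ then $\val(b)\geq e$, and either $b\in\mathfrak m_A$ (so $\val_2([b]_A)=\infty\geq e$) or $b\in A^\times$ (so $\val_2([b]_A)=\val(b)\geq e$); hence the image of $B$ lies in $O_{\val_2}$. Conversely, given $\xi\in O_{\val_2}$, pick a representative $x\in A$: either $\xi=0=[0]_A$, or $x\in A^\times$ with $\val(x)=\val_2(\xi)\geq e$, so $x\in B$ and $\xi$ lies in the image of $B$. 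Therefore $O_{\val_2}=B/(\mathfrak m_A\cap B)$.

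The only place that needs genuine care is the ultrametric inequality, where one must keep track of whether $x+y$ remains a unit or drops into $\mathfrak m_A$; everything else is a direct translation of the corresponding property of $\val$ via the clean dichotomy supplied by Corollary \ref{cor:cor_convex}. There is no serious obstacle.
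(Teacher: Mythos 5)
Your proof is correct and follows essentially the same route as the paper: well-definedness via the observation that two unit representatives of the same class differ by an element of $\mathfrak m_A$ whose valuation cannot lie in $H$, and identification of the valuation ring by matching $\{\xi\in K_A\;|\;\val_2(\xi)\geq e\}$ with the image of $B$, i.e.\ $B/(\mathfrak m_A\cap B)$. The only difference is that you spell out the verification of the valuation axioms, which the paper dismisses as routine.
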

\begin{proof}
We first demonstrate the well-definedness of $\val_2$.
Set $G_0=H \cup G_{\geq e}$.
Let $x_1$ and $x_2$ be elements in $A^\times$ such that $[x_1]_A=[x_2]_A$.
We have $\val(x_1), \val(x_2) \in H$ and 
also have $\val(x_2 - x_1) \in G_0 \setminus H$ by Corollary \ref{cor:cor_convex}.
If $\val(x_1)\neq\val(x_2)$, say $\val(x_1)<\val(x_2)$, then $\val(x_2-x_1)=\val(x_1)\in H$, contradiction.
Hence we get $\val(x_1)=\val(x_2)$.
We have proven the well-definedness.
It is easy to validate that $\val_2$ is a valuation.
We omit the details.

We show that $B/(\mathfrak m_A \cap B)$ is the valuation ring of $\val_2$.
We get 
\begin{align*}
&\val_2^{-1}(\{g \in H\cup \{\infty\}\;|\;g \geq e\}) \\
&=\{[x]_A \in K_A\;|\;(\val (x) \in H \text{ and } \val(x) \geq e) \text{ or } x \in \mathfrak m_A\} \\
&=\{[x]_A \in K_A\;|\;(\val (x) \in H \text{ and } \val(x) \geq e) \text{ or } \val_1(x)\succ [e]\}\\
&=\{[x]_A \in K_A\;|\;(\val (x) \in H \text{ and } \val(x) \geq e) \text{ or } (\val(x) \in G \setminus H \text{ and } \val(x) > e)\}\\
&=\{[x]_A \in K_A\;|\;\val(x) \geq e\} = B/(\mathfrak m_A \cap B)
\end{align*}
by Corollary \ref{cor:cor_convex1}.
We have demonstrated the corollary.
\end{proof}

\subsection{Pseudo-angular component maps}\label{sec:pseudo-angular}
\begin{definition}\label{def:pseudo}
A \textit{pseudo-angular component map} is a map $\mylcp:K^{\times} \rightarrow F^{\times}$ satisfying the following conditions:
\begin{enumerate}
\item[(1)] We have $\mylcp(u)=\pi(u)$ for any $u \in B^{\times}$;
\item[(2)] The equality $\mylcp(ux)=\pi(u) \cdot \mylcp(x)$ holds true for all $u \in B^{\times}$ and $x \in K^{\times}$;
\item[(3)] For any $g \in G$ and nonzero $c \in F$, there exists an element $w \in K$ with $\val(w)=g$ and $\mylcp(w)=c$;
\item[(4)] For any nonzero elements $x_1,x_2 \in K$ with $x_1+x_2 \not=0$, we have
\begin{itemize}
\item $\mylcp(x_1+x_2)=\mylcp(x_1)$ if $\val(x_1) < \val(x_2)$;
\item $\val(x_1+x_2)=\val(x_1)$ and $\mylcp(x_1+x_2)=\mylcp(x_1)+\mylcp(x_2)$ if $\val(x_1) = \val(x_2)$ and $\mylcp(x_1)+\mylcp(x_2) \not=0$;
\end{itemize}
\item[(5)] For any $x,y \in K^{\times}$ with $\overline{\val(x)}=\overline{\val(y)}$ and $\mylcp(x)=\mylcp(y)$, there exists $u \in K^{\times}$ such that $y=u^2x$;
\item[(6)] For any $a,u \in K^{\times}$, there exists $k \in F^{\times}$ such that $\mylcp(au^2)=\mylcp(a)k^2$.
\end{enumerate}
A pseudo-angular component map that is a group homomorphism is said to be an {\it angular component map}.
We denote by $\mylc$ an angular component map.
\end{definition}

\begin{remark}\label{rem:angular}
When a valued field whose strict units always admit a square root, any group homomorphism $\mylc:K^\times \rightarrow F^\times$ whose restriction to $B^\times$ is $\pi$ is an angular component map.
\end{remark}
\begin{proof}
It is obvious that the conditions (1), (2) and (6) in Definition \ref{def:pseudo} are satisfied.

We demonstrate that the map $\mylc$ satisfies the condition Definition \ref{def:pseudo}(3).
Let $g \in G$ and $0 \not=c \in F$ be arbitrary elements.
By the definition of valued field, there exists a nonzero element $x\in K$ with $\val(x)=g$.
Set $d=\mylc(x)$.
We can find a nonzero $y \in B^{\times}$ with $\pi(y)=cd^{-1}$.
Set $w=x \cdot y$.
We get $\val(w)=\val(x) \cdot \val(y)=g$ and $\mylc(w)=\mylc(x) \cdot \mylc(y)=c$, as desired.

We next check the condition Definition \ref{def:pseudo}(4) is satisfied.
Take nonzero elements $x_1,x_2 \in K^{\times}$ with $x_1+x_2 \not=0$.
We first consider the case in which $\val(x_1) < \val(x_2)$.
Since we have $\val\left(\dfrac{x_2}{x_1}\right)>e$,
we get $\val\left(1+\dfrac{x_2}{x_1}\right)=e$. Hence
we have 
\begin{align*}
\mylc(x_1+x_2) &= \mylc(x_1) \cdot \mylc\left(1+\dfrac{x_2}{x_1}\right) = \mylc(x_1)\cdot \pi\left(1+\dfrac{x_2}{x_1}\right) 
=\mylc(x_1)\text{.}
\end{align*}

We next consider the remaining case.
Set $g=\val(x_1)=\val(x_2)$.
We can take a nonzero element $w \in K$ with $\val(w)=g$ and $\mylc(w)=1$ by Definition  \ref{def:pseudo}(3). 
The element $(x_1+x_2)\cdot w^{-1}$ is contained in $B$ because we have
\begin{align*}
\val((x_1+x_2)\cdot w^{-1}) &=\val(x_1\cdot w^{-1}+x_2\cdot w^{-1})\\
& \geq \min \{\val(x_1\cdot w^{-1}),\val(x_2\cdot w^{-1})\} = e\text{.}
\end{align*}
We get $\pi((x_1+x_2)\cdot w^{-1})=\pi(x_1 \cdot w^{-1})+\pi(x_2 \cdot w^{-1})=\mylc(x_1)+\mylc(x_2) \not=0$.
The element $(x_1+x_2)\cdot w^{-1}$ is a unit in $B$.
It means that $\val((x_1+x_2)\cdot w^{-1})=e$; that is, $\val(x_1+x_2)=g$.
The equality $\mylc(x_1+x_2)=\mylc(x_1)+\mylc(x_2)$ follows from the following calculation:
\begin{align*}
\mylc(x_1+x_2) &= \dfrac{\mylc(x_1+x_2)}{\mylc(w)}=\mylc\left(\dfrac{x_1+x_2}{w}\right) = \pi\left(\dfrac{x_1+x_2}{w}\right)\\
&=\pi\left(\dfrac{x_1}{w}\right)+\pi\left(\dfrac{x_2}{w}\right) = \mylc\left(\dfrac{x_1}{w}\right)+\mylc\left(\dfrac{x_2}{w}\right)\\
& = \dfrac{\mylc(x_1)}{\mylc(w)} +\dfrac{\mylc(x_2)}{\mylc(w)} = \mylc(x_1)+\mylc(x_2)\text{.}
\end{align*}

The remaining task is to demonstrate that the map $\mylc$ satisfies the condition Definition \ref{def:pseudo}(5).
Let $x_1,x_2$ be nonzero elements of $K$ such that 
$\overline{\val(x_1)}=\overline{\val(x_2)}$ and $\mylc(x_1)=\mylc(x_2)$.
Since $\overline{\val(x_1)}=\overline{\val(x_2)}$, we can get $h \in G$ with 
$\val(x_1)=\val(x_2)h^2$.
Set $c=\mylc(x_1)=\mylc(x_2)$.
Take a nonzero element $w \in K$ with $\val(w)=h$ and $\mylc(w)=1$ by Definition \ref{def:pseudo}(3). 
Consider the element $y=x_1 \cdot (x_2 \cdot w^2)^{-1}$.
We have $\val(y)=\val(x_1) \cdot (\val(x_2) \cdot \val(w)^2)^{-1} = \val(x_1) \cdot (\val(x_2)h^2)^{-1}=e$.
We also get 
\begin{align*}
\pi(y)=\mylc(y)=\dfrac{\mylc(x_1)}{\mylc(x_2) \cdot \mylc(w)^2} = \dfrac{c}{c \cdot 1^2}=1\text{.}
\end{align*}
The element $y$ is a strict unit.
Therefore, there exists an element $z\in K$ with $y=z^2$
by the assumption.
Set $u=w\cdot z$, then we have $x_1=x_2u^2$.
\end{proof}
The above remark is familiar to anyone with an in-depth understanding of valued fields.
In fact, the equivalent assertions are found in \cite[Appendix A]{S}.
We proved them here for the sake of completeness.

The following proposition is used without citation in this paper.
\begin{proposition}\label{prop:psudo-angular}
A valued field whose strict units always admit a square root has a pseudo-angular component map.
\end{proposition}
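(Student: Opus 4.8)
The plan is to construct a pseudo-angular component map $\mylcp$ by hand, using a carefully chosen system of distinguished elements of $K^{\times}$. Recall that $G$, being fully ordered, is torsion free, and that the residue map $\pi$ carries $B^{\times}$ onto $F^{\times}$. Fix a set of representatives $\{g_\gamma:\gamma\in G/G^2\}$ of $G/G^2$ with $g_{[e]}=e$, an element $t_{g_\gamma}\in K^{\times}$ of valuation $g_\gamma$ for each $\gamma$ with $t_e=1$, and for every $h\in G$ an element $s_h\in K^{\times}$ with $\val(s_h)=h$ and $s_e=1$ (all of these choices use the axiom of choice). For $g\in G$ with $\overline g=\gamma$, let $h(g)\in G$ be the unique element with $g=g_\gamma\,h(g)^2$ (uniqueness is exactly where torsion-freeness of $G$ enters), and put
$$t_g:=t_{g_\gamma}\,s_{h(g)}^{\,2},\qquad \mylcp(x):=\pi\bigl(x\,t_{\val(x)}^{-1}\bigr)\quad\text{for }x\in K^{\times}.$$
Since $\val(t_g)=g$, the element $x\,t_{\val(x)}^{-1}$ lies in $B^{\times}$, so $\mylcp$ is a well-defined map into $F^{\times}$; note that $t_g=t_{g_\gamma}$ when $g=g_\gamma$, that $t_e=1$, and---crucially---that $t_g\,t_{g'}^{-1}=\bigl(s_{h(g)}s_{h(g')}^{-1}\bigr)^2$ is a square in $K^{\times}$ whenever $\overline g=\overline{g'}$.

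Conditions (1)--(4) of Definition \ref{def:pseudo} would then be verified directly. Condition (1) follows from $t_e=1$; for (2) one uses $\val(ux)=\val(x)$ together with multiplicativity of $\pi$ on $B^{\times}$; for (3), given $g\in G$ and $c\in F^{\times}$ one chooses $v\in B^{\times}$ with $\pi(v)=c$ and takes $w=v\,t_g$; for (4) one divides $x_1+x_2$ by $t_{\val(x_1)}$ and reduces modulo the maximal ideal $\mathfrak m$ of $B$---when $\val(x_1)<\val(x_2)$ the summand $x_2/t_{\val(x_1)}$ lies in $\mathfrak m$ and disappears, while when $\val(x_1)=\val(x_2)$ the residue of $(x_1+x_2)/t_{\val(x_1)}$ equals $\mylcp(x_1)+\mylcp(x_2)$, which is nonzero by hypothesis and therefore forces $\val(x_1+x_2)=\val(x_1)$ and the asserted formula.

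The substance of the argument is (5) and (6), and this is where the hypothesis that every strict unit is a square is invoked. For (5), assume $\overline{\val(x)}=\overline{\val(y)}$ and $\mylcp(x)=\mylcp(y)=:c$; writing $x=v_x\,t_{\val(x)}$ and $y=v_y\,t_{\val(y)}$ with $v_x,v_y\in B^{\times}$ and $\pi(v_x)=\pi(v_y)=c$, the element $v_x v_y^{-1}$ is a strict unit, hence a square in $K^{\times}$, while $t_{\val(x)}t_{\val(y)}^{-1}$ is a square by the crucial property above; multiplying, $x y^{-1}$ is a square $w^2$, so $y=(w^{-1})^2x$. For (6), given $a,u\in K^{\times}$ one has $\overline{\val(au^2)}=\overline{\val(a)}$, so these two values share the representative $g_\gamma$ and $h(\val(au^2))=h(\val(a))\cdot\val(u)$; substituting into the definition of $t$ gives $a u^2\,t_{\val(au^2)}^{-1}=\bigl(a\,t_{\val(a)}^{-1}\bigr)\,q^2$ with $q=u\,s_{h(\val(a))}\,s_{h(\val(a))\cdot\val(u)}^{-1}\in B^{\times}$, and applying $\pi$ yields $\mylcp(au^2)=\mylcp(a)\,\pi(q)^2$, so $k=\pi(q)$ works.

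The step I expect to require the most care is the set-up of the family $(t_g)_{g\in G}$: it must be arranged so that $g\mapsto t_g$ is genuinely a function (independent of the presentation $g=g_\gamma\,h(g)^2$) and so that $t_g t_{g'}^{-1}$ is a square in $K^{\times}$ precisely when $g$ and $g'$ differ by a square in $G$. Granting this bookkeeping, the remainder consists of routine computations with $\val$ and $\pi$, and the square-root hypothesis is used only once, at the point indicated in the proof of (5). (In particular I would not attempt to produce an honest group homomorphism $K^{\times}\to F^{\times}$, i.e.\ an angular component map; the weaker pseudo-angular structure is what this explicit construction naturally yields.)
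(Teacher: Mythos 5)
Your construction is essentially the paper's own proof: your $t_{\val(x)}=t_{g_{\overline{\val(x)}}}\,s_{h(\val(x))}^2$ is exactly the paper's $\omega(r(\overline{\val(x)}))\,\omega(h)^2$, and the verifications of (1)--(6) proceed the same way (your direct residue argument for (4) even avoids the square-root hypothesis, which the paper invokes there, but this is a minor streamlining). The proposal is correct.
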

\begin{proof}
Let $(K,\val)$ be the valued field.
We fix a map $\omega:G \rightarrow K^{\times}$ such that $\val(\omega(g)) = g$ and $\omega(e)=1$.
It always exists because the valuation $\val$ is surjective.
We also fix a representative map $r:G/G^2 \rightarrow G$ such that $r(\overline{g})$ is a representative of the equivalence class $\overline{g}$ with $r(\overline{e})=e$.
For any $x \in K^{\times}$, 
set $$\mylcp(x)=\pi\left(\dfrac{x}{\omega(r(\overline{\val(x)}))\omega(h)^2}\right)\text{,}$$
where $h \in G$ with $\val(x)=r(\overline{\val(x)})h^2$.
Note that the element $h\in G$ is uniquely determined by $x$.
Since we have $\val(x/\omega(r(\overline{\val(x)}))\omega(h)^2)=e$, 
it follows that the map $\mylcp:K^{\times}\to F^{\times}$ is well-defined.

We demonstrate that the above map satisfies the conditions (1) through (6) in Definition \ref{def:pseudo}.
It is easy to check that the conditions (1) and (2) are satisfied.
We omit the proof.
We consider the condition (3).
Fix $g \in G$ and $c \in F^{\times}$.
Set $d=c/\mylcp(\omega(g))$ and take $v \in B^{\times}$ with $\pi(v)=d$.
The element $w=v\omega(g)$ satisfies the equalities $\val(w)=g$ and $\mylcp(w)=c$ by condition (2).

The next target is the condition (4).
We take $x_1,x_2 \in K^{\times}$ with $x_1+x_2 \neq 0$.
When $\val(x_1)<\val(x_2)$, we have $\val(x_1+x_2)=\val(x_1)$.
Take $h \in G$ with $\val(x_1+x_2) = r(\overline{\val(x_1+x_2)})h^2$.
We get
\begin{align*}
\mylcp(x_1+x_2) &= \pi\left(\dfrac{x_1+x_2}{\omega(r(\overline{\val(x_1+x_2)}))\omega(h)^2}\right)
= \pi\left(\dfrac{x_1+x_2}{\omega(r(\overline{\val(x_1)}))\omega(h)^2}\right)\\
&=\pi\left(\dfrac{x_1}{\omega(r(\overline{\val(x_1)}))\omega(h)^2}\right)+ \pi\left(\dfrac{x_2}{\omega(r(\overline{\val(x_1)}))\omega(h)^2}\right)\\
&=\pi\left(\dfrac{x_1}{\omega(r(\overline{\val(x_1)}))\omega(h)^2}\right)=\mylcp(x_1)\text{.}
\end{align*}

When $\val(x_1) = \val(x_2)$ and $\mylcp(x_1)+\mylcp(x_2) \not=0$, we first demonstrate that $\val(x_1+x_2)=\val(x_1)$.
Otherwise, we have $\val(x_1+x_2)>\val(x_1)$.
In particular, we get $\val((x_1+x_2)/x_1)>e$.
Hence, we get $\pi(-x_2/x_1)=\pi(1-(x_1+x_2)/x_1))=\pi(1)=1$.
It means that $-x_2/x_1$ is a strict unit and there exists $u \in K^{\times}$ with $x_2=-x_1u^2$
because any strict units in $(K,\val)$ admit a square root.
It is easy to check that $u \in B^{\times}$ and $\pi(u)^2=1$.
We then have $\mylcp(x_1)+\mylcp(x_2)=\mylcp(x_1)+\mylcp(-u^2x_1)=\mylcp(x_1)-\pi^2(u)\mylcp(x_1)=\mylcp(x_1)-\mylcp(x_1)=0$ by condition (2).
It contradicts the assumption that $\mylcp(x_1)+\mylcp(x_2) \not=0$.
We have demonstrated that $\val(x_1+x_2)=\val(x_1)$.

We now get
\begin{align*}
\mylcp(x_1+x_2) &= \pi\left(\dfrac{x_1+x_2}{\omega(r(\overline{\val(x_1+x_2)}))\omega(h)^2}\right)
= \pi\left(\dfrac{x_1+x_2}{\omega(r(\overline{\val(x_1)}))\omega(h)^2}\right)\\
&=\pi\left(\dfrac{x_1}{\omega(r(\overline{\val(x_1)}))\omega(h)^2}\right)+ \pi\left(\dfrac{x_2}{\omega(r(\overline{\val(x_1)}))\omega(h)^2}\right)\\
&=\mylcp(x_1)+\mylcp(x_2)\text{.}
\end{align*}

We next treat the condition (5).
Take $x,y \in K^{\times}$ with $\overline{\val(x)}=\overline{\val(y)}$ and $\mylcp(x)=\mylcp(y)$.
Set $\overline{g}=\overline{\val(x)}=\overline{\val(y)}$.
We can take $h_1,h_2 \in G$ with $\val(x)=r(\overline{g})h_1^2$ and $\val(y)=r(\overline{g})h_2^2$.
We get $$\pi\left(\dfrac{x}{\omega(r(\overline{g}))\omega(h_1)^2}\right)
=\pi\left(\dfrac{y}{\omega(r(\overline{g}))\omega(h_2)^2}\right)$$ because $\mylcp(x)=\mylcp(y)$.
Set $t=y/x \cdot (\omega(h_1)/\omega(h_2))^2$.
Since $t$ is a strict unit, we have $t=v^2$ for some $v \in B^{\times}$.
Set $u=v\cdot \omega(h_2)/\omega(h_1)$.
We have $y=xu^2$.

The remaining task is to show that the map $\mylcp$ satisfies condition (6).
Take $a,u \in K^{\times}$.
Set $\overline{g}=\overline{\val(a)}=\overline{\val(au^2)} \in G/G^2$.
Take $h_1,h_2 \in G$ such that $\val(a)=r(\overline{g})h_1^2$ and $\val(au^2)=r(\overline{g})h_2^2$.
We obviously have $\val(u)=h_2/h_1$.
We get 
\begin{align*}
\mylcp(au^2)&=\pi\left(\dfrac{au^2}{\omega(r(\overline{g}))\omega(h_2)^2}\right) = \left(\pi\left(\dfrac{u\cdot \omega(h_1)}{\omega(h_2)}\right)\right)^2\pi\left(\dfrac{a}{\omega(r(\overline{g}))\omega(h_1)^2}\right)\\
&= k^2\mylcp(a) \text{,}
\end{align*}
where $k=\pi\left(\dfrac{u\cdot \omega(h_1)}{\omega(h_2)}\right)$.
\end{proof}

We fix a pseudo-angular component map $\mylcp:K^{\times} \rightarrow F^{\times}$ through the paper.
 
We prove a lemma which holds true regardless of the characteristic of $F$.

\begin{lemma}\label{lem:psudo_lemlem}
Let $(K,\val)$ be a valued field whose strict units always admit a square root.
Let $x_1,x_2 \in K^\times$.
If $\val(x_1)=\val(x_2)$ and $\mylcp(x_1)=\mylcp(x_2)$, we have $x_2=x_1u^2$ for some $u \in B^\times$ with $\pi(u^2)=1$.
\end{lemma}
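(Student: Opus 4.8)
The plan is to reduce this to condition (5) in Definition~\ref{def:pseudo} together with the square root assumption, and then to upgrade the unit $u$ it provides to one lying in $B^\times$ with trivial residue. First I would observe that the hypotheses $\val(x_1)=\val(x_2)$ and $\mylcp(x_1)=\mylcp(x_2)$ in particular give $\overline{\val(x_1)}=\overline{\val(x_2)}$, so condition (5) applies and yields some $u \in K^\times$ with $x_2 = u^2 x_1$. Applying $\val$ to this equation gives $\val(x_2) = \val(u)^2 \val(x_1)$, and since $\val(x_1)=\val(x_2)$ we conclude $\val(u)^2 = e$, hence $\val(u) = e$ as $G$ is torsion-free (being an ordered abelian group). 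Therefore $u \in B^\times$.

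It remains to check $\pi(u^2)=1$, equivalently $\pi(u)^2=1$. Since $u \in B^\times$, condition (2) of Definition~\ref{def:pseudo} gives $\mylcp(x_2) = \mylcp(u^2 x_1) = \pi(u)\,\mylcp(u x_1) = \pi(u)^2\,\mylcp(x_1)$. Combined with $\mylcp(x_1)=\mylcp(x_2) \neq 0$, cancelling the nonzero element $\mylcp(x_1) \in F^\times$ yields $\pi(u)^2 = 1$, which is exactly $\pi(u^2)=1$.

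I expect no serious obstacle here; the only point requiring a moment's care is the passage $\val(u)^2 = e \implies \val(u) = e$, which uses that an ordered abelian group has no torsion (otherwise a torsion element would violate trichotomy with $e$). Everything else is a direct unwinding of the axioms, so the proof should be a few lines.
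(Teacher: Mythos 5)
Your proposal is correct and follows essentially the same route as the paper: apply Definition~\ref{def:pseudo}(5) to get $u$ with $x_2=u^2x_1$, deduce $\val(u)=e$ from $\val(x_1)=\val(x_2)$ (the paper states this tersely; your torsion-freeness remark is the right justification), and then use Definition~\ref{def:pseudo}(2) together with $\mylcp(x_1)=\mylcp(x_2)\neq 0$ to conclude $\pi(u^2)=1$. The only cosmetic difference is that you apply condition (2) twice with $u$ instead of once with $u^2\in B^\times$, which amounts to the same computation.
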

\begin{proof}
We can take $u \in K^\times$ with $x_2=u^2x_1$ by Definition \ref{def:pseudo}(5).
We have $\val(u)=e$ because $\val(x_1)=\val(x_2)$.
It means that $u \in B^{\times}$.
We get $\mylcp(x_2)=\mylcp(u^2x_1)=\pi(u^2)\mylcp(x_1)$ by Definition \ref{def:pseudo}(2).
We also have $\mylcp(x_2)=\mylcp(x_1)$ by the assumption.
They imply that $\pi(u^2)=1$.
\end{proof}

\begin{corollary}\label{cor:psudo_lemlem}
Let $(K,\val)$ be a valued field whose strict units always admit a square root.
Let $x_1,x_2 \in K^\times$.
If $\val(x_1)=\val(x_2)$ and $\mylcp(x_1)+\mylcp(x_2)=0$, we have $\val(x_1+x_2)>\val(x_1)$.
\end{corollary}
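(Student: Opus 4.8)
The plan is to deduce the statement from Lemma \ref{lem:psudo_lemlem}. The only subtlety is a sign: the hypothesis reads $\mylcp(x_1)+\mylcp(x_2)=0$, i.e. $\mylcp(x_1)=-\mylcp(x_2)$, so the right input to the lemma is the pair $x_1$ and $-x_2$ rather than $x_1$ and $x_2$.

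First I would dispose of the trivial case $x_1+x_2=0$, in which $\val(x_1+x_2)=\infty>\val(x_1)$ and there is nothing to prove; so from now on assume $x_1+x_2\neq 0$. Next, since $-1\in B^{\times}$ with $\pi(-1)=-1$, Definition \ref{def:pseudo}(2) gives $\mylcp(-x_2)=\pi(-1)\,\mylcp(x_2)=-\mylcp(x_2)=\mylcp(x_1)$, while clearly $\val(-x_2)=\val(x_2)=\val(x_1)$. Hence Lemma \ref{lem:psudo_lemlem} applies to $x_1$ and $-x_2$ and yields $-x_2=x_1u^2$ for some $u\in B^{\times}$ with $\pi(u^2)=1$.

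Finally I would compute $x_1+x_2=x_1-x_1u^2=x_1(1-u^2)$. Since $u\in B$, the element $1-u^2$ lies in $B$, and $\pi(1-u^2)=1-\pi(u^2)=0$; moreover $1-u^2\neq 0$ because $x_1+x_2\neq 0$, so $\val(1-u^2)>e$. Therefore $\val(x_1+x_2)=\val(x_1)\val(1-u^2)>\val(x_1)$, as required. I do not expect any real obstacle; the only point that needs a moment's care is the sign bookkeeping that forces one to feed $-x_2$, rather than $x_2$, into Lemma \ref{lem:psudo_lemlem}.
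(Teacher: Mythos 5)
Your proposal is correct and follows essentially the same route as the paper: both reduce to Lemma \ref{lem:psudo_lemlem} (the paper writes $x_2=-u^2x_1$ with $\pi(u^2)=1$, which is the same sign bookkeeping you perform with $-x_2$) and then conclude from $x_1+x_2=x_1(1-u^2)$ with $\val(1-u^2)>e$. Your explicit handling of the case $x_1+x_2=0$ is a minor extra precaution not spelled out in the paper, but the argument is the same.
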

\begin{proof}
We can take $u \in B^\times$ with $x_2=-u^2x_1$ and $\pi(u^2)=1$ by Lemma \ref{lem:psudo_lemlem}.
Setting $v=u^2-1$, we have $\val(v)>e$.
Since $x_1+x_2=x_1-u^2x_1=-vx_1$, we have $\val(x_1+x_2)=\val(v) \cdot \val(x_1)>\val(x_1)$.
\end{proof}

Recall that  a valued field $(K,\val)$ whose residue class field is of characteristic $\neq 2$ is $2$-henselian
if and only if  any strict units admit a square root by \cite[Corollary 4.2.4]{EP}.

\begin{lemma}
\label{coro:basic004}
Let $(K,\val)$ be a $2$-henselian valued field whose residue class field is of characteristic $\not=2$ and $B$ be its valuation ring.
Let $A$ be a subring with $B \subseteq A \subseteq K$.
Set $H=\val(A^\times)$. Let $g$ be an element of $H\cup G_{\geq e}$.
Then there exists an element $w \in A$ such that $\val(w)=g$, $\mylcp(w)=1$ and $x_1+x_2\in Aw$
for any nonzero elements $x_1,x_2\in A$ with $\val(x_1)=\val(x_2)$ such that at least one of the conditions $\val(x_2)\geq g$ and $\val(x_2)=[\![g]\!]$ is satisfied.
\end{lemma}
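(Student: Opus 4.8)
The plan is to build $w$ directly from the pseudo-angular component map and then to reduce the membership $x_1+x_2\in Aw$ to a single valuation computation, exploiting the description $\val(A)=H\cup G_{\geq e}$ and the convexity of $H$ from Lemma~\ref{lem:convex_basic2} together with the unit characterization in Corollary~\ref{cor:cor_convex}.

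First I would apply Definition~\ref{def:pseudo}(3) to the given $g\in H\cup G_{\geq e}\subseteq G$ and to the element $1\in F^{\times}$ to obtain $w\in K^{\times}$ with $\val(w)=g$ and $\mylcp(w)=1$. Since $\val(w)=g$ lies in $H\cup G_{\geq e}$, Corollary~\ref{cor:cor_convex} gives $w\in A$. This already settles the first two required properties; note that $\mylcp(w)=1$ plays no further role in the argument and is recorded only for later use.

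Next, fix nonzero $x_1,x_2\in A$ with $\val(x_1)=\val(x_2)=:\gamma$ and with $\gamma\geq g$ or $[\![\gamma]\!]=[\![g]\!]$. If $x_1+x_2=0$ the claim is trivial, so assume $x_1+x_2\neq0$. It then suffices to prove $(x_1+x_2)w^{-1}\in A$, since then $x_1+x_2=\bigl((x_1+x_2)w^{-1}\bigr)w\in Aw$; and by Corollary~\ref{cor:cor_convex} this reduces to showing $\val(x_1+x_2)g^{-1}\in H\cup G_{\geq e}$. Here I first observe that in all cases $\val(x_1+x_2)\geq\gamma$: if $\mylcp(x_1)+\mylcp(x_2)\neq0$ this is the equality $\val(x_1+x_2)=\gamma$ from Definition~\ref{def:pseudo}(4), while if $\mylcp(x_1)+\mylcp(x_2)=0$ it is the strict inequality $\val(x_1+x_2)>\gamma$ from Corollary~\ref{cor:psudo_lemlem}.

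Finally I would discharge $\val(x_1+x_2)g^{-1}\in H\cup G_{\geq e}$ using the hypothesis on $\gamma$. If $\gamma\geq g$, then $\val(x_1+x_2)\geq\gamma\geq g$, so the quotient is $\geq e$ and we are done. If instead $[\![\gamma]\!]=[\![g]\!]$, write $\gamma=gh^{2}$ with $h\in H$, so that $\gamma g^{-1}=h^{2}\in H$; then $\val(x_1+x_2)g^{-1}\geq h^{2}$, and if this element is $<e$ it is squeezed (after passing to inverses) between $e$ and $h^{-2}\in H$, hence lies in $H$ by convexity of $H$, while otherwise it already lies in $G_{\geq e}$. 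This last bookkeeping with the convexity of $H$ is the only point that needs any care; everything else is a direct application of the axioms for $\mylcp$ in Definition~\ref{def:pseudo} and of Corollaries~\ref{cor:cor_convex} and \ref{cor:psudo_lemlem}.
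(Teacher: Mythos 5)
Your proposal is correct and follows essentially the same route as the paper: the same choice of $w$ via Definition \ref{def:pseudo}(3) and Corollary \ref{cor:cor_convex}, the same case split on whether $\mylcp(x_1)+\mylcp(x_2)$ vanishes in order to get $\val(x_1+x_2)\geq\val(x_2)$, and the same reduction of $x_1+x_2\in Aw$ to a valuation estimate settled by the convexity of $H$. The only cosmetic differences are that you cite Corollary \ref{cor:psudo_lemlem} where the paper redoes the Definition \ref{def:pseudo}(5) argument by hand, and you finish with Corollary \ref{cor:cor_convex} plus explicit convexity bookkeeping where the paper packages the same step as Corollary \ref{cor:cor_convex01}.
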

\begin{proof}
By Definition \ref{def:pseudo}(3), we can take an element $w\in K$ with $\val(w)=g$
and $\mylcp(w)=1$. We see that $w\in A$ from Corollary \ref{cor:cor_convex}.

We first consider the case in which $\mylcp(x_1)+\mylcp(x_2)= 0$.
Since $\mylcp(x_1)=\mylcp(-x_2)$,
there exists an element $u\in K$ such that $x_1=-x_2u^2$
by Definition \ref{def:pseudo}(5). 
We get $-u^2 \in B^{\times}$ because $\val(x_1)=\val(x_2)$.
We have
$x_1+x_2=(1-u^2)x_2$ and $\val(1-u^2)\geq e$.
If $\val(x_2)\geq g$, we have $\val((x_1+x_2)w^{-1})=\val(1-u^2)\val(x_2)\val(w^{-1})\geq e$.
Hence we get $(x_1+x_2)w^{-1}\in A$ from Corollary \ref{cor:cor_convex}. This implies that
$x_1+x_2\in Aw$.
If $\val(x_2)=[\![g]\!]$, there exists an element $h\in H$ such that $\val(x_2)=h^2g$.
It follows that $\val((x_1+x_2)w^{-1})\geq \val(x_2)\val(w)^{-1}=h^2$.
Thus we have $(x_1+x_2)w^{-1}\in A$ from Corollary \ref{cor:cor_convex01}.
Hence we have $x_1+x_2\in Aw$. 

We consider the case in which $\mylcp(x_1)+\mylcp(x_2)\neq 0$. 
If $x_1+x_2=0$, we have  $\mylcp(x_1)+\mylcp(x_2)= 0$.
This contradicts the assumption. Hence it follows that $x_1+x_2\neq 0$.
By Definition \ref{def:pseudo}(4), we have $\val(x_1+x_2)=\val(x_2)$.
We can take an element $h\in H$ with $\val((x_1+x_2)w^{-1})\geq h$ in any case.
It follows that $(x_1+x_2)w^{-1}\in A$ by Corollary \ref{cor:cor_convex01}.
Thus we have $x_1+x_2\in Aw$.
\end{proof}

A \textit{cross section} $\omega:G \rightarrow K^{\times}$ is a group homomorphism such that the composition 
$\val \circ\; \omega$ is the identity map.
A valued field does not necessarily have a cross section as demonstrated in \cite{K}.
We give sufficient conditions for a valued field $(K,v)$ to have a cross section.

\begin{proposition}\label{prop:wmap}
A valued field $(K,\val)$ has a cross section $\omega$ in the following cases:
\begin{enumerate}
\item[(1)] The valued field $K$ is the field of Hahn series $F(\!(G)\!)$.
\item[(2)] The value group $G$ has a system of generators such that any element in $G$ is uniquely represented by the generators.
\item[(3)] The valued field $K$ is a henselian valued field with divisible value group $G$ and
the residue class field $F$ is of characteristic zero such that $F^{\times}$ is divisible.
\end{enumerate}
\end{proposition}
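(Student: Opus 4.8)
The plan is to produce, in each of the three cases, a group homomorphism $\omega\colon G\to K^\times$ with $\val\circ\omega=\mathrm{id}_G$; equivalently, to split the short exact sequence of abelian groups
\[
1\longrightarrow B^\times\longrightarrow K^\times\xrightarrow{\ \val\ }G\longrightarrow 1 .
\]
In case (1) I would take the tautological choice: for $g\in G$ let $\omega(g)$ be the monomial $t^{g}\in F(\!(G)\!)$, i.e.\ the Hahn series whose coefficient at $g$ equals $1$ and whose other coefficients vanish. Its support is $\{g\}$, so $\val(t^{g})=g$ for the natural valuation, and the multiplication rule for Hahn series gives $t^{g}t^{h}=t^{gh}$; hence $\omega$ is a homomorphism splitting $\val$, and this case is a one-line verification.

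For case (2), the hypothesis says precisely that $G$ is free abelian on the given system of generators $\{g_i\}_{i\in I}$, so every $g\in G$ is \emph{uniquely} a finite product $\prod_i g_i^{n_i}$ with $n_i\in\mathbb Z$. Since $\val$ is surjective I would choose $x_i\in K^\times$ with $\val(x_i)=g_i$ for each $i$ and define $\omega\bigl(\prod_i g_i^{n_i}\bigr)=\prod_i x_i^{n_i}$; uniqueness of the representation makes $\omega$ well defined, it is a homomorphism by construction, and $\val\bigl(\prod_i x_i^{n_i}\bigr)=\prod_i\val(x_i)^{n_i}=\prod_i g_i^{n_i}$, so $\val\circ\omega=\mathrm{id}_G$. (Equivalently: a free abelian group is projective, so $\mathrm{Ext}^1(G,B^\times)=0$ and the displayed sequence splits.)

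Case (3) carries the real content, and I expect the main obstacle to be showing that $1+\mathfrak m$ is divisible. Because $(K,\val)$ is henselian and $\operatorname{char}F=0$, for $u\in 1+\mathfrak m$ and $n\ge 1$ the polynomial $X^{n}-u\in B[X]$ reduces modulo $\mathfrak m$ to $X^{n}-1$, which has $1$ as a simple root (its derivative there is $n\ne 0$ in $F$); Hensel's lemma lifts this root to an $n$-th root of $u$ in $1+\mathfrak m$. Thus $1+\mathfrak m$ is divisible, hence injective as a $\mathbb Z$-module, so the exact sequence $1\to 1+\mathfrak m\to B^\times\xrightarrow{\pi}F^\times\to 1$ splits and $B^\times\cong(1+\mathfrak m)\times F^\times$. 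As $F^\times$ is divisible by hypothesis, $B^\times$ is divisible, hence injective, so $\mathrm{Ext}^1(G,B^\times)=0$ and the sequence $1\to B^\times\to K^\times\xrightarrow{\val}G\to 1$ splits; any splitting section $G\to K^\times$ is the sought cross section. I would further remark that, once $B^\times$ is known to be injective, divisibility of $G$ is not actually needed for this argument.
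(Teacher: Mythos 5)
Your proposal is correct. Cases (1) and (2) coincide with the paper's argument: the monomial section for Hahn series, and the multiplicative extension of a choice of preimages of the free generators (the paper does exactly this, without the $\mathrm{Ext}$ reformulation). In case (3) you take a genuinely different route. The paper first proves that the whole group $K^{\times}$ is divisible, using the divisibility of $G$ to write $\val(a)=g^{n}$, the divisibility of $F^{\times}$ to adjust the unit part, and Hensel's lemma (with $\operatorname{char}F=0$) to extract the $n$-th root; it then deduces that $B^{\times}$ is divisible (which implicitly uses that $G$ is torsion-free, so an $n$-th root of a unit is again a unit) and invokes Kaplansky's theorem that a divisible subgroup is a direct summand to split $1\to B^{\times}\to K^{\times}\to G\to 1$. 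You instead prove directly that $B^{\times}$ is divisible, by showing $1+\mathfrak m$ is divisible via the same Hensel/characteristic-zero argument and combining it with the divisibility of $F^{\times}$, and then split the valuation sequence by injectivity of divisible abelian groups. Your version is cleaner on two counts: it sidesteps the subgroup-of-a-divisible-group issue entirely, and, as you observe, it never uses the divisibility of $G$, so it proves a slightly stronger statement; the paper's version yields the additional intermediate fact that $K^{\times}$ itself is divisible. Both arguments rest on the same two pillars (Hensel lifting of $n$-th roots of strict units in residue characteristic zero, and the splitting principle for divisible abelian groups, whether quoted as Kaplansky's theorem or as injectivity), so the difference is one of organization rather than substance.
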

\begin{proof}
(1) The proposition is obvious. The cross section $\omega$ is defined by $\omega(g)=g$ for all $g\in G$.

(2)
Let $\{g_{\lambda}\}_{\lambda \in \Lambda}$ be a system of generators such that any element in $G$ is uniquely represented by the generators.
Find an element $w(g_{\lambda}) \in K$ so that $\val(\omega(g_{\lambda}))=g_{\lambda}$ for any $\lambda \in \Lambda$.
Let $g$ be an element of $G$ and $g=\prod_{\lambda \in \Lambda} g_{\lambda}^{\sigma_\lambda}$ be the unique representation of $g$ by the generators, where $\sigma_\lambda \in \mathbb Z$ and $\sigma_\lambda=0$ except for a finite number of $\lambda$.
Set $\omega(g)=\prod_{\lambda \in \Lambda} \omega(g_{\lambda})^{\sigma_\lambda}$.
It is obvious that the map $w$ is a group homomorphism.

(3) We first demonstrate that the abelian group $K^{\times}$ is divisible.
Take a element $a\in K^{\times}$ and a positive integer $n$.
We have to prove $a=b^n$ for some $b\in K$. Since $G$ is divisible, we have $\val(a)=g^n$ for some $g\in G$.
There exists an element $d\in K^{\times}$ with $\val(d)=g$.
It follows that $\val(a)=\val(d^n)$. Thus we can take an element $u\in B^{\times}$ with
$a=ud^n$. From the divisibility of $F^{\times}$ we have an element $w\in B^{\times}$ with
$\pi(w)^n=\pi(u)$. Consider the polynomial $f(t)=t^n-u\in B[t]$.
It immediately follows that $\pi(f(w))=0$ and $\pi(f'(w))=n\pi(w)^{n-1}\neq 0$ in $F^{\times}$
because $F^{\times}$ is of characteristic zero. Since the valued field $K$ is henselian,
we can take an element $c\in B$ with $u=c^n$. Hence we have $a=(cd)^n$. 

The group $B^{\times}$ is divisible because $K^{\times}$ is divisible.
It follows from \cite[Theorem 2]{Kap} that $B^{\times}$ is a direct summand of $K^{\times}$.
We see that the exact sequence
$$1\to B^{\times}\to K^{\times}\xrightarrow{\val} G\to e$$
is split. Hence we get a group homomorphism $\omega: G\to K^{\times}$ with $\val\circ\;\omega$ is the
identity map.
\end{proof}

\begin{example}
The condition (2) in Proposition \ref{prop:wmap} is satisfied by the fundamental theorem of abelian groups when $G$ is finitely generated because an ordered abelian group $G$ is always torsion-free.
\end{example}

We demonstrate that a $2$-henselian valued field $(K,\val)$
whose residue class field is of characteristic $\neq 2$ has an angular component map when it has a cross section.

\begin{proposition}\label{prop:angular}
Let $(K,\val)$ be a valued field which has a cross section and $B$ be its valuation ring.
Assume that $(K, \val)$ is a $2$-henselian valued field whose residue class field is of characteristic $\neq 2$.
Then it admits an angular component map.
\end{proposition}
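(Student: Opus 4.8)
The plan is to construct an angular component map directly from the cross section and then to appeal to Remark \ref{rem:angular}, which already carries out the substantive part of the argument. Let $\omega:G\to K^\times$ be a cross section, so $\omega$ is a group homomorphism with $\val\circ\;\omega=\mathrm{id}_G$; in particular $\omega(e)=1$. For $x\in K^\times$ I would set
\[
\mylc(x)=\pi\!\left(\frac{x}{\omega(\val(x))}\right).
\]
The first thing to check is well-definedness: since $\val\bigl(x/\omega(\val(x))\bigr)=\val(x)\,\val(\omega(\val(x)))^{-1}=\val(x)\,\val(x)^{-1}=e$, the argument of $\pi$ lies in $B^\times$, and hence $\mylc(x)\in F^\times$.

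Next I would verify that $\mylc$ is a group homomorphism whose restriction to $B^\times$ is $\pi$. For $x,y\in K^\times$ we have $\val(xy)=\val(x)\val(y)$, hence $\omega(\val(xy))=\omega(\val(x))\omega(\val(y))$ because $\omega$ is a homomorphism; applying the ring homomorphism $\pi$ to the identity $xy/\omega(\val(xy))=\bigl(x/\omega(\val(x))\bigr)\bigl(y/\omega(\val(y))\bigr)$ then gives $\mylc(xy)=\mylc(x)\mylc(y)$. If $u\in B^\times$ then $\val(u)=e$, so $\omega(\val(u))=\omega(e)=1$ and $\mylc(u)=\pi(u)$; thus $\mylc|_{B^\times}=\pi$.

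Finally, recall from \cite[Corollary 4.2.4]{EP} that a $2$-henselian valued field whose residue class field has characteristic $\neq 2$ has the property that every strict unit admits a square root. Therefore Remark \ref{rem:angular} applies to the group homomorphism $\mylc$ just constructed, so $\mylc$ is an angular component map, which proves the proposition. The only steps requiring any care are the two routine verifications above — that the quotient $x/\omega(\val(x))$ lands in $B^\times$ and that $\mylc$ is multiplicative — and neither presents a genuine obstacle, since conditions (3)--(6) of Definition \ref{def:pseudo} for such a residue-compatible homomorphism were already established in Remark \ref{rem:angular}. In this sense the proposition is essentially a corollary of Remark \ref{rem:angular} once a cross section is available.
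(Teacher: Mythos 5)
Your construction is exactly the one in the paper: define $\mylc(x)=\pi\bigl(x\cdot\omega(\val(x))^{-1}\bigr)$, check multiplicativity and that $\mylc|_{B^\times}=\pi$, and invoke Remark \ref{rem:angular} (via \cite[Corollary 4.2.4]{EP} to get square roots of strict units). The argument is correct and follows the paper's proof essentially verbatim.
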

\begin{proof}
Let $\omega:G\to K^{\times}$ be a cross section.
We define the map $\mylc:K^{\times} \rightarrow F^{\times}$ by $$\mylc(x)=\pi(x \cdot \omega(\val(x))^{-1})\text{.}$$

We demonstrate that the above map is an angular component map.
It is sufficient from Remark \ref{rem:angular} to demonstrate that the map
$\mylc:K^{\times} \rightarrow F^{\times}$ is a group homomorphism and its restriction to $B^{\times}$ is $\pi$. 
For any non-zero elements $x, y\in K^{\times }$, we have
\begin{align*}
\mylc(x \cdot y) &= \pi \left( \dfrac{x \cdot y}{\omega(\val(x \cdot y))}\right) = \pi \left( \dfrac{x \cdot y}{\omega(\val(x)) \cdot \omega(\val(y))}\right) \\&= \pi \left( \dfrac{x }{\omega(\val(x))}\right) \cdot  \pi \left( \dfrac{y }{\omega(\val(y))}\right) \\
&=\mylc(x) \cdot \mylc(y).
\end{align*}
When $x \in B^{\times}$, we have $\val(x)=e$ and $\omega(\val(x))=1$ because $w$ is a group homomorphism.
In particular, we have $\mylc(x)=\pi(x \cdot \omega(\val(x))^{-1})=\pi(x)$.
\end{proof}

\subsection{Other basic lemmas}

Let $M$ be a quasi-quadratic module in a commutative ring $R$.
We define the {\it support} of 
$M$ as follows:
$$
\mbox{supp}(M)=M \cap (-M),
$$
where $-M=\{x \in R\;|\;-x \in M\}$.
When $M$ is a quadratic module, the following lemma is well-known as in \cite[Proposition 5.1.3]{PD}
\begin{lemma}\label{lem:basic4}
Let $M$ be a quasi-quadratic module in a commutative ring $R$ such that $2$ is a unit.
The set $\supp(M)$ is an ideal of $R$.
\end{lemma}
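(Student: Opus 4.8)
The plan is to verify the three things that make $\supp(M)$ an ideal: it is closed under addition, it is an additive subgroup (closed under negation and contains $0$), and it absorbs multiplication by arbitrary elements of $R$. The first two are immediate from the definition. Indeed, $0 \in M \cap (-M)$ since $0 \in M$; if $x \in \supp(M)$ then $-x \in M$ and $-(-x)=x \in M$, so $-x \in \supp(M)$; and if $x,y \in \supp(M)$, then $x,y \in M$ forces $x+y \in M$ by $M+M \subseteq M$, while $-x,-y \in M$ forces $-(x+y) \in M$, so $x+y \in \supp(M)$. Hence $\supp(M)$ is an additive subgroup of $R$.

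The only substantive step is absorption. Given $x \in \supp(M)$ and $r \in R$, I would use the identity
\[
rx = \left(\frac{r+1}{2}\right)^2 x + \left(\frac{r-1}{2}\right)^2 (-x),
\]
which makes sense precisely because $2$ is a unit, so that $\frac{r+1}{2},\frac{r-1}{2} \in R$, and which holds because $\left(\frac{r+1}{2}\right)^2 - \left(\frac{r-1}{2}\right)^2 = r$. Since $x \in M$ and $-x \in M$, applying the defining property $a^2 M \subseteq M$ with $a = \frac{r+1}{2}$ and with $a = \frac{r-1}{2}$ shows that both summands on the right-hand side lie in $M$, so $rx \in M$. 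Running the same argument with $-x \in \supp(M)$ in place of $x$ gives $r(-x) = -rx \in M$, whence $rx \in M \cap (-M) = \supp(M)$.

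Combining these observations, $\supp(M)$ is an additive subgroup of $R$ that is stable under multiplication by $R$, i.e. an ideal. I do not anticipate a genuine obstacle; the one point to be careful about is exactly where the hypothesis $2 \in R^\times$ enters, namely in forming $\frac{r\pm 1}{2}$ and in the corresponding square identity — this is the ingredient that is unavailable for quasi-quadratic modules over general commutative rings.
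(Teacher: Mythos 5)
Your proof is correct and follows essentially the same route as the paper, which uses the identical identity $ax = \left(\frac{a+1}{2}\right)^2 x + \left(\frac{a-1}{2}\right)^2(-x)$ to establish absorption; you merely spell out the closure-under-addition and negation steps, and the verification that $-rx\in M$, which the paper leaves implicit. No gap.
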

\begin{proof}
It is easy to demonstrate that $x+y \in \supp(M)$ when $x, y \in \supp(M)$.
Let $a$ be an arbitrary element of $R$ and $x$ be an element of $\supp(M)$.
We have $ax = \left(\dfrac{a+1}{2}\right)^2 \cdot x+ \left(\dfrac{a-1}{2}\right)^2 \cdot (-x) \in\supp(M)$ because $\pm x \in\supp(M)$.
\end{proof}

\begin{corollary}
\label{cor:basic4'}
Let $M$ be a quasi-quadratic module in a commutative ring $R$ such that the element $2$
is a unit. Then the following statements hold true:

(1) If there exists an element $b\in R$ such that $\pm b\in M$, then $M\supseteq 
Rb$.

(2) If there exists a unit $c\in R$ such that $\pm c\in M$,
then $M=R$.
\end{corollary}
\begin{proof}
(1) Since $b\in\supp(M)$, we have  $Rb\subseteq\supp(M)\subseteq M$ by Lemma \ref{lem:basic4}.

(2) Applying (1) to the unit $c$, we have $R=Rc\subseteq M$.
\end{proof}

We apply Corollary \ref{cor:basic4'} to a quasi-quadratic module in a field.
\begin{corollary}\label{lem:field0}
Let $M$ be a quasi-quadratic module in a field $E$ of characteristic $\not=2$.
Assume that there exists a nonzero element $c \in E$ with $\pm c \in M$.
Then, we have $M=E$.
\end{corollary}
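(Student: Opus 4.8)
The statement to prove is Corollary \ref{lem:field0}: if $M$ is a quasi-quadratic module in a field $E$ of characteristic $\neq 2$ and there is a nonzero $c \in E$ with $\pm c \in M$, then $M = E$.

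The plan is to reduce this immediately to Corollary \ref{cor:basic4'}(2). In a field $E$ of characteristic $\neq 2$, the element $2$ is a unit, so the hypotheses of Corollary \ref{cor:basic4'} are met; moreover, the given nonzero element $c$ is a unit in $E$ since every nonzero element of a field is invertible. Applying part (2) of that corollary to the unit $c$ with $\pm c \in M$ yields $M = E$ directly.

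Thus essentially no work remains beyond observing that ``nonzero element of a field'' is the same as ``unit,'' and that ``characteristic $\neq 2$'' is exactly what guarantees $2$ is a unit. There is no real obstacle here; the corollary is a one-line specialization of the preceding result, included presumably because the field case is the one actually used later (for instance when analyzing $\mathfrak X_F$).

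For completeness, one could instead argue from scratch: given $\pm c \in M$ with $c \neq 0$, and any $a \in E$, write $a = \left(\tfrac{a+c}{2c}\right)^2 c - \left(\tfrac{a-c}{2c}\right)^2 c$, which lies in $M$ because $M$ is closed under multiplication by squares and under addition while containing both $c$ and $-c$. This shows $E \subseteq M$, hence $M = E$. But invoking Corollary \ref{cor:basic4'}(2) is cleaner and is the route I would take.

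\begin{proof}
Since $E$ is a field of characteristic $\neq 2$, the element $2$ is a unit in $E$. The nonzero element $c$ is a unit in $E$, and $\pm c \in M$ by hypothesis. Applying Corollary \ref{cor:basic4'}(2) to the unit $c$, we conclude $M = E$.
\end{proof}
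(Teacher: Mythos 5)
Your proof is correct and follows exactly the paper's route: the paper also derives this corollary as an immediate application of Corollary \ref{cor:basic4'}(2), using that $2$ is a unit in a field of characteristic $\neq 2$ and that the nonzero element $c$ is a unit.
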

\begin{proof}
Immediate from Corollary \ref{cor:basic4'}(2).
\end{proof}

We also obtain from Corollary \ref{lem:field0} the following:
\begin{corollary}\label{lem:field000}
Let $M_1$ and $M_2$ be proper quasi-quadratic modules in a field $E$  of characteristic $\not=2$.
Assume that $M_1+M_2=E$.
Then, there exists a nonzero element $c \in E$ with $c \in M_1$ and $-c \in M_2$.
\end{corollary}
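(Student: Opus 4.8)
The plan is to combine Corollary~\ref{lem:field0} with the fact that $M_1$ is closed under addition, after making the right choice of element to decompose. First I would record the easy observation that $M_1\neq\{0\}$: if $M_1=\{0\}$, then $M_1+M_2=M_2$, which would force $M_2=E$ and contradict the assumption that $M_2$ is proper. So I may fix some nonzero element $c_0\in M_1$. The crucial point is to apply the hypothesis $M_1+M_2=E$ not to a fixed element like $1$ or $-1$, but to $-c_0$, precisely because $c_0$ is known to lie in $M_1$.

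Next I would write a decomposition $-c_0=a+b$ with $a\in M_1$ and $b\in M_2$, coming from $M_1+M_2=E$. Rearranging gives $-b=a+c_0$, and the right-hand side belongs to $M_1$ since $M_1+M_1\subseteq M_1$. Thus I obtain simultaneously $-b\in M_1$ and $b\in M_2$, which is almost the desired conclusion; the only thing left to rule out is $b=0$.

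Finally I would split into two cases. If $b\neq 0$, then $c:=-b$ is a nonzero element with $c\in M_1$ and $-c=b\in M_2$, so we are done. If $b=0$, then $-c_0=a\in M_1$, and together with $c_0\in M_1$ this gives $\pm c_0\in M_1$ with $c_0\neq 0$; Corollary~\ref{lem:field0} then yields $M_1=E$, contradicting the properness of $M_1$. Hence the case $b=0$ is impossible and the element $c=-b$ always works. I do not expect a genuine obstacle here: the argument is short, and the only idea required is the choice to expand $-c_0$ for a witness $c_0\in M_1$ (so that closure of $M_1$ under addition can be used), while the $b=0$ degeneracy is immediately dispatched by the previously established Corollary~\ref{lem:field0}.
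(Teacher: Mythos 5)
Your proposal is correct and follows essentially the same argument as the paper: take a nonzero $d\in M_1$, decompose $-d=d_1+d_2$ with $d_1\in M_1$, $d_2\in M_2$, rule out $d_2=0$ via Corollary~\ref{lem:field0} and the properness of $M_1$, and set $c=-d_2=d+d_1$. Only the notation differs.
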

\begin{proof}
It is obvious that $M_1 \not=\{0\}$ because $M_1$ and $M_2$ are proper and $M_1+M_2=E$.
Take a nonzero element $d \in M_1$.
Since $M_1+M_2=E$, there exist $d_1 \in M_1$ and $d_2 \in M_2$ with $-d = d_1+d_2$.
If $d_2 =0$, we have $\pm d \in M_1$, and we get $M_1=E$ by Corollary \ref{lem:field0}.
It contradicts the assumption that $M_1$ is proper.
We have shown that $d_2 \not=0$.
Set $c=-d_2=d+d_1$.
It is nonzero and we have $c=d+d_1 \in M_1$ and $-c=d_2 \in M_2$.
\end{proof}

\section{Quasi-quadratic modules in ring $A$}\label{sec:main}
We fix a subring $A$ such that $B \subseteq A \subseteq K$ in this section.
We use the notation $H=\val(A^\times)$ throughout this section.
It is a convex subgroup of $G$ by Lemma \ref{lem:convex_basic2}(1).

\begin{notation}
For simplicity, the notation ``$\val(x)=\overline{g}$'' denotes the condition that $(\val(x) \mod G^2)
=\overline{g}$ for some $g\in G$. 
We define the notations ``$\val(x)=[g]$'' and ``$\val(x)=[\![g]\!]$'' similarly.
We use these notations throughout the paper.
\end{notation}

\begin{definition}
\label{Phi-M_g}
Let $(K,\val)$ be a $2$-henselian valued field whose residue class field is of characteristic $\neq 2$.
Let $A$ be a subring with $B \subseteq A \subseteq K$. Set $H=\val(A^{\times})$.
Consider an element $g \in G$ and a quasi-quadratic module $M$ 
in the residue class field $F$. 
The subset $\Phi^A(M,[\![g]\!])$ of $A$ is defined as follows:
\begin{align*}
\Phi^A(M,[\![g]\!]) 
&= \{x \in A\setminus\{0\} \;|\; 
\val(x) =\overline{g}\text{, } (\val(x)=[\![g]\!] \text{ or }\val(x) > g)\\
&\qquad \text{ and } \mylcp(x) \in M \}
\cup \{0\}.
\end{align*}
The pseudo-angular component map $\mylcp:K^{\times} \rightarrow F^{\times}$ exists because
of Proposition \ref{prop:psudo-angular} and the text before Lemma \ref{coro:basic004}. 
The right-hand of the equality is independent of the representation $g$ of $[\![g]\!]$ by Lemma \ref{lem:convex_basic12}(1).
The subset $\Phi^A(M,[\![g]\!])$ is called the \textit{quasi-quadratic module of $A$
generated by $M$ and $[\![g]\!]\in G/H^2$}.
It is simply denoted by $\Phi(M,[\![g]\!])$ when $A$ is clear from the context.

We next define quasi-quadratic modules in $F$ 
constructed from a quasi-quadratic module $\mathcal M$ in $A$.
Here, $(K,\val)$ denotes a valued field whose strict units always admit a square root.
We set as follows:
$$M_{g}^A(\mathcal M) = \{\mylcp(x)\in 
F\setminus\{0\}\;|\; x \in \mathcal M
\setminus\{0\} \text{ with } \val(x)= g \} \cup \{0\}
$$
for all $g \in G$.
The subset $M_{g}^A(\mathcal M)$ is called the 
\textit{quasi-quadratic module of $F$
generated by $\mathcal M$ and $g\in G$}.
It is simply denoted by $M_{g}$ when $A$ and $\mathcal M$ are clear from the context.
\end{definition}

The following proposition claims that the subset $\Phi^A(M,[\![g]\!])$ is a quasi-quadratic module
in $A$
when $M$ is a proper quasi-quadratic module in the residue class field $F$.

\begin{proposition}\label{prop:ring1}
Let $(K,\val)$ be a $2$-henselian valued field whose residue class field is of characteristic $\not=2$ and $B$ be its valuation ring.
Let $A$ be a subring with $B \subseteq A \subseteq K$.
Let $g\in G$ and $M$ be a proper quasi-quadratic module in the residue class field $F$.
Then the set $\Phi^A(M,[\![g]\!])$ is a quasi-quadratic module in $A$.
\end{proposition}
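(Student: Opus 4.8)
The plan is to verify the two defining properties of a quasi-quadratic module directly: closure under addition, $\Phi^A(M,[\![g]\!]) + \Phi^A(M,[\![g]\!]) \subseteq \Phi^A(M,[\![g]\!])$, and closure under multiplication by squares, $a^2 \cdot \Phi^A(M,[\![g]\!]) \subseteq \Phi^A(M,[\![g]\!])$ for all $a \in A$. Throughout I will fix a representative $g$ of $[\![g]\!]$; by Lemma \ref{lem:convex_basic12}(1) this choice does not affect the set, and the membership condition for a nonzero $x$ reads: $\val(x) = \overline{g}$ in $G/G^2$, the element $x$ satisfies $\val(x) = [\![g]\!]$ or $\val(x) > g$, and $\mylcp(x) \in M$.

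First I would handle the square-multiplication property, which is the easier of the two. Take $a \in A$ and $x \in \Phi^A(M,[\![g]\!])$; we may assume $a \neq 0$ and $x \neq 0$, so $a^2 x \neq 0$. Since $\val(a^2 x) = \val(a)^2 \val(x)$, the class in $G/G^2$ is unchanged, so $\val(a^2x) = \overline{g}$. For the ``$=[\![g]\!]$ or $>g$'' condition: since $a \in A$ we have $\val(a) \in H \cup G_{\geq e}$ by Lemma \ref{lem:convex_basic2}, so $\val(a)^2 \in H^2 \cup G_{\geq e}$; multiplying $\val(x)$ by something in $H^2$ preserves both alternatives (the class $[\![\cdot]\!]$ is taken in $G/H^2$), while multiplying by something $\geq e$ can only increase the value, so $\val(a^2x) = [\![g]\!]$ or $\val(a^2x) > g$ still holds — here I would invoke the convexity of $H$ to rule out the bad intermediate case when $\val(a)^2 \in G_{\geq e}\setminus H^2$ pushes the value up past $g$ but out of the $H^2$-coset, exactly as in Lemma \ref{lem:convex_basic12}. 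Finally $\mylcp(a^2 x) = k^2 \mylcp(x)$ for some $k \in F^\times$ by Definition \ref{def:pseudo}(6), and since $M$ is a quasi-quadratic module in $F$ with $\mylcp(x) \in M$, we get $k^2 \mylcp(x) \in M$, so $a^2 x \in \Phi^A(M,[\![g]\!])$.

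Next, closure under addition. Take nonzero $x_1, x_2 \in \Phi^A(M,[\![g]\!])$; if $x_1 + x_2 = 0$ there is nothing to prove, so assume $x_1 + x_2 \neq 0$. The argument splits on $\val(x_1)$ versus $\val(x_2)$. If, say, $\val(x_1) < \val(x_2)$, then by Definition \ref{def:pseudo}(4) we have $\val(x_1 + x_2) = \val(x_1)$ and $\mylcp(x_1 + x_2) = \mylcp(x_1) \in M$, and all three membership conditions for $x_1 + x_2$ are inherited verbatim from $x_1$. The delicate case is $\val(x_1) = \val(x_2) =: v$. If $\mylcp(x_1) + \mylcp(x_2) \neq 0$, then again by Definition \ref{def:pseudo}(4), $\val(x_1+x_2) = v$ and $\mylcp(x_1+x_2) = \mylcp(x_1) + \mylcp(x_2)$, which lies in $M$ since $M$ is closed under addition; the remaining conditions are inherited from $x_1$. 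If instead $\mylcp(x_1) + \mylcp(x_2) = 0$, then by Corollary \ref{cor:psudo_lemlem} we have $\val(x_1 + x_2) > v$. The task is then to check that $x_1 + x_2$ — whose pseudo-angular component we no longer control — still lies in $\Phi^A(M,[\![g]\!])$, and I claim $\val(x_1+x_2) > g$ holds in this case, which together with $0 \in \Phi^A(M,[\![g]\!])$ closure (handled via the structural description) forces membership. This last sub-case is the main obstacle: I expect to need a careful case analysis of whether $v = [\![g]\!]$ or $v > g$ and whether $\val(x_1+x_2)$ stays in the coset $[\![g]\!]$ or escapes it, using Lemma \ref{lem:convex_basic12} and convexity of $H$ to show that in every scenario $\val(x_1+x_2)$ satisfies ``$=[\![g]\!]$ or $>g$'' — possibly by an argument showing that once the value exceeds $v \geq g$ (or $v$ in the coset), it automatically satisfies the required condition regardless of its $G^2$- and $H^2$-classes, because any element $\geq$ some element of the coset with value $>g$ lies in $G_{>g}$ or the coset. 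The verification that $\overline{\val(x_1+x_2)} = \overline{g}$ in the escaping case will similarly reduce to showing $x_1 + x_2 \in A$ with value forced into an admissible class, which I would derive along the lines of Lemma \ref{coro:basic004}.
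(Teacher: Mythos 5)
Your square-multiplication argument is essentially the paper's own (the case analysis on whether $\val(x)=[\![g]\!]$ and whether $a\in A^\times$, settled by convexity of $H$) and is fine, though the step you "expect" to handle via Lemma \ref{lem:convex_basic12} should be written out as an explicit convexity estimate as in the paper. The genuine gap is in the addition step, in the sub-case $\val(x_1)=\val(x_2)$ and $\mylcp(x_1)+\mylcp(x_2)=0$. There you try to show directly that $x_1+x_2$ still lies in $\Phi^A(M,[\![g]\!])$ by controlling only its valuation (via Corollary \ref{cor:psudo_lemlem}). That cannot succeed: membership of a nonzero element requires in addition that $\overline{\val(x_1+x_2)}=\overline{g}$ and that $\mylcp(x_1+x_2)\in M$, and once the leading terms cancel neither is controlled. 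For instance in $B=\mathbb R[\![t]\!]$ with $g=e$, the elements $x_1=1+t$ and $x_2=-1$ have equal valuation and opposite angular components, and $x_1+x_2=t$ has valuation in a different square class, so it leaves the set entirely.

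The correct move --- and the one the paper makes --- is to observe that this sub-case simply cannot occur, and this is precisely where the hypothesis that $M$ is \emph{proper} enters, a hypothesis your proposal never uses. If $c_i=\mylcp(x_i)\in M$ are nonzero with $c_1+c_2=0$, then $\pm c_1\in M$, so $M=F$ by Corollary \ref{lem:field0} (here characteristic $\neq 2$ is used), contradicting properness. Since the statement is actually false for non-proper $M$ (with $M=F$ the example above exhibits failure of closure under addition), any argument that avoids the properness hypothesis, as yours does, is doomed; your plan needs to be repaired by replacing the final sub-case analysis with this contradiction argument.
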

\begin{proof}
Set $\mathcal M = \Phi^A(M, [\![g]\!])$ and $H=\val(A^\times)$.
We first show that $\mathcal M$ is closed under multiplication by the squares of elements in $A$.
Let $u \in A$ and $x \in \mathcal M$.
If $u=0$ or $x=0$, it is obvious that $u^2x \in \mathcal M$.
Consider the case in which $u \not=0$ and $x \not=0$.
It is obvious that $\val(u^2x) = \val(u)^2\val(x) = \overline{g}$.
When $\val(x) =[\![g]\!]$ and $u \in A^\times$, we obviously have $\val(u^2x) =[\![g]\!]$.
When $\val(x) =[\![g]\!]$ and $u \not\in A^\times$, we have $\val(u)>e$ because
$u\in A\setminus A^{\times}\subseteq B\setminus B^{\times}$. Note that $\val(u)\not\in H$ by
Corollary \ref{cor:cor_convex}.
We can take $h\in H$ with $\val(x)=gh^2$. Since $H$ is convex by Lemma \ref{lem:convex_basic2}(1), we have $h^{-1}<\val(u)$. Thus we get $\val(u)h>e$.
It follows that $\val(u^2x)=(\val(u)h)^2g>g$.
We next consider the case in which $\val(x) \neq [\![g]\!]$ and $u \in A^\times$.
Since $\val(x)=\overline{g}$ and $\val(x)>g$, we can take $h \in G$ with $h>e$ and $\val(x)=gh^2$.
We get $h \not\in H$ because $\val(x) \neq [\![g]\!]$.
Since $H$ is convex, we have $h > \val(u)^{-1}$.
We get $\val(u^2x)=gh^2\val(u)^{2}>g$.
It is obvious that $\val(u^2x) > g$ when $\val(x) \neq [\![g]\!]$ and $u \not\in A^\times$.
We have $\mylcp(u^2x)=k^2 \cdot \mylcp(x) \in M$ for some $k \in F^\times$ by Definition \ref{def:pseudo}(6).
We have shown that $u^2x \in \mathcal M$.

Let $x_1, x_2 \in \mathcal M$.
When $x_1+x_2=0$, $x_1=0$ or $x_2=0$, it is obvious that $x_1+x_2 \in \mathcal M$.
We next demonstrate that $x_1+x_2 \in \mathcal M$ when $x_1+x_2 \not=0, x_1\neq 0$ and $x_2\neq 0$.
We first consider the case in which $\val(x_1) \not= \val(x_2)$.
We may assume that $\val(x_1)<\val(x_2)$ by symmetry.
We have $\val(x_1+x_2)=\val(x_1)$.
We have $\overline{\val(x_1+x_2)}=\overline{\val(x_1)} = \overline{g}$ and one of the conditions that $\val(x_1+x_2)=\val(x_1)=[\![g]\!]$ and $\val(x_1+x_2)=\val(x_1)>g$ is satisfied.
We also get $\mylcp(x_1+x_2)=\mylcp(x_1) \in M$ by Definition \ref{def:pseudo}(4).
We obtain $x_1+x_2 \in \mathcal M$.

We next consider the case in which $\val(x_1)=\val(x_2)=g$.
Set $c_i = \mylcp(x_i)$ for $i=1,2$.
When $c_1+c_2=0$, we get $M=F$ by Corollary \ref{lem:field0}.
Contradiction to the assumption that $M$ is proper.
Hence we see that $c_1+c_2 \not=0$.
We have $\val(x_1+x_2)=g$ and $\mylcp(x_1+x_2)=c_1+c_2$ by Definition \ref{def:pseudo}(4).
Since $c_1, c_2 \in M$, we also have $\mylcp(x_1+x_2) \in M$.
We obtain $x_1+x_2 \in \mathcal M$.
\end{proof}

\begin{remark}\label{rem:properness2}
As seen above, in the proof of Proposition \ref {prop:ring1}, 
the assumption that $M$ is proper is not necessary
when we show that $\mathcal M$ is closed under multiplication by the squares of elements of $A$.
\end{remark}

The next lemma is a direct consequence of the definition.

\begin{lemma}\label{lem:triv}
Let $(K,\val)$ and $A$ be the same as in Proposition \ref {prop:ring1}.
Let $M_1$ and $M_2$ be quasi-quadratic modules in the residue class field $F$ with $M_1 \subseteq M_2$.
For any $g_1, g_2 \in G$ with $g_1 \geq g_2$ and $\overline{g_1}=\overline{g_2}$,
we have $\Phi^A(M_1, [\![g_1]\!]) \subseteq \Phi^A(M_2,[\![g_2]\!])$.
\end{lemma}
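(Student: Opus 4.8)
The plan is to take an arbitrary nonzero $x \in \Phi^A(M_1,[\![g_1]\!])$ and verify the three conditions defining membership in $\Phi^A(M_2,[\![g_2]\!])$; the element $0$ belongs to every such set by definition, so it may be ignored. Of the three conditions, $\val(x)=\overline{g_2}$ is immediate because $\val(x)=\overline{g_1}$ and $\overline{g_1}=\overline{g_2}$, and $\mylcp(x)\in M_2$ is immediate from $\mylcp(x)\in M_1\subseteq M_2$. Hence the entire content of the lemma is to deduce the middle condition ``$\val(x)=[\![g_2]\!]$ or $\val(x)>g_2$'' from the known condition ``$\val(x)=[\![g_1]\!]$ or $\val(x)>g_1$''.

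I would first dispose of the case $[\![g_1]\!]=[\![g_2]\!]$. Here $\Phi^A(M_1,[\![g_1]\!])=\Phi^A(M_1,[\![g_2]\!])$, since, as noted right after the definition, $\Phi^A(M,[\![g]\!])$ depends only on $M$ and the class $[\![g]\!]\in G/H^2$; and $\Phi^A(M_1,[\![g_2]\!])\subseteq\Phi^A(M_2,[\![g_2]\!])$ is clear from the definition, because the hypothesis $M_1\subseteq M_2$ enters only through the requirement $\mylcp(x)\in M_i$. So from now on assume $[\![g_1]\!]\neq[\![g_2]\!]$; then in particular $g_1\neq g_2$, hence $g_1>g_2$.

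Now I would split on the known condition. If $\val(x)>g_1$, then $\val(x)>g_1>g_2$, so $\val(x)>g_2$ and we are done. Otherwise $[\![\val(x)]\!]=[\![g_1]\!]$, and here I would invoke Lemma~\ref{lem:convex_basic12}(1) with $(\val(x),g_1,g_2)$ playing the roles of $(g_1,g_2,h)$: this is legitimate since $[\![\val(x)]\!]=[\![g_1]\!]$, $\overline{g_2}=\overline{g_1}=\overline{\val(x)}$, and $[\![g_2]\!]\neq[\![g_1]\!]=[\![\val(x)]\!]$. Were $g_2>\val(x)$, the lemma would yield $g_2>g_1$, contradicting $g_1>g_2$; and $g_2=\val(x)$ is impossible since $[\![g_2]\!]\neq[\![\val(x)]\!]$. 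Hence $\val(x)>g_2$. In either subcase $x$ satisfies $\val(x)=\overline{g_2}$, $\val(x)>g_2$ and $\mylcp(x)\in M_2$, so $x\in\Phi^A(M_2,[\![g_2]\!])$, which finishes the proof.

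There is essentially no real obstacle, only bookkeeping; the single non-formal point is the order comparison of $\val(x)$ with $g_2$ in the subcase $[\![\val(x)]\!]=[\![g_1]\!]$, which is precisely what Lemma~\ref{lem:convex_basic12} was designed to supply. Should one prefer to avoid that lemma, the same comparison can be made directly: writing $\val(x)=g_1 h^2$ with $h\in H$ and $g_1=g_2 s^2$ with $s>e$ (possible since $g_1>g_2$ and $\overline{g_1}=\overline{g_2}$), one has $\val(x)=g_2(sh)^2$, and if $sh\leq e$ then $sh<e$ forces $e<s<h^{-1}\in H$, so $s\in H$ by convexity and then $sh\in H$, collapsing $[\![g_1]\!]$ onto $[\![g_2]\!]$, a contradiction; hence $sh>e$ and $\val(x)>g_2$.
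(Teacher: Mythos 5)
Your proof is correct and follows essentially the same route as the paper: dispose of the case $[\![g_1]\!]=[\![g_2]\!]$, and otherwise reduce to comparing $\val(x)$ with $g_2$ via Lemma \ref{lem:convex_basic12} when $[\![\val(x)]\!]=[\![g_1]\!]$. The only cosmetic difference is that you apply part (1) of that lemma by contradiction (and also note the direct convexity argument), whereas the paper applies part (2) directly to conclude $g_2<\val(x)$.
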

\begin{proof}
Trivial when $[\![g_1]\!]=[\![g_2]\!]$.
Assume that $[\![g_1]\!]\neq [\![g_2]\!]$. 
Take a nonzero element $x\in\Phi^A(M_1, [\![g_1]\!])$. When $\val(x)>g_1$, there is nothing to prove.
When $\val(x)=[\![g_1]\!]$, it follows that $g_2<\val(x)$ by applying Lemma \ref{lem:convex_basic12}(2)
to $g_2<g_1, \overline{g_2}=\overline{g_1}$ and  $[\![g_1]\!]\neq [\![g_2]\!]$.
Thus we have $x\in\Phi^A(M_2, [\![g_2]\!])$.
\end{proof}

In this section we often use the following lemma:

\begin{lemma}\label{lem:star}
Let $(K,\val)$, $A$ and $H$ be the same as in Proposition \ref {prop:ring1}.
Let $\mathcal M$ be a quasi-quadratic module in $A$.
Consider $x, y\in A$ with $\mylcp(x)=\mylcp(y)$ and $\overline{\val(x)}=\overline{\val(y)}$.
If $x\in \mathcal M$ and at least one of the following conditions is satisfied, 
\begin{enumerate}
\item[(1)] $\val(x)\leq \val(y)$;
\item[(2)] $[\![\val(x)]\!]=[\![\val(y)]\!]$,
\end{enumerate}
then $y\in \mathcal M$.  
\end{lemma}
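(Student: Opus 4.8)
The plan is to reduce everything to the existence of an element $u \in K^\times$ with $y = u^2 x$, which is guaranteed by Definition \ref{def:pseudo}(5) since $\overline{\val(x)} = \overline{\val(y)}$ and $\mylcp(x) = \mylcp(y)$. Once such a $u$ is fixed, the conclusion $y \in \mathcal M$ will follow from $x \in \mathcal M$ and the defining property $a^2 \mathcal M \subseteq \mathcal M$ of quasi-quadratic modules, provided I can show $u \in A$. So the whole lemma comes down to checking $u \in A$ under either hypothesis (1) or (2). (The degenerate case $x = 0$ forces $y = 0$ since then $\mylcp(y) = \mylcp(x)$ is undefined unless $y = 0$ too — more carefully, $\mylcp$ is only defined on $K^\times$, so the statement implicitly treats the nonzero case; I would dispose of $x = 0$ by noting it gives $y = 0 \in \mathcal M$ trivially, or simply assume $x, y \in A \setminus \{0\}$ as the hypothesis $\mylcp(x) = \mylcp(y)$ presupposes.)

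First I would record $\val(u)^2 = \val(y)\val(x)^{-1}$, so $\val(u) = h$ where $h^2 = \val(y)\val(x)^{-1}$ and $h$ is the unique such element (uniqueness of square roots in the torsion-free group $G$). Under hypothesis (1), $\val(x) \leq \val(y)$ gives $h^2 \geq e$, hence $h \geq e$, hence $\val(u) \geq e$, so $u \in B \subseteq A$ and we are done by $u^2 x \in \mathcal M$. Under hypothesis (2), $[\![\val(x)]\!] = [\![\val(y)]\!]$ means $\val(y)\val(x)^{-1} \in H^2$, i.e. $h^2 \in H^2$; since $H$ is convex (Lemma \ref{lem:convex_basic2}) and $H^2 \subseteq H$, I need $h \in H$. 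If $h \geq e$ this follows from convexity as in Lemma \ref{lem:convex_basic11} (namely $e \leq h \leq h^2 \in H$ forces $h \in H$); if $h < e$, apply the same argument to $h^{-1} > e$ with $h^{-2} \in H$, concluding $h^{-1} \in H$ and hence $h \in H$. Then $\val(u) = h \in H = \val(A^\times)$, so by Corollary \ref{cor:cor_convex} we get $u \in A^\times \subseteq A$, and again $y = u^2 x \in \mathcal M$.

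I expect the only mildly delicate point to be the bookkeeping around the square-root element $h$ and the convexity argument in case (2) — in particular making sure the case split on the sign of $h$ is handled symmetrically. Everything else is a direct invocation of Definition \ref{def:pseudo}(5), the definition of quasi-quadratic module, and Corollary \ref{cor:cor_convex}. There is no real obstacle; the lemma is essentially a translation of the group-theoretic facts about $G$, $H$, and $H^2$ (already established in Lemmas \ref{lem:convex_basic11} and \ref{lem:convex_basic12}) into the module setting via the pseudo-angular component map.
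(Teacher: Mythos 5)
Your proposal is correct and follows essentially the same route as the paper: invoke Definition \ref{def:pseudo}(5) to write $y=u^2x$, then show $u\in B\subseteq A$ under condition (1) and $u\in A^\times$ via Corollary \ref{cor:cor_convex} under condition (2). The paper's proof is merely terser, leaving implicit the convexity bookkeeping that places $\val(u)$ in $H$, which you spell out correctly.
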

\begin{proof}
There exists an element $u\in K$ with $y=xu^2$ by Definition \ref{def:pseudo}(5).
It immediately follows that $\val(u)\geq e$ and $u\in B$ when condition (1) is satisfied.
In particular, the element $u$ belongs to $A$.
When condition (2) is satisfied, we get $u \in A^\times$ by Corollary \ref{cor:cor_convex}.
Hence we have $y\in \mathcal M$.
\end{proof}

We demonstrate that $M_{g}^A(\mathcal M)$ is a quasi-quadratic module.
\begin{proposition}\label{prop:ring2}
Let $(K,\val)$ be a valued field whose strict units always admit a square root and $B$ be its valuation ring.
Let $A$ be a subring with $B \subseteq A \subseteq K$.
Set $H=\val(A^\times)$.
Consider a quasi-quadratic module $\mathcal M$ in $A$.
Then the following assertions hold true:
\begin{enumerate}
\item[(1)] The sets $M_{g}^A(\mathcal M)$ are quasi-quadratic modules in $F$ for all $g \in G$.
\item[(2)] If $[\![g_1]\!]=[\![g_2]\!]$ for $g_1, g_2\in H \cup G_{\geq e}$, we have $M_{g_1}^A(\mathcal M)= M_{g_2}^A(\mathcal M)$.
\item[(3)] If $\overline{g_1}= \overline{g_2}$ and $g_1\leq g_2$ for $g_1, g_2\in H \cup G_{\geq e}$,
 then $M_{g_1}^A(\mathcal M)\subseteq M_{g_2}^A(\mathcal M)$.
 \end{enumerate}
\end{proposition}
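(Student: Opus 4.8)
The plan is to verify each of the three assertions directly from the definition of $M_g^A(\mathcal M)$, leaning heavily on the properties of the pseudo-angular component map and on Lemma \ref{lem:star}. For assertion (1), I must show that $M_g^A(\mathcal M)$ is closed under addition and under multiplication by squares of elements of $F$. For the square-multiplication part, given $c = \mylcp(x) \in M_g^A(\mathcal M)$ with $x \in \mathcal M$, $\val(x) = g$, and given $k \in F^\times$, I would first lift $k$ to some $v \in B^\times$ with $\pi(v) = k$ (possible since $\pi$ is surjective), so that $v^2 x \in \mathcal M$ because $\mathcal M$ is a quasi-quadratic module in $A$ and $v \in A$; then $\val(v^2 x) = g$ and $\mylcp(v^2 x) = \pi(v^2)\mylcp(x) = k^2 c$ by Definition \ref{def:pseudo}(2), so $k^2 c \in M_g^A(\mathcal M)$. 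For closure under addition, take $c_1 = \mylcp(x_1)$, $c_2 = \mylcp(x_2)$ with $x_1, x_2 \in \mathcal M \setminus \{0\}$ and $\val(x_1) = \val(x_2) = g$. If $c_1 + c_2 = 0$ there is nothing to prove (the sum is $0 \in M_g^A(\mathcal M)$); otherwise by Definition \ref{def:pseudo}(4) we get $\val(x_1 + x_2) = g$ and $\mylcp(x_1 + x_2) = c_1 + c_2$, and since $x_1 + x_2 \in \mathcal M$ this gives $c_1 + c_2 \in M_g^A(\mathcal M)$. The element $0$ lies in $M_g^A(\mathcal M)$ by definition. (The case $x_1$ or $x_2$ equal to $0$ is immediate.)

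For assertion (2), suppose $[\![g_1]\!] = [\![g_2]\!]$ with $g_1, g_2 \in H \cup G_{\geq e}$; by symmetry it suffices to show $M_{g_1}^A(\mathcal M) \subseteq M_{g_2}^A(\mathcal M)$. Take $c \in M_{g_1}^A(\mathcal M)$, $c \neq 0$, witnessed by $x \in \mathcal M \setminus \{0\}$ with $\val(x) = g_1$ and $\mylcp(x) = c$. Using Definition \ref{def:pseudo}(3) I would produce $w \in K$ with $\val(w) = g_2 g_1^{-1}$ and $\mylcp(w) = 1$; since $[\![g_1]\!] = [\![g_2]\!]$ we have $g_2 g_1^{-1} \in H^2 \subseteq H$, so $w \in A^\times$ by Corollary \ref{cor:cor_convex}. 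Then $w^2 x \in \mathcal M$, $\val(w^2 x) = g_2$, and $\mylcp(w^2 x) = \pi(w)^2 \mylcp(x)$; but I need $\mylcp(w^2 x) = c$, which forces me instead to argue via Lemma \ref{lem:star}. Indeed, apply Lemma \ref{lem:star} with $y$ chosen to have $\val(y) = g_2$ and $\mylcp(y) = c$ — such $y$ exists by Definition \ref{def:pseudo}(3), and $y \in A$ by Corollary \ref{cor:cor_convex01} since $g_2 \in H \cup G_{\geq e}$; then $\overline{\val(x)} = \overline{\val(y)}$ (as $[\![g_1]\!] = [\![g_2]\!]$ implies $\overline{g_1} = \overline{g_2}$), $\mylcp(x) = \mylcp(y)$, and condition (2) of Lemma \ref{lem:star} holds, so $y \in \mathcal M$, whence $c = \mylcp(y) \in M_{g_2}^A(\mathcal M)$.

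For assertion (3), with $\overline{g_1} = \overline{g_2}$ and $g_1 \leq g_2$, the argument is parallel: given $c \in M_{g_1}^A(\mathcal M) \setminus \{0\}$ witnessed by $x \in \mathcal M$ with $\val(x) = g_1$, choose $y \in A$ with $\val(y) = g_2$ and $\mylcp(y) = c$ (using Definition \ref{def:pseudo}(3) and Corollary \ref{cor:cor_convex01}, valid since $g_2 \in H \cup G_{\geq e}$); then $\overline{\val(x)} = \overline{\val(y)}$, $\mylcp(x) = \mylcp(y)$, and $\val(x) = g_1 \leq g_2 = \val(y)$, so condition (1) of Lemma \ref{lem:star} gives $y \in \mathcal M$ and hence $c \in M_{g_2}^A(\mathcal M)$. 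The main point to be careful about throughout is keeping track of when the relevant group elements lie in $H$ versus merely in $H \cup G_{\geq e}$, so that the auxiliary elements produced by Definition \ref{def:pseudo}(3) actually land in $A$ (or in $A^\times$); this is exactly what Corollary \ref{cor:cor_convex01} and Corollary \ref{cor:cor_convex} are for, and I expect that bookkeeping — rather than any genuine difficulty — to be the only subtle part.
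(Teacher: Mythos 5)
Your part (1) is essentially the paper's own argument (lift $k$ to a unit of $B$, use Definition \ref{def:pseudo}(2); use Definition \ref{def:pseudo}(4) for sums), and it is correct; the only point you gloss over is that before invoking Definition \ref{def:pseudo}(4) you must rule out $x_1+x_2=0$, which follows since $x_1+x_2=0$ would force $c_1+c_2=\mylcp(x_1)+\pi(-1)\mylcp(x_1)=0$, and that the case $g\notin H\cup G_{\geq e}$ is vacuous since then $M_g^A(\mathcal M)=\{0\}$. For (2) and (3) you take a genuinely different, and arguably cleaner, route: instead of the paper's device of multiplying the witness $x$ by $w^2$ with $\val(w)$ a square root of $g_2g_1^{-1}$ and then using Definition \ref{def:pseudo}(6) to get $k^2c\in M_{g_2}$ (and finally dividing out $k^2$ using part (1)), you manufacture a target element $y\in A$ with $\val(y)=g_2$ and $\mylcp(y)=c$ via Definition \ref{def:pseudo}(3) and pull it into $\mathcal M$ by Lemma \ref{lem:star}, i.e.\ by Definition \ref{def:pseudo}(5); this avoids the unknown square factor $k^2$ altogether. (Your first attempt in (2), with $\val(w)=g_2g_1^{-1}$, would also have given the wrong valuation $g_2^2g_1^{-1}$ for $w^2x$, but you discard that route, so no harm done.) One caveat: Lemma \ref{lem:star} is stated under the hypotheses of Proposition \ref{prop:ring1} ($2$-henselian, residue characteristic $\neq 2$), whereas Proposition \ref{prop:ring2} assumes only that strict units admit square roots; since the proof of Lemma \ref{lem:star} uses nothing beyond Definition \ref{def:pseudo}(5) and Corollary \ref{cor:cor_convex}, your argument survives in the required generality, but you should either note this explicitly or inline the two-line argument ($y=xu^2$ with $u\in B$ resp.\ $u\in A^\times$), because the weaker hypothesis is exactly what lets the proposition be quoted in the characteristic-two appendix.
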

\begin{proof}
(1) We have $M_{g}=\{0\}$ by Lemma \ref{lem:convex_basic2}(1) when $g \not\in H \cup G_{\geq e}$.
The assertion is trivial in this case.
We consider the other case.
 
Take elements $c_1,c_2 \in M_g$.
We first demonstrate that $c_1+c_2 \in M_g$.
We have nothing to show when $c_1+c_2=0$, $c_1=0$ or $c_2=0$.
We assume that $c_1+c_2 \not=0$, $c_1\neq 0$ and $c_2\neq 0$.
Take nonzero elements $x_i \in \mathcal M$ with $\val(x_i)=g$ and $c_i = \mylcp(x_i)$ for $i=1,2$.
If $x_1+x_2=0$, we get $c_1+c_2=0$. It contradicts that $c_1+c_2\neq 0$. By Definition \ref{def:pseudo}(4),
it is immediately follows  that 
$\val(x_1+x_2)=g$ and $\mylcp(x_1)+\mylcp(x_2)=\mylcp(x_1+x_2)$. Since
$x_1+x_2\in \mathcal M$, we get $c_1+c_2 \in M_g$.

We next prove that $c^2a \in M_g$ when $c \in F$ and $a \in M_g$.
We have nothing to prove when $c=0$ or $a=0$.
When $c \not=0$ and $a \not=0$, there exists an element $x \in B$ with $\pi(x)=c$.
In particular, we have $\val(x)=e$. We can 
take a nonzero element $y \in \mathcal M$ with $\val(y)=g$ and $\mylcp(y)=a$
by the definition of $M_g$.
It is easy to show that $\val(x^2y)=g$.
We get $\mylcp(x^2y)=c^2a$ by Definition \ref{def:pseudo}(2).
It means that $c^2a \in M_g$ because $x^2y \in \mathcal M$.
We have demonstrated that $M_g$ is a quasi-quadratic module in $F$.

(2) By symmetry, we have only to demonstrate that $M_{g_1}$ is contained in $M_{g_2}$.
Take a nonzero $c \in M_{g_1}$.
We can choose $x \in \mathcal M$ with $\val(x)=g_1$ and $\mylcp(x)=c$.
We also take $h \in H$ such that $g_2g_1^{-1}=h^2$.
We can take $w \in K$ with $\val(w)=h$.
We have $w \in A^\times$ by Corollary \ref{cor:cor_convex}.
We get $w^2x \in \mathcal M$, $\val(w^2x)=h^2g_1=g_2$ and $\mylcp(w^2x)=k^2\mylcp(x)=k^2c$ for some $k \in F^\times$ by Definition \ref{def:pseudo}(6).
It implies that $k^2c \in M_{g_2}$.
Since $M_{g_2}$ is a quasi-quadratic module, we get $c \in M_{g_2}$.

(3) Let $c$ be a nonzero element of $M_{g_1}$.
There exists a nonzero element 
$x\in\mathcal M$ such that $\mylcp(x)=c$ and $\val(x)=g_1$.
We can take an element $h\in G_{\geq e}$ with $g_2=g_1h^2$,
because $\overline{g_1}=\overline{g_2}$ and $g_1\leq g_2$.
There exists an element $w\in K$ such that $\val(w)=h$.
The element $w$ belongs to $B$.
We have $\mylcp(xw^2)=k^2\mylcp(x)=k^2c$ for some $k \in F^\times$ by Definition \ref{def:pseudo}(6).
We also get $xw^2\in \mathcal M$ and 
$\val(xw^2)=g_2$. 
It follows that $k^2c\in M_{g_2}$, and consequently, $c \in M_{g_2}$.
\end{proof}

\begin{corollary}
\label{cor:d-circle}
Let $(K,\val)$, $B$, $F$, $A$ and $\mathcal M$ be the same as in Proposition \ref{prop:ring2}.
Assume further that the residue class field is of characteristic $\neq 2$.
Let $g\in G$. If there exists a nonzero element $c\in F$ with $\pm c\in M_g^A(\mathcal M)$,
then $M_g^A(\mathcal M)=F$.
\end{corollary}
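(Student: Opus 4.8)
The plan is to observe that this statement is an immediate combination of two results already in hand, so almost no new work is required. First I would invoke Proposition \ref{prop:ring2}(1), which asserts that $M_g^A(\mathcal M)$ is a quasi-quadratic module in the residue class field $F$ for every $g \in G$, under exactly the hypotheses in force here (note that Proposition \ref{prop:ring2} only needs that strict units admit square roots, which is implied by the present setting). This is the only place the running assumptions on $(K,\val)$, $A$ and $\mathcal M$ enter.

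Next, since we are now assuming in addition that $F$ has characteristic $\neq 2$, and since $M_g^A(\mathcal M)$ is a quasi-quadratic module in the field $F$ containing both $c$ and $-c$ for some nonzero $c$, I would apply Corollary \ref{lem:field0} (which is itself the specialization of Corollary \ref{cor:basic4'}(2) to fields): it yields $M_g^A(\mathcal M) = F$ directly.

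There is essentially no obstacle. The only point requiring a moment's care is the bookkeeping of hypotheses: one must check that the setting of Corollary \ref{cor:d-circle}, namely a valued field whose strict units admit square roots together with the extra assumption $\operatorname{char} F \neq 2$, does subsume the hypotheses of both Proposition \ref{prop:ring2}(1) and Corollary \ref{lem:field0}, which it plainly does. One might also remark, though it is not needed, that the hypothesis of the corollary forces $g \in H \cup G_{\geq e}$, since otherwise $M_g^A(\mathcal M) = \{0\}$ by Lemma \ref{lem:convex_basic2} and no nonzero $c$ with $\pm c \in M_g^A(\mathcal M)$ could exist; so in the nontrivial range the argument above applies verbatim.
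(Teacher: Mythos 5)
Your argument is exactly the paper's proof: apply Proposition \ref{prop:ring2}(1) to see that $M_g^A(\mathcal M)$ is a quasi-quadratic module in $F$, then conclude $M_g^A(\mathcal M)=F$ from Corollary \ref{lem:field0} using $\operatorname{char} F \neq 2$. The hypothesis bookkeeping and the remark about $g \notin H \cup G_{\geq e}$ being vacuous are both fine; nothing is missing.
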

\begin{proof}
By Proposition \ref{prop:ring2}(1), we see that $M_g$ is a quasi-quadratic module of $F$.
The assertion is immediate from Corollary \ref{lem:field0}.
\end{proof}

We also give the following corollary for later use.
\begin{corollary}\label{cor:mymymy}
Let $(K,\val)$, $B$, $F$, $A$ and $\mathcal M$ be the same as in Proposition \ref{prop:ring2}.
Let $\mathcal N$ be a quasi-quadratic module in $A$.
The notations $M_g$ and $N_g$ denote the quasi-quadratic modules $M_g^A(\mathcal M)$ and $M_g^A(\mathcal N)$, respectively, for all $g \in H \cup G_{\geq e}$.
Set $$L=\{l \in H \cup G_{\geq e}\;|\; l \geq g \text{ for some } g \in H \cup G_{\geq e} \text{ with } M_g+N_g=F\}.$$
Let $m \in H \cup G_{\geq e}$.
If there exists an $l \in L$ with $[l]=[m]$, then we have $m \in L$.
\end{corollary}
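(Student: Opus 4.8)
The statement is monotonicity of the set $L$ under the order $\preceq$ on $G/H$: if some $l \in L$ has $[l] = [m]$ for $m \in H \cup G_{\geq e}$, then $m \in L$. The plan is to unwind the definition of $L$ and reduce to two tasks: first, that membership in $L$ only depends on the $G/H$-class (via the $\preceq$-order), and second, that the defining property of $L$ — lying $\preceq$-above some $g$ with $M_g + N_g = F$ — is already stable under passing to a larger $\preceq$-class. The key external inputs will be Proposition \ref{prop:cor_convex} (so $H$ is convex), Proposition \ref{prop:convex_basic} (so $\preceq$ is a genuine total order on $G/H$), and Proposition \ref{prop:ring2}(2),(3) (so $M_g = M_h$ when $[\![g]\!] = [\![h]\!]$ and $M_g \subseteq M_h$ when $\overline{g} = \overline{h}$, $g \le h$).

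\textbf{Main steps.}
First I would fix $l \in L$ with $[l] = [m]$, and take $g \in H \cup G_{\geq e}$ with $g \preceq l$ (in the $G/H$-order, i.e.\ $[g] = [l]$ or $g < l$) such that $M_g + N_g = F$. The goal is to exhibit $g' \in H \cup G_{\geq e}$ with $g' \preceq m$ and $M_{g'} + N_{g'} = F$. The natural candidate is to take $g'$ in the same $G/H$-class as $g$ but chosen compatibly with $m$. Here I would split on whether $[g] = [l]$ or $g < l$. In the case $g < l$: since $[l] = [m]$ and the order $\preceq$ on $G/H$ is well-defined by Proposition \ref{prop:convex_basic}, $[g] = [g] \prec [l] = [m]$, so $g \prec m$, i.e.\ $g < m u$ for all $u \in H$ (using convexity), hence in particular $g \preceq m$ with $g' = g$, and $M_{g'} + N_{g'} = M_g + N_g = F$; done. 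In the case $[g] = [l] = [m]$: now $g$ and $m$ lie in the same coset of $H$, so $g = m u$ for some $u \in H$, and I want to transport the equality $M_g + N_g = F$ to an index $g'$ with $[g'] = [m]$. Since $u \in H = \val(A^\times)$, write $u = \val(a)$ with $a \in A^\times$. The slickest route is Proposition \ref{prop:ring2}(2): I should choose $g'$ so that $[\![g']\!] = [\![m]\!]$, which forces $M_{g'} = M_m$ and $N_{g'} = N_m$, and simultaneously arrange $M_{g'} + N_{g'} \supseteq M_g + N_g = F$. If $u$ happens to be a square in $H$, say $u = v^2$, then $[\![g]\!] = [\![m]\!]$ already and $g' = g = m$ works by Proposition \ref{prop:ring2}(2). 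In general $u$ need not be a square in $H$, so I would instead multiply: consider $g' = m$ itself and compare $M_m$ with $M_g$. Using that $\overline{g} = \overline{m}$ iff $u$ is a square mod $G^2$ — which need not hold — this comparison is not automatic.

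\textbf{The real argument, and the main obstacle.}
The cleanest fix, and the one I expect the authors use, is to not compare $M_g$ with $M_m$ directly but to produce a \emph{new} witness index. Given $g = mu$ with $u \in H$, set $g' = m u^2 \cdot$ (adjust): more precisely, since $L$ is defined using ``$l \ge g$ for some $g$ with $M_g + N_g = F$'' with $\ge$ the $\preceq$-order, and we are told $[l]=[m]$, it suffices to show $[g] = [l]$ forces existence of a suitable $g''$ with $[\![g'']\!]=[\![g]\!]$ lying $\preceq$-below $m$; and $[\![g u]\!] = [\![g]\!]$ whenever... no. The honest approach: write $m = g u^{-1}$ with $u = \val(a) \in H$; then $g a^2 \in \mathcal M$-indices shift by $\val(a)^2 = u^2$, so $M_{gu^2} = M_g$ (Proposition \ref{prop:ring2}(2), as $[\![gu^2]\!] = [\![g]\!]$), likewise for $N$, hence $M_{gu^2} + N_{gu^2} = F$; and $gu^2 = m u$, which satisfies $[gu^2] = [m]$, so $gu^2 \preceq m$. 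Thus $g' = gu^2 \in H \cup G_{\geq e}$ (it lies in $H$ or is $\ge e$ since it is $H$-equivalent to $m$ and $H$ is convex — here I would invoke Corollary \ref{cor:cor_convex}), $g' \preceq m$, and $M_{g'} + N_{g'} = F$, so $m \in L$. The main obstacle is precisely this bookkeeping: making sure the chosen representative $g' = gu^2$ genuinely lies in $H \cup G_{\geq e}$ (not merely in $G$) so that it is a legitimate index in the definition of $L$, and checking that $g' \preceq m$ in the sense of $\preceq$ on $G/H$ rather than the raw order on $G$. Once the convexity of $H$ and the well-definedness of $\preceq$ are used carefully, the rest is routine.
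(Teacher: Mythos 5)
There is a genuine gap, and it sits exactly where you located the ``main obstacle''. Membership in $L$ is defined by a raw inequality in $G$: to show $m\in L$ you must produce $g'\in H\cup G_{\geq e}$ with $M_{g'}+N_{g'}=F$ and $m\geq g'$, where $\geq$ is the order of $G$, not the quotient order $\preceq$ on $G/H$. Your argument aims at (and only verifies) the weaker condition $[g']\preceq[m]$ --- you even state that the check is ``$g'\preceq m$ in the sense of $\preceq$ on $G/H$ rather than the raw order on $G$'' --- but when $[g']=[m]$ this says nothing about whether $m\geq g'$, so the concluding ``so $m\in L$'' does not follow. (Your first case, where $[g]\neq[l]$, is fine: there you do get $g<mu$ for all $u\in H$, hence $g<m$.) Worse, in the critical case $[g]=[l]=[m]$ your concrete witness moves the wrong way: from $m=gu^{-1}$ with $u\in H$ you get $g=mu$ and hence $gu^{2}=mu^{3}$ (not $mu$ as written), so in the nontrivial subcase $u>e$, i.e.\ $g>m$, the element $g'=gu^{2}$ lies strictly above $m$ and is not a witness at all. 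Shifting by a square of an $H$-element only helps if it pushes the index \emph{down} below $m$; note also that you never use the hypothesis $l\geq g$ in this case, and it is needed to control that descent.

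The paper's proof fixes precisely this. The case $m\geq l$ is immediate, since then $m\geq l\geq g$. If $m<l$, set $h=ml^{-1}\in H$, so $h<e$, and replace the witness $g$ by $gh^{2}$: convexity of $H$ (Lemma \ref{lem:convex_basic2}) gives $gh^{2}\in H\cup G_{\geq e}$, Proposition \ref{prop:ring2}(2) gives $M_{gh^{2}}=M_g$ and $N_{gh^{2}}=N_g$, hence $M_{gh^{2}}+N_{gh^{2}}=F$, and the chain $m=lh>lh^{2}\geq gh^{2}$ (using $h<e$ and $l\geq g$) yields the required raw inequality $m\geq gh^{2}$. In your parametrization the correct shift in the case $g>m$ is $gu^{-2}$, not $gu^{2}$; with that correction (and the check $gu^{-2}\in H\cup G_{\geq e}$ by convexity) your outline becomes essentially the paper's argument.
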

\begin{proof}
We have nothing to show when $m \geq l$.
Assume that $m<l$.
Set $h=ml^{-1}$. 
We have $h <e$ and $h \in H$.
There exists $g \in H \cup G_{\geq e}$ with $l \geq g$ and $M_g + N_g =F$ because $l \in L$.
Note that the element $gh^{2}$ is also contained in $H \cup G_{\geq e}$ because $H$ is convex by Lemma \ref{lem:convex_basic2}(1).
In fact, it is obvious when $g \in H$.
In the other case, we have $g \in G_{\geq e} \setminus H$ and we get $h^{-2} < g$ from the convexity of $H$.
By Proposition \ref{prop:ring2}(2), we have $M_{gh^{2}}=M_g$ and $N_{gh^{2}}=N_g$; and consequently we obtain $M_{gh^{2}}+N_{gh^{2}}=F$.
On the other hand, the inequalities $m=lh >lh^{2} \geq gh^{2}$ hold true.
They imply that $m \in L$.
\end{proof}

The following structure theorem for valuation rings guarantees 
that their quasi-quadratic modules have a simple form.

\begin{theorem}
[Canonical representation theorem for quasi-quadratic modules]\label{thm:ring1}
Let $(K,\val)$ be a $2$-henselian valued field whose residue class field is of characteristic $\not=2$ and $B$ be its valuation ring.
Let $A$ be a subring with $B \subseteq A \subseteq K$.
Set $H=\val(A^\times)$.
Consider a quasi-quadratic module $\mathcal M$ in $A$.
We have 
$$\mathcal M = \bigcup_{g\in H\cup G_{\geq e}}\Phi^A(M_{g},[\![g]\!]),$$
where $M_g$ denotes the quasi-quadratic module $M_{g}^A(\mathcal M)$ of $F$ generated by $\mathcal M$ and $g$.
\end{theorem}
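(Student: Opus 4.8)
The plan is to establish the two inclusions separately; in both directions essentially all of the work has already been absorbed into Lemma~\ref{lem:star} and the computations of Proposition~\ref{prop:ring2}, so what remains is to unwind the definitions of $\Phi^A(\cdot,\cdot)$ and $M_g^A(\mathcal M)$ carefully.

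\emph{The inclusion $\bigcup_{g\in H\cup G_{\geq e}}\Phi^A(M_g,[\![g]\!])\subseteq\mathcal M$.} Fix $g\in H\cup G_{\geq e}$ and take a nonzero $x\in\Phi^A(M_g,[\![g]\!])$; the zero element lies in $\mathcal M$ trivially. By definition $\val(x)=\overline{g}$, one of $\val(x)=[\![g]\!]$ or $\val(x)>g$ holds, and $\mylcp(x)\in M_g=M_g^A(\mathcal M)$. Since $\mylcp(x)\neq 0$, the definition of $M_g^A(\mathcal M)$ produces a nonzero $y\in\mathcal M$ with $\val(y)=g$ and $\mylcp(y)=\mylcp(x)$. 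I then apply Lemma~\ref{lem:star} to the pair consisting of $y$ (in the role of the element already known to belong to $\mathcal M$) and $x$: the hypotheses $\mylcp(y)=\mylcp(x)$ and $\overline{\val(y)}=\overline{g}=\overline{\val(x)}$ hold, and for the required disjunction, if $\val(x)=[\![g]\!]$ then $[\![\val(y)]\!]=[\![g]\!]=[\![\val(x)]\!]$, which is alternative~(2), while if $\val(x)>g=\val(y)$ then $\val(y)\leq\val(x)$, which is alternative~(1). Hence $x\in\mathcal M$.

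\emph{The inclusion $\mathcal M\subseteq\bigcup_{g\in H\cup G_{\geq e}}\Phi^A(M_g,[\![g]\!])$.} The zero element belongs to every $\Phi^A(M_g,[\![g]\!])$, so take a nonzero $x\in\mathcal M$ and set $g:=\val(x)$. By Lemma~\ref{lem:convex_basic2} we have $\val(A)=H\cup G_{\geq e}$, so $g\in H\cup G_{\geq e}$ and the term $\Phi^A(M_g,[\![g]\!])$ occurs in the union. Since $x\in\mathcal M\setminus\{0\}$ with $\val(x)=g$, the definition of $M_g^A(\mathcal M)$ gives $\mylcp(x)\in M_g$. Finally $\val(x)=g$ forces $\val(x)=\overline{g}$ and $\val(x)=[\![g]\!]$, so all the membership conditions of $\Phi^A(M_g,[\![g]\!])$ are met, and $x$ lies in the union.

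I do not anticipate a genuine obstacle; the one point that needs attention is the case analysis in the first inclusion, where one must check that $(\val(x),\val(y))$ always satisfies one of the two hypotheses of Lemma~\ref{lem:star} --- and this is precisely why the definition of $\Phi^A$ was built around the disjunction ``$\val(x)=[\![g]\!]$ or $\val(x)>g$''. I would also note that Proposition~\ref{prop:ring2}(1) is used implicitly so that each $M_g$ is a bona fide quasi-quadratic module in $F$, making $\Phi^A(M_g,[\![g]\!])$ meaningful (and, by Proposition~\ref{prop:ring1}, a quasi-quadratic module in $A$ whenever $M_g$ is proper, though properness plays no role in the set equality itself).
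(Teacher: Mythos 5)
Your proof is correct and follows essentially the same route as the paper: the easy inclusion uses Lemma \ref{lem:convex_basic2} to place $\val(x)$ in $H\cup G_{\geq e}$, and the other inclusion extracts a witness from the definition of $M_g^A(\mathcal M)$ and applies Lemma \ref{lem:star}. Your explicit check of which alternative of Lemma \ref{lem:star} applies is a welcome (if minor) elaboration of a step the paper leaves implicit.
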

\begin{proof}
Set $\mathcal N=\bigcup_{g\in H \cup G_{\geq e}}\Phi^A(M_{g},[\![g]\!])$.
We demonstrate that $\mathcal M=\mathcal N$.
We first show that $\mathcal M \subseteq \mathcal N$.
Take a nonzero element $x\in \mathcal M$ and set $g=\val(x)$.
Since $g\in H\cup G_{\geq e}$ by Lemma \ref{lem:convex_basic2}(1),
it is easily seen that $x \in\Phi(M_{g},[\![g]\!])$.

We demonstrate the opposite inclusion.
Take a nonzero  element $y \in \mathcal N$.
Then there exists an element 
$g \in H\cup G_{\geq e}$ such that $y\in\Phi(M_{g},[\![g]\!])$.

It follows from the definition that $\mylcp(y)\in M_{g}$, $\val(y)= \overline{g}$
and  $\val(y)=[\![g]\!]$ or $\val(y)> g$.
There exists a nonzero element $z\in\mathcal M$ such that
$\mylcp(y)=\mylcp(z)$ and $\val(z)=g$. 
Applying Lemma \ref{lem:star} to $y$ and $z$, we get $y \in \mathcal M$.
\end{proof}

\begin{definition}
Consider a quasi-quadratic module $\mathcal M$ in $A$.
The decomposition 
$\mathcal M = \bigcup_{g\in H\cup G_{\geq e}} \Phi^{A}(M_{g},[\![g]\!])$
in Theorem \ref{thm:ring1} is called the 
\textit{canonical representation} of $\mathcal M$.
\end{definition}

The next proposition and corollaries give useful characterizations of the 
support of a quasi-quadratic module.

\begin{proposition}\label{prop:ring3'}
Let $(K,\val)$ be a $2$-henselian valued field and $B$ be its valuation ring.
Let $A$ be a subring such that $B \subseteq A \subseteq K$ and assume that $2$ is a unit in $B$.
Set $H=\val(A^\times)$.
Consider a quasi-quadratic module $\mathcal M$ in $A$
and a nonzero element $x\in A$ with $\val(x)=g$.
Then the following conditions are equivalent:
\begin{enumerate}
\item[(1)] $x\in\supp(\mathcal M)$;
\item[(2)] $M_{g}^A(\mathcal M)=F$.
\end{enumerate}
Furthermore, $M_h^A(\mathcal M)=F$ whenever $M_g^A(\mathcal M)=F$ 
and $h\geq g$.
\end{proposition}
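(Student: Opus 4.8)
The plan is to prove the equivalence $(1)\Leftrightarrow(2)$ directly and then read off the monotonicity statement as a formal consequence. Before starting I would record the hypotheses actually in force: since $2$ is a unit in $B$, its residue $\pi(2)$ is a unit in $F$, so $F$ has characteristic $\neq 2$; together with $2$-henselianity and \cite[Corollary 4.2.4]{EP} this makes every strict unit of $(K,\val)$ a square, so Proposition \ref{prop:ring2}, Lemma \ref{lem:star} and Corollary \ref{cor:d-circle} are all available, and in particular $M_g:=M_g^A(\mathcal M)$ is a quasi-quadratic module in $F$. Moreover $2^{-1}\in B\subseteq A$, so $2$ is a unit in $A$ and hence $\supp(\mathcal M)$ is an ideal of $A$ by Lemma \ref{lem:basic4}; this is the one place where the present hypothesis is used, and it enters only in the last sentence.

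For $(1)\Rightarrow(2)$ I would argue as follows. From $\pm x\in\mathcal M$ and $\val(-x)=\val(x)=g$, the definition of $M_g$ gives $\mylcp(x)\in M_g$; and writing $-x=(-1)\cdot x$ with $-1\in B^\times$, Definition \ref{def:pseudo}(2) gives $\mylcp(-x)=\pi(-1)\mylcp(x)=-\mylcp(x)$, which therefore also lies in $M_g$. Since $\mylcp(x)\neq0$, Corollary \ref{cor:d-circle} forces $M_g=F$.

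For $(2)\Rightarrow(1)$: if $M_g=F$, then $\mylcp(x)$ and $\mylcp(-x)$ both lie in $M_g\setminus\{0\}$, so by the definition of $M_g$ there are nonzero $x_1,x_2\in\mathcal M$ with $\val(x_1)=\val(x_2)=g$, $\mylcp(x_1)=\mylcp(x)$ and $\mylcp(x_2)=\mylcp(-x)$. Applying Lemma \ref{lem:star} to the pair $(x_1,x)$ and to the pair $(x_2,-x)$ --- in each pair the two valuations are both equal to $g$, so the $\overline{\val}$-classes agree and alternative (1) of the lemma holds --- yields $x\in\mathcal M$ and $-x\in\mathcal M$, i.e.\ $x\in\supp(\mathcal M)$.

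Finally, for the monotonicity claim, assume $M_g=F$ and $h\geq g$. By $(2)\Rightarrow(1)$ the given element $x$ lies in $\supp(\mathcal M)$. Since $hg^{-1}\geq e$ and $\val$ is surjective, pick $w\in K$ with $\val(w)=hg^{-1}$; then $w\in B\subseteq A$, so $y:=xw$ is a nonzero element of $A$ with $\val(y)=h$, and $y\in\supp(\mathcal M)$ because $\supp(\mathcal M)$ is an ideal. Applying the already-proved implication $(1)\Rightarrow(2)$ to $y$ gives $M_h^A(\mathcal M)=F$. I expect no real obstacle here: the only points deserving care are lining up the hypotheses of Lemma \ref{lem:star} in the $(2)\Rightarrow(1)$ step and remembering that the ideal property of $\supp(\mathcal M)$ is exactly what ``$2$ is a unit'' provides.
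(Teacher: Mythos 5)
Your proposal is correct and follows essentially the same route as the paper: $(1)\Rightarrow(2)$ via $\pm\mylcp(x)\in M_g$ and Corollary \ref{cor:d-circle}, $(2)\Rightarrow(1)$ via Lemma \ref{lem:star} applied to representatives $x_1,x_2$, and the monotonicity claim by multiplying by $w\in B$ with $\val(w)=hg^{-1}$ and using that $\supp(\mathcal M)$ is an ideal (Lemma \ref{lem:basic4}). Your extra remarks (char $F\neq 2$ from $2\in B^\times$, and $\mylcp(-x)=-\mylcp(x)$ via Definition \ref{def:pseudo}(2)) only make explicit what the paper leaves implicit.
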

\begin{proof}
(1) $\Rightarrow$ (2):
By the assumption, we have $\pm x\in\mathcal M$ and $\val(\pm x)=g$.
Thus we get $\pm\mylcp(x)\in M_g$.
It is trivial that the characteristics of both $K$ and $F$ are not equal to two 
when $2$ is a unit in $B$. Hence we see that $M_g=F$ by Corollary \ref{cor:d-circle}.

(2) $\Rightarrow$ (1):
Since $\pm\mylcp(x)=\mylcp(\pm x)\in M_g$,
there exist  nonzero elements $x_1, x_2\in\mathcal M$ such that
$\mylcp(x)=\mylcp(x_1)$, $\mylcp(-x)=\mylcp(x_2)$ and $\val(x_1)=\val(x_2)=g$. 
By using Lemma \ref{lem:star}, we have $\pm x \in \mathcal M$.
Hence we get $x\in\supp(\mathcal M)$.

We next demonstrate the `furthermore' part.
Since the valuation $\val$ is surjective and $hg^{-1}\geq e$,
there exists an element $w\in B$ such that $\val(w)=hg^{-1}$.
Set $y=wx$. Since $x\in\supp(\mathcal M)$, it follows from 
Lemma \ref{lem:basic4} that 
$y\in\supp(\mathcal M)$. Thus we have $M_h=F$ because $\val(y)=h$. 
\end{proof}

\begin{corollary}\label{cor:ring3}
Let $(K,\val)$, $B$, $F$, $A$ and $\mathcal M$ be the same as in
Proposition \ref{prop:ring3'}. 
Set $H=\val(A^\times)$.
The following conditions are equivalent:
\begin{enumerate}
\item[(1)] $\mathcal M=A$;
\item[(2)] $M_{g}^A(\mathcal M)=F$ for all $g \in H \cup G_{\geq e}$;
\item[(3)] $M_{e}^A(\mathcal M)=F$. 
\end{enumerate}
\end{corollary}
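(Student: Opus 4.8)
The plan is to prove the chain of equivalences $(1)\Rightarrow(2)\Rightarrow(3)\Rightarrow(1)$, using Proposition \ref{prop:ring3'} as the main engine. The implication $(1)\Rightarrow(2)$ is immediate: if $\mathcal M=A$, then for every $g\in H\cup G_{\geq e}$ we may (by surjectivity of $\val$ together with Lemma \ref{lem:convex_basic2}, which guarantees $\val(A)=H\cup G_{\geq e}$) pick a nonzero $x\in A$ with $\val(x)=g$; since $\pm x\in A=\mathcal M$ we have $x\in\supp(\mathcal M)$, and Proposition \ref{prop:ring3'} gives $M_g^A(\mathcal M)=F$. The implication $(2)\Rightarrow(3)$ is trivial because $e\in H\cup G_{\geq e}$.

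The substantive implication is $(3)\Rightarrow(1)$. Assuming $M_e^A(\mathcal M)=F$, the `furthermore' part of Proposition \ref{prop:ring3'} already gives $M_h^A(\mathcal M)=F$ for every $h\geq e$, i.e. for all $h\in G_{\geq e}$. To get the remaining values $g\in H$ with $g<e$: fix such a $g$, choose $a\in A^\times$ with $\val(a)=g$ (possible since $g\in H=\val(A^\times)$), and take a nonzero $x\in A$ with $\val(x)=g$, e.g. $x=a$ itself. Since $a$ is a unit of $A$, both $\pm a\in A=\mathcal M$ would follow once we know $\mathcal M=A$—but that is what we want to prove, so instead I argue directly at the level of the $M_g$'s: because $\val(ae)=e$ trivially has $M_e=F$, and $a^2\cdot(\text{anything in }M_e)$ stays in $M_{g^2}$... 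Actually the cleaner route is: given $g\in H\cup G_{\geq e}$ arbitrary, pick nonzero $x\in A$ with $\val(x)=g$; since $M_e=F$ we have $\pm\mylcp(1)=\pm 1\in M_e$, so by Proposition \ref{prop:ring3'}$(2)\Rightarrow(1)$ applied with the element $1$ we get $1\in\supp(\mathcal M)$, hence $\pm 1\in\mathcal M$, hence $Ax\cdot 1=Ax\subseteq\supp(\mathcal M)$ by Lemma \ref{lem:basic4} (it is an ideal), so in particular $\pm x\in\mathcal M$; since $x$ was an arbitrary nonzero element of $A$ and $0\in\mathcal M$ always, this gives $\mathcal M=A$.

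So the real proof of $(3)\Rightarrow(1)$ reduces to: $M_e^A(\mathcal M)=F$ implies $1\in\supp(\mathcal M)$ (which is exactly Proposition \ref{prop:ring3'} with $x=1$, $g=e$), and then $\supp(\mathcal M)$ being an ideal of $A$ containing a unit forces $\supp(\mathcal M)=A$ by Corollary \ref{cor:basic4'}(2), whence $\mathcal M\supseteq\supp(\mathcal M)=A$ and therefore $\mathcal M=A$. I expect no genuine obstacle here; the only point requiring a little care is making sure that "$2$ is a unit in $B$" (the standing hypothesis of Proposition \ref{prop:ring3'}, inherited here since $F$ has characteristic $\neq 2$ and $K$ is $2$-henselian) is what licenses the use of Lemma \ref{lem:basic4} and Corollary \ref{cor:basic4'}, and that $1\in A$ with $\val(1)=e\in H\cup G_{\geq e}$ is a legitimate choice of the nonzero element in Proposition \ref{prop:ring3'}. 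Everything else is bookkeeping.
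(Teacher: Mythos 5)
Your proposal is correct and, after the brief detour in the middle, settles on exactly the paper's argument: $(1)\Rightarrow(2)$ and $(3)\Rightarrow(1)$ both via Proposition \ref{prop:ring3'} (the latter with $x=1$, $g=e$ to get $1\in\supp(\mathcal M)$, then Corollary \ref{cor:basic4'}(2)), with $(2)\Rightarrow(3)$ trivial. No substantive difference from the paper's proof.
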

\begin{proof}
The implication (1) $\Rightarrow$ (2) immediately follows from Proposition \ref{prop:ring3'}
and Corollary \ref{cor:cor_convex}.
Condition (3) obviously follows from condition (2).
We demonstrate that (3) implies (1). By Proposition \ref{prop:ring3'},
we have $1\in\supp(\mathcal M)$. It is immediate from Corollary \ref{cor:basic4'}(2) 
that $\mathcal M=A$.
\end{proof}

\begin{corollary}\label{cor:ring4}
Let $(K,\val)$, $B$, $F$, $A$ and $\mathcal M$ be the same as in
Proposition \ref{prop:ring3'}.
\begin{enumerate}
\item[(1)] $\supp(\mathcal M)=\{x\in A\setminus\{0\}\;|\; M_{\val(x)}^A(\mathcal M)
=F\}\cup\{0\}$.
\item[(2)] When $\mathcal M\neq \{0\}$,  $\mathcal M$ becomes an ideal of $A$
if and only if
$M_{\val(x)}^A(\mathcal M)=F$ for any $0\neq x\in \mathcal M$.
\end{enumerate}
\end{corollary}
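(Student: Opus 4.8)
The plan is to derive both parts directly from Proposition \ref{prop:ring3'} and Lemma \ref{lem:basic4}, since the statement is essentially a repackaging of those results together with the trivial set-theoretic properties of the support.

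For part (1), I would first record that $0 \in \supp(\mathcal M)$ always holds, because $0 \in \mathcal M$ and $-0 = 0 \in \mathcal M$. For a nonzero element $x \in A$, Proposition \ref{prop:ring3'} gives precisely the equivalence between $x \in \supp(\mathcal M)$ and $M_{\val(x)}^A(\mathcal M) = F$. Combining these two facts yields the asserted description of $\supp(\mathcal M)$ at once.

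For part (2), the key reduction is that a quasi-quadratic module $\mathcal M$ in $A$ is an ideal of $A$ if and only if $\mathcal M = \supp(\mathcal M)$. If $\mathcal M$ is an ideal, then $-x = (-1)\cdot x \in \mathcal M$ for every $x \in \mathcal M$, so $\mathcal M \subseteq -\mathcal M$ and hence $\mathcal M = \mathcal M \cap (-\mathcal M) = \supp(\mathcal M)$. Conversely, $\supp(\mathcal M)$ is an ideal of $A$ by Lemma \ref{lem:basic4}, where I would note that $2$ is a unit in $A$ since $2^{-1} \in B \subseteq A$ by hypothesis, so the lemma applies with $R = A$. With this reduction in hand, I would translate the equality $\mathcal M = \supp(\mathcal M)$ through part (1): because $\supp(\mathcal M) \subseteq \mathcal M$ holds automatically, $\mathcal M = \supp(\mathcal M)$ is equivalent to $\mathcal M \subseteq \supp(\mathcal M)$, which by part (1) says exactly that $M_{\val(x)}^A(\mathcal M) = F$ for every nonzero $x \in \mathcal M$. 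This completes the argument.

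There is no genuine obstacle here; the corollary is bookkeeping on top of the earlier machinery. The only points that need a moment of care are verifying that the hypothesis ``$2$ is a unit in $B$'' from Proposition \ref{prop:ring3'} transfers to ``$2$ is a unit in $A$'' so that Lemma \ref{lem:basic4} is applicable to $R = A$, and checking that the excluded degenerate case $\mathcal M = \{0\}$ is harmless (there the condition on $M_{\val(x)}^A(\mathcal M)$ is vacuous and $\{0\}$ is indeed an ideal, so the equivalence would still hold, which is presumably why it is separated out only for emphasis).
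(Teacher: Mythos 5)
Your proposal is correct and follows essentially the same route as the paper: part (1) is read off from Proposition \ref{prop:ring3'}, and part (2) reduces to the fact that $\supp(\mathcal M)$ is an ideal (Lemma \ref{lem:basic4}, via $2^{-1}\in B\subseteq A$) combined with Proposition \ref{prop:ring3'}, which is exactly the machinery the paper invokes through Corollaries \ref{cor:d-circle} and \ref{cor:basic4'}(1). Your packaging of part (2) as ``$\mathcal M$ is an ideal iff $\mathcal M=\supp(\mathcal M)$'' is a clean but equivalent rephrasing of the paper's argument.
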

\begin{proof}
(1) It is obvious from the proposition.

(2) Let $\mathcal M$ be an ideal of $A$. Take a nonzero element $x\in\mathcal M$.
Since $-x\in \mathcal M$, we have $\pm\mylcp(x)\in M_{\val(x)}$.
It follows that $M_{\val(x)}=F$ by
Corollary \ref{cor:d-circle}.

We demonstrate the opposite implication. Take a nonzero element $x\in \mathcal M$.
We have to prove that $Ax\subseteq M$.
By Corollary \ref{cor:basic4'}(1), it is enough to show 
that $-x\in\mathcal M$.
Since we have $M_{\val(x)}=F$ by the assumption, we see that
$x\in\supp(\mathcal M)$ by Proposition \ref{prop:ring3'}.
Thus we get $-x\in\mathcal M$.
\end{proof}

\begin{corollary}\label{cor:ring41}
Let $(K,\val)$, $B$, $F$, $A$ and $\mathcal M$ be the same as in
Proposition \ref{prop:ring3'}.
Let $g_1, g_2 \in G$.
Set $H=\val(A^\times)$.
If $[g_1]=[g_2]$ and $M_{g_1}^A(\mathcal M)=F$, we have $M_{g_2}^A(\mathcal M)=F$.
\end{corollary}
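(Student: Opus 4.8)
The idea is to pass through the support $\supp(\mathcal M)$, which is an ideal, so that we can transport membership along multiplication by a \emph{unit} of $A$ (not merely by a square). First I would observe that the hypothesis $M_{g_1}^A(\mathcal M)=F$ forces $g_1\in H\cup G_{\geq e}$: since $F$ is a field, $F\neq\{0\}$, whereas $M_g^A(\mathcal M)=\{0\}$ whenever $g\notin H\cup G_{\geq e}$, as noted in the proof of Proposition \ref{prop:ring2}(1) via Lemma \ref{lem:convex_basic2}. By that same lemma $H\cup G_{\geq e}=\val(A)$, so there is a nonzero element $x\in A$ with $\val(x)=g_1$.

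Next I would apply the equivalence (2)$\Rightarrow$(1) of Proposition \ref{prop:ring3'} to $x$ to get $x\in\supp(\mathcal M)$. Since $2$ is a unit in $B\subseteq A$, it is a unit in $A$, so $\supp(\mathcal M)$ is an ideal of $A$ by Lemma \ref{lem:basic4}. The assumption $[g_1]=[g_2]$ says precisely that $g_2g_1^{-1}\in H=\val(A^\times)$, so we may choose $a\in A^\times$ with $\val(a)=g_2g_1^{-1}$. Then $y:=ax$ is a nonzero element of $A$ lying in the ideal $\supp(\mathcal M)$, and $\val(y)=g_2$. Finally, applying the implication (1)$\Rightarrow$(2) of Proposition \ref{prop:ring3'} to $y$ yields $M_{g_2}^A(\mathcal M)=F$.

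There is essentially no obstacle; the only points needing care are checking that the hypotheses of Proposition \ref{prop:ring3'} are in force here (which they are, being inherited verbatim), and recognizing that one should work with $\supp(\mathcal M)$ and an honest unit $a$ rather than trying to move within the $M_g$'s directly, since the latter would only permit multiplication by squares, changing the valuation by $g_2g_1^{-1}$ composed with itself instead of by $g_2g_1^{-1}$.
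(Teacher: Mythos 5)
Your proposal is correct and follows essentially the same route as the paper: obtain a nonzero element of $\supp(\mathcal M)$ of valuation $g_1$ via Proposition \ref{prop:ring3'}, multiply by a unit $a\in A^\times$ with $\val(a)=g_2g_1^{-1}$ (using that $\supp(\mathcal M)$ is an ideal by Lemma \ref{lem:basic4}), and apply Proposition \ref{prop:ring3'} again. The only difference is your explicit check that $g_1\in H\cup G_{\geq e}$, a point the paper leaves implicit; otherwise the arguments coincide.
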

\begin{proof}
We can take $h \in H$ such that $g_2 =g_1h$.
We get a nonzero element $x \in \supp(\mathcal M)$ with $g_1=\val(x)$ by Proposition \ref{prop:ring3'}.
There exists $u \in A^\times$ with $\val(u)=h$. Hence 
we have $ux \in \supp(\mathcal M)$ by Lemma \ref{lem:basic4}.
Since $\val(ux)=g_2$, we obtain $M_{g_2}^A(\mathcal M)=F$ by Proposition \ref{prop:ring3'}.
\end{proof}

We can show that an ideal of the valuation ring is represented 
as a union of sets of the form $\Phi^{A}(F,[\![g]\!])$ for $g \in G$.

\begin{lemma}\label{lem:basic5}
Let $(K,\val)$ be a $2$-henselian valued field and $B$ be its valuation ring.
Let $A$ be a subring such that $B \subseteq A \subseteq K$ and assume that $2$ is a unit in $B$.
Set $H=\val(A^\times)$.
For any ideal $I$ of $A$,
we have
$$
I=\bigcup_{[\![g]\!]\in S(I)}\Phi^A(F,[\![g]\!])\cup\{0\},
$$
where $S(I)=\{[\![\val(x)]\!]\in G/H^2\;|\; x\in I\setminus\{0\}\}$.
\end{lemma}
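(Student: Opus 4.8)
The plan is to prove the two inclusions separately, using the canonical representation theorem (Theorem \ref{thm:ring1}) and the characterization of the support (Corollary \ref{cor:ring4}). First I would observe that an ideal $I$ of $A$ is in particular a quasi-quadratic module in $A$: indeed $I+I\subseteq I$ and $a^2I\subseteq AI\subseteq I$ for all $a\in A$. Hence Theorem \ref{thm:ring1} applies and gives the canonical representation $I=\bigcup_{g\in H\cup G_{\geq e}}\Phi^A(M_g^A(I),[\![g]\!])$.

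Next I would identify the relevant $M_g^A(I)$. For a nonzero $x\in I$ with $\val(x)=g$, we have $-x\in I$ as well, so by Corollary \ref{cor:ring4}(2) (or directly Proposition \ref{prop:ring3'}, since $I$ is its own support) we get $M_g^A(I)=F$. Conversely, if $g\in H\cup G_{\geq e}$ is such that $M_g^A(I)\neq\{0\}$, pick a nonzero $c\in M_g^A(I)$; then there is a nonzero $x\in I$ with $\val(x)=g$, so $g=\val(x)$ with $x\in I\setminus\{0\}$. Therefore the set of $g\in H\cup G_{\geq e}$ with $M_g^A(I)\neq\{0\}$ is exactly $\{\val(x)\;|\;x\in I\setminus\{0\}\}$, and for all such $g$ we have $M_g^A(I)=F$, while $\Phi^A(\{0\},[\![g]\!])=\{0\}$ for the remaining $g$. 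Combining this with the canonical representation yields
$$
I=\bigcup_{g\in \val(I\setminus\{0\})}\Phi^A(F,[\![g]\!])\cup\{0\}.
$$

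The final step is to rewrite the index set in terms of $S(I)=\{[\![\val(x)]\!]\;|\;x\in I\setminus\{0\}\}$. Here I would use the fact that $\Phi^A(F,[\![g]\!])$ depends only on the class $[\![g]\!]$ — more precisely, by Lemma \ref{lem:triv} applied with $M_1=M_2=F$, if $[\![g_1]\!]=[\![g_2]\!]$ then $\Phi^A(F,[\![g_1]\!])=\Phi^A(F,[\![g_2]\!])$. Moreover, if $x\in I\setminus\{0\}$ and $g=\val(x)$, then since $I$ is an ideal and $\val$ is surjective, for every $g'\in H\cup G_{\geq e}$ with $[\![g']\!]=[\![g]\!]$ there is an element of $I$ of value $g'$ (multiply $x$ by a unit or by an element of appropriate positive value, using $\val(A)=H\cup G_{\geq e}$ from Lemma \ref{lem:convex_basic2} and Corollary \ref{cor:ring41}). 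So the union over $g\in\val(I\setminus\{0\})$ is the same as the union of $\Phi^A(F,[\![g]\!])$ over one representative $g$ of each class in $S(I)$, giving the claimed formula.

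I do not expect a serious obstacle here; the statement is essentially a packaging of Theorem \ref{thm:ring1} together with Proposition \ref{prop:ring3'}/Corollary \ref{cor:ring4}(2) in the special case where the quasi-quadratic module is an ideal. The only point requiring a little care is the passage from the index set $\val(I\setminus\{0\})\subseteq H\cup G_{\geq e}$ to the set of classes $S(I)\subseteq G/H^2$, which must be handled so that no $\Phi^A(F,[\![g]\!])$ is silently dropped or double-counted; Lemma \ref{lem:triv} and Corollary \ref{cor:ring41} take care of this, and the well-definedness of $\Phi^A(F,[\![g]\!])$ on classes modulo $H^2$ is already recorded after its definition via Lemma \ref{lem:convex_basic12}(1).
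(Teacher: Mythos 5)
Your proof is correct, and it takes a somewhat more modular route than the paper's. You treat $I$ as a quasi-quadratic module, invoke the canonical representation theorem (Theorem \ref{thm:ring1}), and then compute the components: $M_g^A(I)=F$ exactly when $g\in\val(I\setminus\{0\})$ (by Corollary \ref{cor:ring4}(2), or Proposition \ref{prop:ring3'} since $I=\supp(I)$) and $M_g^A(I)=\{0\}$ otherwise, after which you re-index the union over $S(I)$. The paper instead gives a self-contained two-inclusion argument: the forward inclusion is immediate, and for the reverse it first shows that every representative $g$ of a class in $S(I)$ lies in $\val(I\setminus\{0\})$ (multiplying a suitable element of $I$ by $u^2$ with $u\in A^\times$, as you also sketch), then deduces $M_{\val(x)}^A(I)=F$ from Proposition \ref{prop:ring2}(2)(3) and Corollary \ref{cor:ring4}(2), and finally pulls $x$ into $I$ via Definition \ref{def:pseudo}(5) with $u\in B^\times$ --- in effect re-deriving inline exactly the special case of Theorem \ref{thm:ring1} that you cite as a black box. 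So the two arguments rest on the same ingredients; yours buys brevity by leaning on the structure theorem, the paper's stays at the element level. One small point of precision: Lemma \ref{lem:triv} alone only gives an inclusion (it requires $g_1\geq g_2$), so the equality $\Phi^A(F,[\![g_1]\!])=\Phi^A(F,[\![g_2]\!])$ for $[\![g_1]\!]=[\![g_2]\!]$ is best justified, as you do at the end, by the representative-independence of $\Phi^A$ recorded right after its definition via Lemma \ref{lem:convex_basic12}(1); with that observation the re-indexing is immediate, and your extra step producing elements of $I$ of every value in a given class (via Corollary \ref{cor:ring41}) is not actually needed.
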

\begin{proof}
When $I=\{0\}$, the equality holds because $S(I)=\emptyset$.
We assume that $I\neq \{0\}$. Take a nonzero element $x\in I$.
Since $[\![\val(x)]\!]\in S(I)$, it can be easily seen that $x$ is included in the 
right-hand side of the equality.

We next demonstrate the opposite inclusion.
For that purpose, we show that any $g \in G$ with $[\![g]\!] \in S(I)$ is contained in $\val(I\setminus\{0\})$.
By the definition, we can take $0 \neq y \in I$ and $h \in H$ such that $g=\val(y) \cdot h^2$.
Take $u \in A^\times$ with $\val(u)=h$.
We have $u^2y \in I$ and $\val(u^2y)=\val(y) \cdot h^2=g$.
It implies that $g \in \val(I\setminus\{0\})$.

Let $x$ be a nonzero element of $\bigcup_{[\![g]\!]\in S(I)}\Phi(F,[\![g]\!])\cup\{0\}$.
There exists an element $[\![g]\!]\in S(I)$ such that $x \in \Phi(F,[\![g]\!])$.
Let $g \in G$ be a representative of $[\![g]\!]$.
We get $g \in \val(I\setminus\{0\})$.
We have $\val(x)=\overline{g}$.
In addition, at least one of the conditions that $\val(x)=[\![g]\!]$ and $\val(x)>g$ is satisfied.
By Proposition \ref{prop:ring2}(2)(3) and Corollary \ref{cor:ring4}(2), we have $M_{\val(x)}^A(I)=F$.
It implies that there exists a nonzero element $z\in I$ such that
$\mylcp(x)=\mylcp(z)$ and $\val(x)=\val(z)$. By Definition \ref{def:pseudo}(5), we can take $u\in K$ with
$x=u^2z$. We clearly see that $u\in B^\times \subseteq A^\times$ and $x\in I$.
\end{proof}

We are going to study presentations of the sum and the intersection of two quasi-quadratic modules
in a valuation ring.

\begin{lemma}\label{lem:ring3}
Let $(K,\val)$ be a $2$-henselian valued field and $B$ be its valuation ring.
Let $A$ be a subring such that $B \subseteq A \subseteq K$ and assume that $2$ is a unit in $B$.
Set $H=\val(A^\times)$.
Take elements $g_1,g_2 \in H\cup G_{\geq e}$ and quasi-quadratic modules $M_1$ and $M_2$ in the residue class field $F$.
Set $g_{\rm max}=\max\{g_1,g_2\}$.
\begin{enumerate}[(1)]
\item For any nonzero elements $x_1\in\Phi^A(M_1,[\![g_1]\!])$ and $x_2\in \Phi^A(M_2,[\![g_2]\!])$ with $\val(x_1)\neq \val(x_2)$, 
we have $x_1+x_2\in\Phi^A(M_1,[\![g_1]\!])\cup\Phi^A(M_2,[\![g_2]\!])$.
\item If $M_1$ and $M_2$ are proper, the following equality holds true:
\end{enumerate}
\begin{align*}
&\Phi^A(M_1,[\![g_1]\!]) + \Phi^A(M_2,[\![g_2]\!]) \\
\!
&= 
\!
\left\{
\!\!\!
\begin{array}{ll}
\Phi^A(M_1,[\![g_1]\!]) \cup \Phi^A(M_2,[\![g_2]\!]) 
& 
\text{ 
\!\!\!\!\!\!\!\!\!\!\!\!\!\!\!\!\!\!\!\!\!\!\!\!\!\!\!\!\!\!
\!\!\!\!\!\!\!\!\!\!\!\!\!\!\!\!\!\!\!\!\!\!\!\!\!\!\!\!\!\!\!\!\!\!\!\!\!\!\!\!\!\!\!\!\!\!\!\!\!\!\!
 if \;} \overline{g_1} \not= \overline{g_2},\\
\Phi^A(M_{1},[\![g_{1}]\!])\cup\Phi^A(M_2,[\![g_{2}]\!])
\cup\Phi^A(M_1+M_2,[\![g_{\rm max}]\!])  & \\
& \text{ \!\!\!\!\!\!\!\!\!\!\!\!\!\!\!\!\!\!\!\!\!\!\!\!\!\!\!\!\!\!\!\!\!\!\!\!\!\!\!\!
\!\!\!\!\!\!\!\!\!\!\!\!\!\!\!\!\!\!\!\!\!\!\!\!\!\!\!\!\!\!\!\!\!\!\!\!\!\!\!\!\!
 if  \;}
\overline{g_1} = \overline{g_2}\text{ and }M_1+M_2\not =F,\\
\Phi^A(M_1,[\![g_1]\!]) \cup \Phi^A(M_2,[\![g_2]\!])
\cup\bigcup_{g\in H{g_{\rm max}}\cup G_{\geq g_{\rm max}}}\Phi^A(F,[\![g]\!]) & \\
&
\text{ \!\!\!\!\!\!\!\!\!\!\!\!\!\!\!\!\!\!\!\!\!
\!\!\!\!\!\!\!\!\! otherwise}.
\end{array}
\right.
\end{align*}
\end{lemma}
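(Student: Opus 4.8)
The plan is to abbreviate $\mathcal M_i:=\Phi^A(M_i,[\![g_i]\!])$ for $i=1,2$; by Proposition \ref{prop:ring1} these are quasi-quadratic modules in $A$ (using that $M_1,M_2$ are proper), and I would first record the routine facts that $M_1+M_2$ is a quasi-quadratic module in $F$ and $\mathcal M_1+\mathcal M_2$ one in $A$. For assertion (1) I would assume, without loss of generality, $\val(x_1)<\val(x_2)$; then Definition \ref{def:pseudo}(4) gives $\val(x_1+x_2)=\val(x_1)$ and $\mylcp(x_1+x_2)=\mylcp(x_1)$, so each of the three conditions defining membership in $\Phi^A(M_1,[\![g_1]\!])$ (the value lying over $\overline{g_1}$ in $G/G^2$; the value being $[\![g_1]\!]$ or exceeding $g_1$; the pseudo-angular component lying in $M_1$) transfers verbatim from $x_1$ to $x_1+x_2$. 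This also settles the case $\overline{g_1}\neq\overline{g_2}$ of (2): the inclusion $\supseteq$ is trivial from $0\in\mathcal M_i$, and for $\subseteq$, nonzero $x_i\in\mathcal M_i$ with $x_1+x_2\neq0$ must have $\val(x_1)\neq\val(x_2)$ (else $\overline{g_1}=\overline{\val(x_1)}=\overline{\val(x_2)}=\overline{g_2}$), so (1) applies.

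For the two cases with $\overline{g_1}=\overline{g_2}$ I would fix an index $i_0$ with $g_{i_0}=g_{\rm max}$ and treat each inclusion separately. For $\subseteq$: given nonzero $x_i\in\mathcal M_i$ with $x_1+x_2\neq0$, if $\val(x_1)\neq\val(x_2)$ I invoke (1); if $\val(x_1)=\val(x_2)=:g'$ I set $c_i=\mylcp(x_i)\in M_i\setminus\{0\}$. When $c_1+c_2\neq0$, Definition \ref{def:pseudo}(4) gives $\val(x_1+x_2)=g'$ and $\mylcp(x_1+x_2)=c_1+c_2\in M_1+M_2$, and reading ``$g'=[\![g_{\rm max}]\!]$ or $g'>g_{\rm max}$'' off the membership $x_{i_0}\in\mathcal M_{i_0}$ shows $x_1+x_2\in\Phi^A(M_1+M_2,[\![g_{\rm max}]\!])$ — a summand of the right-hand side when $M_1+M_2\neq F$, and equal to $\Phi^A(F,[\![g_{\rm max}]\!])$ otherwise. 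When $c_1+c_2=0$ we have $c_1\in M_1$ and $-c_1\in M_2$ with $c_1\neq0$: if $M_1+M_2\neq F$ this contradicts Corollary \ref{lem:field0}, so that subcase is vacuous; if $M_1+M_2=F$, Corollary \ref{cor:psudo_lemlem} gives $\val(x_1+x_2)>g'$, whence $x_1+x_2$ lies trivially in $\Phi^A(F,[\![\val(x_1+x_2)]\!])$, and a convexity computation (combining $\val(x_1+x_2)>g'$ with ``$g'=[\![g_{\rm max}]\!]$ or $g'>g_{\rm max}$'') places $\val(x_1+x_2)$ in $Hg_{\rm max}\cup G_{\geq g_{\rm max}}$.

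For $\supseteq$ I would note $\mathcal M_1,\mathcal M_2\subseteq\mathcal M_1+\mathcal M_2$, then, in the case $M_1+M_2\neq F$, take nonzero $x\in\Phi^A(M_1+M_2,[\![g_{\rm max}]\!])$ and write $\mylcp(x)=c_1+c_2$ with $c_i\in M_i$: if some $c_i=0$ then $x\in\Phi^A(M_j,[\![g_{\rm max}]\!])\subseteq\mathcal M_j$ by Lemma \ref{lem:triv}; if $c_1,c_2\neq0$ I pick $w\in A$ with $\val(w)=\val(x)$ and $\mylcp(w)=1$ (Definition \ref{def:pseudo}(3) and Corollary \ref{cor:cor_convex}), note $\pi(x/w)=\mylcp(x)=c_1+c_2$ (Definition \ref{def:pseudo}(2)), lift $c_1,c_2$ along the residue map $\pi$ and correct one lift by an element of $\mathfrak m_B$ so the two lifts sum to $x/w$, getting $b_1,b_2\in B^\times$ with $b_1+b_2=x/w$ and $\pi(b_i)=c_i$; then $x_i:=wb_i$ gives $x_1+x_2=x$ with $\val(x_i)=\val(x)$, $\mylcp(x_i)=c_i$, so $x_i\in\Phi^A(M_i,[\![g_{\rm max}]\!])\subseteq\mathcal M_i$ by Lemma \ref{lem:triv}. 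In the remaining case $M_1+M_2=F$, Corollary \ref{lem:field000} supplies a nonzero $c$ with $c\in M_1$, $-c\in M_2$; picking $w\in A$ with $\val(w)=g_{\rm max}$, $\mylcp(w)=1$ and $b\in B^\times$ with $\pi(b)=c$, the elements $bw$ and $-bw$ lie in $\mathcal M_1$ and $\mathcal M_2$ respectively, so $bw\in\supp(\mathcal M_1+\mathcal M_2)$ with $\val(bw)=g_{\rm max}$; then Proposition \ref{prop:ring3'} and Corollary \ref{cor:ring41} give $M_g^A(\mathcal M_1+\mathcal M_2)=F$ for all $g\in Hg_{\rm max}\cup G_{\geq g_{\rm max}}$ (a subset of $H\cup G_{\geq e}$ by convexity of $H$), and applying the canonical representation Theorem \ref{thm:ring1} to $\mathcal M_1+\mathcal M_2$ yields $\bigcup_{g\in Hg_{\rm max}\cup G_{\geq g_{\rm max}}}\Phi^A(F,[\![g]\!])\subseteq\mathcal M_1+\mathcal M_2$.

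I expect the main obstacle to be not a single deep step but the bookkeeping: one must keep straight the three quotients $G/G^2$, $G/H^2$, $G/H$ and repeatedly invoke convexity of $H$ (Lemmas \ref{lem:convex_basic11}, \ref{lem:convex_basic12} and Corollary \ref{cor:cor_convex}) to compare $g'$, $\val(x_1+x_2)$ and $g_{\rm max}$, while being careful in the numerous degenerate subcases ($x_i=0$, $x_1+x_2=0$, $c_i=0$, $c_1+c_2=0$). The two genuinely non-formal points are the use of properness of $M_1+M_2$ through Corollary \ref{lem:field0} to exclude the cancellation subcase when $M_1+M_2\neq F$, and, in the complementary case, the reduction of the ``$\supseteq$'' inclusion to the structure of $\supp(\mathcal M_1+\mathcal M_2)$ afforded by Proposition \ref{prop:ring3'}.
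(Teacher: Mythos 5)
Your proposal is correct, and while its case skeleton necessarily mirrors the paper's (it is dictated by the statement), the two non-trivial inclusions are handled with genuinely different tools. For the inclusion $\Phi^A(M_1+M_2,[\![g_{\max}]\!])\subseteq \Phi^A(M_1,[\![g_1]\!])+\Phi^A(M_2,[\![g_2]\!])$ the paper produces, via Definition \ref{def:pseudo}(3), auxiliary elements $x_1,x_2$ of valuation exactly $g_{\max}$ with $\mylcp(x_i)=c_i$ and then transfers $x$ into the sum by Lemma \ref{lem:star} (after noting the sum is a quasi-quadratic module via Proposition \ref{prop:ring1}); you instead perform an \emph{exact} residue-level lifting, writing $x=wb_1+wb_2$ with $\pi(b_i)=c_i$, which makes Lemma \ref{lem:star} unnecessary at that step, and your treatment of the degenerate subcases $c_i=0$ through $\Phi^A(M_j,[\![g_{\max}]\!])\subseteq\Phi^A(M_j,[\![g_j]\!])$ (Lemma \ref{lem:triv}) packages more cleanly the convexity argument the paper redoes by hand with Lemma \ref{lem:convex_basic12}. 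In the case $M_1+M_2=F$ the paper proves the intermediate identity that the sum equals $\mathcal M_1\cup\mathcal M_2\cup Aw$, using Lemma \ref{coro:basic004} for one inclusion and Corollary \ref{cor:basic4'}(1) for the other, and then rewrites $Aw$ via Lemma \ref{lem:basic5}; you bypass $Aw$ entirely: for ``$\subseteq$'' you use Corollary \ref{cor:psudo_lemlem} plus a convexity computation (which does work: if $\val(x_1+x_2)<g_{\max}$ and $\val(x_1)=g_{\max}h^2$ with $h\in H$, convexity of $H$ forces $\val(x_1+x_2)g_{\max}^{-1}\in H$), and for ``$\supseteq$'' you exhibit an element of valuation $g_{\max}$ in $\supp(\mathcal M_1+\mathcal M_2)$ and then invoke Proposition \ref{prop:ring3'}, Corollary \ref{cor:ring41} and the canonical representation Theorem \ref{thm:ring1} applied to the sum. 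This is legitimate and not circular, since those results precede the lemma in the paper; what the paper's route buys is independence from the structure theorem machinery (its proof of the lemma needs only the elementary Lemmas \ref{coro:basic004} and \ref{lem:basic5}), whereas your route buys brevity by reusing Theorem \ref{thm:ring1} and the support characterization. The only polishing needed is to write out the convexity computation and the inclusion $Hg_{\max}\cup G_{\geq g_{\max}}\subseteq H\cup G_{\geq e}$ explicitly, both of which you correctly flag.
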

\begin{proof}
Set $\mathcal M_i = \Phi^A(M_i,[\![g_i]\!])$ for $i=1,2$
and assume that $g_1\leq g_2$ without loss of generality.
Note that Corollary \ref{lem:field0} and Corollary \ref{lem:field000} 
are valid for $F$.

(1) We may assume $x_1+x_2\neq 0$ and $\val(x_1)<\val(x_2)$ by symmetry.
By Definition \ref{def:pseudo}(4), it follows that
$\mylcp(x_1+x_2)=\mylcp(x_1)\in M_1$. We have
$\val(x_1+x_2)=\val(x_1)=\overline{g_1}$.
We get $\val(x_1+x_2)=\val(x_1)=[\![g_1]\!]$
or
$\val(x_1+x_2)=\val(x_1)>g_1$.
Hence we have
$x_1+x_2\in \mathcal M_1\cup \mathcal M_2$.

(2) We first consider the case in which 
$\overline{g_1} \not = \overline{g_2}$.
It is obvious that 
$\mathcal M_1+\mathcal M_2\supseteq \mathcal M_1\cup \mathcal M_2$.
We only have to show that
$\mathcal M_1+\mathcal M_2\subseteq \mathcal M_1\cup \mathcal M_2$.
Take arbitrary elements $x_i\in\mathcal M_i$ for $i=1,2$. There is nothing to show 
when $x_1=0$, $x_2=0$ or $x_1+x_2=0$. Assume $x_1\neq 0$, $x_2\neq0$ and
$x_1+x_2\neq0$. Since $\overline{g_1}\neq\overline{g_2}$, we have
$\val(x_1)\neq \val(x_2)$. It is immediate from (1) that
$x_1+x_2\in \mathcal M_1\cup \mathcal M_2$.

We next consider the case in which $\overline{g_1} = \overline{g_2}$
and $M_1+M_2\neq F$. 
We first demonstrate that the left hand side of the
equality is included in the right-hand side.
Take nonzero elements $x_1\in \mathcal M_1$ and $x_2\in \mathcal M_2$ with $x_1+x_2\neq 0$.
When $\val(x_1)\neq \val(x_2)$, we have $x_1+x_2\in \mathcal M_1\cup\mathcal M_2$ from (1).
We consider the case in which $\val(x_1)=\val(x_2)$ and 
$\mylcp(x_1)+\mylcp(x_2)\neq 0$.
By Definition \ref{def:pseudo}(4), we have
$\mylcp(x_1+x_2)=\mylcp(x_1)+\mylcp(x_2)\in M_1+M_2$ and
$\val(x_1+x_2)=\val(x_1)= \overline{g_2}$. We get 
$\val(x_1+x_2)=\val(x_2)=[\![g_2]\!]$ or $\val(x_1+x_2)=\val(x_2)>g_2$.
Hence we have $x_1+x_2\in \Phi(M_1+M_2,[\![g_2]\!])$.
We consider the case in which $\val(x_1)=\val(x_2)$ and 
$\mylcp(x_1)+\mylcp(x_2)= 0$. However this case does not occur.
In fact, suppose that the case holds. Since $\pm\mylcp(x_1)\in M_1+M_2$,
we have $M_1+M_2=F$ by Corollary \ref{lem:field0}.
It is a contradiction to the assumption.

We show the opposite inclusion.
Take a nonzero element $x\in\Phi(M_1,[\![g_1]\!])\cup\Phi(M_2,[\![g_2]\!])\cup\Phi(M_1+M_2,[\![g_2]\!])$.
We have nothing to show when $x\in\Phi(M_i,[\![g_i]\!])$ for $i=1,2$.
Assume $x\in\Phi(M_1+M_2,[\![g_2]\!])$.
There exist elements 
$c_1\in M_1$ and $c_2\in M_2$ such that $\mylcp(x)=c_1+c_2 \neq 0$.

If $c_1=0$, we have $x\in\mathcal M_2$
because $\val(x)=\overline{g_2}$ and $\val(x)=[\![g_2]\!]$ or $\val(x)> g_2$. 

If $c_2=0$, it is obvious that $x\in\mathcal M_1$ when $g_1=g_2$ or $\val(x)=[\![g_1]\!]$.
We assume $g_2>g_1$ and $\val(x)\neq[\![g_1]\!]$. It follows that $x\in\mathcal M_1$ when $\val(x) \geq g_2$ because $\val(x)>g_1$ in this case.
Let $\val(x)< g_2$. 
We have $\val(x)=[\![g_2]\!]$ by the assumption.
If $g_1>\val(x)$, we have $g_1>g_2$ by Lemma \ref{lem:convex_basic12}(1).
This is a contradiction. We get $\val(x)>g_1$. Hence we have $x\in\mathcal M_1$.

Thus we assume $c_1\neq 0$ and $c_2\neq 0$.
We can take nonzero elements $x_i\in K$ with $\mylcp(x_i)=c_i$ and $\val(x_i)=g_2$ for $i=1,2$
by Definition \ref{def:pseudo}(3).
It follows from Corollary \ref{cor:cor_convex} that $x_i\in A$ for $i=1,2$.
We have
$x_i\in\Phi(M_i,[\![g_i]\!])$ for $i=1,2$ by the assumption.
If $x_1+x_2=0$, it follows that $c_1+c_2=0$. It contradicts
the fact that $c_1+c_2\neq 0$. Hence we see that
$\mylcp(x)=c_1+c_2=\mylcp(x_1)+\mylcp(x_2)=\mylcp(x_1+x_2)$
and  $\val(x_1+x_2)=\val(x_1)=g_2$
by Definition \ref{def:pseudo}(4).
Since $\val(x)=\overline{g_2}$ and $\val(x)=[\![g_2]\!]$ or $\val(x)> g_2$,
we have $x\in\mathcal M_1+\mathcal M_2$ by using Proposition \ref{prop:ring1} and Lemma \ref{lem:star}.

We consider the remaining case in which $\overline{g_1} = \overline{g_2}$
and  $M_1+M_2=F$. 
Take $w \in A$ satisfying the condition in Lemma \ref{coro:basic004}.
We have $\val(w)=g_2$ and $\mylcp(w)=1$.
We demonstrate that $\mathcal M_1+\mathcal M_2= \mathcal M_1\cup \mathcal M_2\cup Aw$.
We first show that $\mathcal M_1+\mathcal M_2$ is contained in the right-hand of
the equality.
Take arbitrary elements $x_1+x_2\in\mathcal M_1
+\mathcal M_2$, where $x_i\in\mathcal M_i$ for $i=1,2$.
We may assume that $x_1\neq 0$, $x_2\neq 0$ and $x_1+x_2\neq 0$.
When $\val(x_1)\neq \val(x_2)$, we have $x_1+x_2\in \mathcal M_1\cup \mathcal M_2$ from (1).
When $\val(x_1)=\val(x_2)$ and $\val(x_2)=[\![g_2]\!]$ or
$\val(x_1)=\val(x_2)$ and $\val(x_2)>g_2$, it follows that $x_1+x_2\in Aw$ by Lemma \ref{coro:basic004}. 
Hence we have
$\mathcal M_1+\mathcal M_2\subseteq \mathcal M_1\cup \mathcal M_2\cup Aw$.

We show the opposite inclusion. 
By Corollary \ref{lem:field000},
there exists a nonzero element $c\in M_1$ such that $-c\in M_2$.
We can take a unit $u\in B$ with $\pi(u)=c$.
Set $x=uw$. It is easily seen from Lemma \ref{lem:triv} that
$x\in\Phi(M_1,[\![g_2]\!])\subseteq\mathcal M_1$. It is immediate that  $-x\in\mathcal M_2$.
By Corollary \ref{cor:basic4'}(1), we get
$\mathcal M_1+\mathcal M_2 \supseteq Aw$.
It is clear that $\mathcal M_1+\mathcal M_2\supseteq \mathcal M_i$ 
for $i=1,2$. Hence we have
$\mathcal M_1+\mathcal M_2\supseteq \mathcal M_1\cup \mathcal M_2\cup Aw$.

Consequently, we have
$\mathcal M_1+\mathcal M_2=\mathcal M_1\cup \mathcal M_2\cup Aw.$
By Lemma \ref{lem:basic5}, we see that 
$
Aw=\bigcup_{[\![g]\!]\in S(Aw)}\Phi(F,[\![g]\!]).
$
Hence we have proven
$
\mathcal M_1+\mathcal M_2=\mathcal M_1 \cup \mathcal M_2
\cup\bigcup_{g\in Hg_2\cup G_{\geq g_2}}\Phi(F,[\![g]\!]).
$
\end{proof}

Note that the assumption that $M_1$ and $M_2$ are proper is not necessary
in the proof of Lemma \ref{lem:ring3}(2)
when we show that 
the left hand side of the equality is included 
the right hand side of the equality.

Combining the above lemma with Remark \ref{rem:properness2}, we have 
the following corollary:
\begin{corollary}\label{cor:ring1}
Let $(K,\val)$ be a $2$-henselian valued field and $B$ be its valuation ring.
Let $A$ be a subring such that $B \subseteq A \subseteq K$ and assume that $2$ is a unit in $B$.
Set $H=\val(A^\times)$.
Let $(M_{g})_{g \in H\cup G_{\geq e}}$ 
be a family of  quasi-quadratic modules in the residue class field $F$ such that
$M_g\subseteq M_h$ whenever one of the following two conditions is satisfied:
\begin{itemize}
\item $g\leq h$ and $\overline{g}= \overline{h}$;
\item $M_g=F$ and ( $[g]=[h]$ or $g \leq h$).
\end{itemize}
Then, the union $\bigcup_{g \in H\cup G_{\geq e}}\Phi^A(M_{g},[\![g]\!])$
is a quasi-quadratic module in $A$.
\end{corollary}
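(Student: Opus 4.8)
The plan is to set $\mathcal{M} := \bigcup_{g \in H \cup G_{\geq e}} \Phi^A(M_g, [\![g]\!])$ and verify directly the two closure properties defining a quasi-quadratic module: closure under multiplication by squares, and closure under addition. For the first, I would take $a \in A$ and a nonzero $x \in \mathcal{M}$, so $x \in \Phi^A(M_g, [\![g]\!])$ for some $g \in H \cup G_{\geq e}$. By Remark \ref{rem:properness2}, each $\Phi^A(M_g, [\![g]\!])$ is already closed under multiplication by squares of elements of $A$ regardless of whether $M_g$ is proper, so $a^2 x \in \Phi^A(M_g, [\![g]\!]) \subseteq \mathcal{M}$ immediately. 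This disposes of the easy half.

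The substantive part is additive closure. I would take nonzero $x_1 \in \Phi^A(M_{g_1}, [\![g_1]\!])$ and $x_2 \in \Phi^A(M_{g_2}, [\![g_2]\!])$ with $x_1 + x_2 \neq 0$, and split into cases exactly as in the proof of Lemma \ref{lem:ring3}. If $\val(x_1) \neq \val(x_2)$, then Lemma \ref{lem:ring3}(1) gives $x_1 + x_2 \in \Phi^A(M_{g_1}, [\![g_1]\!]) \cup \Phi^A(M_{g_2}, [\![g_2]\!]) \subseteq \mathcal{M}$, so this case is free. So assume $\val(x_1) = \val(x_2) =: g$; then $g \in H \cup G_{\geq e}$ by Lemma \ref{lem:convex_basic2}, and $\overline{g_1} = \overline{g} = \overline{g_2}$. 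Here I expect the key step is to compare $g$ with $g_1$ and $g_2$ and use the monotonicity hypotheses on the family. Concretely, $\mylcp(x_i) \in M_{g_i}$ by definition of $\Phi^A$; I want to promote these to membership in $M_g$. Using Proposition \ref{prop:ring2}-style reasoning — or rather the hypothesis directly — since $x_i \in \Phi^A(M_{g_i}, [\![g_i]\!])$ forces either $\val(x_i) = [\![g_i]\!]$ or $\val(x_i) > g_i$, and $\overline{g} = \overline{g_i}$, Lemma \ref{lem:convex_basic12} and Lemma \ref{lem:convex_basic11} let me conclude $g \geq g_i$ with $\overline{g}=\overline{g_i}$ (or $[\![g]\!]=[\![g_i]\!]$), hence $M_{g_i} \subseteq M_g$ by the first monotonicity bullet (or trivially). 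Thus $\mylcp(x_i) \in M_g$.

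Now if $\mylcp(x_1) + \mylcp(x_2) \neq 0$, then by Definition \ref{def:pseudo}(4) we have $\val(x_1+x_2) = g$ and $\mylcp(x_1+x_2) = \mylcp(x_1) + \mylcp(x_2) \in M_g$ (a quasi-quadratic module of $F$ by the hypothesis on the family, or by Proposition \ref{prop:ring2}(1) if $\mathcal M$ turns out to realize it — but here $M_g$ is given). Since $\val(x_1+x_2) = g$ and $g \in H \cup G_{\geq e}$, trivially $x_1 + x_2 \in \Phi^A(M_g, [\![g]\!]) \subseteq \mathcal{M}$. The remaining subcase is $\mylcp(x_1) + \mylcp(x_2) = 0$: then $\pm \mylcp(x_1) \in M_g$, so by Corollary \ref{lem:field0} (valid for $F$, which has characteristic $\neq 2$) we get $M_g = F$. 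By Corollary \ref{cor:psudo_lemlem} we have $\val(x_1+x_2) > g$; write $h := \val(x_1+x_2)$. The hard part will be showing $x_1 + x_2 \in \mathcal{M}$ here: I need some index $g' \in H \cup G_{\geq e}$ with $M_{g'} = F$, $\overline{g'} = \overline{h}$, and $x_1+x_2 \in \Phi^A(F, [\![g']\!])$, i.e. $h = [\![g']\!]$ or $h > g'$. Taking $g' = g$ does not work unless $\overline{h} = \overline{g}$. The resolution is to invoke the second monotonicity bullet: since $M_g = F$ and we can find (via Lemma \ref{lem:convex_basic11}, using $\overline{h}=\overline{g}$ would be needed — if instead $\overline{h}\neq\overline{g}$ we argue via $[g]$ versus $[h]$) an appropriate index, the condition "$M_g = F$ and ($[g]=[h]$ or $g \leq h$) implies $M_g \subseteq M_h$" forces $M_h = F$, whence $x_1+x_2 \in \Phi^A(M_h,[\![h]\!]) = \Phi^A(F,[\![h]\!])$ with $h \in H \cup G_{\geq e}$ (as $h > g \geq e$ or $h \in H$ by convexity and Lemma \ref{lem:convex_basic2}), so $x_1+x_2 \in \mathcal{M}$. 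I expect this last bookkeeping — matching the two monotonicity bullets to exactly the indices produced by the $\mylcp$-cancellation case — to be the only genuinely delicate point; everything else is a transcription of the arguments already carried out in Lemma \ref{lem:ring3} and Theorem \ref{thm:ring1}.
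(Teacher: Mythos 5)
Your reduction of closure under squares to Remark \ref{rem:properness2}, and of the case $\val(x_1)\neq\val(x_2)$ to Lemma \ref{lem:ring3}(1), is fine; the gap is in the equal-valuation case. Writing $g=\val(x_1)=\val(x_2)$, you claim $M_{g_i}\subseteq M_g$ (hence $\mylcp(x_i)\in M_g$) because either $g\geq g_i$ with $\overline g=\overline{g_i}$, or $[\![g]\!]=[\![g_i]\!]$, the latter case being ``trivial''. It is not: the hypotheses of this corollary, unlike the definition of $\mathcal T_F^{H\cup G_{\geq e}}$ used for Theorem \ref{thm:ring4}, contain no clause forcing $M_g\subseteq M_h$ when $[\![g]\!]=[\![h]\!]$, and as soon as $A\neq B$ an element of $\Phi^A(M_{g_i},[\![g_i]\!])$ may have $\val(x_i)=g_ih^2<g_i$ with $h\in H$. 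Concretely, take $K=\mathbb R(\!(t_1)\!)(\!(t_2)\!)$, $G=\mathbb Z^2$ lexicographic, $H=\{0\}\times\mathbb Z$, and set $M_h=\sum F^2$ if $h\geq(0,2)$ and $\overline h=\overline e$, and $M_h=\{0\}$ otherwise; this family satisfies both bullets, and $x_1=x_2=1$ lies in $\Phi^A(M_{(0,2)},[\![(0,2)]\!])$ because $[\![e]\!]=[\![(0,2)]\!]$, yet your step would place $\mylcp(x_1+x_2)=2$ in $M_e=\{0\}$, which is false, so your argument cannot locate $x_1+x_2$ (it does lie in $\mathcal M$, but only via the index $(0,2)$, not via $\Phi^A(M_e,[\![e]\!])$). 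The same unjustified inclusion is what produces $M_g=F$ in your cancellation sub-case. (When $A=B$ one has $H=\{e\}$ and $[\![g]\!]=[\![g_i]\!]$ forces $g=g_i$, so your argument is only complete in that special case.)

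The repair is to index at $g_{\max}=\max\{g_1,g_2\}$ rather than at $\val(x_i)$, which is in substance what the paper does by invoking Lemma \ref{lem:ring3}(2) for the pair $g_1\leq g_2$: since $\overline{g_1}=\overline{g_2}$, the first bullet gives $M_{g_1}\subseteq M_{g_2}$, so $M_{g_1}+M_{g_2}=M_{g_2}$; in the non-cancelling case $\mylcp(x_1+x_2)\in M_{g_2}$ and the valuation conditions relative to $[\![g_2]\!]$ are inherited from $x_2$ because $\val(x_1+x_2)=\val(x_2)$, whence $x_1+x_2\in\Phi^A(M_{g_2},[\![g_2]\!])$; in the cancelling case $\pm\mylcp(x_2)\in M_{g_2}$ gives $M_{g_2}=F$, and for $h=\val(x_1+x_2)>\val(x_2)$ convexity of $H$ yields $[h]=[g_2]$ or $h\geq g_2$, so the second bullet gives $M_h=F$ and $x_1+x_2\in\Phi^A(M_h,[\![h]\!])$. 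With this re-indexing (or by citing Lemma \ref{lem:ring3}(2) directly, where only the inclusion ``$\subseteq$'' of its last case is needed, so properness is not an issue), your case analysis goes through.
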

\begin{proof}
Set  $\mathcal M=\bigcup_{g \in H\cup G_{\geq e}}\Phi^{A}(M_{g},[\![g]\!])$.
We first show that $\mathcal M$ is closed under multiplication by the squares of elements in $A$.
However, it immediately follows from Remark \ref{rem:properness2}.

We next want to show that $\mathcal M$ is closed under addition.
Take elements $x_1, x_2 \in \mathcal M$.
When $x_1+x_2=0$, $x_1=0$ or $x_2=0$, it is obvious that $x_1+x_2 \in \mathcal M$.
We demonstrate that $x_1+x_2 \in \mathcal M$ when $x_1+x_2 \not=0, x_1\neq 0$ and $x_2\neq 0$.
There exist elements $g_1, g_2\in H\cup G_{\geq e}$ such that $x_1\in\Phi(M_{g_1},[\![g_1]\!])$
and $x_2\in\Phi(M_{g_2},[\![g_2]\!])$.

When $\overline{g_1}\neq \overline{g_2}$, we have $x_1+x_2\in\Phi(M_{g_1},[\![g_1]\!])\cup\Phi(M_{g_2},[\![g_2]\!])$ 
by Lemma \ref{lem:ring3}(1).

We next consider the case in which $\overline{g_1}=\overline{g_2}$.
We may assume $g_1\leq g_2$ without loss of generality.
If $M_{g_1}+M_{g_2}\neq F$, we see 
that $x_1+x_2\in\Phi(M_{g_1},[\![g_1]\!]) \cup \Phi(M_{g_2},[\![g_2]\!])$ by Lemma \ref{lem:ring3}(2) 
because $M_{g_1}+M_{g_2}=M_{g_2}$ by the assumption.

If $M_{g_1}+M_{g_2}=F$, we see that $M_{g_2}=F$. This implies that $M_g=F$ for any $g\in H{g_2}\cup G_{\geq g_2}$
because $Hg_2\cup G_{\geq g_2}\subseteq H\cup G_{g\geq e}$ by Lemma \ref{lem:convex_basic2}(2).
Hence it follows from Lemma \ref{lem:ring3}(2) and Lemma \ref{lem:convex_basic2}(2) that 
\begin{eqnarray*}
x_1+x_2&\in&
\Phi(M_{g_1},[\![g_1]\!]) \cup \Phi(M_{g_2},[\![g_2]\!])\cup\bigcup_{g\in H{g_2}\cup G_{\geq g_2}}\Phi(F,[\![g]\!]) \\
&=&\Phi(M_{g_1},[\![g_1]\!]) \cup \Phi(M_{g_2},[\![g_2]\!])\cup\bigcup_{g\in H{g_2}\cup G_{\geq g_2}}\Phi(M_g,[\![g]\!])\\
&\subseteq& \mathcal M.
\end{eqnarray*}
\end{proof}

\begin{lemma}\label{lem:ring4}
Let $(K,\val)$ be a $2$-henselian valued field and $B$ be its valuation ring.
Let $A$ be a subring such that $B \subseteq A \subseteq K$.
Set $H=\val(A^\times)$.
Take elements $g_1,g_2 \in H\cup G_{\geq e}$ and quasi-quadratic modules $M_1$ and $M_2$ in the residue class field $F$.
The following equality holds true:
$$
\Phi^A(M_1,[\![g_1]\!]) \cap \Phi^A(M_2,[\![g_2]\!]) = \left\{
\begin{array}{ll}
\{0\} & \text{if } \overline{g_1} \not= \overline{g_2},\\
\Phi^A(M_1 \cap M_2,[\![\max\{g_1,g_2\}]\!]) & \text{otherwise.}
\end{array}
\right.
$$
\end{lemma}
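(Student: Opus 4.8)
The plan is to prove the two cases separately, using the canonical representation machinery and Lemma~\ref{lem:star} as the main tools. Throughout, write $\mathcal M_i=\Phi^A(M_i,[\![g_i]\!])$ for $i=1,2$, and observe that $0$ always lies in both sides, so it suffices to argue about nonzero elements. Since the role of $g_1$ and $g_2$ is symmetric in the intersection (unlike the sum), I may freely assume $g_1\leq g_2$ when convenient, so that $\max\{g_1,g_2\}=g_2$.

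First I would dispose of the case $\overline{g_1}\neq\overline{g_2}$. If $x$ is a nonzero element of the intersection, then $\val(x)=\overline{g_1}$ (from $x\in\mathcal M_1$) and $\val(x)=\overline{g_2}$ (from $x\in\mathcal M_2$) in $G/G^2$; these contradict $\overline{g_1}\neq\overline{g_2}$. Hence the intersection is $\{0\}$, as claimed.

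Now suppose $\overline{g_1}=\overline{g_2}$, and assume $g_1\leq g_2$. For the inclusion $\Phi^A(M_1\cap M_2,[\![g_2]\!])\subseteq\mathcal M_1\cap\mathcal M_2$: take a nonzero $x$ with $\val(x)=\overline{g_2}$, $\mylcp(x)\in M_1\cap M_2$, and ($\val(x)=[\![g_2]\!]$ or $\val(x)>g_2$). Clearly $x\in\mathcal M_2$ since $M_1\cap M_2\subseteq M_2$. To see $x\in\mathcal M_1$, I need to check the condition relative to $g_1$: since $\val(x)\geq g_2\geq g_1$, either $\val(x)>g_1$ (done), or $\val(x)=g_1$, in which case $\val(x)=[\![g_1]\!]$; since $\mylcp(x)\in M_1$, we get $x\in\mathcal M_1$. (If $\val(x)=[\![g_2]\!]$ with $\val(x)<g_2$ were possible only when... actually $\val(x)=[\![g_2]\!]$ forces $\val(x)\geq g_2$ up to $H^2$, but one must be careful: invoke Lemma~\ref{lem:convex_basic12} to compare $\val(x)$ with $g_1$ exactly as in the proof of Lemma~\ref{lem:triv}.) Conversely, for $\mathcal M_1\cap\mathcal M_2\subseteq\Phi^A(M_1\cap M_2,[\![g_2]\!])$: take a nonzero $x\in\mathcal M_1\cap\mathcal M_2$. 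Then $\mylcp(x)\in M_1\cap M_2$ and $\val(x)=\overline{g_2}$. The remaining point is to verify the valuation condition for $[\![g_2]\!]$, i.e.\ that $\val(x)=[\![g_2]\!]$ or $\val(x)>g_2$. From $x\in\mathcal M_2$ this is immediate. So $x\in\Phi^A(M_1\cap M_2,[\![g_2]\!])$.

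The main obstacle I anticipate is the bookkeeping around the three equivalence relations $\overline{\cdot}$, $[\![\cdot]\!]$, $[\cdot]$ on the value group: when $\val(x)=[\![g_2]\!]$ but $g_1<g_2$ strictly in $G/H^2$, one must confirm via Lemma~\ref{lem:convex_basic12} that the $\Phi^A(\cdot,[\![g_1]\!])$-condition is automatically met (specifically that $\val(x)>g_1$), rather than just $\val(x)=[\![g_1]\!]$ which would be false. This is exactly the argument already used in Lemma~\ref{lem:triv} and Lemma~\ref{lem:star}, so I would simply cite those. Everything else is a routine unwinding of definitions; no appeal to the pseudo-angular component structure beyond what is already encoded in the sets $\Phi^A$ is needed, because intersection behaves much more simply than sum.
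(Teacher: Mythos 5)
Your proposal is correct and follows essentially the same route as the paper: dispose of $\overline{g_1}\neq\overline{g_2}$ by comparing classes in $G/G^2$, assume $g_1\leq g_2$, unwind the definitions, and settle the only nontrivial point — that a nonzero $x$ with $\val(x)=[\![g_2]\!]$ but $[\![\val(x)]\!]\neq[\![g_1]\!]$ still satisfies $\val(x)>g_1$ — via Lemma \ref{lem:convex_basic12}, exactly as the paper does. Only note that your intermediate claim ``$\val(x)\geq g_2\geq g_1$'' is not literally valid (the condition $\val(x)=[\![g_2]\!]$ only gives $\val(x)=g_2h^2$ with $h\in H$), but the correction you yourself flag in the parenthetical — arguing as in Lemma \ref{lem:triv} with Lemma \ref{lem:convex_basic12} — is precisely the paper's argument, so the proof is sound.
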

\begin{proof}
Set $\mathcal M=\Phi^A(M_1,[\![g_1]\!]) \cap \Phi^A(M_2,[\![g_2]\!])$.
We first consider the case in which $\overline{g_1} \not= \overline{g_2}$.
If there exists a nonzero element $x \in \mathcal M$, 
we have $\overline{g_1} = \overline{\val(x)}= \overline{g_2}$.
A contradiction.

We next consider the case in which $\overline{g_1}=\overline{g_2}$.
We may assume $g_1\leq g_2$ without loss of generality.
Take a nonzero element $x\in\mathcal M$.
Since it follows that $\val(x)=\overline{g_2}$,
$\mylcp(x)\in M_1\cap M_2$ and $\val(x)=[\![g_2]\!]$ or $\val(x)>g_2$, 
we have $x\in\Phi(M_1\cap M_2,[\![g_2]\!])$.
We demonstrate the opposite inclusion.
Take a nonzero element $x\in\Phi(M_1\cap M_2,[\![g_2]\!])$.
It follows from the definition and the assumption that 
$\val(x)=\overline{g_1}=\overline{g_2}$ and $\mylcp(x)\in M_1\cap M_2$.
When $\val(x)>g_2$, it is obvious that $x\in\mathcal M$.
Assume that $\val(x)=[\![g_2]\!]$. We have nothing to prove when $\val(x)=[\![g_1]\!]$.
We consider the remaining case in which $\val(x)\neq [\![g_1]\!]$.
If $g_1>\val(x)$, we get $g_1>g_2$ by Lemma \ref{lem:convex_basic12}(1).
A contradiction. Hence we have $x \in\mathcal M$.
\end{proof}

We are now ready to demonstrate the following theorem
which is a main result of this section.

\begin{theorem}\label{thm:ring2}
Let $(K,\val)$ be a $2$-henselian valued field and $B$ be its valuation ring
with residue class field $F$.
Let $A$ be a subring such that $B \subseteq A \subseteq K$ and assume that $2$ is a unit in $B$.
Set $H=\val(A^\times)$.
Consider quasi-quadratic modules $\mathcal M$ and $\mathcal N$ in $A$.
Let $\mathcal M = 
\bigcup_{g \in H\cup G_{\geq e}}\Phi^{A}(M_{g},[\![g]\!])$
and 
$\mathcal N = 
\bigcup_{g \in H\cup G_{\geq e}}\Phi^{A}(N_{g},[\![g]\!])$
be the canonical representations of $\mathcal M$ and $\mathcal N$
where $M_{g}=M_{g}(\mathcal M)$ and $N_{g}=M_{g}
(\mathcal N)$, respectively.
We get the following equalities:

(1) 
$$\mathcal M + \mathcal N = \bigcup_{g \in H\cup G_{\geq e}}
\Phi^{A}(M_{g}+N_{g},[\![g]\!])
\cup\bigcup_{l \in L} \Phi^{A}(F,[\![l]\!]),
$$
where $L=\{l\in H\cup G_{\geq e}\;|\;
l\geq g \text{ for some } 
g\in H\cup G_{\geq e} \text{ with } M_g+N_g=F \}$.

(2) 
\begin{eqnarray*}
\mathcal M \cap \mathcal N 
&=& 
\bigcup_{g\in H\cup G_{\geq e}}
\Phi^{A}(M_g\cap N_g,[\![g]\!]).
\end{eqnarray*}
\end{theorem}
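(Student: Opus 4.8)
The plan is to prove the two equalities by reducing them, via the canonical representation theorem (Theorem \ref{thm:ring1}), to the pointwise lemmas already established for the building blocks $\Phi^A(M,[\![g]\!])$. For (2), note that $\mathcal M\cap\mathcal N=\bigl(\bigcup_{g}\Phi^A(M_g,[\![g]\!])\bigr)\cap\bigl(\bigcup_{h}\Phi^A(N_h,[\![h]\!])\bigr)=\bigcup_{g,h}\bigl(\Phi^A(M_g,[\![g]\!])\cap\Phi^A(N_h,[\![h]\!])\bigr)$, and then apply Lemma \ref{lem:ring4} termwise: the cross terms with $\overline g\neq\overline h$ contribute only $\{0\}$, and for $\overline g=\overline h$ we get $\Phi^A(M_g\cap N_h,[\![\max\{g,h\}]\!])$. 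The remaining work is to check that $\bigcup_{\overline g=\overline h}\Phi^A(M_g\cap N_h,[\![\max\{g,h\}]\!])$ equals $\bigcup_{g}\Phi^A(M_g\cap N_g,[\![g]\!])$. For the inclusion $\supseteq$ this is the diagonal $g=h$; for $\subseteq$, given $\overline g=\overline h$, say $g\le h$, we use Proposition \ref{prop:ring2}(2),(3) to see $M_g\subseteq M_h$ hence $M_g\cap N_h\subseteq M_h\cap N_h=M_h\cap N_h$, so by Lemma \ref{lem:triv} the term at $[\![h]\!]$ is absorbed by $\Phi^A(M_h\cap N_h,[\![h]\!])$.

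For (1), the same distributive expansion gives $\mathcal M+\mathcal N=\sum_{g,h}\bigl(\Phi^A(M_g,[\![g]\!])+\Phi^A(N_h,[\![h]\!])\bigr)$ once one observes that a sum $x_1+x_2$ with $x_1\in\mathcal M$, $x_2\in\mathcal N$ lands in some $\Phi^A(M_g,[\![g]\!])+\Phi^A(N_h,[\![h]\!])$ and conversely each such binary sum lies in $\mathcal M+\mathcal N$. Here the key technical input is Lemma \ref{lem:ring3}(2), which evaluates each $\Phi^A(M_g,[\![g]\!])+\Phi^A(N_h,[\![h]\!])$ in three cases (distinct square classes; equal square classes with $M_g+N_h\neq F$; equal square classes with $M_g+N_h=F$). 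I would first dispose of the case where some $M_g$ or $N_h$ is improper by invoking Corollary \ref{cor:ring3} (then $\mathcal M$ or $\mathcal N$ equals $A$ and the statement is immediate, with $L=H\cup G_{\ge e}$); so in the main argument all the $M_g,N_h$ occurring may be taken proper and Lemma \ref{lem:ring3}(2) applies directly.

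Assembling the union over all pairs $(g,h)$, the first case of Lemma \ref{lem:ring3}(2) contributes only pieces already present in $\mathcal M\cup\mathcal N$, hence in the right-hand side (since $M_g+N_g\supseteq M_g$ and $\supseteq N_g$). In the second case, with $\overline g=\overline h$ and $g_{\max}=\max\{g,h\}$, using Proposition \ref{prop:ring2}(2),(3) as above one gets $M_g+N_h\subseteq M_{g_{\max}}+N_{g_{\max}}$, so $\Phi^A(M_g+N_h,[\![g_{\max}]\!])\subseteq\Phi^A(M_{g_{\max}}+N_{g_{\max}},[\![g_{\max}]\!])$ by Lemma \ref{lem:triv}, which is one of the terms of the first union on the right-hand side. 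In the third case the extra contribution is $\bigcup_{g\in Hg_{\max}\cup G_{\ge g_{\max}}}\Phi^A(F,[\![g]\!])$; since $M_{g_{\max}}+N_{g_{\max}}=F$ we have $g_{\max}\in L$, and by the definition of $L$ together with Corollary \ref{cor:mymymy} (closure of $L$ under $[\,\cdot\,]$-equivalence) every $g\in Hg_{\max}\cup G_{\ge g_{\max}}$ belongs to $L$, so this contribution is contained in $\bigcup_{l\in L}\Phi^A(F,[\![l]\!])$. For the reverse inclusion, each term $\Phi^A(M_g+N_g,[\![g]\!])$ on the right is $\Phi^A(M_g,[\![g]\!])+\Phi^A(N_g,[\![g]\!])\subseteq\mathcal M+\mathcal N$ by Lemma \ref{lem:ring3}(2) itself (second or third case, $g_1=g_2=g$), and for $l\in L$ pick $g\le l$ with $M_g+N_g=F$: then $\Phi^A(F,[\![l]\!])$ appears inside $\Phi^A(M_g,[\![g]\!])+\Phi^A(N_g,[\![g]\!])$ by the third case of Lemma \ref{lem:ring3}(2).

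The step I expect to be the main obstacle is the bookkeeping in (1) showing that the union of all binary sums over pairs $(g,h)$ collapses exactly onto the two advertised unions — in particular verifying that the "spurious" $\Phi^A(F,[\![g]\!])$-terms produced by mismatched pairs in the third case of Lemma \ref{lem:ring3}(2) are all captured by $L$, which is precisely what Corollary \ref{cor:mymymy} is designed to guarantee, and making sure no term escapes when one of $M_g$, $N_h$ could a priori be improper. The properness reduction via Corollary \ref{cor:ring3} at the outset is what keeps this manageable.
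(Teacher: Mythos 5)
Your part (2) follows the paper's own route (distribute, apply Lemma \ref{lem:ring4}, absorb cross terms with Proposition \ref{prop:ring2}(2)(3) and Lemma \ref{lem:triv}) and is fine. The gap is in part (1), at the very first step: the ``properness reduction''. You claim that if some $M_g$ or $N_h$ equals $F$ then Corollary \ref{cor:ring3} forces $\mathcal M$ or $\mathcal N$ to equal $A$. Corollary \ref{cor:ring3} says no such thing: it yields $\mathcal M=A$ only when $M_e(\mathcal M)=F$, equivalently when $M_g(\mathcal M)=F$ for \emph{all} $g\in H\cup G_{\geq e}$. A single $M_g=F$ with $g\neq e$ implies nothing of the sort: for $A=B$ the maximal ideal $\mathfrak m_B$ is a quasi-quadratic module with $M_g(\mathfrak m_B)=F$ for every $g>e$ and $M_e(\mathfrak m_B)=\{0\}$, yet $\mathfrak m_B\neq B$. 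So you cannot assume that all the $M_g,N_h$ occurring are proper, and every subsequent appeal to Lemma \ref{lem:ring3}(2) (whose hypotheses require both residue modules to be proper) in a case where one of them is $F$ is unjustified.

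This is not cosmetic; it breaks your argument for $\bigcup_{l\in L}\Phi^A(F,[\![l]\!])\subseteq\mathcal M+\mathcal N$. You pick a witness $g\leq l$ with $M_g+N_g=F$ and assert $\Phi^A(F,[\![l]\!])\subseteq\Phi^A(M_g,[\![g]\!])+\Phi^A(N_g,[\![g]\!])$ via the third case of Lemma \ref{lem:ring3}(2); but take $A=B$, $G=\mathbb Z$, $\mathcal M=\mathfrak m_B$, $\mathcal N=\{0\}$, $g=1$, $l=2$: then $M_1=F$, $N_1=\{0\}$, $M_1+N_1=F$, while $\Phi^A(M_1,[\![1]\!])+\Phi^A(N_1,[\![1]\!])=\Phi^A(F,[\![1]\!])$ consists of elements of odd valuation and cannot contain $\Phi^A(F,[\![2]\!])$. (The containment into $\mathcal M+\mathcal N$ is still true, but your route does not prove it.) The correct argument is the paper's: from $M_g+N_g=F$ deduce $M_g(\mathcal M+\mathcal N)=F$, propagate to $M_{\val(x)}(\mathcal M+\mathcal N)=F$ for each nonzero $x\in\Phi^A(F,[\![l]\!])$ using the ``furthermore'' part of Proposition \ref{prop:ring3'} together with Proposition \ref{prop:ring2}(2)(3), and conclude $x\in\supp(\mathcal M+\mathcal N)\subseteq\mathcal M+\mathcal N$ by Proposition \ref{prop:ring3'}. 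The forward inclusion is easier to repair: when $M_{g}+N_{h}\neq F$ both modules are automatically proper, and when $M_{g}+N_{h}=F$ you only need the one-sided containment of the sum, which follows from Lemma \ref{lem:ring3}(1) and Lemma \ref{coro:basic004} without any properness hypothesis. As written, though, the proposal has a genuine hole where the residue modules degenerate to $F$ or $\{0\}$.
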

\begin{proof}

(1) Let $\mathcal L_1$ be the right-hand side of the equality for the
simplicity of notation.
We first demonstrate 
$\mathcal M+\mathcal N \subseteq \mathcal L_1$.
Let $x$ be a nonzero element of $\mathcal{M+N}$.
We can take $x_1\in\mathcal M$ and $x_2\in\mathcal{N}$ with
$x=x_1+x_2$. There exist $g_1,g_2\in H\cup G_{\geq e}$ such that 
$x_1\in\Phi(M_{g_1},[\![g_1]\!])$ and $x_2\in\Phi(N_{g_2},[\![g_2]\!])$.
We may assume that $x_1\neq 0$, $x_2\neq 0$ and $g_1\leq g_2$.

We consider the case in which $\val(x_1)\neq \val(x_2)$.
By Lemma \ref{lem:ring3}(1), we have
$x=x_1+x_2\in\Phi(M_{g_1},[\![g_1]\!])\cup\Phi(N_{g_2},[\![g_2]\!])
\subseteq\bigcup_{g\in H\cup G_{\geq e}}\Phi(M_{g}+N_{g},[\![g]\!])\subseteq\mathcal L_{1}$.

We next consider the case in which $\val(x_1)=\val(x_2)$.
If $M_{g_1}+N_{g_2}\neq F$, then
we see that 
$$x\in\Phi(M_{g_1},[\![g_1]\!])\cup\Phi(N_{g_2},[\![g_2]\!])\cup\Phi(M_{g_1}+N_{g_2},[\![g_2]\!])$$
by Lemma \ref{lem:ring3}(2). It is trivial that $x\in\mathcal L_1$ when
$x\in\Phi(M_{g_1},[\![g_1]\!])\cup\Phi(N_{g_2},[\![g_2]\!])$. So we assume that $x\in\Phi(M_{g_1}+N_{g_2},[\![g_2]\!])$.
By Proposition \ref{prop:ring2}(3), it follows that 
$\Phi(M_{g_1}+N_{g_2},[\![g_2]\!])\subseteq\Phi(M_{g_2}+N_{g_2},[\![g_2]\!])\subseteq \mathcal L_1$.

If $M_{g_1}+N_{g_2}=F$, we have
$$
x\in\Phi(M_{g_1},[\![g_1]\!])\cup\Phi(N_{g_2},[\![g_2]\!])\cup
\bigcup_{g\in H{g_{2}}\cup G_{\geq g_{2}}}\Phi(F,[\![g]\!])
$$
by Lemma \ref{lem:ring3}(2).
Note that  $M_{g_2}+N_{g_2}=F$ because 
$M_{g_1}\subseteq M_{g_2}$ by the assumption and Proposition \ref{prop:ring2}(3).
It is sufficient to demonstrate the result in the case where $x\in\Phi(F,[\![g]\!])$ for some $g\in Hg_2\cup G_{\geq g_2}$.
When $g\in G_{\geq g_2}$, we get $g\in L$ by Lemma \ref{lem:convex_basic2}(2).
Hence it follows that $x\in \bigcup_{l \in L} \Phi(F,[\![l]\!])\subseteq
\mathcal L_1$.
When $g\not\in G_{\geq g_2}$, we have $g\in Hg_2$. 
It follows from  Lemma \ref{lem:convex_basic2}(2) and Corollary \ref{cor:mymymy} 
that $g \in L$. Thus we have  $x\in \bigcup_{l \in L} \Phi(F,[\![l]\!])\subseteq
\mathcal L_1$.

We next demonstrate the opposite inclusion.
Let $x$ be a nonzero element of $\mathcal L_1$.
We first assume that $x\in\Phi(F,[\![l]\!])$ for some $l\in L$.
There exists an element $g\in H\cup G_{\geq e}$ such that
$l \geq g$ with  $M_g+N_g=F$. 
We have $M_{g}(\mathcal M+\mathcal N)=F$ by Definition \ref{def:pseudo}(4).
It is trivial that
$M_{l}(\mathcal M+\mathcal N)=F$ by Proposition \ref{prop:ring3'}.

We have $\val(x)=\overline{l}$ and either $\val(x)=[\![l]\!]$ or $\val(x)>l$ by the definition of $\Phi(F,[\![l]\!])$.
Note that $\val(x)\in H\cup G_{\geq e}$ from Lemma \ref{lem:convex_basic2}(1).
By Proposition \ref{prop:ring3'}, it is enough to demonstrate that
$$M_{\val(x)}(\mathcal M+\mathcal N)=F.$$
However the equality immediately follows from Proposition \ref{prop:ring2}(2)(3).

We consider the remaining case
where $x\not\in\Phi(F, [\![l]\!])$ for any $l\in L$.
We can take an element $g\in H\cup G_{\geq e}$ with
$x\in\Phi(M_g+N_g,[\![g]\!])$. 
If $M_g+N_g=F$, we see that $g\in L$. It contradicts the assumption 
that $x\not\in\Phi(F, [\![l]\!])$ for any $l\in L$.
Hence it follows that $M_g+N_g\neq F$.
By Lemma \ref{lem:ring3}(2) and Theorem \ref{thm:ring1},
we see that
$$x\in\Phi(M_g+N_g,[\![g]\!])\subseteq
\Phi(M_g,[\![g]\!])+\Phi(N_g,[\![g]\!])\subseteq\mathcal{M+N}.$$

(2) We have $\mathcal M = \bigcup_{g\in H\cup G_{\geq e}}\Phi(M_{g},[\![g]\!])$ and $\mathcal N = \bigcup_{g\in H\cup G_{\geq e}}\Phi(N_{g},[\![g]\!])$ by Theorem \ref{thm:ring1}.
We use these equalities in the proof.
Let $\mathcal L_2$ be the right-hand side of the equality.
We first demonstrate 
$\mathcal M\cap \mathcal N \subseteq \mathcal L_2$.
Take a nonzero element $x\in\mathcal M\cap \mathcal N$.
Then there exist $g_1,g_2\in H\cup G_{\geq e}$ such that
$x\in \Phi(M_{g_1},[\![g_1]\!])\cap \Phi(N_{g_2},[\![g_2]\!])$.
We may assume that $g_1\leq g_2$.
By Lemma \ref{lem:ring4}, we have $\overline{g_1}=\overline{g_2}$ and 
$\Phi(M_{g_1},[\![g_1]\!])\cap \Phi(N_{g_2},[\![g_2]\!])=\Phi(M_{g_1}\cap N_{g_2},[\![g_2]\!])$.
Moreover, we can show that 
$\Phi(M_{g_1}\cap N_{g_2},[\![g_2]\!])\subseteq\Phi(M_{g_2}\cap N_{g_2},[\![g_2]\!])$,
because $M_{g_1}\subseteq M_{g_2}$ by Proposition \ref{prop:ring2}(3). 
Hence we have $x\in\Phi(M_{g_2}\cap N_{g_2},[\![g_2]\!])$.

We next demonstrate the opposite inclusion.
However it is immediately follows from 
$\Phi(M_g \cap N_g,[\![g]\!])=\Phi(M_{g},[\![g]\!])\cap\Phi(N_{g},[\![g]\!])$ for any $g\in H\cup G_{\geq e}$
by Lemma \ref{lem:ring4}
\end{proof}

Recall that $\mathfrak X_{R}$
denotes the set of all quasi-quadratic modules in a commutative ring $R$.
Let $(K,\val)$ be a valued field and $B$ be its valuation ring.
Suppose that $A$ is a subring such that $B \subseteq A \subseteq K$.
Set $H=\val(A^\times)$.

We set
\begin{align*}
\mathcal T_R^{H\cup G_{\geq e}}
&= \{(M_g)_{g\in H\cup G_{\geq e}}\in\prod_{g\in H\cup G_{\geq e}}\mathfrak X_{R}\;|\;
M_g\subseteq M_h \text{ whenever }\\
&\qquad\qquad
([\![g]\!]=[\![h]\!])\vee
 (g\leq h \wedge \overline{g}=\overline{h})
\vee (M_g=R \wedge ([g]=[h]\vee g\leq h))\}.
\end{align*}

The following theorem is the main theorem introduced in Section \ref{sec:intro}.
\begin{theorem}\label{thm:ring4}
Let $(K,\val)$ be a $2$-henselian valued field and $B$ be its valuation ring.
Let $A$ be a subring such that $B \subseteq A \subseteq K$ and assume that $2$ is a unit in $B$.
Set $H=\val(A^\times)$.
We define the map 
$\Theta:\mathfrak X_A \rightarrow \mathcal T_F^{H\cup G_{\geq e}}$ by
\begin{equation*}
\Theta(\mathcal M)=(M_{g}(\mathcal M))_{g \in H\cup G_{\geq e}}\text{,}
\end{equation*}
where $F$ is the residue class field.
The map $\Theta$ is a bijection.
\end{theorem}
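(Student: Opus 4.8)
The plan is to verify in turn that $\Theta$ takes values in $\mathcal T_F^{H\cup G_{\geq e}}$, that it is injective, and that it is surjective; most of the work has already been done in the preceding results. For well-definedness, fix $\mathcal M\in\mathfrak X_A$ and check that $(M_g(\mathcal M))_{g\in H\cup G_{\geq e}}$ meets the defining conditions of $\mathcal T_F^{H\cup G_{\geq e}}$: each $M_g(\mathcal M)$ is a quasi-quadratic module in $F$ by Proposition \ref{prop:ring2}(1); the clause $[\![g]\!]=[\![h]\!]$ is handled by Proposition \ref{prop:ring2}(2) (which even gives equality), the clause $g\leq h$ with $\overline{g}=\overline{h}$ by Proposition \ref{prop:ring2}(3), and the clause ``$M_g(\mathcal M)=F$ and ($[g]=[h]$ or $g\leq h$)'' by Corollary \ref{cor:ring41} (when $[g]=[h]$) and by the ``furthermore'' part of Proposition \ref{prop:ring3'} (when $g\leq h$), each of which forces $M_h(\mathcal M)=F\supseteq M_g(\mathcal M)$. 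Here I use that $2$ is a unit in $B$.

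Injectivity is then immediate from the Canonical Representation Theorem \ref{thm:ring1}: if $\Theta(\mathcal M)=\Theta(\mathcal N)$, then $M_g(\mathcal M)=M_g(\mathcal N)$ for every $g\in H\cup G_{\geq e}$, hence
\[
\mathcal M=\bigcup_{g\in H\cup G_{\geq e}}\Phi^A(M_g(\mathcal M),[\![g]\!])=\bigcup_{g\in H\cup G_{\geq e}}\Phi^A(M_g(\mathcal N),[\![g]\!])=\mathcal N .
\]
For surjectivity, I would take an arbitrary tuple $(M_g)_{g\in H\cup G_{\geq e}}\in\mathcal T_F^{H\cup G_{\geq e}}$ and set $\mathcal M=\bigcup_{g\in H\cup G_{\geq e}}\Phi^A(M_g,[\![g]\!])$. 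The hypotheses required by Corollary \ref{cor:ring1} are among the defining conditions of $\mathcal T_F^{H\cup G_{\geq e}}$, so $\mathcal M\in\mathfrak X_A$, and it remains to show $M_g^A(\mathcal M)=M_g$ for each $g\in H\cup G_{\geq e}$ (this also exhibits the union map as the inverse of $\Theta$). For $M_g\subseteq M_g^A(\mathcal M)$, a nonzero $c\in M_g$ is realized as $c=\mylcp(x)$ for some $x\in K$ with $\val(x)=g$ by Definition \ref{def:pseudo}(3); since $g\in H\cup G_{\geq e}=\val(A)$, Corollary \ref{cor:cor_convex} gives $x\in A$, so $x\in\Phi^A(M_g,[\![g]\!])\subseteq\mathcal M$ and $c\in M_g^A(\mathcal M)$. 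For the reverse inclusion, a nonzero $c\in M_g^A(\mathcal M)$ equals $\mylcp(x)$ for some $x\in\mathcal M$ with $\val(x)=g$, and this $x$ lies in some $\Phi^A(M_h,[\![h]\!])$; then $\overline{g}=\overline{h}$, $\mylcp(x)\in M_h$, and either $[\![g]\!]=[\![h]\!]$ or $g>h$. In the first case the clause $[\![g]\!]=[\![h]\!]$ of $\mathcal T_F^{H\cup G_{\geq e}}$ gives $M_h=M_g$; in the second, $h\leq g$ together with $\overline{h}=\overline{g}$ gives $M_h\subseteq M_g$. Either way $c\in M_g$, so $M_g^A(\mathcal M)=M_g$ and $\Theta(\mathcal M)=(M_g)_{g\in H\cup G_{\geq e}}$.

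I expect the one delicate point to be the reverse inclusion in the surjectivity argument: one has to read off, from $x\in\Phi^A(M_h,[\![h]\!])$ with $\val(x)=g$, exactly which compatibility clause of $\mathcal T_F^{H\cup G_{\geq e}}$ to invoke, and the dichotomy ``$[\![g]\!]=[\![h]\!]$ or $g>h$'' is precisely what the first two clauses in the definition of $\mathcal T_F^{H\cup G_{\geq e}}$ were designed to absorb. Everything else is bookkeeping on top of Theorem \ref{thm:ring1}, Corollary \ref{cor:ring1}, and Proposition \ref{prop:ring2}.
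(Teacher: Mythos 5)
Your proposal is correct and follows essentially the same route as the paper: the same citations (Proposition \ref{prop:ring2}, Proposition \ref{prop:ring3'}, Corollary \ref{cor:ring41}) for well-definedness, Theorem \ref{thm:ring1} for injectivity (the paper phrases this as $\Lambda\circ\Theta=\mathrm{id}$), and Corollary \ref{cor:ring1} plus the computation $M_g^A\bigl(\bigcup_h\Phi^A(M_h,[\![h]\!])\bigr)=M_g$ for surjectivity (the paper's $\Theta\circ\Lambda=\mathrm{id}$). Your elementwise use of the dichotomy ``$[\![g]\!]=[\![h]\!]$ or $g>h$'' is just the unpacked form of the paper's appeal to Lemma \ref{lem:triv}, so there is no substantive difference.
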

\begin{proof}
The map $\Theta$ is well-defined by 
Proposition \ref{prop:ring2}, 
Proposition \ref{prop:ring3'}
and Corollary \ref{cor:ring41}.
We define the map 
$\Lambda:\mathcal T_F^{H\cup G_{\geq e}} \rightarrow \mathfrak X_{A}$ by
\begin{equation*}
\Lambda\left((M_{g})_{g\in H\cup G_{\geq e}}\right)
=\bigcup_{g\in H\cup G_{\geq e}}\Phi^{A}(M_{g},[\![g]\!])\text{.}
\end{equation*}
The map $\Lambda$ is well-defined by Corollary \ref{cor:ring1}.
The composition $\Lambda \circ \Theta$ is the identity map by Theorem \ref{thm:ring1}.
We demonstrate that $\Theta \circ \Lambda$ is also the identity map.
Fix $g\in  H\cup G_{\geq e}$.
Let $N$ be the $g$-th coordinate of $\Theta \circ \Lambda\left((M_g)_{g\in 
H\cup G_{\geq e}}\right)$.
We want to show that $N=M_{g}$.
By the assumption and Lemma \ref{lem:triv}, we have 
\begin{align*}
N &= M_{g}\left(\bigcup_{h\in H\cup G_{\geq e}}\Phi(M_{h}, [\![h]\!])\right)\\
&=\left\{\mylcp(x)\;\Bigl|\; x \in \bigcup_{h\in H\cup G_{\geq e}}
\Phi(M_{h},[\![h]\!])
\setminus\{0\}
\text{ and } \val(x)=g\right\} \cup \{0\}\\
&=\left\{\mylcp(x)\;|\; x \in \Phi(M_{g},[\![g]\!])\setminus\{0\}\text{ and } 
\val(x)=g\right\}\cup \{0\}\\
&\subseteq M_g.
\end{align*}

We demonstrate the opposite inclusion.
Take a nonzero element $c \in M_{g}$.
There exists a nonzero element $w \in K$ with $\val(w)=g$ and $\mylcp(w)=c$ by 
Definition \ref{def:pseudo}(3).
By Corollary \ref{cor:cor_convex}, we get $w\in A$. 
It follows that  
$w\in\Phi(M_g,[\![g]\!])$. Thus we obtain $c \in N$.
We have finished to prove that $\Theta$ and $\Lambda$ are the inverses of the others.
\end{proof}

We give explicit expressions of $\Theta(\mathcal M \cap \mathcal N)$ and $\Theta(\mathcal M+\mathcal N)$ for 
all $\mathcal M, \mathcal N \in \mathfrak X_A$.

\begin{theorem}\label{cor:exp}
Let $(K,\val)$ be a $2$-henselian valued field and $B$ be its valuation ring.
Let $A$ be a subring such that $B \subseteq A \subseteq K$ and assume that $2$ is a unit in $B$.
Set $H=\val(A^\times)$.
Consider quasi-quadratic modules $\mathcal M$ and $\mathcal N$ in $A$.
Fix an element $g\in H\cup G_{\geq e}$.
Let $M_{g}=M_{g}(\mathcal M)$ and $N_{g}=M_{g}(\mathcal N)$ be the quasi-quadratic modules of the
residue class field $F$ generated by $\mathcal M$, $\mathcal N$ and $g$, respectively.
Then we get the following equalities:

(1)
$$
\Theta(\mathcal M \cap \mathcal N) 
=  (M_g\cap N_g)_{g\in H\cup G_{\geq e}}
.$$

(2)
$$
\Theta(\mathcal M + \mathcal N)_g
=\left\{
\begin{array}{ll}
F & \text{if there exists } h \in H \cup G_{\geq e} \\
&\quad \text{ with } g \geq h \text{ and } M_h+N_h=F,\\
M_g+N_g & \text{otherwise.}
\end{array}
\right.
$$
Here, the notation $\Theta(\mathcal M + \mathcal N)_g$ denotes the $g$-th coordinate of $\Theta(\mathcal M + \mathcal N)$.
\end{theorem}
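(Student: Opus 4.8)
The plan is to reduce both equalities to Theorem \ref{thm:ring2} by exploiting that the map $\Theta$ and the map $\Lambda$ constructed in the proof of Theorem \ref{thm:ring4} are mutually inverse bijections between $\mathfrak X_A$ and $\mathcal T_F^{H\cup G_{\geq e}}$; recall that $\Lambda((P_g)_g)=\bigcup_{g\in H\cup G_{\geq e}}\Phi^A(P_g,[\![g]\!])$. The key point is that, for any family $(P_g)_{g\in H\cup G_{\geq e}}$ of quasi-quadratic modules in $F$ that lies in $\mathcal T_F^{H\cup G_{\geq e}}$, one has $\Theta(\Lambda((P_g)_g))=(P_g)_g$. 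So in each part it suffices to (i) exhibit the candidate family, (ii) check that it belongs to $\mathcal T_F^{H\cup G_{\geq e}}$, and (iii) verify that applying $\Lambda$ to it returns $\mathcal M\cap\mathcal N$, respectively $\mathcal M+\mathcal N$; then applying $\Theta$ finishes the proof.

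For part (1), step (iii) is immediate: Theorem \ref{thm:ring2}(2) says precisely that $\mathcal M\cap\mathcal N=\bigcup_{g\in H\cup G_{\geq e}}\Phi^A(M_g\cap N_g,[\![g]\!])=\Lambda((M_g\cap N_g)_g)$. For step (ii), each $M_g\cap N_g$ is a quasi-quadratic module of $F$; if $[\![g]\!]=[\![h]\!]$ then $M_g=M_h$ and $N_g=N_h$ by Proposition \ref{prop:ring2}(2); if $g\leq h$ and $\overline g=\overline h$ then $M_g\subseteq M_h$ and $N_g\subseteq N_h$ by Proposition \ref{prop:ring2}(3); and if $M_g\cap N_g=F$ then $M_g=N_g=F$, hence $M_h=N_h=F$ whenever $[g]=[h]$ or $g\leq h$ by Corollary \ref{cor:ring41} and the last assertion of Proposition \ref{prop:ring3'}. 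Thus $(M_g\cap N_g)_g\in\mathcal T_F^{H\cup G_{\geq e}}$, and applying $\Theta$ gives $\Theta(\mathcal M\cap\mathcal N)=(M_g\cap N_g)_g$.

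For part (2), recall $L=\{l\in H\cup G_{\geq e}\;|\;l\geq g\text{ for some }g\in H\cup G_{\geq e}\text{ with }M_g+N_g=F\}$ from Theorem \ref{thm:ring2}(1), and let $P_g=F$ if there is $h\in H\cup G_{\geq e}$ with $g\geq h$ and $M_h+N_h=F$, and $P_g=M_g+N_g$ otherwise. Then $P_g=F$ holds exactly when $g\in L$ (the case $h=g$ shows that $M_g+N_g=F$ already forces $g\in L$), so $\Lambda((P_g)_g)=\bigcup_{g\notin L}\Phi^A(M_g+N_g,[\![g]\!])\cup\bigcup_{l\in L}\Phi^A(F,[\![l]\!])$. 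Comparing with the formula for $\mathcal M+\mathcal N$ in Theorem \ref{thm:ring2}(1), the only extra terms there are the sets $\Phi^A(M_g+N_g,[\![g]\!])$ with $g\in L$, and since $M_g+N_g\subseteq F$ each of these is contained in $\Phi^A(F,[\![g]\!])$ by Lemma \ref{lem:triv}; hence $\Lambda((P_g)_g)=\mathcal M+\mathcal N$, which is step (iii). For step (ii): $M_g+N_g$ and $F$ are quasi-quadratic modules; if $g\leq h$ and $\overline g=\overline h$ then $M_g+N_g\subseteq M_h+N_h$ by Proposition \ref{prop:ring2}(3) and $g\in L\Rightarrow h\in L$ directly; and if $g\in L$ and $[g]=[h]$ then $h\in L$ by Corollary \ref{cor:mymymy}, which in particular covers the clause $[\![g]\!]=[\![h]\!]$ since then $[g]=[h]$ and $M_g+N_g=M_h+N_h$. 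Therefore $(P_g)_g\in\mathcal T_F^{H\cup G_{\geq e}}$, and applying $\Theta$ yields exactly the stated case distinction for $\Theta(\mathcal M+\mathcal N)_g$.

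The step that needs the most care is the bookkeeping in part (2): identifying the locus $\{g\in H\cup G_{\geq e}\;|\;P_g=F\}$ with $L$, checking that this locus is stable under all three relations built into $\mathcal T_F^{H\cup G_{\geq e}}$ (here Corollary \ref{cor:mymymy} is exactly what takes care of the $[g]=[h]$ clause), and matching the union describing $\mathcal M+\mathcal N$ in Theorem \ref{thm:ring2}(1) term by term with $\Lambda((P_g)_g)$, which comes down to absorbing the redundant pieces $\Phi^A(M_g+N_g,[\![g]\!])$ for $g\in L$ into $\Phi^A(F,[\![g]\!])$ via Lemma \ref{lem:triv}. Everything else reduces to routine applications of Theorems \ref{thm:ring1}, \ref{thm:ring2} and \ref{thm:ring4}.
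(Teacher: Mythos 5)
Your proposal is correct, but it is organized differently from the paper's proof. The paper computes the coordinates of $\Theta(\mathcal M\cap\mathcal N)$ and $\Theta(\mathcal M+\mathcal N)$ directly: for (1) it proves $M_g\cap N_g\subseteq M_g(\mathcal M\cap\mathcal N)$ by an element-level argument (if $x_1\in\mathcal M$ and $x_2\in\mathcal N$ have the same value and the same pseudo-angular component, Definition \ref{def:pseudo}(5) gives $x_1=b^2x_2$ with $b\in B^{\times}$, so $x_1\in\mathcal M\cap\mathcal N$), never invoking Theorem \ref{thm:ring2}(2); for (2) it evaluates $M_g$ of the representation of $\mathcal M+\mathcal N$ from Theorem \ref{thm:ring2}(1), using Corollary \ref{cor:mymymy} to see that the part $\bigcup_{l\in L}\Phi^A(F,[\![l]\!])$ contributes nothing at values $g\notin L$, and Proposition \ref{prop:ring2}(2)(3) to push $M_h+N_h$ into $M_g+N_g$. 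You instead pivot on the inverse map $\Lambda$ from the proof of Theorem \ref{thm:ring4}: you exhibit the candidate tuples, verify membership in $\mathcal T_F^{H\cup G_{\geq e}}$ (the extra work your route requires; your citations of Proposition \ref{prop:ring2}(2)(3), Proposition \ref{prop:ring3'}, Corollary \ref{cor:ring41} and Corollary \ref{cor:mymymy} do cover all three clauses), show that $\Lambda$ sends them to $\mathcal M\cap\mathcal N$ and $\mathcal M+\mathcal N$ via Theorem \ref{thm:ring2} together with the absorption step from Lemma \ref{lem:triv}, and conclude from $\Theta\circ\Lambda$ being the identity on $\mathcal T_F^{H\cup G_{\geq e}}$. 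Your scheme buys uniformity --- both parts follow the same three-step template and you never compute $M_g$ of a union by hand --- at the cost of the up-front verification of the compatibility conditions, which the paper's direct computation gets for free from the well-definedness of $\Theta$; conversely, the paper's argument for (1) is shorter and more elementary.
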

\begin{proof}
(1) It it is immediate from the definition that $M_g(\mathcal M\cap\mathcal N)\subseteq
M_g\cap N_g$. We demonstrate the opposite inclusion. Take a nonzero element 
$c\in M_g\cap N_g$.
There exist some elements $x_1\in\mathcal M$ and $x_2\in\mathcal N$ with
$c=\mylcp(x_1)=\mylcp(x_2)$ and $g=\val(x_1)=\val(x_2)$.
By Definition \ref{def:pseudo}(5), we can take an element $b\in B^{\times}$ with
$x_1=b^2x_2$. Hence we have $c\in M_g(\mathcal M\cap\mathcal N)$.

(2) Set $L=\{l \in H \cup G_{\geq e}\;|\; l \geq h \text{ for some } h \in H \cup G_{\geq e} \text{ with } M_h + N_h =F\}$.
When $g \in L$, we have $\Phi(F,[\![g]\!]) \subseteq \mathcal M + \mathcal N$ by Theorem \ref{thm:ring2}(1).
On the other hand, we always have $M_g(\Phi(F,[\![g]\!]))=F$ by the definition of the operator $\Phi$ and Definition \ref{def:pseudo}(3).
It implies that $\Theta(\mathcal M + \mathcal N)_g=F$.

We next consider the case in which $g \not\in L$.
Note that $\val(\Phi(F,[\![l]\!])) \subseteq L$ for all $l \in L$ by Corollary \ref{cor:mymymy}.
It means that $\bigcup_{l \in L} \Phi(F,[\![l]\!])$ does not contain an element $x$ with $\val(x)=g$.
Therefore, we have $\Theta(\mathcal M + \mathcal N)_g=M_g(\bigcup_{h \in H \cup G_{\geq e}} \Phi(M_h+N_h,[\![h]\!]))$ by Theorem \ref{thm:ring2}(1).
It is obvious that $M_g+N_g$ is contained in $\Theta(\mathcal M + \mathcal N)_g$.
We prove the opposite inclusion.
Take a nonzero $c \in \Theta(\mathcal M + \mathcal N)_g$.
There exist $h \in H \cup G_{\geq e}$ and $x \in \Phi(M_h+N_h,[\![h]\!])$ with $\val(x)=g$ and $\mylcp(x)=c$.
We have (i) $[\![g]\!]=[\![h]\!]$ or (ii) $\overline{g}=\overline{h}$ and $h<g$.
In both cases, we have $c \in M_g + N_g$ by Proposition \ref{prop:ring2}(2)(3).
We have demonstrated the assertion (2). 
\end{proof}

We review the definitions of quadratic modules, preorderings and semiorderings given in \cite{AK,M}.
\begin{definition}
Let $R$ be a commutative ring.
A quasi-quadratic module $M$ of $R$ is a \textit{quadratic module} in $R$ if $1 \in M$.
A quadratic module $M$ of $R$ is a \textit{preordering} in $R$ if $M \cdot M \subseteq M$.
A quasi-quadratic module $M$ of $R$ is a \textit{quasi-semiordering} in $R$ if $M \cup (-M) =R$ and 
$\supp(M)=M \cap (-M)$ is a prime ideal.
A \textit{semiordering} is a quasi-semiordering which is simultaneously a quadratic module.
\end{definition}

The following theorem gives necessary and sufficient conditions 
for a quasi-quadratic module of the valuation ring
to be a quadratic module/pre-ordering/quasi-semiordering.

\begin{theorem}\label{thm:ring3}
Let $(K,\val)$ be a $2$-henselian valued field and $B$ be its valuation ring
with residue class field $F$.
Let $A$ be a subring such that $B \subseteq A \subseteq K$ and assume that $2$ is a unit in $B$.
Set $H=\val(A^\times)$.
Consider a quasi-quadratic module $\mathcal M$ in $A$.
\begin{enumerate}
\item[(1)] The quasi-quadratic module $\mathcal M$ is a proper quadratic module if and only if $M_{e}(\mathcal M)$ is a proper quadratic module, where $e$ is the 
identity element of $G$.
\item[(2)]The quasi-quadratic module $\mathcal M$ is a quasi-semiordering
if and only if $M_{g}(\mathcal M)$ is a quasi-semiordering for any 
$g\in H\cup G_{\geq e}$ with $M_g(\mathcal M)\neq F$ and, for any $g_1, g_2\in H\cup G_{\geq e}$ 
with $M_{g_1}(\mathcal M)\neq F$ and $M_{g_2}(\mathcal M)\neq F$, we have 
$M_{g_1g_2}(\mathcal M)\neq F$.
\item[(3)] 
Assume that $\mylcp:K^{\times}\rightarrow F^{\times}$ is an angular component map $\mylc$.
When $\mathcal M$ is a quadratic module, $\mathcal M$ is a preordering if and only if, for any $g_1, g_2 \in H\cup G_{\geq e}$, nonzero elements $c_1 
\in M_{g_1}(\mathcal M)$ and $c_2 \in M_{g_2}(\mathcal M)$, 
we have $c_1 c_2 \in M_{g_1 g_2}(\mathcal M)$.
In particular, $M_{e}(\mathcal M)$ is a preordering if $\mathcal M$ is.

\end{enumerate}
\end{theorem}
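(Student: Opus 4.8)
The plan is to prove each of the three items by transferring the relevant algebraic property between $\mathcal M$ and the system $(M_g(\mathcal M))_{g}$ via the canonical representation (Theorem \ref{thm:ring1}) and the dictionary for $\supp$ (Proposition \ref{prop:ring3'} and Corollary \ref{cor:ring4}). The guiding principle throughout is that membership of a nonzero $x$ in $\mathcal M$ is governed by $\mylcp(x) \in M_{\val(x)}(\mathcal M)$, together with the order-comparison lemmas (Lemma \ref{lem:convex_basic12}, Lemma \ref{lem:star}).

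For (1): if $\mathcal M$ is a proper quadratic module, then $1 \in \mathcal M$ gives $\mylcp(1) = 1 \in M_e(\mathcal M)$, so $M_e(\mathcal M)$ is a quadratic module; it is proper because $\mathcal M = A$ would follow from $M_e(\mathcal M) = F$ by Corollary \ref{cor:ring3}. Conversely, if $M_e(\mathcal M)$ is a proper quadratic module, then $1 = \mylcp(1) \in M_e(\mathcal M)$ forces $1 \in \Phi^A(M_e,[\![e]\!]) \subseteq \mathcal M$ (since $\val(1)=e$), so $\mathcal M$ is a quadratic module, and it is proper by Corollary \ref{cor:ring3} again. For (2): one direction uses that $\supp(\mathcal M)$ is an ideal (Lemma \ref{lem:basic4}) and that $\mathcal M \cup (-\mathcal M) = A$ says precisely that for every nonzero $x \in A$ either $\mylcp(x) \in M_{\val(x)}$ or $-\mylcp(x) \in M_{\val(x)}$, i.e. $M_g \cup (-M_g) = F$ for all $g \in H \cup G_{\geq e}$; combined with Proposition \ref{prop:ring3'} (that $\supp(\mathcal M)$ being prime controls when $M_g = F$), this yields the stated conditions on the $M_g$ and the multiplicativity $M_{g_1}, M_{g_2} \neq F \Rightarrow M_{g_1g_2} \neq F$. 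The converse rebuilds $\supp(\mathcal M)$ from the $g$ with $M_g = F$ using Corollary \ref{cor:ring4}(1) and Lemma \ref{lem:basic5}, and checks primality of this ideal from the multiplicativity hypothesis together with primality of each $M_g$. For (3): with $\mylcp = \mylc$ a homomorphism, $\mathcal M \cdot \mathcal M \subseteq \mathcal M$ translates, via $\val(x_1 x_2) = \val(x_1)\val(x_2)$ and $\mylc(x_1 x_2) = \mylc(x_1)\mylc(x_2)$ and the canonical representation, exactly into $c_1 \in M_{g_1}$, $c_2 \in M_{g_2} \Rightarrow c_1 c_2 \in M_{g_1 g_2}$; the ``in particular'' clause is the special case $g_1 = g_2 = e$.

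The main obstacle I expect is the converse direction of (2), specifically verifying that the ideal $\supp(\mathcal M) = \bigcup_{[\![g]\!] \in S}\Phi^A(F,[\![g]\!]) \cup \{0\}$ (with $S$ the set of $[\![\val(x)]\!]$ for which $M_{\val(x)} = F$) is prime, using only that each $M_g$ is a quasi-semiordering and that the set $\{g : M_g = F\}$ is closed under the product of two elements outside it in the appropriate sense. One must show: if $x y \in \supp(\mathcal M)$ with $x, y \neq 0$, then $M_{\val(x)\val(y)} = F$, whence by the multiplicativity hypothesis (contrapositive) at least one of $M_{\val(x)}, M_{\val(y)}$ equals $F$, giving $x \in \supp(\mathcal M)$ or $y \in \supp(\mathcal M)$. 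The subtlety is handling zero divisors and the passage between $\val(xy)$ and the product $\val(x)\val(y)$ correctly, and confirming that the stated hypothesis is phrased at the level of $H \cup G_{\geq e}$ rather than arbitrary $G$-classes — here Corollary \ref{cor:ring41} and Proposition \ref{prop:ring3'} ensure the ``furthermore'' monotonicity $M_g = F, h \geq g \Rightarrow M_h = F$ is available to bridge the gap. A secondary technical point in (3) is that the forward direction needs the canonical representation to guarantee that an arbitrary element of $\mathcal M$ is captured by some single $\Phi^A(M_h,[\![h]\!])$, so that products reduce to products of pseudo-angular components in a fixed value-group class.
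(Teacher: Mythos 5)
Your proposal is correct and follows essentially the same route as the paper's proof: the dictionary ``nonzero $x \in \mathcal M \Leftrightarrow \mylcp(x) \in M_{\val(x)}(\mathcal M)$'' coming from Theorem \ref{thm:ring1} and Lemma \ref{lem:star}, Definition \ref{def:pseudo}(3) together with Corollary \ref{cor:cor_convex} to realize prescribed values and residues inside $A$, and Proposition \ref{prop:ring3'} to control the support, with your primality argument in (2) being exactly the paper's argument stated contrapositively (and the zero-divisor worry is vacuous since $A \subseteq K$ is a domain, so $\val(xy)=\val(x)\val(y)$ always). The only point to make explicit in the converse of (2) is that $\mathcal M \cup (-\mathcal M) = A$ must also be verified, but this follows at once by reading the equivalence you already stated (``$\mathcal M \cup (-\mathcal M)=A$ iff $M_g \cup (-M_g)=F$ for all $g \in H \cup G_{\geq e}$'') from right to left, since $M_g \cup (-M_g)=F$ holds trivially when $M_g = F$ and by hypothesis otherwise.
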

\begin{proof}
(1) Recall that $M_{e} = \{\mylcp(x)\;|\; x \in \mathcal M\setminus\{0\}
\text{ with }\val(x)=e\} \cup \{0\}$.
We first assume that $\mathcal M$ is a proper quadratic module.
By Corollary \ref{cor:ring3}, we see that $M_e\neq F$.
It follows that $1\in M_e$ because $1_K\in \mathcal M$ by the assumption.
Since $M_e$ is a quasi-quadratic module from Proposition \ref{prop:ring2}(1),
we see that $M_e$ is a proper quadratic module.

We next assume that $M_{e}$ is a proper quadratic module.
We have $1 \in  M_e$ by the assumption.
There exists a nonzero element $x \in \mathcal M$ such that 
$\val(x)=e$ and $\mylcp(x)=1$. Since $\val(x)=\val(1_K)$ and $\mylcp(x)=\mylcp(1_K)$,
it follows from Lemma \ref{lem:star} that $1_K\in \mathcal M$.
If $-1_K\in\mathcal M$, then we have $M_e=F$
by Corollary \ref{lem:field0}. 
This contradicts the 
assumption that $M_e$ is proper.
We have demonstrated the assertion (1).

(2) We first assume that the quasi-quadratic module $\mathcal M$ is 
a quasi-semiordering. 
Take an arbitrary element $g\in H\cup G_{\geq e}$ with $M_g \neq F$.
It is obviously true that the ideal $\mbox{supp}(M_{g})$
becomes the prime ideal of $F$ because $\supp(M_g)\subseteq M_g$.

We next want to show $M_{g}\cup (-M_{g})=F$.
Take an arbitrary nonzero element $c \in F$. 
By Definition \ref{def:pseudo}(3), we can take an element $y\in K$ such
that $\val(y)=g$ and $\mylcp(y)=c$.
It follows from Corollary \ref{cor:cor_convex} that $y\in A$.
Since $y\in A=\mathcal M\cup (-\mathcal M)$,
we have 
$y \in \mathcal M$ or $-y \in \mathcal M$.
In the former case, we have $c \in M_g$.
We also have $-c \in M_{g}$ in the latter case.
Hence we see that $F\subseteq M_{g} \cup (-M_{g})$.
We have proven that $F =M_{g} \cup (-M_{g})$.

We prove the remaining assertion.
Take elements $g_1,g_2\in H\cup G_{\geq e}$ with $M_{g_1}\neq F$ and $M_{g_2}\neq F$.
By Lemma \ref{lem:convex_basic2}(1), we have 
$x_1,x_2\in A$ with $\val(x_1)=g_1$ and $\val(x_2)=g_2$.
By Proposition \ref{prop:ring3'}, it follows 
that $x_i\in A\setminus\supp(\mathcal M)$ for $i=1,2$.
Since the ideal $\supp(\mathcal M)$ is a prime ideal, 
we have $x_1x_2\in A\setminus\supp(\mathcal M)$.
This means that $M_{\val(x_1x_2)}=M_{g_1g_2}\neq F$ again by Proposition \ref{prop:ring3'}.

We demonstrate the opposite implication.
We first want to show that $A=\mathcal M \cup (-\mathcal M)$.
Take an arbitrary nonzero element $x \in A$.
There is nothing to prove when $x\in\supp(\mathcal M)$. We assume
$x\not\in\supp(\mathcal M)$. 
Set $g=\val(x)$ and $c=\mylcp(x)$.
We have $M_g \neq F$ by Proposition \ref{prop:ring3'}.
Since $M_{g}$ is a quasi-semiordering by the assumption, we have 
$M_g \cup (-M_{g})=F$.
If $c\in M_g$, then we have $x\in \Phi(M_g,[\![g]\!])\subseteq\mathcal M$ 
by Theorem \ref{thm:ring1}.
If $-c\in M_g$, then we have $-x\in \Phi(M_g,[\![g]\!])\subseteq\mathcal M$ 
by Theorem \ref{thm:ring1} and hence we see that $x\in -\mathcal M$.
We have proven that $x \in \mathcal M \cup (-\mathcal M)$.

We next want to show the ideal $\supp(\mathcal M)$ is a prime ideal.
Take elements $x_1,x_2 \in A$ such that $x_1\not\in \supp(\mathcal M)$
and $x_2\not\in \supp(\mathcal M)$.
Since $M_{\val(x_1)}\neq F$ and $M_{\val(x_2)}\neq F$, we have $x_1x_2\not\in \supp(\mathcal M)$ by the assumption and Proposition \ref{prop:ring3'}.
Hence we see that the ideal $\supp(\mathcal M)$ is prime.

(3) We assume that $\mathcal M$ is a preordering.
Take arbitrary elements $g_1, g_2 \in H\cup G_{\geq e}$, arbitrary nonzero elements 
$c_1 \in M_{g_1}$ and $c_2 \in M_{g_2}$.
By the assumption, there exist 
nonzero elements $x_i \in \mathcal M$ with $\val(x_i)=g_i$
and $\mylc(x_i)=c_i$ for $i=1,2$.
Set $x=x_1x_2$.
We get $x \in \mathcal M$ because $\mathcal M$ is a preordering.
It is easily seen that $\val(x)=g_1g_2\in H\cup G_{\geq e}$ and $\mylc(x)=c_1c_2$.
We have demonstrated $c_1c_2 \in M_{{g_1}{g_2}}$.

We next show the opposite implication.
Take nonzero elements $x_1,x_2 \in \mathcal M$.
We want to show that $x_1 x_2 \in \mathcal M$.
Set $g_i =\val(x_i)$ and $c_i = \mylc(x_i)$ for $i=1,2$.
Note that $c_i\in M_{g_i}$ for $i=1,2$.
We have $c_1c_2 \in M_{g_1g_2}$ by the assumption.
Therefore, there exists a nonzero element $x \in \mathcal M$ with 
$\val(x)=g_1g_2$ and $\mylc(x)=c_1c_2$.
We also have $\val(x_1x_2)=g_1g_2$ and $\mylc(x_1x_2)=c_1c_2$.
Therefore we get $x_1x_2 \in \mathcal M$ by Lemma \ref{lem:star}.
The `in particular' part is obvious.
We have demonstrated assertion (3).
\end{proof}

We consider the set $\mathfrak Y_R$ of all  quasi-semiorderings in a commutative ring $R$.
Note that the sets
discussed in \cite[Section 5.3]{M} are similar but not identical to $\mathfrak Y_R$.

We set
\begin{eqnarray*}
\mathcal S_R^{H\cup G_{\geq e}}\!=\!
\{(M_g)_{g\in H\cup G_{\geq e}}\!\!\in\mathcal T_R^{H\cup G_{\geq e}}\;|\;
M_{g_1g_2}\neq R \text{ whenever } M_{g_1}\neq R \text{ and } M_{g_2}\neq R,\\
M_g \in\mathfrak Y_R \text{ for any } g\in H\cup G_{\geq e} \text{ with } M_g\neq R 
\}.
\end{eqnarray*}

We have the following corollary:
\begin{corollary}\label{cor:ring2}
Let $(K,\val)$ be a $2$-henselian valued field and $B$ be its valuation ring
with residue class field $F$.
Let $A$ be a subring such that $B \subseteq A \subseteq K$ and assume that $2$ is a unit in $B$.
Set $H=\val(A^\times)$.
Then 
there exists a bijection between $\mathfrak Y_A$ and 
$\mathcal S_F^{H\cup G_{\geq e}}$.
\end{corollary}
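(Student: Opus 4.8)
The plan is to obtain the desired bijection by simply restricting the bijection $\Theta:\mathfrak X_A \rightarrow \mathcal T_F^{H\cup G_{\geq e}}$ of Theorem \ref{thm:ring4}, together with its inverse $\Lambda$, to the subsets $\mathfrak Y_A$ and $\mathcal S_F^{H\cup G_{\geq e}}$. Thus the only thing to verify is the set-theoretic equivalence
\[
\mathcal M \in \mathfrak Y_A \iff \Theta(\mathcal M) \in \mathcal S_F^{H\cup G_{\geq e}}
\]
for every $\mathcal M \in \mathfrak X_A$. Once this is established, $\Theta|_{\mathfrak Y_A}$ is injective because $\Theta$ is, and it is surjective onto $\mathcal S_F^{H\cup G_{\geq e}}$ because for any tuple $(M_g)_{g} \in \mathcal S_F^{H\cup G_{\geq e}}$ the element $\Lambda((M_g)_g) \in \mathfrak X_A$ maps to $(M_g)_g$ under $\Theta$ and, by the equivalence, lies in $\mathfrak Y_A$.

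To prove the equivalence I would unwind the definition of $\mathcal S_F^{H\cup G_{\geq e}}$ and compare it with Theorem \ref{thm:ring3}(2). Writing $M_g = M_g(\mathcal M)$ for the coordinates of $\Theta(\mathcal M)$, the tuple $(M_g)_g$ lies in $\mathcal S_F^{H\cup G_{\geq e}}$ precisely when (i) $M_g \in \mathfrak Y_F$ for every $g \in H\cup G_{\geq e}$ with $M_g \neq F$, and (ii) $M_{g_1 g_2} \neq F$ whenever $g_1,g_2 \in H\cup G_{\geq e}$ satisfy $M_{g_1} \neq F$ and $M_{g_2} \neq F$; and these are verbatim the two conditions characterizing quasi-semiorderings in Theorem \ref{thm:ring3}(2). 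Here I would note in passing that $\mathfrak Y_F$, the set of quasi-semiorderings of the field $F$, consists exactly of the quasi-quadratic modules $M$ of $F$ with $M\cup(-M)=F$, since in a field the support of a proper quasi-semiordering is forced to be $\{0\}$; this is consistent with the way "quasi-semiordering of $F$" is used inside the proof of Theorem \ref{thm:ring3}(2).

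There is no real obstacle here: the mathematical content is entirely contained in Theorem \ref{thm:ring3}(2) and Theorem \ref{thm:ring4}, and the corollary is a bookkeeping exercise. The only point requiring a moment's care is to observe that a tuple $(M_g)_g \in \mathcal S_F^{H\cup G_{\geq e}}$ is by definition already an element of $\mathcal T_F^{H\cup G_{\geq e}}$, so that $\Lambda$ may legitimately be applied to it, and that conversely $\Theta(\mathfrak Y_A) \subseteq \mathcal T_F^{H\cup G_{\geq e}}$ holds automatically since $\mathfrak Y_A \subseteq \mathfrak X_A$; both facts are immediate from the definitions.
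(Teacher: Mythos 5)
Your proposal is correct and is essentially the paper's own proof: the paper defines the bijection by restricting $\Theta$ and its inverse $\Lambda$ from Theorem \ref{thm:ring4} to $\mathfrak Y_A$ and $\mathcal S_F^{H\cup G_{\geq e}}$, with well-definedness supplied exactly by Theorem \ref{thm:ring3}(2). One trivial caveat: in your aside, $\mathfrak Y_F$ consists of the \emph{proper} quasi-quadratic modules $M$ with $M\cup(-M)=F$ (for $M=F$ the support is all of $F$, hence not a prime ideal), but this does not affect your argument since $\mathcal S_F^{H\cup G_{\geq e}}$ only imposes the condition when $M_g\neq F$.
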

\begin{proof}
Let $\Theta$ and $\Lambda$ be the bijections defined in Theorem \ref{thm:ring4} and its proof.
The maps $\theta:\mathfrak Y_A \rightarrow \mathcal S_F^{H\cup G_{\geq e}}$ and 
$\lambda : \mathcal S_F^{H\cup G_{\geq e}} \rightarrow \mathfrak Y_A$ 
are defined by $\theta(\mathcal M)=\Theta(\mathcal M)$ 
and $\lambda \left((M_{g})_{g\in H\cup G_{\geq e}}\right)
=\Lambda\left((M_{g})_{g \in H\cup G_{\geq e}}\right)$, respectively.
They are well-defined by Theorem \ref{thm:ring3}(2) and Theorem \ref{thm:ring4}.
It is obvious that $\theta$ and $\lambda$ are bijections again by Theorem \ref{thm:ring4}.
\end{proof}

We end this section by proving the following lemma 
which is used in the next section.

\begin{lemma}\label{lem:monogenic_ring}
Let $(K,\val)$ be a $2$-henselian valued field whose residue class field is of characteristic $\not=2$ and $B$ be its valuation ring.
Let $A$ be a subring with $B \subseteq A \subseteq K$.
Assume further that $(K,\val)$ admits an angular component map $\mylc:K^{\times}\to F^{\times}$
and the residue class field $F$ is a formally real field.
(See \cite[Section II.5]{L} for the definition of a formally real field.)
For any nonzero $f \in A$, the quasi-quadratic module in $A$ generated by $f$ is of the form $\Phi^A(M_f,[\![\val(f)]\!])$, where $M_f$ is the quasi-quadratic module of $F$ generated by $\mylc(f)$.
\end{lemma}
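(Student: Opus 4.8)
The plan is to identify explicitly the quasi-quadratic module $\mathcal{M}_f$ in $A$ generated by $f$ and then match it with $\Phi^A(M_f,[\![\val(f)]\!])$ using the canonical representation theorem. First I would recall that $\mathcal{M}_f$ is by definition the smallest quasi-quadratic module containing $f$, i.e. $\mathcal{M}_f = \{\sum_{i} a_i^2 f \;|\; a_i \in A\}$, the set of finite sums of elements of the form $a^2 f$ with $a \in A$ (including the empty sum $0$). Since $f \in \Phi^A(M_f,[\![\val(f)]\!])$ — indeed $\val(f) \equiv \val(f)$ and $\mylc(f) \in M_f$ — and $\Phi^A(M_f,[\![\val(f)]\!])$ is a quasi-quadratic module in $A$ by Proposition \ref{prop:ring1} (using that $M_f$ is proper, which is where the hypothesis that $F$ is formally real enters: a monogenic quasi-quadratic module $M_f$ of $F$ generated by a single nonzero element cannot contain $\pm$ of a nonzero element when $F$ is formally real, hence $M_f \ne F$ by Corollary \ref{lem:field0}), we immediately get the inclusion $\mathcal{M}_f \subseteq \Phi^A(M_f,[\![\val(f)]\!])$.

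For the reverse inclusion I would apply Theorem \ref{thm:ring1} to $\mathcal{M}_f$: write $\mathcal{M}_f = \bigcup_{g \in H \cup G_{\geq e}} \Phi^A(M_g(\mathcal{M}_f),[\![g]\!])$. The key computation is to determine the slices $M_g(\mathcal{M}_f)$ for each $g$. Because every nonzero element of $\mathcal{M}_f$ has the form $\sum a_i^2 f$, and because $\mylc$ is an angular component map (a group homomorphism), one computes $\mylc(a^2 f) = \mylc(a)^2 \mylc(f)$ and $\val(a^2 f) = \val(a)^2 \val(f)$; then using Definition \ref{def:pseudo}(4) for sums, the value $\val(\sum a_i^2 f)$ is $\geq \min_i \val(a_i^2 f) = \val(f) \cdot \min_i \val(a_i)^2 \geq \val(f)$ whenever all $\val(a_i) \geq e$, but in general $\val(a_i)$ can be negative. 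More carefully: for $x = \sum a_i^2 f \in \mathcal{M}_f$ nonzero with $\val(x) = g$, one has $\overline{g} = \overline{\val(f)}$ (since each summand has value in the coset $\overline{\val(f)}$ and addition preserves the coset when it does not annihilate, by Definition \ref{def:pseudo}(4)), so $M_g(\mathcal{M}_f) = \{0\}$ unless $\overline{g} = \overline{\val(f)}$. When $\overline{g} = \overline{\val(f)}$, the angular component $\mylc(x)$ is a sum of terms $\mylc(a_i)^2 \mylc(f)$ (restricted to those $i$ realizing the minimal value, after collecting), hence $\mylc(x) \in M_f$ (note $M_f$ is a quasi-quadratic module of $F$, closed under $c^2 \cdot$ and $+$). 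Therefore $M_g(\mathcal{M}_f) \subseteq M_f$ for all $g$, and combined with $\mathcal{M}_f \subseteq \Phi^A(M_f,[\![\val(f)]\!])$ and Lemma \ref{lem:triv}, each $\Phi^A(M_g(\mathcal{M}_f),[\![g]\!])$ with $\overline{g} = \overline{\val(f)}$ and $g$ not forcing a strict inequality issue is contained in $\Phi^A(M_f,[\![\val(f)]\!])$.

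To finish, I would check that no slice at a value $g$ with $g < \val(f)$ (but $\overline{g} = \overline{\val(f)}$) can be nonzero: a representation $\sum a_i^2 f$ has value $\geq$ (minimum of the $\val(a_i^2 f)$), and each $\val(a_i^2 f) = \val(a_i)^2 \val(f)$; since $A \ni a_i$ and $H = \val(A^\times)$ is convex with $\val(A) = H \cup G_{\geq e}$ by Lemma \ref{lem:convex_basic2}, I would argue $\val(a_i)^2 \val(f) \geq \val(f)$ unless $\val(a_i) \in H$, in which case $\val(a_i)^2 \val(f) \equiv \val(f) \pmod{H^2}$, i.e. $[\![\val(a_i^2 f)]\!] = [\![\val(f)]\!]$. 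Either way the summand lies in $\Phi^A(M_f,[\![\val(f)]\!])$, so by Proposition \ref{prop:ring1} the sum does too, confirming $\mathcal{M}_f \subseteq \Phi^A(M_f,[\![\val(f)]\!])$, which was already noted; and conversely every $x \in \Phi^A(M_f,[\![\val(f)]\!])$ has $\mylc(x) = \mylc(a)^2\mylc(f)$ for a suitable $a$ (using Definition \ref{def:pseudo}(5) to solve $x = u^2 \cdot (\text{something with value }\val(f)\text{ and angular component }\mylc(f))$ and then relating to $f$ via an element of $B^\times$), giving $x \in \mathcal{M}_f$. The main obstacle I anticipate is bookkeeping the value-group cosets carefully — specifically verifying that membership in $\Phi^A(M_f,[\![\val(f)]\!])$ really does hold for every $x$ whose angular component lands in $M_f$ at an admissible value, which requires the existence statements in Definition \ref{def:pseudo}(3),(5) together with the convexity of $H$ and Corollary \ref{cor:cor_convex}; the formally-real hypothesis is only needed to guarantee $M_f \ne F$ so that Proposition \ref{prop:ring1} applies.
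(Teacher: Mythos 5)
Your forward inclusion is fine, and in fact slightly cleaner than the paper's: since $F$ is formally real, $M_f$ cannot contain $\pm c$ for a nonzero $c$, so $M_f$ is proper, Proposition \ref{prop:ring1} makes $\Phi^A(M_f,[\![\val(f)]\!])$ a quasi-quadratic module containing $f$, and minimality of the generated module gives $\mathcal M_f \subseteq \Phi^A(M_f,[\![\val(f)]\!])$. The problem is the reverse inclusion, which is the real content of the lemma. Most of your "reverse" paragraph (computing the slices $M_g(\mathcal M_f)$, checking values and cosets, invoking Lemma \ref{lem:triv}) only re-derives the containment $\mathcal M_f \subseteq \Phi^A(M_f,[\![\val(f)]\!])$ a second time. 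The one sentence that actually addresses $\Phi^A(M_f,[\![\val(f)]\!]) \subseteq \mathcal M_f$ rests on the claim that every $x$ in the left-hand side has $\mylc(x)=\mylc(a)^2\mylc(f)$ for a single $a$, so that Definition \ref{def:pseudo}(5) applies. That claim is false in general: $M_f$ consists of the elements $\mylc(f)(c_1^2+\cdots+c_m^2)$, and in a formally real field that is not pythagorean (e.g.\ $\mathbb Q$) a sum of squares need not be a square, so such $x$ are not reached by a single square factor and your argument leaves them uncovered.

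What is needed instead (and what the paper does) is to lift a whole sum-of-squares representation: for nonzero $x \in \Phi^A(M_f,[\![\val(f)]\!])$ write $\val(x)=\val(f)g^2$ with $g \in H\cup G_{\geq e}$ and $\mylc(x)=\mylc(f)(c_1^2+\cdots+c_m^2)$ with all $c_i\neq 0$; choose $u_i \in K$ with $\val(u_i)=g$ and $\mylc(u_i)=c_i$ by Definition \ref{def:pseudo}(3), note $u_i \in A$ by Corollary \ref{cor:cor_convex}, and set $y=f(u_1^2+\cdots+u_m^2)\in\mathcal M_f$. Because $F$ is formally real, no partial sum of the $c_i^2$ vanishes, so Definition \ref{def:pseudo}(4) gives $\val(y)=\val(f)g^2=\val(x)$ and $\mylc(y)=\mylc(x)$, and Lemma \ref{lem:star} then yields $x\in\mathcal M_f$. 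This also shows that your closing assertion that formal reality is "only needed to guarantee $M_f\neq F$" is inaccurate: it is equally what rules out cancellation among the leading terms in these valuation computations, without which neither $\val(y)$ nor $\mylc(y)$ is under control.
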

\begin{proof}
Let $\mathcal M_f$ be the quasi-quadratic module generated by $f$.
We first show that $\mathcal M_f$ is contained in $\Phi(M_f,[\![\val(f)]\!])$.
Take a nonzero element $x \in  \mathcal M_f$.
There are finite nonzero elements $u_1,\ldots, u_m \in A$ with $x=f(u_1^2+\cdots +u_m^2)$.
Set $g_{\min}=\min\{\val(u_i)\;|\; 1 \leq i \leq m\}$ and $S=\{i\;|\;1 \leq i \leq m, \val(u_i)=g_{\min}\}$.
We have $g_{\min} \in H \cup G_{\geq e}$ by Corollary \ref{cor:cor_convex}.
We easily have $\val(x)=\val(f)g_{\min}^2$ and $\mylc(x)=\sum_{i \in S}\mylc(f)\mylc(u_i)^2$ by Definition \ref{def:pseudo}(4) because $F$ is formally real.
We easily get $x \in \Phi(M_f,[\![\val(f)]\!])$.

We next demonstrate the opposite inclusion.
Take a nonzero element $x \in \Phi(M_f,[\![\val(f)]\!])$.
We can get an element $g \in H \cup G_{\geq e}$ and a finite sequence of nonzero elements $c_1, \ldots, c_m$ in $F$ such that $\val(x)=\val(f)g^2$ and $\mylc(x)=\mylc(f)(c_1^2+\cdots+c_m^2)$.
There exist nonzero elements $u_i \in K$ such that $\val(u_i)=g$ and $\mylc(u_i)=c_i$ for all $1 \leq i \leq m$ by
Definition \ref{def:pseudo}(3).
The elements $u_i$ are in $A$ for all $1\leq i \leq m$ by Corollary \ref{cor:cor_convex}.
Set $y=fu_1^2+ \cdots +fu_m^2$.
The element $y$ belongs to $\mathcal M_f$.
Since $F$ is formally real, we have $\val(y)=\val(f)g^2=\val(x)$ and $\mylc(y)=\mylc(f)(c_1^2+\cdots+c_m^2)=\mylc(x)$ by
Definition \ref{def:pseudo}(4).
Hence we have $x\in \mathcal M_f$ by Lemma \ref{lem:star}.
\end{proof}

\section{Special cases}\label{sec:special}
We demonstrated the structure theorems for the ring $A$ in the previous section, where $A$ is a subring of the valued field $K$ containing the valuation ring $B$.
In this section, we treat two important special cases; that is, $A=B$ and $A=K$. 

\subsection{When the ring $A$ is the valuation ring $B$}\label{sec:ring}
We first consider the case in which $A=B$.
In this case, we have $H=\val(A^{\times})=\{e\}$ and
$$
\Phi(M,g) 
= \{x \in A\setminus\{0\} \;|\; 
\val(x) =\overline{g}\text{, }\val(x) \geq g \text{ and } \mylcp(x) \in M \}
\cup \{0\},
$$
where $M$ is a quasi-quadratic module of $F$.
\begin{theorem}
[Canonical representation theorem for quasi-quadratic modules
in valuation rings]\label{thm:sp_ring1}
Let $(K,\val)$ be a $2$-henselian valued field whose residue class field $F$ is of characteristic $\not=2$ and
$B$ be its valuation ring.
Consider a quasi-quadratic module $\mathcal M$ in $B$.
We have 
$$\mathcal M = \bigcup_{g\in G_{\geq e}}\Phi(M_{g},g),$$
where $M_g=M_{g}(\mathcal M)$.
\end{theorem}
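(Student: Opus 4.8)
The plan is to obtain this statement as an immediate specialization of the general canonical representation theorem, Theorem \ref{thm:ring1}, to the case $A = B$. First I would record that $\val(B^{\times}) = \{e\}$: an element $x \in K$ lies in $B^{\times}$ precisely when $\val(x) = e$. Hence $H := \val(B^{\times}) = \{e\}$, and consequently $H \cup G_{\geq e} = G_{\geq e}$ because $e \in G_{\geq e}$. In particular $H$ is trivially a convex subgroup of $G$, so Theorem \ref{thm:ring1} applies.

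Next I would observe that $H^2 = \{e\}$ as well, so that $G/H^2$ may be identified with $G$ and the class $[\![g]\!]$ with $g$ itself. Under this identification the condition ``$\val(x) = [\![g]\!]$ or $\val(x) > g$'' appearing in the definition of $\Phi^B(M,[\![g]\!])$ collapses to ``$\val(x) \geq g$'', so that $\Phi^B(M,[\![g]\!])$ is exactly the set $\Phi(M,g)$ displayed just before the statement. Theorem \ref{thm:ring1} then yields
\[
\mathcal M \;=\; \bigcup_{g \in H \cup G_{\geq e}} \Phi^B(M_g,[\![g]\!]) \;=\; \bigcup_{g \in G_{\geq e}} \Phi(M_g,g),
\]
with $M_g = M_g^B(\mathcal M)$, which is the desired equality.

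There is essentially no obstacle here: the entire content of the statement is already contained in Theorem \ref{thm:ring1}, and the only work is the bookkeeping of unwinding the notation $H$, $[\![g]\!]$ and $\Phi^B$ in the degenerate case $H = \{e\}$. If a self-contained argument were preferred, one could instead repeat the proof of Theorem \ref{thm:ring1} verbatim, invoking Lemma \ref{lem:convex_basic2} (which here gives $\val(B) = G_{\geq e}$ at once) together with Lemma \ref{lem:star}; but this would merely reproduce the already-given proof, so I would not do that.
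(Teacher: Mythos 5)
Your proposal is correct and matches the paper's own proof, which simply cites Theorem \ref{thm:ring1} as a special case; your explicit unwinding of $H=\val(B^\times)=\{e\}$, the identification $[\![g]\!]=g$, and the collapse of ``$\val(x)=[\![g]\!]$ or $\val(x)>g$'' to ``$\val(x)\geq g$'' is exactly the bookkeeping the paper leaves implicit.
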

\begin{proof}
Special case of Theorem \ref{thm:ring1}.
\end{proof}

\begin{theorem}\label{thm:sp_ring2}
Let $(K,\val)$ be a $2$-henselian valued field, $B$ be its valuation ring such that $2$ is a unit.
Consider two quasi-quadratic modules $\mathcal M$ and $\mathcal N$ in $B$.
Let $\mathcal M = 
\bigcup_{g \in G_{\geq e}}\Phi(M_{g},g)$
and 
$\mathcal N = 
\bigcup_{g \in G_{\geq e}}\Phi(N_{g},g)$
be the canonical representations of $\mathcal M$ and $\mathcal N$
where $M_{g}=M_{g}(\mathcal M)$ and $N_{g}=M_{g}
(\mathcal N)$, respectively.
We get the following equalities:

(1) 
$$\mathcal M + \mathcal N = \bigcup_{g \in G_{\geq e}}
\Phi(M_{g}+N_{g},g)
\cup\bigcup_{h \in H} \Phi(F,h),
$$
where $H=\{h\in G_{\geq e}\;|\;h\geq g \text{ for some } 
g\in G_{\geq e} \text{ with } M_g+N_g=F\}$.

(2) 
\begin{eqnarray*}
\mathcal M \cap \mathcal N 
&=& 
\bigcup_{g\in G_{\geq e}}
\Phi(M_g\cap N_g,g).
\end{eqnarray*}
\end{theorem}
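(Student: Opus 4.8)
The statement is the specialization of Theorem \ref{thm:ring2} to the case $A=B$, so the plan is simply to check that every piece of data occurring in Theorem \ref{thm:ring2} degenerates to the form asserted here and then to invoke that theorem. First I would record that $H=\val(B^\times)=\{e\}$ (immediately from Lemma \ref{lem:convex_basic2}, or directly because a unit of $B$ has valuation $e$). Consequently $H^2=\{e\}$, the quotient $G/H^2$ is canonically $G$ itself, the class $[\![g]\!]$ is just $g$, and $H\cup G_{\geq e}=G_{\geq e}$. Under these identifications $\Phi^B(M,[\![g]\!])$ unwinds to
$$\{x \in B\setminus\{0\} \;|\; \val(x) = \overline{g},\ \val(x)\geq g,\ \mylcp(x)\in M\}\cup\{0\},$$
since the alternative ``$\val(x)=[\![g]\!]$ or $\val(x)>g$'' becomes ``$\val(x)=g$ or $\val(x)>g$'', i.e. $\val(x)\geq g$. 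This is exactly the operator $\Phi(M,g)$ appearing in the statement, and Theorem \ref{thm:sp_ring1} supplies the canonical representations $\mathcal M=\bigcup_{g\in G_{\geq e}}\Phi(M_g,g)$ and $\mathcal N=\bigcup_{g\in G_{\geq e}}\Phi(N_g,g)$ that serve as the input of Theorem \ref{thm:ring2}.

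Next I would apply Theorem \ref{thm:ring2} verbatim. Its part (2) gives $\mathcal M\cap\mathcal N=\bigcup_{g\in G_{\geq e}}\Phi(M_g\cap N_g,g)$, which is assertion (2). For assertion (1), Theorem \ref{thm:ring2}(1) yields $\mathcal M+\mathcal N=\bigcup_{g\in G_{\geq e}}\Phi(M_g+N_g,g)\cup\bigcup_{l\in L}\Phi(F,[\![l]\!])$ with $L=\{l\in G_{\geq e}\;|\; l\geq g \text{ for some } g\in G_{\geq e} \text{ with } M_g+N_g=F\}$; since $[\![l]\!]=l$ and this set $L$ is literally the set denoted $H$ in the present statement (the symbol $H$ being reused here for the index set rather than for $\val(B^\times)$), this is assertion (1).

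There is no genuine obstacle beyond the bookkeeping of verifying that the general definitions collapse correctly when $A=B$; all the real content was proved in Section \ref{sec:main}. The only point deserving a second glance is the translation between $[\![g]\!]\in G/H^2$ and $g\in G$ and the observation that the index set $H$ of this theorem coincides with the set $L$ of Theorem \ref{thm:ring2}(1), both of which are immediate once $H=\val(B^\times)=\{e\}$ is in hand.
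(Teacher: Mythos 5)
Your proposal is correct and follows exactly the paper's route: the paper's proof is simply ``Special case of Theorem \ref{thm:ring2},'' and your argument just spells out the bookkeeping ($H=\val(B^\times)=\{e\}$, $[\![g]\!]=g$, $H\cup G_{\geq e}=G_{\geq e}$, and the identification of the index set with $L$) that the paper leaves implicit.
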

\begin{proof}
Special case of Theorem \ref{thm:ring2}.
\end{proof}

We have
$$
\mathcal T_F^{G_{\geq e}}=\{(M_g)_{g\in G_{\geq e}}\in\prod_{g\in G_{\geq e}}\mathfrak X_{F}\;|\;
M_g\subseteq M_h \text{ whenever } (g\leq h) \wedge ((\overline{g}=\overline{h}) \vee (M_g=F))
\}
$$
in this case.

\begin{theorem}\label{thm:sp_ring4}
Let $(K,\val)$ be a $2$-henselian valued field and
$B$ be its valuation ring such that $2$ is a unit.
We define the map 
$\Theta:\mathfrak X_B \rightarrow \mathcal T_F^{G_{\geq e}}$ by
\begin{equation*}
\Theta(\mathcal M)=(M_{g}(\mathcal M))_{g \in G_{\geq e}}\text{,}
\end{equation*}
where $F$ is the residue class field.
The map $\Theta$ is a bijection.
\end{theorem}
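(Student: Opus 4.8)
The plan is to derive this theorem as the specialization of Theorem \ref{thm:ring4} to the case $A=B$, so the only real work is to match the two statements and to confirm that the hypotheses of Theorem \ref{thm:ring4} are in force.

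First I would record that $B^{\times}=\{x\in K\mid \val(x)=e\}$, hence $H:=\val(B^{\times})=\{e\}$ and therefore $H\cup G_{\geq e}=G_{\geq e}$. This identifies the index set of the family $(M_g(\mathcal M))_{g}$ appearing in Theorem \ref{thm:ring4} with the one used here, and it shows that the map $\Theta$ of Theorem \ref{thm:ring4} is exactly the map $\mathcal M\mapsto (M_g(\mathcal M))_{g\in G_{\geq e}}$ in the present statement. Since ``$2$ is a unit in $B$'' is assumed in both statements and $(K,\val)$ is the same $2$-henselian valued field, the hypotheses of Theorem \ref{thm:ring4} are satisfied with $A=B$.

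Second, I would check that the codomain $\mathcal T_F^{H\cup G_{\geq e}}$ of Theorem \ref{thm:ring4} collapses to the set $\mathcal T_F^{G_{\geq e}}$ defined just before this theorem. Because $H=\{e\}$ we have $H^2=\{e\}$, so $G/H^2=G/H=G$, and the relations $[\![g]\!]=[\![h]\!]$ and $[g]=[h]$ both degenerate to $g=h$. Consequently the three disjuncts defining membership in $\mathcal T_F^{H\cup G_{\geq e}}$ — namely $[\![g]\!]=[\![h]\!]$; the conjunction $g\leq h$ and $\overline g=\overline h$; and the conjunction $M_g=F$ and ($[g]=[h]$ or $g\leq h$) — become, respectively, the vacuous $g=h$, the conjunction $g\leq h\wedge\overline g=\overline h$, and $M_g=F\wedge g\leq h$. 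Their disjunction is precisely ``$g\leq h$ and ($\overline g=\overline h$ or $M_g=F$)'', which is the defining condition of $\mathcal T_F^{G_{\geq e}}$.

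With these two identifications in hand, Theorem \ref{thm:ring4} applied to $A=B$ gives that $\Theta$ is a bijection onto $\mathcal T_F^{H\cup G_{\geq e}}=\mathcal T_F^{G_{\geq e}}$, which is the claim. I do not anticipate any genuine obstacle; the only step requiring care is the bookkeeping in the preceding paragraph — verifying that making $H$ trivial really does turn the general membership condition into the special one, without silently dropping or adding a constraint.
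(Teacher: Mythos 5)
Your proposal is correct and is exactly the paper's argument: the paper simply cites Theorem \ref{thm:ring4} as a special case, and your verification that $H=\val(B^{\times})=\{e\}$ collapses $H\cup G_{\geq e}$ to $G_{\geq e}$ and turns the general membership condition for $\mathcal T_F^{H\cup G_{\geq e}}$ into the one defining $\mathcal T_F^{G_{\geq e}}$ is the (correctly executed) bookkeeping the paper leaves implicit.
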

\begin{proof}
Special case of Theorem \ref{thm:ring4}.
\end{proof}

\begin{theorem}\label{thm:sp_ring3}
Let $(K,\val)$ be a $2$-henselian valued field and
$B$ be its valuation ring such that $2$ is a unit.
Consider a proper quasi-quadratic module $\mathcal M$ in $B$.
\begin{enumerate}
\item[(1)] The quasi-quadratic module $\mathcal M$ is a quadratic module if and only if $M_{e}(\mathcal M)$ is a quadratic module, where $e$ is the 
identity element of $G$.
\item[(2)] 
The quasi-quadratic module $\mathcal M$ is a quasi-semiordering
if and only if $M_{g}(\mathcal M)$ is a quasi-semiordering for any 
$g\in G_{\geq e}$ with $M_g(\mathcal M)\neq F$ and, for any $g_1, g_2\in G_{\geq e}$ 
with $M_{g_1}(\mathcal M)\neq F$ and $M_{g_2}(\mathcal M)\neq F$, we have 
$M_{g_1g_2}(\mathcal M)\neq F$.
\item[(3)] 
Assume that $\mylcp:K^{\times}\rightarrow F^{\times}$ is an angular component map $\mylc$.
When $\mathcal M$ is a quadratic module, $\mathcal M$ is a preordering if and only if, for any $g_1, g_2 \in G_{\geq e}$, nonzero elements $c_1 
\in M_{g_1}(\mathcal M)$ and $c_2 \in M_{g_2}(\mathcal M)$, 
we have $c_1 \cdot c_2 \in M_{g_1\cdot g_2}(\mathcal M)$.
In particular, $M_{e}(\mathcal M)$ is a preordering if so is $\mathcal M$.
\end{enumerate}
\end{theorem}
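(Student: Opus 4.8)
The plan is to deduce all three assertions from Theorem \ref{thm:ring3} by specializing to the case $A=B$. The first observation is that $H=\val(B^\times)=\{e\}$, since the units of $B$ are exactly the elements of valuation $e$; consequently $H\cup G_{\geq e}=G_{\geq e}$, and the index set appearing in Theorem \ref{thm:ring3} collapses to the one used here. Likewise every occurrence of $\Phi^B$ and $M_g^B$ reduces to the simplified $\Phi$ and $M_g$ recalled at the beginning of this subsection. Since the hypotheses of Theorem \ref{thm:ring3} (a $2$-henselian valued field, valuation ring $B$, $2$ a unit in $B$, and a subring $A$ with $B\subseteq A\subseteq K$) are all met by $A=B$, parts (2) and (3) then follow verbatim: the quasi-semiordering criterion and the preordering criterion are stated over exactly the same family $(M_g)_{g\in G_{\geq e}}$, and the ``in particular'' clause in (3) is inherited directly from the corresponding clause in Theorem \ref{thm:ring3}(3).

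For part (1) there is a minor discrepancy to reconcile: Theorem \ref{thm:ring3}(1) is phrased in terms of \emph{proper} quadratic modules, whereas here $\mathcal M$ is already assumed to be a proper quasi-quadratic module and the adjective ``proper'' is dropped on both sides of the equivalence. To bridge this I would argue as follows. If $\mathcal M$ is a quadratic module, then, being proper, it is a proper quadratic module, so Theorem \ref{thm:ring3}(1) yields that $M_e(\mathcal M)$ is a proper quadratic module, in particular a quadratic module. Conversely, if $M_e(\mathcal M)$ is a quadratic module, note first that $M_e(\mathcal M)\neq F$: since $\mathcal M\neq B$ by properness, Corollary \ref{cor:ring3} (the equivalence $\mathcal M=A \Leftrightarrow M_e^A(\mathcal M)=F$, applied with $A=B$) forces $M_e(\mathcal M)\neq F$, so $M_e(\mathcal M)$ is a proper quadratic module; Theorem \ref{thm:ring3}(1) then returns that $\mathcal M$ is a proper quadratic module, hence a quadratic module.

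Since the statement is purely a corollary of the general structure theorem, I do not expect any genuine obstacle; the only point requiring a moment's care is the properness bookkeeping in part (1) described above, which is dispatched by Corollary \ref{cor:ring3}. Accordingly the written proof can be very short, essentially ``special case of Theorem \ref{thm:ring3} with $A=B$, using $H=\{e\}$, together with Corollary \ref{cor:ring3} for the properness reduction in (1).''
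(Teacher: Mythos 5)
Your proposal is correct and matches the paper, whose entire proof is ``Special case of Theorem \ref{thm:ring3}'' with $A=B$ and $H=\{e\}$. Your extra care in part (1), reconciling the dropped adjective ``proper'' via Corollary \ref{cor:ring3}, is a sound (and slightly more explicit) handling of a detail the paper leaves implicit.
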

\begin{proof}
Special case of Theorem \ref{thm:ring3}.
\end{proof}

\subsection{Derivation of Augustin and Knebusch's assertions}\label{sec:AK}
The initial motivation of this study is the generalization of the study \cite{AK}.
We want to clarify that our results are the generalization of their study.  

Let $R$ be a commutative ring. Following the definition in \cite{AK}, for given elements $g_1,\ldots,g_r\in R$, 
let $\po_R(g_1,\ldots,g_r)$ be the preordering of $R$ generated by the elements $g_1,\ldots,g_r\in R$. Namely, 
\begin{eqnarray*}
\po_R(g_1,\ldots,g_r)
:=\Bigl{\{}\sum_{\epsilon}\sigma_{\epsilon}g_{\epsilon}
\;\Bigl{|}\;\epsilon=(\epsilon_1,\ldots,\epsilon_r)\in\{0,1\}^{r},
\sigma_{\epsilon}\in \sum R^2,
g_{\epsilon}=\prod_{i=1}^r g_i^{{\epsilon}_i}
\Bigl{\}},
\end{eqnarray*}
where $\sum R^2$ denotes the set of sums of squares of elements of $R$.
In particular, the set $\po_R(g)$ for any $g\in R$ is called  {\it monogenic quadratic module}
because $\po_R(g)$ coincides with the quadratic module of $R$ generated by $1$ and $g$.
It is simply denoted by $\po(g)$ when $R$ is clear from the context.

A \textit{euclidean field} $F$ is a formally real field with $F=F^2 \cup (-F^2)$. 
For any nonzero $c \in F$, its sign is defined as $1$ when $c \in F^2$ and $-1$ when $-c \in F^2$.
In this subsection, we assume that
$B=F[\![X]\!]$, which is the valuation ring of the Laurent series field
$K=F(\!(X)\!)$ in the indeterminate $X$ over a euclidean field $F$. 
The field $K$ is a valued field whose valuation $\val(f)$ of a nonzero element $f \in K$
is the order of the element $f$.
The value group $G$ is the group of integers $\mathbb Z$.
It is well known that a strict unit has a square root in the ring $B$.
The valued field $(K,\val)$ has a cross section by Proposition \ref{prop:wmap}.
Therefore, it admits the angular component map $\mylc: K^{\times}\to F^{\times}$
by Proposition \ref{prop:angular}.
The notation $\epsilon(f)$ denotes the sign of $\mylc(f)$ in $F$ for any nonzero $f \in B$
according to the study \cite{AK}.

We have the following lemma:
\begin{lemma}\label{lem:aug_mono}
Let $f$ be a nonzero element in $B$.
We have 
\[
\po(f)=\left\{\begin{array}{ll}
\Phi(F^2,0) \cup \Phi(\epsilon(f)F^2,\val(f)) & \!\!\!\!\!\!\!\!\!\!\!\!\!\!\!\!\!\!\!\!\text{ if } \val(f) 
\text{ is odd,}\\
\Phi(F^2,0) &\!\!\!\!\!\!\!\!\!\!\!\!\!\!\!\!\!\!\!\! \text{ if } \val(f) \text{ is even and } \epsilon(f)=1 \text{,}\\
\Phi(F^2,0) \cup \Phi(F,\val(f)) \cup \Phi(F,\val(f)+1) & \text{ otherwise.}
\end{array}
\right.
\]
In particular, we have 
\[
\po(f)=\left\{\begin{array}{ll}
\po(1) & \text{ if } \val(f) \text{ is even and } \epsilon(f)=1 \text{,}\\
\po(\epsilon(f)X^{\val(f)}) & \text{ otherwise.}
\end{array}
\right.
\]
\end{lemma}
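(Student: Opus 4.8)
The plan is to express $\po(f)$ as a sum of two explicit monogenic quasi-quadratic modules in $B$ and then read off the answer from the structure results of this subsection. Concretely, $\po(f)=\mathcal M_1+\mathcal M_f$, where $\mathcal M_1$ is the quasi-quadratic module in $B$ generated by $1$ and $\mathcal M_f$ is the one generated by $f$. Since $F$ is euclidean it is formally real, and $(K,\val)$ carries the angular component map $\mylc$ attached to the cross section $\omega(n)=X^n$ (so that $\mylc(X^n)=1$ for all $n$); hence Lemma~\ref{lem:monogenic_ring} applies and gives $\mathcal M_1=\Phi(M_1,0)$ and $\mathcal M_f=\Phi(M_f,\val(f))$, where $M_1$ and $M_f$ are the quasi-quadratic modules of $F$ generated by $\mylc(1)=1$ and $\mylc(f)$ respectively. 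Because $F$ is euclidean, a sum of squares in $F$ is again a square, so $M_1=F^2$ and $M_f=\mylc(f)F^2$, which equals $F^2$ when $\epsilon(f)=1$ and $-F^2$ when $\epsilon(f)=-1$; both are proper, $F$ being formally real.

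Next I would compute $\po(f)=\Phi(F^2,0)+\Phi(\epsilon(f)F^2,\val(f))$ via Lemma~\ref{lem:ring3}(2) (equivalently Theorem~\ref{thm:sp_ring2}(1)), using $H=\{e\}$, that $G/G^2=\mathbb{Z}/2\mathbb{Z}$ so $\overline 0=\overline{\val(f)}$ exactly when $\val(f)$ is even, and that $0\le\val(f)$, whence $g_{\max}=\val(f)$. If $\val(f)$ is odd, the first case of Lemma~\ref{lem:ring3}(2) gives $\Phi(F^2,0)\cup\Phi(\epsilon(f)F^2,\val(f))$. If $\val(f)$ is even and $\epsilon(f)=1$, then $M_1+M_f=F^2\ne F$ and the middle case gives $\Phi(F^2,0)\cup\Phi(F^2,\val(f))$, which reduces to $\Phi(F^2,0)$ by Lemma~\ref{lem:triv}. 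If $\val(f)$ is even and $\epsilon(f)=-1$, then $M_1+M_f=F^2+(-F^2)=F$, since every $c\in F$ equals $(\tfrac{c+1}{2})^2-(\tfrac{c-1}{2})^2$ (using $2\in F^\times$); the third case gives $\Phi(F^2,0)\cup\Phi(-F^2,\val(f))\cup\bigcup_{g\ge\val(f)}\Phi(F,g)$, and Lemma~\ref{lem:triv}, applied separately to even and odd $g\ge\val(f)$, collapses the union to $\Phi(F,\val(f))\cup\Phi(F,\val(f)+1)$, which already absorbs $\Phi(-F^2,\val(f))$. This yields the three displayed expressions.

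For the ``in particular'' part it suffices to note that the formulas just derived show $\po(f)$ depends on $f$ only through $\val(f)$ and $\epsilon(f)$. Taking $f=1$ in the first part (where $\val(1)=0$ is even and $\epsilon(1)=1$) gives $\po(1)=\Phi(F^2,0)$, so $\po(f)=\po(1)$ in the even, $\epsilon(f)=1$ case. In every other case set $h=\epsilon(f)X^{\val(f)}\in B$; then $\val(h)=\val(f)$ and $\mylc(h)=\mylc(\epsilon(f))\mylc(X)^{\val(f)}=\epsilon(f)$, using that $\mylc$ is a group homomorphism restricting to $\pi$ on $B^\times$ and $\mylc(X)=1$, so $\epsilon(h)=\epsilon(f)$; hence the first part applied to $h$ gives $\po(h)=\po(f)$. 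I expect the only genuine bookkeeping hurdle to be the last simplification in the $\epsilon(f)=-1$ case, namely checking $\bigcup_{g\ge\val(f)}\Phi(F,g)=\Phi(F,\val(f))\cup\Phi(F,\val(f)+1)$ through Lemma~\ref{lem:triv}; the remaining steps are direct substitutions into Lemmas~\ref{lem:monogenic_ring}, \ref{lem:ring3} and \ref{lem:triv} together with the elementary facts about euclidean fields recorded above.
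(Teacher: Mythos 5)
Your proposal is correct and follows essentially the same route as the paper: write $\po(f)$ as the sum of the quasi-quadratic modules generated by $1$ and by $f$, identify these as $\Phi(F^2,0)$ and $\Phi(\epsilon(f)F^2,\val(f))$ via Lemma \ref{lem:monogenic_ring}, and then evaluate the sum case-by-case with Lemma \ref{lem:ring3}(2) and collapse the resulting unions with Lemma \ref{lem:triv}. The only cosmetic difference is in the even, $\epsilon(f)=1$ case, where the paper invokes directly that sums of squares in $B$ are squares while you run the middle case of Lemma \ref{lem:ring3}(2) and then apply Lemma \ref{lem:triv}; both arguments are valid.
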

\begin{proof}
Note that a sum of squares in $B$ is a square because a strict unit of $B$ has a square root.
Since $F$ is a euclidean field,  the quasi-quadratic modules in $F$ are $\{0\}$, $F^2$, $-F^2$ and $F$.
The quasi-quadratic modules generated by $1$ and by $f$ are $\Phi(F^2,0)$ and $\Phi(\epsilon(f)F^2,\val(f))$, respectively, by Lemma \ref{lem:monogenic_ring}.
Hence, we have $$\po(f)=\Phi(F^2,0)+\Phi(\epsilon(f)F^2,\val(f)).$$

When $\val(f)$ is odd, we have $\val(f) \not\equiv 0 \mod 2\mathbb{Z}$.
We get $\po(f)=\Phi(F^2,0) \cup \Phi(\epsilon(f)F^2,\val(f))$ by Lemma \ref{lem:ring3}(2).
When $\val(f)$ is even and $\epsilon(f)=1$, we have $\po(f)=\Phi(F^2,0)$
by Lemma \ref{lem:ring3}(2) and what we noted at the beginning of the proof.
When $\val(f)$ is even and $\epsilon(f)=-1$, we have $$\po(f)=\Phi(F^2,0) \cup \bigcup_{g \geq \val(f)} \Phi(F,g)$$
by Lemma \ref{lem:ring3}(2).
On the other hand, we have $$\bigcup_{g \geq \val(f)} \Phi(F,g)=\Phi(F,\val(f)) \cup \Phi(F,\val(f)+1)$$ by Lemma \ref{lem:triv}.
The `in particular' part of the lemma easily follows from the equality we have just demonstrated. 
\end{proof}

We can completely classify quadratic modules in $F[\![X]\!]$.
\begin{lemma}\label{lem:aug_quad}
For a quadratic module $Q$ in $B=F[\![X]\!]$
there are nonnegative integers $m< n$ and $\varepsilon\in\{-1,1\}$ such that
the quasi-quadratic modules $M_k(Q)$ of $F$ generated by $Q$ and $k \in \mathbb Z _{\geq 0}$ are given by the following table:
\begin{center}
\begin{tabular}{|c|l|}
\hline
 & \;\;\;\;\;\;\;\;\;\;\;\;\;\;\;\;\;\;\;\;Description of $M_k(Q)$\\
 \hline\hline & \vspace{-3mm}\\
 (a) & $M_k(Q)=\left\{\begin{array}{ll} F^2 & \text{ if } k \text{ is even,}\\ \{0\} & \text{ otherwise.}\end{array}\right.$\\
 & \vspace{-3mm}\\
 \hline & \vspace{-3mm}\\
 (b) & $M_k(Q)=F \text{ for all } k\in\mathbb{Z}_{\geq 0}$\\
 & \vspace{-3mm}\\
 \hline & \vspace{-3mm}\\
 (c) & $M_k(Q)=\left\{\begin{array}{ll} F^2 & \!\!\!\!\text{ if } k \text{ is even,}\\ 
 \varepsilon F^2 & \!\!\!\!\text{ if } k \text{ is odd and } k \geq n,\\
 \{0\} & \!\!\!\!\text{ otherwise.}\end{array}\right.$\\ 
 & \vspace{-3mm}\\
 \hline & \vspace{-3mm}\\
 (d) & $M_k(Q)=\left\{\begin{array}{ll} F^2 & \text{ if } k \text{ is even and }k<n,\\ 
 F & \text{ if } k \geq n,\\
 \{0\} & \text{ otherwise.}\end{array}\right.$\\
 & \vspace{-3mm}\\
 \hline & \vspace{-3mm}\\
 (e) & $M_k(Q)=\left\{\begin{array}{ll} F^2 & \!\!\!\!\text{ if } k \text{ is even and }k<n,\\ 
 \varepsilon F^2 & \!\!\!\!\text{ if } k \text{ is odd and } m \leq k<n,\\ 
 F & \!\!\!\!\text{ if } k \geq n,\\
 \{0\} & \!\!\!\!\text{ otherwise.}\end{array}\right.$ \smallskip \\ 
 \hline 
\end{tabular}
\end{center}
The quadratic module $Q$ has one of the following forms in each case:
\begin{enumerate}
\item[(a)] $Q=\Phi(F^2,0)$;
\item[(b)] $Q=B$;
\item[(c)] $Q=\Phi(F^2,0) \cup \Phi(\varepsilon F^2,n)$;
\item[(d)] $Q=\Phi(F^2,0) \cup \Phi(F,n) \cup \Phi(F,n+1)$;
\item[(e)] $Q=\Phi(F^2,0) \cup \Phi(\varepsilon F^2,m) \cup \Phi(F,n) \cup \Phi(F,n+1)$.
\end{enumerate}
In particular, any quadratic module in $F[\![X]\!]$ is a preordering.
\end{lemma}
\begin{proof}
As was stated in Definition \ref{Phi-M_g},
write $M_k$ instead of $M_k(Q)$ for simplicity.
We have $M_m \subseteq M_n$ whenever $m \equiv n \mod 2\mathbb Z$ and $m \leq n$ by Proposition \ref{prop:ring2}(3).
We also have $M_n=F$ whenever $M_m=F$ for some $m<n$ by Proposition \ref{prop:ring3'}.
Note that any quadratic module in $B$ contains $\po(1)$. In particular, we have $F^2\subseteq \po(1) \subseteq Q$.
It follows that $F^2 \subseteq M_n$ for all nonnegative even numbers $n$. In fact, it suffices to demonstrate that
$F^2\subseteq M_0$ because $M_0 \subseteq M_n$ for nonnegative even numbers $n$ by Proposition \ref{prop:ring2}(3). 
Since $M_0=\{\mylc(x)\in F\setminus\{0\}\;|\; x \in Q
\setminus\{0\} \text{ with } \val(x)= 0 \} \cup \{0\}$, we get $F^2\subseteq M_0$.
Since $F$ is a euclidean field, the quasi-quadratic modules in $F$ are $\{0\}$, $F^2$, $-F^2$ and $F$.
The quasi-quadratic modules $F^2$ and $-F^2$ are not contained in each other.
Therefore, $M_k(Q)$ should be one of the five forms in the table.

We have $Q=\Phi(F^2,0)$ in the cases of (a) by 
Theorem \ref{thm:sp_ring1} and Lemma \ref{lem:triv}.
In the case of (b),
we have $Q=B$ by Corollary \ref{cor:ring3}.

We next consider the case (c).
We have $M_n=F^2$ or $M_n=-F^2$.
We only consider the first case.
We can get the similar result in the latter case.
We have $M_k=F^2$ for all positive odd number with $k \geq n$.
Hence, we get $Q=\Phi(F^2,0) \cup \Phi(F^2,n)$ by 
Theorem \ref{thm:sp_ring1} and Lemma \ref{lem:triv}.

The next target is the case (d).
We have $$Q=\bigcup_{k<n,k \text{:even}}\Phi(F^2,k) \cup \bigcup_{k \geq n}\Phi(F,k)=\Phi(F^2,0) \cup \Phi(F,n) \cup \Phi(F,n+1)$$
by Theorem \ref{thm:sp_ring1} and Lemma \ref{lem:triv}.

The remaining case is the case (e).
We only consider the case in which $M_m=F^2$.
We can prove the lemma similarly when $M_m=-F^2$.
We have $$Q=\Phi(F^2,0) \cup \Phi(F^2,m) \cup \Phi(F,n) \cup \Phi(F,n+1)$$ by Theorem \ref{thm:sp_ring1} and Lemma \ref{lem:triv} in the same manner as the case (d).

We finally demonstrate that $Q$ is a preordering.
It is easy to check that $Q$ satisfies the equivalent condition given in Theorem \ref{thm:sp_ring3}(3) for each case by the table in the lemma.
\end{proof}

\begin{lemma}\label{lem:aug_quad2}
Quadratic modules $Q$ in $B=F[\![X]\!]$ are represented as follows in all the cases of Lemma \ref{lem:aug_quad}:
\begin{center}
\begin{tabular}{|c|l|c|}
\hline
 & Classification in Lemma \ref{lem:aug_quad} & Description of $Q$\\
 \hline
  \hline
(i) & (a) & $\po(1)$\\
\hline
(ii) & (b) & $\po(-1)$\\
\hline
(iii) & (c) & $\po(\varepsilon X^n)$\\
\hline
(iv) & (d) and $n$ is even & $\po(-X^n)$\\
\hline
(v) & (d) and $n$ is odd & $\po(X^n)+\po(-X^n)$\\
\hline
(vi) & (e) and $n$ is even & $\po(\varepsilon X^m)+\po(-X^n)$\\
\hline
(vii) & (e) and $n$ is odd & $\po(\varepsilon X^m)+\po(-\varepsilon X^n)$\\
\hline
\end{tabular}
\end{center}
In particular, any quadratic module is generated by at most two monogenic submodules.
\end{lemma}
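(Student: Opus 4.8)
The plan is to play every case of Lemma~\ref{lem:aug_quad} off against the monogenic computation in Lemma~\ref{lem:aug_mono}, invoking the sum formula of Theorem~\ref{thm:sp_ring2}(1) only for the three cases in which $Q$ is not already monogenic. I would first dispose of cases (i)--(iv) by directly comparing the $\Phi$-representations produced by the two lemmas. In case (i), Lemma~\ref{lem:aug_quad}(a) reads $Q=\Phi(F^2,0)$, and $\po(1)=\Phi(F^2,0)$ by Lemma~\ref{lem:monogenic_ring} (the module of $F$ generated by $\mylc(1)=1$ being $F^2$ since $F$ is euclidean). In case (ii), $Q=B$, while $\po(-1)=B$ because $\pm1\in\po(-1)$ and $1$ is a unit, so Corollary~\ref{cor:basic4'}(2) applies; equivalently, the third clause of Lemma~\ref{lem:aug_mono} with $f=-1$ gives $\po(-1)=\Phi(F^2,0)\cup\Phi(F,0)\cup\Phi(F,1)=B$, since every valuation on $B$ is nonnegative and hence $\Phi(F,0)\cup\Phi(F,1)=B$. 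In case (iii), Lemma~\ref{lem:aug_quad}(c) reads $Q=\Phi(F^2,0)\cup\Phi(\pm F^2,n)$ with $n$ odd, which is exactly $\po(\pm X^n)$ by the odd-exponent clause of Lemma~\ref{lem:aug_mono}; and in case (iv), with $n$ even, Lemma~\ref{lem:aug_quad}(d) reads $Q=\Phi(F^2,0)\cup\Phi(F,n)\cup\Phi(F,n+1)$, which is $\po(-X^n)$ by the third clause of Lemma~\ref{lem:aug_mono}.

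For cases (v), (vi) and (vii) I would evaluate the indicated sum by Theorem~\ref{thm:sp_ring2}(1). The coordinate modules $M_g,N_g$ of the two monogenic summands are read off from Lemma~\ref{lem:aug_quad}(c)(d), and the only residue-field arithmetic needed is the identity $F^2+(-F^2)=F$: the sum $F^2+(-F^2)$ is a quasi-quadratic module of the euclidean (hence characteristic $\neq2$) field $F$ containing $\pm1$, so it equals $F$ by Corollary~\ref{lem:field0}. Computing the coordinatewise sums one finds, in each of the three cases, that the least exponent at which $M_g+N_g=F$ is $n$ (for the odd exponents $m\le g<n$ the sum is $(\pm F^2)+\{0\}=\pm F^2$, never $F$ --- in particular at $g=m$ in cases (vi) and (vii) --- and for the even exponents below $n$ it is $F^2$), so the index set $H$ of Theorem~\ref{thm:sp_ring2}(1) equals $\{h\in G_{\geq e}\mid h\geq n\}$. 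Collapsing the resulting unions by Lemma~\ref{lem:triv} via
\[
\bigcup_{g \text{ even}}\Phi(F^2,g)=\Phi(F^2,0),\qquad
\bigcup_{g \text{ odd},\ g\geq m}\Phi(\pm F^2,g)=\Phi(\pm F^2,m),\qquad
\bigcup_{h\geq n}\Phi(F,h)=\Phi(F,n)\cup\Phi(F,n+1)
\]
turns the output of Theorem~\ref{thm:sp_ring2}(1) into precisely the representations of cases (d) and (e) in Lemma~\ref{lem:aug_quad}. The double sign at exponent $m$ is inherited from the $X^m$-summand; in case (vii) the $X^n$-summand must be given the \emph{opposite} sign, so that the coordinate at exponent $n$ (whose two summands are then $\pm F^2$ with opposite signs) equals $F$, whereas in case (vi) the summand $\po(-X^n)$ already contributes $F$ from exponent $n$ onward, independently of the sign at $m$.

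The ``in particular'' clause is then immediate, since cases (i)--(iv) exhibit $Q$ as a single monogenic module and cases (v)--(vii) as a sum of two. I expect the main obstacle to be purely combinatorial bookkeeping: keeping the parities of $m$ and $n$ and the two double signs consistent through the coordinatewise sums, and checking that no coordinate $M_g+N_g$ collapses to $F$ below exponent $n$ (so that $H$ genuinely starts at $n$). Once these verifications are carried out, the union-collapsing via Lemma~\ref{lem:triv} and the matching with Lemma~\ref{lem:aug_quad} are routine.
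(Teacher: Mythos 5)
Your proposal is correct, and its overall strategy --- match the $\Phi$-representations of Lemma \ref{lem:aug_mono} and Lemma \ref{lem:aug_quad} in the monogenic cases, compute the two-generator sums in the remaining cases, and collapse the unions with Lemma \ref{lem:triv} --- is the same as the paper's. The one genuine difference is how the sums in cases (v)--(vii) are evaluated: the paper distributes each sum over the two-term unions supplied by Lemma \ref{lem:aug_mono} and applies Lemma \ref{lem:ring3}(2) to each pairwise sum of $\Phi$'s (working case (v) explicitly and declaring the others analogous), whereas you invoke the canonical-representation sum formula of Theorem \ref{thm:sp_ring2}(1), computing the coordinatewise sums $M_g+N_g$ (with $F^2+(-F^2)=F$ via Corollary \ref{lem:field0}) and the propagation set $\{h\geq n\}$. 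The two routes are equivalent in substance, since Theorem \ref{thm:sp_ring2} is deduced from Lemma \ref{lem:ring3}(2); yours treats (v)--(vii) uniformly, at the cost of the small extra check that the canonical coordinates of each monogenic summand are indeed those listed in Lemma \ref{lem:aug_quad}(c),(d) (immediate from Lemma \ref{lem:aug_mono} together with the bijection of Theorem \ref{thm:sp_ring4}, or by a direct valuation-parity computation), while the paper's expansion needs no coordinate data beyond the $\Phi$-decompositions themselves. Your treatment of (i)--(iv) by direct comparison, and the sign bookkeeping forcing opposite signs at the exponent $n$ in case (vii), agree with the paper; I see no gap.
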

\begin{proof}
It is clear that $Q=\Phi(F^2,0)=B^2=\po(1)$ in the case (i).
We obtain $Q=B=\po(-1)$ by Corollary \ref{cor:basic4'}(2) in the case (ii).
For example, we demonstrate the case (v).
We get 
\begin{eqnarray*}
&&\po(X^n)+\po(-X^n)\\
  &=&\Phi(F^2,0) \cup \Phi(F^2,n)+\Phi(F^2,0) \cup \Phi(-F^2,n) \;\;\;\;\; \mbox{(by Lemma \ref{lem:aug_mono})}\\
  &=& (\Phi(F^2,0)+\Phi(F^2,0)) \cup (\Phi(F^2,0)+ \Phi(F^2,n))\\ 
  &&\;\;\;\;\;\;\;\;\;\;\;\cup (\Phi(F^2,0)+ \Phi(-F^2,n))\cup(\Phi(F^2,n)+\Phi(-F^2,n))\\
  &=& \Phi(F^2,0) \cup (\Phi(F^2,0) \cup \Phi(F^2,n)) \cup (\Phi(F^2,0)\\
  &&\;\;\;\;\;\;\;\;\;\;\;\cup \Phi(-F^2,n))\cup \bigcup_{k \geq n}\Phi(F,k) \;\;\;\; \mbox{(by Lemma \ref{lem:ring3}(2))}\\
  &=& \Phi(F^2,0) \cup \Phi(F,n) \cup \Phi(F,n+1) \;\;\;\; \mbox{(by Lemma \ref{lem:triv})} \\
  &=&Q  \;\;\;\;\;\;\mbox{(by Lemma \ref{lem:aug_quad})}.
\end{eqnarray*}
The remaining cases follow in the same way.
\end{proof}

For any quadratic module $Q$ of $B=F[\![X]\!]$, we consider the following three quadratic submodules.
Let $n_l$ be the smallest positive odd integer with $X^{n_l} \in Q$.
We set $Q_l=\po(X^{n_l})$.
If such an integer $n_l$ does not exist, we set $Q_l=\po(1)$.
We define $Q_c$ and $Q_r$ similarly:
We put $Q_c=\po(-X^{n_c})$ if the smallest nonnegative even integer with $-X^{n_c} \in Q$ exists and set $Q_c=\po(1)$ otherwise.
We put $Q_r=\po(-X^{n_r})$ if the smallest positive odd integer with $-X^{n_r} \in Q$ exists and set $Q_r=\po(1)$ otherwise.

\begin{lemma}\label{lem:aug_lcr}
Let $Q$ be a quadratic module in $B=F[\![X]\!]$.
The quadratic submodules $Q_l$, $Q_c$ and $Q_r$ are given by the following:
\begin{center}
\begin{tabular}{|l|c|c|c|}
\hline
Classification in Lemma \ref{lem:aug_quad2} & $Q_l$ & $Q_c$ & $Q_r$\\
 \hline
  \hline
 (i) & $\po(1)$ & $\po(1)$ & $\po(1)$\\
 \hline
 (ii) & $\po(X)$ & $\po(-1)$ & $\po(-X)$\\
  \hline
 (iii) and $M_n=F^2$ & $\po(X^n)$ & $\po(1)$ & $\po(1)$\\
 \hline
  (iii) and $M_n=-F^2$ & $\po(1)$ & $\po(1)$ & $\po(-X^n)$\\
 \hline
  (iv)  & $\po(X^{n+1})$ & $\po(-X^n)$ & $\po(-X^{n+1})$\\
 \hline
   (v) & $\po(X^n)$ & $\po(-X^{n+1})$ & $\po(-X^n)$\\
 \hline
   (vi) and $M_m=F^2$ & $\po(X^m)$ & $\po(-X^n)$ & $\po(-X^{n+1})$\\
 \hline
   (vi) and $M_m=-F^2$ & $\po(X^{n+1})$ & $\po(-X^n)$ & $\po(-X^m)$\\
 \hline
   (vii) and $M_m=F^2$ & $\po(X^m)$ & $\po(-X^{n+1})$ & $\po(-X^n)$\\
 \hline
   (vii) and $M_m=-F^2$ & $\po(X^n)$ & $\po(-X^{n+1})$ & $\po(-X^m)$\\
 \hline
 \end{tabular}
\end{center}
In particular, we have $Q=Q_l \cup Q_c \cup Q_r$.
\end{lemma}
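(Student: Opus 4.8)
The plan is to establish the table first — row by row — and then derive the identity $Q=Q_l\cup Q_c\cup Q_r$ from it. The workhorse is a membership criterion for monomials. With the cross section $\omega(k)=X^k$ the chosen angular component map satisfies $\mylc(X^k)=1$ and $\mylc(-X^k)=-1$, so by the canonical representation theorem (Theorem~\ref{thm:sp_ring1}) one has, for every $k\in\mathbb{Z}_{\geq 0}$, that $X^k\in Q$ iff $1\in M_k(Q)$ and $-X^k\in Q$ iff $-1\in M_k(Q)$. Since a quasi-quadratic module of the euclidean field $F$ containing a nonzero square already contains $F^2$, and the only quasi-quadratic modules of $F$ are $\{0\},F^2,-F^2,F$ (as recalled in the proof of Lemma~\ref{lem:aug_quad}), this becomes
\[
X^k\in Q\iff M_k(Q)\in\{F^2,F\},\qquad -X^k\in Q\iff M_k(Q)\in\{-F^2,F\}.
\]
Hence $n_l$ is the least positive odd $k$ with $M_k(Q)\in\{F^2,F\}$, $n_c$ the least nonnegative even $k$ with $M_k(Q)\in\{-F^2,F\}$, and $n_r$ the least positive odd $k$ with $M_k(Q)\in\{-F^2,F\}$ (with $Q_\bullet=\po(1)$ exactly when no such $k$ exists).

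Next I would go through the seven rows, which are the cases (i)--(vii) of Lemma~\ref{lem:aug_quad2} with (iii), (vi), (vii) split according to the sign of $M_n(Q)$ or of $M_m(Q)$. In each case $n_l$, $n_c$, $n_r$ are read off directly from the description of $M_k(Q)$ supplied by Lemma~\ref{lem:aug_quad}, and then $Q_l=\po(X^{n_l})$, $Q_c=\po(-X^{n_c})$, $Q_r=\po(-X^{n_r})$ by definition; for example, in case (iv) the description $M_k(Q)=F$ for all $k\geq n$ with $n$ even forces the least odd witnessing index to be $n+1$ and the least even one to be $n$, giving $\po(X^{n+1})$, $\po(-X^n)$, $\po(-X^{n+1})$. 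The only delicate point is parity bookkeeping: when the table gives $M_k(Q)=F$ for all $k\geq n$, the least witnessing even index is $n$ or $n+1$ according to the parity of $n$, and similarly for the least odd index — this is exactly what produces the alternating $n$/$n+1$ pattern in the columns.

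For the equality $Q=Q_l\cup Q_c\cup Q_r$, the inclusion $\supseteq$ is immediate, because each of $Q_l,Q_c,Q_r$ is either $\po(1)$ or $\po(t)$ with $t\in Q$, hence contained in the quadratic module $Q$. For $\subseteq$ I would argue uniformly, not re-entering the case split: given a nonzero $x\in Q$ with $k=\val(x)$ and $c=\mylc(x)\in M_k(Q)\setminus\{0\}$, euclideanness of $F$ gives $c\in F^2\cup(-F^2)$. If $c\in F^2$ then $M_k(Q)\in\{F^2,F\}$ by the criterion, so $X^k\in Q$ and $x\in\Phi(F^2,k)$; for $k$ even, Lemma~\ref{lem:triv} and Lemma~\ref{lem:aug_mono} give $\Phi(F^2,k)\subseteq\Phi(F^2,0)\subseteq\po(X^{n_l})=Q_l$, while for $k$ odd one has $k\geq n_l$ by minimality and $\Phi(F^2,k)\subseteq\Phi(F^2,n_l)\subseteq\po(X^{n_l})=Q_l$ again by Lemmas~\ref{lem:triv} and~\ref{lem:aug_mono}; either way $x\in Q_l$. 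If $c\in-F^2$ the same argument with $-X^k\in Q$ yields $x\in\Phi(-F^2,k)\subseteq\po(-X^{n_c})=Q_c$ when $k$ is even (using $k\geq n_c$) and $x\in\Phi(-F^2,k)\subseteq\po(-X^{n_r})=Q_r$ when $k$ is odd (using $k\geq n_r$).

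I expect the second paragraph to be the only laborious part — pure bookkeeping through the seven (sub)cases with careful attention to parities — but there is no conceptual obstacle anywhere; in particular the final inclusion $Q\subseteq Q_l\cup Q_c\cup Q_r$ is disposed of once and for all by the dichotomy on the sign of $\mylc(x)$, independently of the case analysis.
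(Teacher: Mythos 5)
Your proposal is correct and follows essentially the same route as the paper: the table is read off from the descriptions of $M_k(Q)$ in Lemma \ref{lem:aug_quad} together with Lemma \ref{lem:aug_mono}, via the (correct) criterion that $X^k\in Q$ iff $M_k(Q)\supseteq F^2$ and $-X^k\in Q$ iff $M_k(Q)\supseteq -F^2$. The only difference is cosmetic: you prove $Q\subseteq Q_l\cup Q_c\cup Q_r$ uniformly by the sign dichotomy on $\mylc(x)$ (using Lemmas \ref{lem:triv} and \ref{lem:aug_mono}) instead of verifying the union case by case from the table, which is a tidy but equivalent way of invoking the same ingredients.
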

\begin{proof}
The table is immediately obtained from Lemma \ref{lem:aug_mono} and Lemma \ref{lem:aug_quad}.
The equality $Q=Q_l \cup Q_c \cup Q_r$ also follows from the table, Lemma \ref{lem:aug_mono} and Lemma \ref{lem:aug_quad}.
\end{proof}

Now, the assertions on $F[\![X]\!]$ in \cite{AK} directly follow from the assertions we gave.
The following table summarizes the assertions in \cite{AK} and the counterparts in this paper.
\begin{center}
\begin{tabular}{|c|c|}
\hline
Assertions in \cite{AK} & Counterparts in this paper\\
 \hline
  \hline
Theorem 2.3. & Lemma \ref{lem:aug_mono} and Lemma \ref{lem:triv}.\\
 \hline
 Proposition 3.2. & Lemma \ref{lem:aug_mono},  Lemma \ref{lem:aug_lcr} and 
 Lemma \ref{lem:triv}.\\
 \hline
 Scholium 3.4. & Lemma \ref{lem:aug_lcr}.\\
 \hline
 Theorem 3.5 and Corollary 3.6. & Lemma \ref{lem:aug_quad2}.\\
 \hline
 Theorem 3.7 through Corollary 3.9. & Lemma \ref{lem:aug_lcr}.\\
 \hline
Theorem 4.1 and Corollary 4.2. & Lemma \ref{lem:aug_quad}.\\
 \hline
Theorem 4.3. & Lemma \ref{lem:aug_mono} and Lemma \ref{lem:triv}.\\
 \hline
 \end{tabular}
\end{center}

\subsection{When the ring $A$ is the valued field $K$}
We next consider the case $A=K$.
In this case, we have $H=\val(A^{\times})=G$ and 
\begin{align*}
\Phi(M,\overline{g}) 
&= \{x \in K\setminus\{0\} \;|\; \val(x) = \overline{g} \text{ and } \mylcp(x) \in M \} \cup \{0\},
\end{align*}
where $M$ is a quasi-quadratic module of $F$.
We can get a simpler structure theorem when we only consider proper quasi-quadratic modules.
 
\begin{theorem}[Canonical decomposition theorem for quasi-quadratic modules]\label{thm:field1}
Let $(K,\val)$ be a $2$-henselian valued field whose residue class field is of characteristic $\not=2$.
Consider a quasi-quadratic module $\mathcal M$ in $K$.
We have $$\mathcal M = \bigcup_{\overline{g} \in G/G^2} \Phi(M_{g}(\mathcal M),\overline{g})\text{.}$$
Here, the subscript $g$ of $M_{g}(\mathcal M)$ denotes an arbitrary representative of $\overline{g}$ in $G$.
\end{theorem}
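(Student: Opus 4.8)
The plan is to obtain this as the specialization of the canonical representation theorem, Theorem~\ref{thm:ring1}, to the case $A=K$. First I would record the reductions that occur when $A=K$. Since the valuation $\val$ is surjective onto $G$, we have $H=\val(K^\times)=G$, hence $H^2=G^2$ and the quotient $G/H^2$ is just $G/G^2$, so the bracket notation $[\![g]\!]$ coincides with $\overline{g}$ for every $g\in G$; moreover $H\cup G_{\geq e}=G\cup G_{\geq e}=G$. Next I would unwind the definition of $\Phi^K(M,[\![g]\!])$: once $\val(x)=\overline{g}$, the clause ``$\val(x)=[\![g]\!]$ or $\val(x)>g$'' holds automatically, because $\overline{g}=[\![g]\!]$ in this situation; thus $\Phi^K(M,\overline{g})$ reduces exactly to $\{x\in K\setminus\{0\} : \val(x)=\overline{g},\ \mylcp(x)\in M\}\cup\{0\}$, the set displayed just before the theorem. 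The hypotheses of Theorem~\ref{thm:ring1} (a $2$-henselian valued field with residue characteristic $\neq 2$, whence $2\in B^\times$) are precisely those assumed here.

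With these identifications, Theorem~\ref{thm:ring1} applied to $A=K$ gives $\mathcal{M}=\bigcup_{g\in G}\Phi^K(M_g^K(\mathcal{M}),\overline{g})$. The last step is to collapse the index set from $G$ to $G/G^2$. For this I would invoke Proposition~\ref{prop:ring2}(2): if $\overline{g_1}=\overline{g_2}$ then $[\![g_1]\!]=[\![g_2]\!]$ (again because $H=G$), whence $M_{g_1}^K(\mathcal{M})=M_{g_2}^K(\mathcal{M})$ and therefore $\Phi^K(M_{g_1}^K(\mathcal{M}),\overline{g_1})=\Phi^K(M_{g_2}^K(\mathcal{M}),\overline{g_2})$. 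This shows simultaneously that the term $\Phi(M_g(\mathcal{M}),\overline{g})$ depends only on the class $\overline{g}$, which justifies the notational convention that the subscript $g$ denotes an arbitrary representative, and that the union over $g\in G$ equals the union over $\overline{g}\in G/G^2$. This is exactly the asserted equality.

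I do not expect a genuine obstacle: the content is bookkeeping built on Theorem~\ref{thm:ring1} and Proposition~\ref{prop:ring2}. The only point deserving a moment of care is the well-definedness noted above, namely that $M_g^K(\mathcal{M})$, and hence $\Phi(M_g(\mathcal{M}),\overline{g})$, really is independent of the representative $g$ chosen for $\overline{g}$; this is immediate from Proposition~\ref{prop:ring2}(2) once one observes that in the present case $[\![\cdot]\!]$ and $\overline{\,\cdot\,}$ coincide.
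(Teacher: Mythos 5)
Your proposal is correct and is essentially the paper's own argument: apply Theorem~\ref{thm:ring1} with $A=K$ (so $H=G$, $[\![g]\!]=\overline{g}$, $H\cup G_{\geq e}=G$) and then collapse the union over $g\in G$ to a union over $\overline{g}\in G/G^2$ via Proposition~\ref{prop:ring2}(2). The extra care you take about well-definedness of $M_g(\mathcal M)$ on classes is exactly the point the paper relies on, so there is nothing to add.
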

\begin{proof}
Applying Theorem \ref{thm:ring1}, we get the equality $\mathcal M = \bigcup_{g \in G} \Phi(M_{g}(\mathcal M),\overline{g})\text{.}$
We obtain the equality in the theorem by Proposition \ref{prop:ring2}(2).
\end{proof}

\begin{theorem}\label{thm:field2}
Let $(K,\val)$ be a $2$-henselian valued field whose residue class field is of characteristic $\not=2$.
Consider quasi-quadratic modules $\mathcal M$ and $\mathcal N$ in $K$.
Let $\mathcal M = \bigcup_{\overline{g} \in G/G^2} \Phi(M_{g},\overline{g})$
and $\mathcal N = \bigcup_{\overline{g} \in G/G^2} \Phi(N_{g},\overline{g})$
 be the canonical representations of $\mathcal M$ and $\mathcal N$
where $M_{g}=M_{g}(\mathcal M)$ and
$N_{g}=M_{g}(\mathcal N)$ for some representative $g \in G$ of $\overline{g}$, respectively.
We get the following equalities:
\begin{enumerate}
\item[(1)] $\displaystyle \mathcal M + \mathcal N = \bigcup_{\overline{g} \in G/G^2} \Phi(M_{g}+N_{g},\overline{g})$ when $\mathcal M + \mathcal N \not=K$;
\item[(2)] $\displaystyle\mathcal M \cap \mathcal N = \bigcup_{\overline{g} \in G/G^2} \Phi(M_{g}\cap N_{g},\overline{g})$.
\end{enumerate}
\end{theorem}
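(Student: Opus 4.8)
The plan is to deduce Theorem~\ref{thm:field2} from Theorem~\ref{thm:ring2} by taking $A=K$. First I would record the bookkeeping for this specialization. Since $\val\colon K^{\times}\to G$ is surjective, $H=\val(K^{\times})=G$, so $H\cup G_{\geq e}=G$ and $H^{2}=G^{2}$; hence $G/H^{2}=G/G^{2}$, the class $[\![g]\!]$ is simply $\overline g$, and the defining clause ``$\val(x)=[\![g]\!]$ or $\val(x)>g$'' of $\Phi^{A}(M,[\![g]\!])$ holds automatically once $\val(x)=\overline g$, so that $\Phi^{A}(M,[\![g]\!])=\Phi(M,\overline g)$ in the notation fixed before the theorem. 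Note also that ``residue characteristic $\neq 2$'' makes $2$ a unit in $B$, so the hypotheses of Theorem~\ref{thm:ring2} and of Corollaries~\ref{cor:ring3} and~\ref{cor:ring41} are met.

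With these identifications, assertion~(2) is immediate from Theorem~\ref{thm:ring2}(2), which gives $\mathcal M\cap\mathcal N=\bigcup_{g\in G}\Phi(M_{g}\cap N_{g},\overline g)$; since $M_{g}$ and $N_{g}$, hence $M_{g}\cap N_{g}$, depend only on $\overline g$ by Proposition~\ref{prop:ring2}(2), the union may be reindexed over $G/G^{2}$.

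For assertion~(1) the only substantive point is to show that the second union $\bigcup_{l\in L}\Phi^{A}(F,[\![l]\!])$ in Theorem~\ref{thm:ring2}(1) is empty under the hypothesis $\mathcal M+\mathcal N\neq K$, i.e.\ that $L=\emptyset$. I would argue as follows. Here $G/H$ is trivial, so $[g]=[e]$ for every $g\in G$; combining this with Corollary~\ref{cor:ring41} and Corollary~\ref{cor:ring3} shows that a quasi-quadratic module $\mathcal P$ in $K$ equals $K$ precisely when $M_{g}(\mathcal P)=F$ for some (equivalently every) $g$. Now if $M_{h}+N_{h}=F$ for some $h$, then $h\in L$, so $\Phi(F,\overline h)\subseteq\mathcal M+\mathcal N$ by Theorem~\ref{thm:ring2}(1); since by Definition~\ref{def:pseudo}(3) every nonzero $c\in F$ equals $\mylcp(w)$ for some $w$ with $\val(w)=h$, and such $w$ lies in $\Phi(F,\overline h)$, we obtain $M_{h}(\mathcal M+\mathcal N)=F$, whence $\mathcal M+\mathcal N=K$, contrary to hypothesis. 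Therefore no such $h$ exists, so $L=\emptyset$, and Theorem~\ref{thm:ring2}(1) becomes $\mathcal M+\mathcal N=\bigcup_{g\in G}\Phi(M_{g}+N_{g},\overline g)$, which reindexes over $G/G^{2}$ as in~(2).

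The whole proof is thus formal once Theorem~\ref{thm:ring2} is in hand; the one genuinely non-trivial ingredient is the reduction $L=\emptyset$ in part~(1), and this is exactly where the hypothesis $\mathcal M+\mathcal N\neq K$ is indispensable: if $\mathcal M+\mathcal N=K$ one can have $M_{g}+N_{g}$ proper for some residue class $\overline g$, so the right-hand side of~(1) would then be a proper subset of $K$ even though $\mathcal M+\mathcal N=K$.
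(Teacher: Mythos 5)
Your proposal is correct and follows essentially the same route as the paper: specialize Theorem~\ref{thm:ring2} to $A=K$ (so $H=G$, $[\![g]\!]=\overline g$), show by contradiction that the auxiliary set $L$ is empty when $\mathcal M+\mathcal N\neq K$, and reindex over $G/G^2$ via Proposition~\ref{prop:ring2}(2). The only difference is cosmetic: the paper derives $M_l(\mathcal M+\mathcal N)=F$ from Definition~\ref{def:pseudo}(4) and then invokes Proposition~\ref{prop:ring3'} and Corollary~\ref{lem:field0}, whereas you use $\Phi(F,\overline h)\subseteq\mathcal M+\mathcal N$ with Definition~\ref{def:pseudo}(3) and Corollaries~\ref{cor:ring41} and~\ref{cor:ring3}; both are valid.
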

\begin{proof}
(1) It follows from Theorem \ref{thm:ring2}(1) that
$$\mathcal M + \mathcal N = \bigcup_{g \in G}
\Phi(M_{g}+N_{g},\overline{g})
\cup\bigcup_{l \in L} \Phi(F,\overline{l}),
$$
where $L=\{l\in G\;|\; l \geq h \text{ for some } h \text{ with }M_h+N_h=F \}$.
By Definition \ref{def:pseudo}(4), we have $M_g(\mathcal M+\mathcal N)\supseteq
M_g+N_g$ for all $g\in G$. If the set $L$ is not empty, the equality 
$M_l+N_l=F$ holds true for some $l\in G$. We get $M_l(\mathcal M+\mathcal N)=F$.
There exists a nonzero $x\in K$ with $\val(x)=l$.
By Proposition \ref{prop:ring3'}, it follows that $x\in \supp(\mathcal M+\mathcal N)$.
It means that $\pm x \in \mathcal M+\mathcal N$.
We have $\mathcal M+\mathcal N=K$ from Corollary \ref{lem:field0}, which is 
a contradiction. 
We get the equality $\mathcal M + \mathcal N = \bigcup_{g \in G}
\Phi(M_{g}+N_{g},\overline{g})$.
We obtain the assertion (1) by this equality together with Proposition \ref{prop:ring2}(2).

(2) Special case of Theorem \ref{thm:ring2}(2) with the application of Proposition \ref{prop:ring2}(2).
\end{proof}

The notation $\mathfrak X^p_R$ denotes the set of all proper quasi-quadratic modules in a commutative ring $R$.
Note that
$$
\mathcal T_F^{G}=\{(M_g)_{g\in G}\in\prod_{g\in G}\mathfrak X_{F}\;|\;
M_g\subseteq M_h \text{ whenever } (\overline{g}=\overline{h}) \vee ((M_g=F)\wedge (g\leq h))
\}
$$
in the case.

\begin{theorem}\label{thm:field4}
Let $(K,\val)$ be a 2-henselian valued field whose residue class field is of characteristic $\not=2$.
We define the map $\Theta^p:\mathfrak X^p_K \rightarrow \prod_{\overline{g} \in G/G^2} \mathfrak X^p_F$ by
\begin{equation*}
\Theta^p(\mathcal M)=(M_{g}(\mathcal M))_{\overline{g} \in G/G^2}\text{.}
\end{equation*}
The map $\Theta^p$ is a bijection.
\end{theorem}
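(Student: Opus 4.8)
The plan is to mimic the proof of the general bijectivity result, Theorem \ref{thm:ring4}, specializing to $A=K$ and restricting attention to \emph{proper} quasi-quadratic modules. First I would observe that when $A=K$ we have $H=\val(K^\times)=G$, so $H\cup G_{\geq e}=G$, and by Lemma \ref{lem:convex_basic12} and Proposition \ref{prop:ring2}(2) the coordinate $M_g(\mathcal M)$ depends only on the class $\overline g\in G/G^2$; this is exactly why the index set collapses from $G$ to $G/G^2$. I would then check that $\Theta^p$ is well-defined: for a proper $\mathcal M\in\mathfrak X^p_K$, each $M_g(\mathcal M)$ is a quasi-quadratic module of $F$ by Proposition \ref{prop:ring2}(1), and it is \emph{proper} because if some $M_g(\mathcal M)=F$ then, by Proposition \ref{prop:ring3'} applied with $A=K$, any nonzero $x\in K$ with $\val(x)=g$ lies in $\supp(\mathcal M)$, hence $\pm x\in\mathcal M$, and then Corollary \ref{lem:field0} forces $\mathcal M=K$, contradicting properness. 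So $\Theta^p$ indeed lands in $\prod_{\overline g\in G/G^2}\mathfrak X^p_F$.

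Next I would construct the inverse. Define $\Lambda^p\big((M_{\overline g})_{\overline g\in G/G^2}\big)=\bigcup_{\overline g\in G/G^2}\Phi^K(M_{\overline g},\overline g)$, where $\Phi^K(M,\overline g)=\{x\in K\setminus\{0\}\mid \val(x)=\overline g,\ \mylcp(x)\in M\}\cup\{0\}$ as in the specialized definition preceding Theorem \ref{thm:field1}. I must check this is a quasi-quadratic module in $K$: this follows from Corollary \ref{cor:ring1} once I verify the monotonicity hypothesis on the family, but here each $M_{\overline g}$ is proper (so the case $M_g=F$ never arises) and the condition $g\leq h\wedge\overline g=\overline h$ forces $\overline g=\overline h$ and hence $M_g=M_h$; thus the hypothesis of Corollary \ref{cor:ring1} holds trivially, and even more directly Proposition \ref{prop:ring1} plus the argument in Theorem \ref{thm:field1} suffice. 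Then $\Lambda^p\circ\Theta^p=\mathrm{id}$ is Theorem \ref{thm:field1} (the canonical representation), and $\Theta^p\circ\Lambda^p=\mathrm{id}$ is proved exactly as in Theorem \ref{thm:ring4}: fixing $\overline g$, the $g$-th coordinate of $\Theta^p(\Lambda^p((M_{\overline h})))$ equals $\{\mylcp(x)\mid x\in\Phi^K(M_{\overline g},\overline g)\setminus\{0\},\ \val(x)=g\}\cup\{0\}$ by Lemma \ref{lem:triv}, which is contained in $M_{\overline g}$ trivially, and contains every nonzero $c\in M_{\overline g}$ because Definition \ref{def:pseudo}(3) produces $w\in K$ with $\val(w)=g$, $\mylcp(w)=c$, and $w\in\Phi^K(M_{\overline g},\overline g)$.

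The main thing to get right — though it is more bookkeeping than a genuine obstacle — is the properness/well-definedness interface: one must confirm that restricting to proper modules on both sides makes the two maps land where claimed, and that the reduced index set $G/G^2$ is consistently used (every invocation of Proposition \ref{prop:ring2}(2) to pass from a representative $g$ to the class $\overline g$). I would therefore state the proof concisely: ``The map $\Theta^p$ is well-defined by Proposition \ref{prop:ring2}, Proposition \ref{prop:ring3'} and Corollary \ref{lem:field0}, since a proper quasi-quadratic module in $K$ cannot have a coordinate equal to $F$. Its inverse is $\Lambda^p$ as above, well-defined by Proposition \ref{prop:ring1} and the argument of Theorem \ref{thm:field1}. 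That $\Lambda^p\circ\Theta^p=\mathrm{id}$ is Theorem \ref{thm:field1}, and $\Theta^p\circ\Lambda^p=\mathrm{id}$ follows verbatim from the proof of Theorem \ref{thm:ring4} using Lemma \ref{lem:triv} and Definition \ref{def:pseudo}(3).'' No new ideas beyond Theorem \ref{thm:ring4} are needed; the content is the specialization $H=G$ together with the collapse of the index to $G/G^2$ and the observation that properness is preserved coordinatewise in both directions.
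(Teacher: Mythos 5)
Your proposal is correct and takes essentially the same route as the paper: both hinge on Theorem \ref{thm:ring4}, on Proposition \ref{prop:ring3'} together with Corollary \ref{lem:field0} (a coordinate equal to $F$ forces $\mathcal M=K$, so properness passes in both directions), and on Proposition \ref{prop:ring2}(2) to collapse the index set from $G$ to $G/G^2$; the paper merely phrases this as restricting the bijection $\Theta$ and identifying its image with $\prod_{\overline g\in G/G^2}\mathfrak X^p_F$, whereas you rebuild the inverse map directly on the quotient index set. The one check you leave implicit --- that $\Lambda^p$ applied to a family of proper modules yields a proper quasi-quadratic module of $K$ --- follows at once from Corollary \ref{cor:ring3} (or from the coordinate computation in your last step), so it is not a genuine gap.
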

\begin{proof}
Note that $H \cup G_{\geq e}=G$ in this case and $\Theta^p$ is well-defined by Proposition \ref{prop:ring2}(2).
Let $\Theta$ be the bijection defined in Theorem \ref{thm:ring4}.
By Corollary \ref{lem:field0} and Proposition \ref{prop:ring3'}, we have $\mathcal M=K$ when $M_g(\mathcal M)=F$ for some $g \in G$.
Therefore, $\Theta^{-1}(\mathcal T_F^{G} \setminus \prod_{g\in G}\mathfrak X^p_F)=\{K\}$.
The restriction map $\Theta |_{\mathfrak X^p_K}$ of $\Theta$ to $\mathfrak X^p_K$ provides a bijection between $\mathfrak X^p_K$ and $\mathcal T_F^{G} \cap \prod_{g\in G}\mathfrak X^p_F$.
On the other hand, we also have $\mathcal T_F^{G} \cap \prod_{g\in G}\mathfrak X^p_F=\{(X_g)_{g \in G} \in \prod_{g\in G}\mathfrak X^p_F \;|\; X_g=X_h \text{ whenever } \overline{g}=\overline{h}\}$. 
It is clear that the map $\Lambda:\mathcal T_F^{G} \cap \prod_{g\in G}\mathfrak X^p_F \to \prod_{\overline{g} \in G/G^2} \mathfrak X^p_F$
by $\Lambda((X_g)_{g\in G})=(X_g)_{\overline{g}\in G/G^2}$ for an element $(X_g)_{g\in G}\in\mathcal T_F^{G} \cap \prod_{g\in G}\mathfrak X^p_F$ is bijective.

We claim that
the composition $\Lambda\circ(\Theta |_{\mathfrak X^p_K}):\mathfrak X^p_K \rightarrow \prod_{\overline{g} \in G/G^2} \mathfrak X^p_F$
coincides with $\Theta^p$. Suppose a proper quasi-quadratic module $\mathcal M$ in $K$.
It follows that
$$
\Lambda\circ(\Theta |_{\mathfrak X^p_K})(\mathcal M)
=\Lambda((M_g(\mathcal M))_{g\in G})
=((M_g(\mathcal M))_{\overline{g}\in G/G^2})
=\Theta^p(\mathcal M).
$$
We get $\Lambda\circ(\Theta |_{\mathfrak X^p_K})=\Theta^p$. Hence $\Theta^p$ is a bijection.
\end{proof}

\begin{theorem}\label{thm:field3}
Let $(K,\val)$ be a $2$-henselian valued field whose residue class field $F$ is of characteristic $\not=2$.
Consider a quasi-quadratic module $\mathcal M$ in $K$.
\begin{enumerate}
\item[(1)] The quasi-quadratic module $\mathcal M$ is a proper quadratic module if and only if $M_{e}(\mathcal M)$ is a proper quadratic module, where $e$ is the identity element $e$ in $G$.
\item[(2)] The quasi-quadratic module $\mathcal M$ is a quasi-semiordering if and only if $M_{g}(\mathcal M)$ is a quasi-semiordering for any $g \in G$.
\item[(3)] 
Assume that $\mylcp:K^{\times}\rightarrow F^{\times}$ is an angular component map $\mylc$.
When $\mathcal M$ is a quadratic module, $\mathcal M$ is a preordering if and only if, for any $g_1, g_2\in G$, nonzero elements $c_1 \in M_{g_1}(\mathcal M)$ and $c_2 \in M_{g_2}(\mathcal M)$, we have $c_1 c_2 \in M_{g_1g_2}(\mathcal M)$.
In particular, $M_{e}(\mathcal M)$ is a preordering if $\mathcal M$ is.
\end{enumerate}
\end{theorem}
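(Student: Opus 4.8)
The plan is to derive all three assertions from Theorem \ref{thm:ring3} by specializing to the case $A = K$. In this case $H = \val(K^{\times}) = G$, so that $H \cup G_{\geq e} = G$, and $\Phi(M,\overline{g})$ takes the simplified form displayed above. Moreover the hypotheses of Theorem \ref{thm:ring3} are met: $2$ is a unit in $B$, since otherwise $2$ would lie in the maximal ideal of $B$ and $\pi(2) = 0$ would force the residue class field $F$ to have characteristic $2$.

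For assertion (1), I would simply invoke Theorem \ref{thm:ring3}(1) with $A = K$; as $e \in G = H \cup G_{\geq e}$, the equivalence to be proved is exactly its conclusion. Assertion (3) is dealt with the same way: after rewriting $H \cup G_{\geq e}$ as $G$ it is verbatim the statement of Theorem \ref{thm:ring3}(3), including the ``in particular'' clause. Neither step requires anything beyond identifying the index set with $G$.

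The only assertion needing a short extra argument is (2), where I must match the single condition ``$M_g(\mathcal M)$ is a quasi-semiordering for every $g \in G$'' against the two-part condition on the right-hand side of Theorem \ref{thm:ring3}(2). The key point is that a quasi-semiordering in the field $F$ is automatically proper: its support is by definition a prime ideal of $F$, hence a proper ideal, hence $\{0\}$, so $M_g(\mathcal M) = F$ cannot happen when $M_g(\mathcal M)$ is a quasi-semiordering. Thus, if $M_g(\mathcal M)$ is a quasi-semiordering for all $g \in G$, then $M_g(\mathcal M) \neq F$ for all $g$, and both clauses of the right-hand side of Theorem \ref{thm:ring3}(2) hold trivially (the first because every index is then of the required kind, the second because $M_{g_1g_2}(\mathcal M) \neq F$ always); that theorem then gives that $\mathcal M$ is a quasi-semiordering. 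Conversely, if $\mathcal M$ is a quasi-semiordering in $K$, then $\supp(\mathcal M)$ is a prime ideal of the field $K$, hence $\supp(\mathcal M) = \{0\}$, and since $\val$ is surjective Corollary \ref{cor:ring4}(1) forces $M_g(\mathcal M) \neq F$ for every $g \in G$; applying Theorem \ref{thm:ring3}(2) and reading its right-hand side over all $g$ then yields that $M_g(\mathcal M)$ is a quasi-semiordering for every $g \in G$.

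I do not expect a real obstacle; the one delicate point, which I would take care to write out in full, is the reduction used in (2): over a field the auxiliary multiplicativity condition on indices in Theorem \ref{thm:ring3}(2) becomes vacuous, which is precisely what properness of quasi-semiorderings over $F$ supplies, while in the reverse direction the triviality of the support of a quasi-semiordering in $K$ collapses the quantifier ``$g$ with $M_g(\mathcal M) \neq F$'' to ``all $g \in G$''.
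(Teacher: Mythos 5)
Your proposal is correct and takes essentially the same route as the paper, whose entire proof is the single line that the theorem is a special case of Theorem \ref{thm:ring3} with $A=K$ (so that $H\cup G_{\geq e}=G$ and $2$ is a unit in $B$ since $\operatorname{char}F\neq 2$). The details you add for part (2) — that a quasi-semiordering of $F$ can never equal $F$, and that $\supp(\mathcal M)=\{0\}$ together with Corollary \ref{cor:ring4}(1) and surjectivity of $\val$ forces $M_g(\mathcal M)\neq F$ for all $g$ — are precisely the reduction the paper leaves implicit.
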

\begin{proof}
Special case of Theorem \ref{thm:ring3}.
\end{proof}

We consider a more special case; that is, the case in which the residue class field is a euclidean field.
Recall that $\po_K(f)$ denotes the monogenic quadratic module of a field $K$ and an element $f\in K$.

\begin{proposition}\label{prop:euclidean}
Let $(K,\val)$ be a $2$-henselian valued field.
We further assume that the residue class field $F$ is a euclidean field
and $\mylcp:K^{\times}\to F^{\times}$ is an angular component map $\mylc$.
The following assertions hold true:
\begin{enumerate}
\item[(1)] The field $K$ is pythagorean; that is, a sum of squares in $K$ is a square.
\item[(2)] $\po_K(1)=K^2$ and $\po_K(-1)=K$.
\item[(3)] A monogenic quadratic module $\po_K(f)$ coincides with $K^2$ or $K$ if and only if $\overline{\val(f)} = \overline{e}$
for any nonzero element $f\in K$.
In particular, there exists an element $f\in K$ with $\po_K(f)\neq K^2$ and $\po_K(f)\neq K$ if and only if $G \neq G^2$.
\item[(4)] 
Any monogenic quadratic module is contained in $K$ and contains $K^2$.
There is no proper inclusion relation between proper monogenic quadratic modules which are not  $K^2$. 
\end{enumerate}
\end{proposition}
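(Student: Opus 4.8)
The plan is to reduce all four statements to the canonical form of monogenic quadratic modules of $K$. The key preliminary step is the identification $\Phi(F^2,\overline{e})=K^2$ in the present situation: the inclusion $K^2\subseteq\Phi(F^2,\overline{e})$ is clear since $\mylc$ is a homomorphism, and for the converse, given $x\in K^\times$ with $\val(x)\in G^2$ and $\mylc(x)\in F^2$, write $\val(x)=r^2$ and $\mylc(x)=c^2$, use Definition \ref{def:pseudo}(3) to pick $w\in K^\times$ with $\val(w)=r$ and $\mylc(w)=c$, so that $\val(w^2)=\val(x)$ and $\mylc(w^2)=\mylc(x)$, and conclude $x=(wu)^2$ for some $u\in B^\times$ by Lemma \ref{lem:psudo_lemlem}. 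Recall also that, $F$ being euclidean, the only quasi-quadratic modules of $F$ are $\{0\},F^2,-F^2,F$, and the quasi-quadratic module of $F$ generated by a single nonzero element is $F^2$ or $-F^2$ according to its sign. Finally, applying Lemma \ref{lem:monogenic_ring} with $A=K$ to the generators $1$ and $f$ yields $\po_K(f)=\Phi(F^2,\overline{e})+\Phi(M_f,\overline{\val(f)})$, where $M_f\in\{F^2,-F^2\}$ is the quasi-quadratic module of $F$ generated by $\mylc(f)$.

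Part (2) is then immediate: $\po_K(1)$ is the quasi-quadratic module of $K$ generated by $1$, which by Lemma \ref{lem:monogenic_ring} equals $\Phi(F^2,\overline{e})=K^2$, while $\po_K(-1)$ contains $\pm 1$ and hence equals $K$ by Corollary \ref{lem:field0}. Part (1) follows because $\Phi(F^2,\overline{e})$ is a quasi-quadratic module of $K$ by Proposition \ref{prop:ring1} (as $F^2$ is proper) and it contains $1$, hence it contains $\sum K^2$; combined with $\Phi(F^2,\overline{e})=K^2$ this forces $\sum K^2=K^2$.

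For part (3): if $\overline{\val(f)}=\overline{e}$ then $\val(f)\in G^2$, so directly from the definition of $\Phi$ and the equality $\Phi(F^2,\overline{e})=K^2$ one gets $f\in K^2$ when $\mylc(f)\in F^2$, giving $\po_K(f)=K^2$ by part (1), and $-f\in K^2$ when $-\mylc(f)\in F^2$, whence $\pm f\in\po_K(f)$ and $\po_K(f)=K$ by Corollary \ref{lem:field0}. Conversely, if $\overline{\val(f)}\neq\overline{e}$, then Lemma \ref{lem:ring3}(2) (the case of distinct classes, both $F^2$ and $M_f$ being proper) gives $\po_K(f)=\Phi(F^2,\overline{e})\cup\Phi(M_f,\overline{\val(f)})$; this set omits $-1$, whose valuation lies in $G^2$ but which has $\mylc(-1)=-1\notin F^2$, so $\po_K(f)\neq K$, and it contains $f\notin K^2=\Phi(F^2,\overline{e})$, so $\po_K(f)\neq K^2$. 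The ``in particular'' clause follows, since a nonzero $f$ with $\po_K(f)\notin\{K^2,K\}$ exists exactly when $\val$ attains a value outside $G^2$, i.e. when $G\neq G^2$.

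For part (4), containment in $K$ is trivial, and $K^2=\po_K(1)\subseteq\po_K(f)$ because $\po_K(f)$ contains every $\sigma_0+0\cdot f$ with $\sigma_0\in\sum K^2$. For incomparability, suppose $\po_K(f_1)\subseteq\po_K(f_2)$ with both proper and distinct from $K^2$; by part (3) both satisfy $\overline{\val(f_i)}\neq\overline{e}$, so each equals $\Phi(F^2,\overline{e})\cup\Phi(M_{f_i},\overline{\val(f_i)})$. Since $f_1\notin\Phi(F^2,\overline{e})$, the inclusion forces $f_1\in\Phi(M_{f_2},\overline{\val(f_2)})$, hence $\overline{\val(f_1)}=\overline{\val(f_2)}$ and $\mylc(f_1)\in M_{f_2}$, so $M_{f_1}\subseteq M_{f_2}$; as $F^2$ and $-F^2$ are incomparable this gives $M_{f_1}=M_{f_2}$, and therefore $\po_K(f_1)=\po_K(f_2)$. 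I expect the main obstacle to be the bookkeeping in part (3): passing from $\overline{\val(f)}=\overline{e}$ to ``$f$ is a square up to sign'' and invoking Lemma \ref{lem:ring3}(2) correctly in the converse direction; once $\Phi(F^2,\overline{e})=K^2$ is in hand the remaining arguments are short.
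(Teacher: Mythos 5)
Your proposal is correct and follows essentially the same route as the paper's proof: identify $\po_K(f)=\Phi(F^2,\overline{e})+\Phi(M_f,\overline{\val(f)})$ via Lemma \ref{lem:monogenic_ring}, turn the sum into a union with Lemma \ref{lem:ring3}(2), and exploit the classification of quasi-quadratic modules in a euclidean field together with Lemma \ref{lem:psudo_lemlem} and Corollary \ref{lem:field0}. The only differences are tactical and harmless: you establish $\Phi(F^2,\overline{e})=K^2$ first and deduce (1) from it (the paper proves pythagoreanness directly and then this identity), you argue the converse in (3) directly by exhibiting $-1\notin\po_K(f)$ and $f\notin K^2$ rather than by contradiction, and in (4) you replace the paper's intersection computations via Lemma \ref{lem:ring4} by a direct element-chasing argument.
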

\begin{proof}
(1) Let $f_1$ and $f_2$ be nonzero elements of $K$.
Set $c_i=\mylcp(f_i) \in F$ for $i=1,2$.
When $\val(f_1) \neq \val(f_2)$, we may assume that $\val(f_1)<\val(f_2)$ without loss of generality. 
By using Definition \ref{def:pseudo}(4),  we have $\val(f_1^2+f_2^2)=\val(f_1^2)$ and $\mylc(f_1^2+f_2^2)=\mylc(f_1^2)$.
We can find $u \in K$ such that $f_1^2+f_2^2=f_1^2u^2$
by Lemma \ref{lem:psudo_lemlem}.
Hence $f_1^2+f_2^2$ is a square in this case.

We consider the case in which $\val(f_1) = \val(f_2)=g$. We have $\mylc(f_1^2)+\mylc(f_2^2)=c_1^2+c_2^2 \not=0$  because $F$ is formally real.
We obtain $\val(f_1^2+f_2^2)=g^2$ and $\mylc(f_1^2+f_2^2)=c_1^2+c_2^2$
by Definition \ref{def:pseudo}(4).
There exists a nonzero element $c \in F$ with $c^2=c_1^2+c_2^2$ because $F$ is a euclidean field.
From Definition \ref{def:pseudo}(3), we can take a nonzero element $w \in K$ with $\val(w)=g$ and $\mylc(w)=c$.
Since $\val(f_1^2+f_2^2)=\val(w^2)$ and $\mylc(f_1^2+f_2^2)=\mylc(w^2)$, we can take $u \in K$ with $f_1^2+f_2^2 =u^2w^2$
by Lemma \ref{lem:psudo_lemlem}.
Hence $f_1^2+f_2^2$ is also a square in this case.

(2) The equality $\po(1)=K^2$ immediately follows from (1).
We get $\po(-1)=K$ by Corollary \ref{lem:field0} because $\pm 1 \in \po(-1)$.

(3) Take a nonzero element $f\in K$. By (1) and Lemma \ref{lem:monogenic_ring}, we get 
$\Phi(F^2,\overline{e})=K^2$ and $\Phi(M_f,\overline{\val(f)})=K^2f$, where $M_f$ is the
quasi-quadratic module in $F$ generated by $\mylc(f)$.
Hence it follows that 
$$\mbox{PO}(f)=K^2+K^2f=\Phi(F^2, \overline{e})+\Phi(M_f,\overline{\val(f)}).$$
Note that  $M_f=F^2$ or $M_f=-F^2$ because $F$ is euclidean.

We first assume that $\overline{\val(f)} = \overline{e}$. When $M_f=F^2$, we have
$\mbox{PO}(f)=\Phi(F^2,\overline{e})=K^2$
by Proposition \ref{prop:ring1}. When $M_f=-F^2$, we have $\pm 1\in\mbox{PO}(f)$. 
Thus it follows from Corollary \ref{lem:field0} that $\po(f)=K$.

We next demonstrate the opposite implication.
Assume $\overline{\val(f)}\neq \overline{e}$. We want to lead to contradiction.
We have $\po(f)=\Phi(F^2,\overline{e})\cup\Phi(M_f,\overline{\val(f)})$
by Lemma \ref{lem:ring3}(2). When $\po(f)=K^2$, we see that $K^2\supseteq\Phi(M_f,\overline{\val(f)})$.
Hence we have
$\Phi(F^2,\overline{e})\supseteq\Phi(M_f,\overline{\val(f)})$.
Since the quasi-quadratic module $\Phi(M_f,\overline{\val(f)})$ contains $f$,
we have $\overline{\val(f)}=\overline{e}$. This is a contradiction.
When $\po(f)=K$, we have $K=K^2\cup\Phi(M_f,\overline{\val(f)})$.
We get $-1\in \Phi(M_f,\overline{\val(f)})$ because $-1\not\in K^2$. 
This contradicts the assumption that $\overline{\val(f)}\neq\overline{e}$.

The `in particular' part easily follows from the former assertion.

(4) The first claim is obvious.
We demonstrate the latter assertion.
Assume that there exist elements $f_1$, $f_2\in K$ such that
$\mbox{PO}(f_i) \neq K$ and $\neq K^2$ for $i=1,2$ and $\mbox{PO}(f_1)\subsetneq \mbox{PO}(f_2)$.
We set $g_i=\val(f_i)$ for $i=1,2$.
We want to obtain a contradiction.
It follows, as in the proof of (3), that $\mbox{PO}(f_i)=\Phi(F^2, \overline{e})+\Phi(M_{f_i},\overline{g_i})$, where 
$M_{f_i}$ is the quasi-quadratic module in $F$ generated by $\mylc(f_i)$ for $i=1,2$.

We first show that $\overline{g_i} \neq \overline{e}$ for $i=1,2$. Suppose not.
We assume $\overline{g_1}=\overline{e}$ without loss of generality.
If $M_{f_1}=F^2$, we have $\Phi(M_{f_1},\overline{g_1})=K^2$. It contradicts the assumption that $\mbox{PO}(f_1)\neq K^2$.
If $M_{f_1}=-F^2$, we have $-1 \in \Phi(M_{f_1},\overline{g_1})$ and $\pm 1 \in \po(f_1)$. 
Consequently, we have $\po(f_1)=K$ by Corollary \ref{lem:field0}, which is a contradiction.

Since $\mbox{PO}(f_1)\subseteq\mbox{PO}(f_2)$,  we have
$$
\Phi(F^2,\overline{e})\cup\Phi(M_{f_1},\overline{g_1})
\subseteq
\Phi(F^2,\overline{e})\cup\Phi(M_{f_2},\overline{g_2})
$$
by using Lemma \ref{lem:ring3}(2).
This implies that
\begin{eqnarray*}
\Phi(M_{f_1},\overline{g_1})
&\subseteq&
\Bigl(\Phi(M_{f_1},\overline{g_1})\cap\Phi(F^2,\overline{e})\Bigl)
\cup\Bigl(\Phi(M_{f_1},\overline{g_1})\cap\Phi(M_{f_2},\overline{g_2})\Bigl)\\
&=&\Phi(M_{f_1},\overline{g_1})\cap\Phi(M_{f_2},\overline{g_2})
\end{eqnarray*}
because $\Phi(M_{f_1},\overline{g_1})\cap\Phi(F^2,\overline{e})=\{0\}$ by
Lemma \ref{lem:ring4}.
We first consider the case in which $\overline{g_1}=\overline{g_2}$. We have $M_{f_1}\neq M_{f_2}$
because $\mbox{PO}(f_1)\neq \mbox{PO}(f_2)$. Since $F$ is euclidean, 
it follows that either $M_{f_1}=F^2$ and $M_{f_2}=-F^2$ or $M_{f_1}=-F^2$ and $M_{f_2}=F^2$.
In each case we get
$M_{f_1}\cap M_{f_2}=\{0\}$.
Using Lemma \ref{lem:ring4}, 
\begin{eqnarray*}
\Phi(M_{f_1},\overline{g_1})\cap\Phi(M_{f_2},\overline{g_2})
&=&\Phi(M_{f_1}\cap M_{f_2},\overline{\max\{g_1,g_2\}})\\
&=&\Phi(\{0\},\overline{\max\{g_1,g_2\}})\\
&=&\{0\}.
\end{eqnarray*}
Thus we have $\Phi(M_{f_1},\overline{g_1})=\{0\}$, which shows $\mbox{PO}(f_1)=K^2$.
This is a contradiction.

We next consider the remaining case in which $\overline{g_1}\neq \overline{g_2}$.
However it is immediate from Lemma \ref{lem:ring4} 
that 
$
\Phi(M_{f_1},\overline{g_1})\cap\Phi(M_{f_2},\overline{g_2})=\{0\}.
$
Hence we have $\Phi(M_{f_1},\overline{g_1})=\{0\}$, which shows $\mbox{PO}(f_1)=K^2$.
Contradiction.
\end{proof}

We end this subsection by calculating  monogenic quadratic modules of iterated Laurent series fields over
a euclidean field as follows:
\begin{corollary}\label{cor:i-Laurent}
Let $K=F(\!(t_1)\!)\cdots(\!(t_n)\!)$ be the iterated Laurent 
series field in the indeterminates $t_1,\ldots,t_n$ over a euclidean field $F$.
Then Figure 1 illustrates all monogenic quadratic modules in 
$K$.
\begin{figure}[h]
\begin{center}
\includegraphics[scale=0.46]{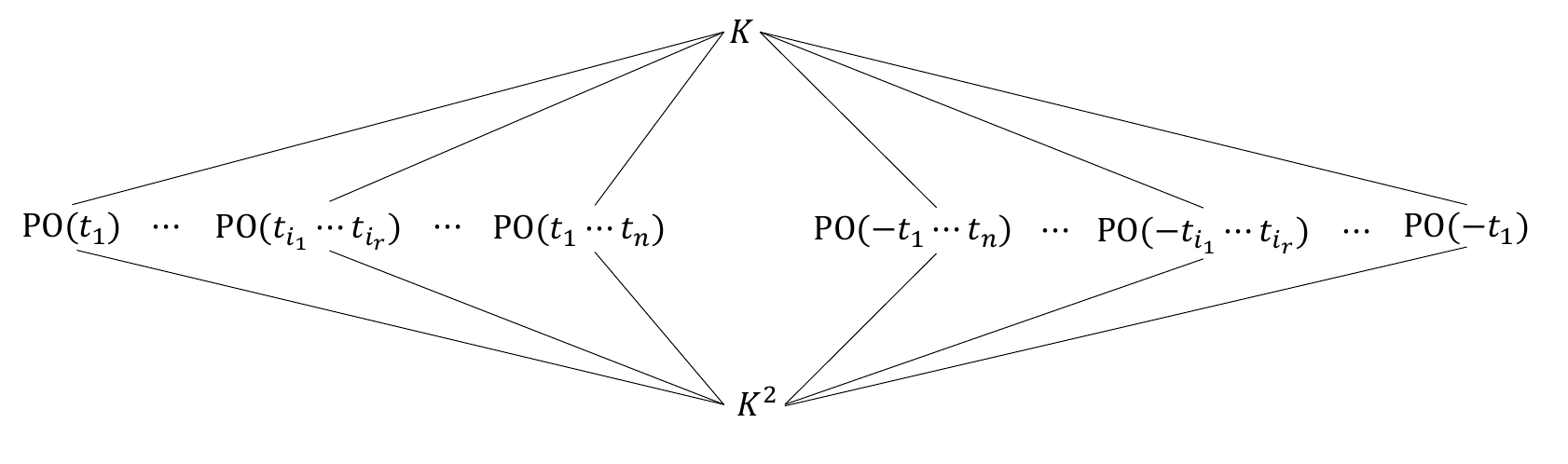}
\captionsetup{labelformat=empty,labelsep=none}
\caption{Figure 1: monogenic quadratic modules in $K$.}
\end{center}
\end{figure}

\hspace{-.46cm} Here, the solid lines denote inclusion.
\end{corollary}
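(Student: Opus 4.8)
The plan is to recognize $(K,\val)$ as a valued field satisfying the hypotheses of Proposition~\ref{prop:euclidean} and then to read off all monogenic quadratic modules, and all inclusions between them, from that proposition together with the identification $G/G^2 \cong (\mathbb Z/2\mathbb Z)^n$. First I would set up the hypotheses. By the discussion in Section~\ref{sec:intro}, $K=F(\!(t_1)\!)\cdots(\!(t_n)\!)$ is the Hahn series field $F(\!(G)\!)$ with $G=\mathbb Z^n$ ordered lexicographically under $t_1>\cdots>t_n$, and its natural valuation $\val\colon K^\times\twoheadrightarrow G$ has valuation ring $B$ with residue class field $F$. Being euclidean, $F$ is formally real, hence of characteristic $0$, in particular of characteristic $\neq 2$; and every strict unit of $B$ admits a square root, so $(K,\val)$ is $2$-henselian. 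Since $K$ is a Hahn series field, Proposition~\ref{prop:wmap}(1) supplies a cross section, and Proposition~\ref{prop:angular} then yields an angular component map $\mylc\colon K^\times\to F^\times$. Thus all hypotheses of Proposition~\ref{prop:euclidean} hold.

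Next I would classify the monogenic quadratic modules. By Proposition~\ref{prop:euclidean}(2) we have $\po_K(1)=K^2$ and $\po_K(-1)=K$; by part~(4), $K^2\subseteq\po_K(f)\subseteq K$ for every nonzero $f\in K$, and the proper monogenic quadratic modules distinct from $K^2$ form an antichain. By part~(3), $\po_K(f)\in\{K^2,K\}$ exactly when $\overline{\val(f)}=\overline{e}$ in $G/G^2$; so every monogenic quadratic module other than $K^2$ and $K$ is $\po_K(f)$ for some $f$ with $\overline{\val(f)}\neq\overline{e}$, and conversely each such $f$ gives such a module. For such $f$, Lemma~\ref{lem:monogenic_ring} together with Proposition~\ref{prop:euclidean}(1) gives $\po_K(f)=K^2+K^2f=\Phi(F^2,\overline{e})+\Phi(M_f,\overline{\val(f)})$, where $M_f\in\{F^2,-F^2\}$ because $F$ is euclidean; and since $\overline{\val(f)}\neq\overline{e}$, Lemma~\ref{lem:ring3}(2) collapses this to $\po_K(f)=\Phi(F^2,\overline{e})\cup\Phi(M_f,\overline{\val(f)})$. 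This module depends only on the pair $(\overline{\val(f)},M_f)\in\big(G/G^2\setminus\{\overline e\}\big)\times\{F^2,-F^2\}$; by Definition~\ref{def:pseudo}(3) every such pair is realized by some $f$; and distinct pairs give distinct modules, since from $\Phi(F^2,\overline e)\cup\Phi(M_f,\overline{\val(f)})$ one recovers $\overline{\val(f)}$ as the unique nontrivial valuation class occurring and $M_f$ as the set of angular components of its elements of that valuation (alternatively, distinctness is immediate from the antichain property of part~(4) once one notes each such module is $\neq K^2$). Hence the proper monogenic quadratic modules $\neq K^2$ are in bijection with $\big(G/G^2\setminus\{\overline e\}\big)\times\{F^2,-F^2\}$.

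Finally I would count and assemble the diagram. Since $G=\mathbb Z^n$, we have $G/G^2\cong(\mathbb Z/2\mathbb Z)^n$, a group of order $2^n$, so there are $2^n-1$ nontrivial classes, each contributing two modules; together with $K^2$ and $K$ this gives $2+2(2^n-1)=2^{n+1}$ monogenic quadratic modules in all. The inclusion relations are precisely $K^2$ below everything, $K$ above everything, and the remaining $2(2^n-1)$ modules pairwise incomparable, which is exactly the poset drawn in Figure~1. I do not expect a genuine obstacle here: the substantive work is entirely contained in Proposition~\ref{prop:euclidean}, and the only remaining care is in verifying the hypotheses (in particular the existence of a cross section, via Proposition~\ref{prop:wmap}(1)) and in matching the $2^{n+1}$ modules and their order relations to what Figure~1 depicts.
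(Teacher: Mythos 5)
Your proposal is correct, but it reaches the figure by a different route than the paper. The paper's proof spends most of its effort on an explicit induction on $n$ showing that every nonzero $f\in K$ has a normal form $f=cu^2t_1^{l_1}\cdots t_n^{l_n}$ with $c\in F$ and $u$ a unit, whence (using $F=F^2\cup(-F^2)$) $f=\pm(du)^2t_1^{l_1}\cdots t_n^{l_n}$ and $\po_K(f)=\po_K(\pm t_1^{\epsilon_1}\cdots t_n^{\epsilon_n})$ with $\epsilon_i=l_i\bmod 2$; only then does it invoke Proposition \ref{prop:euclidean} to read off the inclusion relations. You bypass the normal form entirely: after checking the hypotheses (same as the paper, via Proposition \ref{prop:wmap}(1) and Proposition \ref{prop:angular}), you parametrize the monogenic modules abstractly by pairs $(\overline{\val(f)},M_f)\in\bigl(G/G^2\setminus\{\overline e\}\bigr)\times\{F^2,-F^2\}$ using Lemma \ref{lem:monogenic_ring} and Lemma \ref{lem:ring3}(2) (in effect re-deriving the formula $\po_K(f)=\Phi(F^2,\overline e)\cup\Phi(M_f,\overline{\val(f)})$ that already appears inside the proof of Proposition \ref{prop:euclidean}(3)), and then count via $G/G^2\cong(\mathbb Z/2\mathbb Z)^n$. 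Both arguments funnel through Proposition \ref{prop:euclidean} for the poset structure ($K^2$ at the bottom, $K$ at the top, the remaining $2(2^n-1)$ modules an antichain), and your distinctness argument (recovering $\overline{\val(f)}$ and $M_f$ from the module) is sound. What the paper's normal form buys, and what you only gesture at in your final "matching" remark, is the explicit labelling of the figure: to name each module as $\po_K(\pm t_1^{\epsilon_1}\cdots t_n^{\epsilon_n})$ you still need the (easy) observation that the monomials represent all square classes of $G$ and that, for the angular component map coming from the natural cross section, $\mylc(t_1^{\epsilon_1}\cdots t_n^{\epsilon_n})=1$, so the two signs realize the two choices $M_f=\pm F^2$; your more structural count, in exchange, is shorter and makes the cardinality $2^{n+1}$ and the antichain shape transparent without any induction.
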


\begin{proof}
As was seen in Section \ref{sec:intro}, $(K,\val)$ is a 2-henselian valued field whose value group is
$\mathbb{Z}^n$ with the lexicographic order under the $t_1>t_2>\cdots>t_n$.
It follows from Proposition \ref{prop:wmap}(1) and Proposition \ref{prop:angular}
that  $(K,\val)$ admits an angular component map.
We set $F_n=F(\!(t_1)\!)\cdots(\!(t_n)\!)$.

We first demonstrate by induction on $n$ that
for every element $f\in K$, there exist $c \in F$,
a unit $u$ of $F_n$ and integers $l_1,\ldots,l_n\in\mathbb{Z}$ such that
$$f=cu^2t_1^{l_1}\cdots t_n^{l_n}.$$

When $f=0$, it is trivial.  We always assume that $f\neq 0$. In case $n=1$, it follows that
$$
f=\sum_{i=l}^{\infty}a_it_1^i
$$
for some $l\in\mathbb{Z}$ and $a_i\in F$ with $a_l\neq 0$.
Then we have 
$$
f=a_lt_1^l(1+qt_1)
$$
for some $q\in F[\![t_1]\!]$. Since the element $1+qt_1$ is a unit and
a square in $F[\![t_1]\!]$, there exists a unit $u\in F[\![t_1]\!]$ such that 
$u^2=1+qt_1$. Hence we have $f=a_lu^2t_1^l$.

Assume $n>1$,  there exist $g,h\in F_{n-1}[\![t_n]\!]$ such that $f=\dfrac{g}{h}$.
Because we can write 
$$
g=b_lt_n^l(1+q_1t_n)\quad\mbox{and}\quad
h=c_mt_n^m(1+q_2t_n)
$$
for some $l,m\in\mathbb{Z}$, 
$b_l,c_m\in F_{n-1}$ 
and $q_1,q_2\in F_{n-1}[\![t_n]\!]$, 
there exist units $u_1, u_2\in F_{n-1}[\![t_n]\!]$ such that
$$
f=\dfrac{b_lt_n^lu_1^2}{c_mt_n^mu_2^2}.
$$
By the induction hypothesis,
there exist $e_1,e_2 \in F$, units $v_1,v_2\in F_{n-1}$ and
integers $l_1,\ldots,l_{n-1}$, $m_1,\ldots,m_{n-1}$ such that
$$
b_l=e_1v_1^2t_1^{l_1}\cdots t_n^{l_n}
\quad\mbox{and}\quad
c_m=e_2v_2^2t_1^{m_1}\cdots t_n^{m_n}.
$$
Therefore we have
$$
f=\dfrac{e_1v_1^2t_1^{l_1}\cdots t_n^{l_n}\cdot t_n^lu_1^2}
{e_2v_2^2t_1^{m_1}\cdots t_n^{m_n}\cdot t_n^mu_2^2}
=\dfrac{e_1}{e_2}\cdot\Bigl(\dfrac{u_1v_1}{u_2v_2}\Bigl)^2
t_1^{{l_1-m_1}}\cdots t_{n-1}^{{l_{n-1}-m_{n-1}}}t_n^{l-m},
$$
where $\dfrac{e_1}{e_2} \in F$ and
$\Bigl(\dfrac{u_1v_1}{u_2v_2}\Bigl)^2$ is a unit of $F_n$.
This completes the proof.

Since $F=F^2\cup(-F^2)$, there exists $d\in F$ such that
$c=\pm d^2$. Hence we have
$$f=\pm(du)^2t_1^{l_1}\cdots t_n^{l_n}.$$
Combining this with Proposition \ref{prop:euclidean},
we have Figure 1.
In particular, it follows from Proposition \ref{prop:euclidean}(3)(4) that
all monogenic quadratic modules in Figure 1 are different 
because $\val(\pm t_{i_1}\!\cdots t_{i_r})\not\equiv(0,\ldots,0)\mbox{ mod }2\mathbb{Z}^n$
for any integers $1\leq r\leq n$ and 
$1\leq i_1<\cdots <i_r\leq n$.

\end{proof}

\section{Appendix: When $F$ is of characteristic two}\label{sec:char2}

We have assumed that the residue class field $F$ is not of characteristic two in the previous part of the paper.
We consider the case in which $F$ is of characteristic two in this section.
We can obtain a simpler structure theorem than the other case we already investigated.

We prove an important technical lemma.
We crucially use the assumption that $F$ is of characteristic two in the proof. 

\begin{lemma}[Basic lemma]\label{lem:char_two1}
Let $(K,\val)$ be a valued field whose residue class field is of characteristic two.
Assume that every strict unit admits a square root.
Let $A$ be a subring of the valued field $K$ containing the valuation ring $B$.
Any element $x \in K^{\times} \setminus B^{\times}$ is the sum of squares of two elements $u$ and $v$ in $K$ with $\val(u) =\val(v) = \min\{e,\val(x)\}$.
Furthermore, we can take $u$ and $v$ in $A$ when $x \in A$.
\end{lemma}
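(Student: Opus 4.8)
The plan is to split on the position of $\val(x)$ relative to $e$. Since $x \in K^\times \setminus B^\times$ we have $\val(x) \neq e$, so either $\val(x) > e$ or $\val(x) < e$, and accordingly $\min\{e,\val(x)\}$ is $e$ in the first case and $\val(x)$ in the second. I expect the first case to carry the real content, and the second to reduce to it by rescaling.

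First I would treat the case $\val(x) > e$. Here the key observation is that $x-1$ is a strict unit: we have $\val(x-1)=e$ because $\val(x) > e = \val(1)$, and, crucially, $\pi(x-1) = \pi(x) - \pi(1) = 0 - 1 = -1 = 1$ in $F$ since $F$ has characteristic two. By the standing hypothesis that every strict unit admits a square root, write $x - 1 = v^2$ with $v \in K$; then $\val(v)=e$, so $v \in B^\times \subseteq A$, and $x = 1^2 + v^2$ with $\val(1) = \val(v) = e = \min\{e,\val(x)\}$. Both $1$ and $v$ lie in $B^\times \subseteq A$, so the ``furthermore'' clause is automatic in this case.

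Next I would handle $\val(x) < e$. Fix $w \in K$ with $\val(w) = \val(x)$, which exists because $\val$ is surjective. Then $\val(x/w^2) = \val(x) - 2\val(x) = -\val(x) > e$, so by the first case $x/w^2 = 1 + (v')^2$ for some $v' \in B^\times$. Multiplying through by $w^2$ gives $x = w^2 + (wv')^2$, with $\val(w) = \val(wv') = \val(x) = \min\{e,\val(x)\}$. For the ``furthermore'' clause, suppose in addition $x \in A$; since $\val(w/x) = e$ we get $w/x \in B \subseteq A$, hence $w = (w/x)\,x \in A$, and then $wv' \in A$ as well because $v' \in B \subseteq A$ and $A$ is a ring.

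The hard part is really just the one clean observation underlying the first case, namely that residue characteristic two forces $\pi(x-1) = -1 = 1$, so that $x-1$ is a strict unit and the square-root hypothesis applies directly; beyond that, everything is routine bookkeeping with the valuation and the inclusions $B^\times \subseteq B \subseteq A$.
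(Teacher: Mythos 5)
Your proof is correct and takes essentially the same route as the paper: residue characteristic two makes the relevant element a strict unit ($x-1$ in your version, $1+x$ together with $-1$ in the paper's), the square-root hypothesis settles the case $\val(x)>e$, and the case $\val(x)<e$ is reduced to it by rescaling (you divide by $w^2$ with $\val(w)=\val(x)$, the paper inverts $x$). The ``furthermore'' clause is handled by the same elementary observations about $B\subseteq A$, so there is nothing to fix.
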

\begin{proof}
We first demonstrate the existence of $u$ and $v$.
We first consider the case in which $x \in B \setminus B^\times$; that is $\val(x)>e$.
We have $\pi(1+x)=1$ and $\pi(-1)=-1=1$ because the residue class field $F$ is of characteristic two.
It implies that $1+x$ and $-1$ are strict units.
We can take $u,v \in K$ with $1+x=u^2$ and $-1=v^2$ by the assumption. 
We have $x=u^2+v^2$.
It is obvious that $\val(u)=\val(v)=e$.
We have proven the lemma in this case.

The remaining case is the case in which $x \in K \setminus B$.
We have $x^{-1} \in B \setminus B^\times$.
We can take $u',v' \in K$ with $x^{-1}=(u')^2+(v')^2$ as we demonstrated above.
We have $\val(u')=\val(v')=e$ and $\val((u')^2+(v')^2)=\val(x^{-1})=(\val(x))^{-1}$.
Set $u=\dfrac{u'}{(u')^2+(v')^2}$ and $v=\dfrac{v'}{(u')^2+(v')^2}$.
We get $x=u^2+v^2$ and $\val(u)=\val(v)=\val(x)$.
When $x\in A$, we have $u=u'x\in A$ and $v=v'x\in A$ because $u',v'\in B$.
\end{proof}

We give several corollaries of the basic lemma.

\begin{corollary}\label{cor:char_two1}
Let $(K,\val)$ be a valued field whose residue class field is of characteristic two.
Assume that every strict unit in $K$ admits a square root.
Let $A$ be a subring of the valued field $K$ containing the valuation ring $B$.
Let $\mathcal M$ be a quasi-quadratic module of $A$.
If $x \in \mathcal M$, $y \in A$ and $\val(y) > \val(x)$, then $y \in \mathcal M$.
\end{corollary}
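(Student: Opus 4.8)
The plan is to write $y$ as $x$ times a sum of two squares of elements of $A$ and then invoke the defining closure properties of a quasi-quadratic module, the point being that the hypothesis $\val(y)>\val(x)$ forces $y/x$ to be a nonunit of $B$, which is exactly the situation covered by the characteristic-two Basic lemma (Lemma \ref{lem:char_two1}).

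First I would dispose of the degenerate cases. If $y=0$ there is nothing to prove, so assume $y\neq 0$; then $\val(y)\in G$, and since $\val(y)>\val(x)$ this inequality forces $\val(x)\neq\infty$, so $x\neq 0$ as well, and $x,y\in K^{\times}$. Now set $z:=y/x\in K^{\times}$. Multiplying the inequality $\val(y)>\val(x)$ by $\val(x)^{-1}$ gives $\val(z)=\val(y)\val(x)^{-1}>e$, so $z$ is a nonunit of $B$; in particular $z\in K^{\times}\setminus B^{\times}$ and $z\in B\subseteq A$.

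Next I would apply Lemma \ref{lem:char_two1} to the element $z$. It produces $u,v\in K$ with $z=u^2+v^2$ and $\val(u)=\val(v)=\min\{e,\val(z)\}=e$; hence $u,v\in B^{\times}\subseteq A$ (so I do not even need the ``furthermore'' clause of that lemma here). Then $y=xz=xu^2+xv^2$. Since $\mathcal M$ is a quasi-quadratic module in $A$ and $u,v\in A$, we have $xu^2\in\mathcal M$ and $xv^2\in\mathcal M$, and therefore $y=xu^2+xv^2\in\mathcal M$ because $\mathcal M+\mathcal M\subseteq\mathcal M$.

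I do not anticipate any genuine obstacle: once the Basic lemma is in hand the argument is immediate, and the only thing that needs to be checked carefully is the bookkeeping that $y/x$ lies in $B\setminus B^{\times}$ (so that Lemma \ref{lem:char_two1} applies) and that the resulting $u,v$ lie in $A$ (which is automatic since they are units of $B$). Everything else is just the definition of a quasi-quadratic module.
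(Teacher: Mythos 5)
Your proof is correct and follows essentially the same route as the paper: both pass to $y/x$, observe $\val(y/x)>e$ so that the Basic Lemma (Lemma \ref{lem:char_two1}) writes it as a sum of two squares in $A$, and conclude by the closure properties of a quasi-quadratic module. Your explicit handling of the degenerate cases and the remark that $u,v$ are units of $B$ (making the ``furthermore'' clause unnecessary) are fine refinements of the same argument.
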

\begin{proof}
We have $\val(y/x)>e$ and $y/x \in A$ by Corollary \ref{cor:cor_convex}.
We can take $u,v \in A$ with $y=(u^2+v^2)x$ by Lemma \ref{lem:char_two1}.
It implies that $y \in \mathcal M$ because $\mathcal M$ is a quasi-quadratic module.
\end{proof}

\begin{corollary}\label{cor:char_two2}
Let $(K,\val)$, $A$ and $\mathcal M$ be the same as in Corollary \ref{cor:char_two1}. 
If the quasi-quadratic module $\mathcal M$ of $A$ contains an element $x$ with $\val(x)<e$, then $\mathcal M=A$.
\end{corollary}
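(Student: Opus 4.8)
The plan is to prove $\mathcal M = A$ by showing every element of $A$ lies in $\mathcal M$, splitting the argument according to whether the valuation is $\geq e$ or $< e$, and reducing the second case to the first via the sum-of-two-squares representation of Lemma \ref{lem:char_two1}.

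First I would record that $B \subseteq \mathcal M$. Indeed, let $x \in \mathcal M$ be the given element with $\val(x) < e$. For any $w \in B$ we have $\val(w) \geq e > \val(x)$, so Corollary \ref{cor:char_two1} applies and yields $w \in \mathcal M$. In particular $1 \in \mathcal M$, and this is the only consequence of the first step that the rest of the argument needs.

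Next I would take an arbitrary $y \in A$ and argue $y \in \mathcal M$. If $y \in B$ we are already done by the previous paragraph (this also covers $y = 0$ and $y \in B^{\times}$). Otherwise $y \in A \setminus B$; then $y \neq 0$ and $y \notin B^{\times}$, so $y \in K^{\times} \setminus B^{\times}$, and Lemma \ref{lem:char_two1} together with its `furthermore' clause (applicable because $y \in A$) gives elements $u, v \in A$ with $y = u^2 + v^2$. Since $\mathcal M$ is a quasi-quadratic module containing $1$, it is closed under multiplication by $u^2$ and $v^2$ and under addition, so $y = u^2 \cdot 1 + v^2 \cdot 1 \in \mathcal M$. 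Hence $A \subseteq \mathcal M$ and therefore $\mathcal M = A$.

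I do not expect a genuine obstacle: all the work is done by Corollary \ref{cor:char_two1} and Lemma \ref{lem:char_two1}, and the only point requiring a little care is the case split — one must make sure the units $B^{\times}$, the element $0$, and the elements of $A$ with valuation $>e$ are all handled, which is exactly what the inclusion $B \subseteq \mathcal M$ takes care of. An equivalent formulation would apply Corollary \ref{cor:char_two1} to $y$ whenever $\val(y) > \val(x)$ and Lemma \ref{lem:char_two1} whenever $\val(y) < e$, but phrasing the dichotomy as $B$ versus $A \setminus B$ keeps the bookkeeping minimal.
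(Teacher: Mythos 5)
Your proof is correct. It differs from the paper's argument only in how the elements $y \in A$ with $\val(y) < e$ are handled: the paper also disposes of the case $\val(y) \geq e$ exactly as you do (via Corollary \ref{cor:char_two1}), but for $\val(y) = g < e$ it multiplies the given $x$ by $y^2$, observes that $\val(y^2x) = g^2\val(x) < g = \val(y)$ since $g\val(x) < e$, and applies Corollary \ref{cor:char_two1} once more; in particular it never invokes Lemma \ref{lem:char_two1} directly, nor does it need $1 \in \mathcal M$. You instead first establish $B \subseteq \mathcal M$ (hence $1 \in \mathcal M$) and then write each $y \in A \setminus B$ as $u^2 + v^2$ with $u,v \in A$ via the `furthermore' clause of Lemma \ref{lem:char_two1}, so that $y = u^2\cdot 1 + v^2\cdot 1 \in \mathcal M$. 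Both arguments are equally short and rest on the same basic facts; the paper's $y^2x$ trick keeps the whole proof inside Corollary \ref{cor:char_two1}, while your route makes explicit the structural point that once $1 \in \mathcal M$, the characteristic-two sum-of-two-squares decomposition already forces $\mathcal M$ to swallow all of $A \setminus B^{\times}$.
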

\begin{proof}
Fix an arbitrary element $y \in A$.
When $\val(y) \geq e$, we have $\val(y) > \val(x)$.
It implies that $y \in \mathcal M$ by Corollary \ref{cor:char_two1}.

When $g=\val(y)<e$, set $h=\val(x)<e$.
We have $y^2x \in \mathcal M$ because $\mathcal M$ is a quasi-quadratic module.
Since $gh<e$, we get $\val(y^2x)=g^2h<g=\val(y)$.
We obtain $y \in \mathcal M$ by Corollary \ref{cor:char_two1} because $y^2x$ belongs to $\mathcal M$.
\end{proof}

\begin{corollary}\label{cor:char_two3}
Let $(K,\val)$, $A$ and $\mathcal M$ be the same as in Corollary \ref{cor:char_two1}. 
If $\val(\mathcal M)$ has a nonempty intersection with $\val(A^\times)$, one of the following conditions holds true:
\begin{itemize}
\item $\mathcal M=A$;
\item $A=B$ and $e \in \val(\mathcal M)$.
\end{itemize}
\end{corollary}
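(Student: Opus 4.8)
The plan is to lean on Corollary \ref{cor:char_two2}, which upgrades any quasi-quadratic module of $A$ containing one element of value $<e$ to the whole ring $A$. The hypothesis that $\val(\mathcal M)$ meets $\val(A^\times)$ supplies a nonzero element $x\in\mathcal M$ with $\val(x)\in H:=\val(A^\times)$ (it excludes the degenerate case $\mathcal M=\{0\}$), and I would run a short case analysis on where $\val(x)$ sits relative to $e$.

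If $\val(x)<e$, then Corollary \ref{cor:char_two2} gives $\mathcal M=A$ immediately, so the first alternative holds. If $\val(x)>e$, then $\val(x)^{-1}\in H$ since $H$ is a subgroup of $G$, so I can choose $a\in A^{\times}$ with $\val(a)=\val(x)^{-1}$; because $\mathcal M$ is a quasi-quadratic module of $A$, we have $a^2x\in\mathcal M$, and $\val(a^2x)=\val(x)^{-1}<e$, so again $\mathcal M=A$ by Corollary \ref{cor:char_two2}.

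The remaining case is $\val(x)=e$, which already records $e\in\val(\mathcal M)$. If $A=B$ we are in the second alternative of the statement and done. If $A\neq B$, I would first observe that $H\neq\{e\}$: by Corollary \ref{cor:cor_convex} we have $A=\{y\in K\mid \val(y)\in H \text{ or }\val(y)\geq e\}$, and if $H$ were contained in $G_{\geq e}$ then, $H$ being a group, $H=\{e\}$ and hence $A$ would equal $B=\{y\in K\mid \val(y)\geq e\}$, a contradiction. So there is $h\in H$ with $h<e$; picking $a\in A^{\times}$ with $\val(a)=h$ gives $a^2x\in\mathcal M$ with $\val(a^2x)=h^2<e$, and Corollary \ref{cor:char_two2} again yields $\mathcal M=A$. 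Combining the cases proves the corollary.

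I do not expect a genuine obstacle: this is a two-line case split feeding into Corollary \ref{cor:char_two2}. The only point needing a moment's attention is the bookkeeping that $A\neq B$ forces $H$ to contain an element strictly below $e$, which is immediate from the description of $A$ in terms of $H$ in Corollary \ref{cor:cor_convex}.
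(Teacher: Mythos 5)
Your proof is correct and takes essentially the same route as the paper: both arguments reduce everything to Corollary \ref{cor:char_two2} by using closure of $\mathcal M$ under multiplication by squares of units of suitable value (the paper uses $x^{-1}=(x^{-1})^2x$ when $\val(x)>e$ and a short contradiction argument to force $A=B$ when the intersection is $\{e\}$, while you produce an element of value $<e$ directly via the description of $A$ in terms of $H=\val(A^\times)$ from Corollary \ref{cor:cor_convex}). The case bookkeeping differs only cosmetically; there are no gaps.
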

\begin{proof}
We first consider the case in which the intersection $\val(\mathcal M) \cap \val(A^\times)$ has an element $g$ with $g \neq e$.
When $g < e$, we get $\mathcal M=A$ by Corollary \ref{cor:char_two2}.
When $g>e$, take $x \in \mathcal M$ with $\val(x)=g$.
We have $x \in A^\times$ by Corollary \ref{cor:cor_convex}.
We get $x^{-1}=(x^{-1})^2\cdot x \in \mathcal M$.
We have $g^{-1} \in \val(\mathcal M) \cap \val(A^\times)$ and $g^{-1}<e$.
We obtain $\mathcal M=A$ by Corollary \ref{cor:char_two2}.

The remaining case is the case in which $\val(\mathcal M) \cap \val(A^\times)=\{e\}$.
If $A \neq B$, there exists an element $x\in A\setminus B$. It follows that $\val(x)<e$.
By Lemma \ref{lem:convex_basic2}(1), we have $\val(x)\in H$. It means from 
Corollary  \ref{cor:cor_convex} that $x\in A^{\times}$.
Take $y \in \mathcal M$ with $\val(y)=e$.
The element $y$ belongs to $B^\times \subseteq A^\times$ because $\val(y)=e$.
We have $x^2y \in \mathcal M \cap A^\times$.
We easily get $e>\val(x^2y) \in \val(\mathcal M) \cap \val(A^\times)$.
Contradiction.
Therefore, we have $A=B$ in this case.
\end{proof}

We start a preparation for presenting a structure theorem of quasi-quadratic modules.

\begin{definition}\label{def:char_two1}
Let $(K,\val)$ be a valued field.
Let $A$ be a subring of the valued field $K$ containing the valuation ring $B$.
A subset $S$ of $\val(A)$ is \textit{well-behaved} if the following conditions are satisfied:
\begin{enumerate}
\item[(i)] We have $h \leq g$ for all $g \in S$ and $h \in \val(A^\times)$;
\item[(ii)] We have $gh \in S$ for all $g \in S$ and $h \in \val(A)$.
\end{enumerate}
Note that the condition (i) is equivalent to the condition that $h < g$ for all $g \in S$ and $h \in \val(A^\times)$
when $A \neq B$ because $A$ has an element $x$ with $\val(x)<e$.
When $A=B$, a subset $S$ of $G_{\geq e}$ is well-behaved if and only if any element $h \in G$ belongs to $S$ whenever $h \geq g$ for some $g \in S$.
Note also that the empty set is well-behaved.
The notation $\mathfrak S_A$ denotes the family of well-behaved subsets of $\val(A)$.
We simply denotes it by $\mathfrak S$ when the ring $A$ is clear.
\end{definition}

The following lemma immediately follows from the definition:
\begin{lemma}\label{lem:well-behaved01}
Let $(K,\val)$ be a valued field. Let $A$ be a subring of the valued field $K$ containing the valuation ring $B$
and $S$ be a well-behaved subset of $\val(A)$.
If for any element $g\in G$, there exists an element $h\in S$ with $g\geq h$, then we have $g\in S$.
\end{lemma}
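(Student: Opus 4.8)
The plan is to read the conclusion off directly from condition~(ii) in Definition~\ref{def:char_two1}. Fix $g \in G$ and suppose there is an element $h \in S$ with $g \geq h$. Since $G$ is an ordered abelian group, multiplying this inequality on both sides by $h^{-1}$ gives $g h^{-1} \geq e$, i.e.\ $g h^{-1} \in G_{\geq e}$. Because $B \subseteq A$ and every element of $G_{\geq e}$ is the value of some element of $B$ (using surjectivity of $\val$), we have $G_{\geq e} \subseteq \val(A)$; alternatively this is part of Lemma~\ref{lem:convex_basic2}, which says $\val(A) = \val(A^\times) \cup G_{\geq e}$. Hence $g h^{-1} \in \val(A)$.

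Now I would apply condition~(ii) of ``well-behaved'' to the pair $h \in S$ and $g h^{-1} \in \val(A)$: it gives $h \cdot (g h^{-1}) \in S$, that is, $g \in S$, which is the assertion. Note that the borderline case $g = h$ needs no separate treatment, since then $g h^{-1} = e \in \val(A)$ and condition~(ii) still applies verbatim.

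I do not expect any genuine obstacle here, as the statement is essentially a reformulation of condition~(ii); the only point requiring a moment's care is the verification that $g h^{-1}$ actually lies in $\val(A)$, and this is immediate from $B \subseteq A$ together with the surjectivity of the valuation (or directly from Lemma~\ref{lem:convex_basic2}). In particular the argument uses neither the convexity of $\val(A^\times)$ nor condition~(i) of Definition~\ref{def:char_two1}.
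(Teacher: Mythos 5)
Your proposal is correct and follows essentially the same route as the paper: both show $gh^{-1}\in G_{\geq e}\subseteq \val(A)$ (via surjectivity of $\val$ and $B\subseteq A$) and then apply condition (ii) of Definition \ref{def:char_two1} to $h\in S$ and $gh^{-1}\in\val(A)$ to conclude $g\in S$.
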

\begin{proof}
We can take nonzero element $x\in K$ with $\val(x)=gh^{-1}\geq e$. 
It is trivial that $x\in A$. Hence it follows 
from Definition \ref{def:char_two1}(2) that $g=\val(x)h\in S$.
\end{proof}

We introduce several notations.
\begin{notation}
Let $(K,\val)$ be a valued field whose strict units always admit a square root.
Let $A$ be a subring of the valued field $K$ containing the valuation ring $B$.
The notations $\mathfrak S_{A,\min}$ and $\mathfrak S_{\min}$ denote the subfamily of $\mathfrak S_A$ of the subsets which have a smallest element.
Note that, when $A \neq B$, we have $\mathfrak S_{A,\min}=\emptyset$ because $\val(A)$ has an element $g$ with $g<e$.
We set
\begin{align*}
&\Gamma_1(S_1)=\{x \in A\;|\; \val(x) \in S_1\} \cup \{0\}\text{ and }\\
&\Gamma_2(S_2,M)=\{x \in A\;|\; (\val(x)=g_{\min} \text{ and } \mylcp(x) \in M) \text { or } \val(x) > g_{\min}\}
\end{align*}
for any $S_1 \in \mathfrak S$, $S_2 \in \mathfrak S_{\min}$ and quasi-quadratic module $M$ of the residue class field $F$.
Here, $g_{\min}$ denotes the smallest element of $S_2$.
We have $\Gamma_1(\emptyset)=\{0\}$ and $\Gamma_1(\val(A))=A$.
Note that we always have $0 \in \Gamma_2(S_2,M)$ because $\val(0)=\infty>g_{\min}$.
\end{notation}

The sets $\Gamma_1(S)$ and $\Gamma_2(S,M)$ are quasi-quadratic modules.

\begin{proposition}\label{prop:char_two1}
Let $(K,\val)$ be a valued field whose residue class field is of characteristic two.
Assume that every strict unit in $K$ admits a square root.
Let $A$ be a subring of the valued field $K$ containing the valuation ring $B$.
The following assertions hold true:
\begin{enumerate}
\item[(1)] The set $\Gamma_1(S)$ is a quasi-quadratic module of $A$ for any $S \in \mathfrak S_{A}$.
\item[(2)] The set $\Gamma_2(S,M)$ is a quasi-quadratic module of $A$ for any $S \in \mathfrak S_{A,\min}$ and quasi-quadratic module $M$ of the residue class field $F$.
\end{enumerate}
\end{proposition}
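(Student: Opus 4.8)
The plan is to verify the two defining closure properties of a quasi-quadratic module — closure under addition and under multiplication by squares of elements of $A$ — for each of the two sets $\Gamma_1(S)$ and $\Gamma_2(S,M)$ separately. Throughout I would lean on Corollary \ref{cor:cor_convex} to translate membership conditions on $\val(A)$ into algebraic statements in $A$, and crucially on Corollary \ref{cor:char_two1}, which says that if $x\in\mathcal M$ and $\val(y)>\val(x)$ with $y\in A$ then $y\in\mathcal M$ — this is the characteristic-two ingredient that will absorb all the awkward cases.

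\textbf{Part (1).} For $\Gamma_1(S)$ with $S\in\mathfrak S_A$ well-behaved: first, closure under squares. Take $x\in\Gamma_1(S)$, $a\in A$, both nonzero; then $\val(a^2x)=\val(a)^2\val(x)$, and since $\val(a)^2\in\val(A)$ and $\val(x)\in S$, condition (ii) of Definition \ref{def:char_two1} gives $\val(a^2x)\in S$, so $a^2x\in\Gamma_1(S)$. Second, closure under addition. Take nonzero $x_1,x_2\in\Gamma_1(S)$ with $x_1+x_2\neq0$. Then $\val(x_1+x_2)\geq\min\{\val(x_1),\val(x_2)\}$; say the minimum is $\val(x_1)\in S$. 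If $\val(x_1+x_2)=\val(x_1)$ we are done directly; if $\val(x_1+x_2)>\val(x_1)$, then $x_1+x_2\in A$ has valuation exceeding that of $x_1\in\Gamma_1(S)$, and Corollary \ref{cor:char_two1} — or equivalently, a direct application of Lemma \ref{lem:well-behaved01}, since $\val(x_1+x_2)\geq\val(x_1)\in S$ forces $\val(x_1+x_2)\in S$ — gives $x_1+x_2\in\Gamma_1(S)$. Here using Lemma \ref{lem:well-behaved01} is cleanest and avoids invoking the quasi-quadratic-module hypothesis circularly.

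\textbf{Part (2).} For $\Gamma_2(S,M)$ with $S\in\mathfrak S_{A,\min}$, write $g_{\min}$ for the smallest element of $S$. Note first that by Lemma \ref{lem:well-behaved01} together with condition (i) of well-behavedness, $S=\{g\in G\mid g\geq g_{\min}\}$ is precisely the up-set of $g_{\min}$, and since $S\in\mathfrak S_{A,\min}$ is possible only when $A=B$ (as remarked after the notation), we have $H=\val(A^\times)=\{e\}$ and $g_{\min}\geq e$. For closure under squares: take nonzero $x\in\Gamma_2(S,M)$ and nonzero $a\in B$; then $\val(a)\geq e$, so $\val(a^2x)=\val(a)^2\val(x)\geq\val(x)\geq g_{\min}$. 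If $\val(a^2x)>g_{\min}$ we are done; if $\val(a^2x)=g_{\min}$ then necessarily $\val(a)=e$ (so $a\in B^\times$) and $\val(x)=g_{\min}$, hence $\mylcp(x)\in M$, and $\mylcp(a^2x)=\pi(a^2)\mylcp(x)=\pi(a)^2\mylcp(x)\in M$ by Definition \ref{def:pseudo}(2) since $M$ is a quasi-quadratic module; thus $a^2x\in\Gamma_2(S,M)$. For closure under addition: take nonzero $x_1,x_2\in\Gamma_2(S,M)$ with $x_1+x_2\neq0$. If $\val(x_1)\neq\val(x_2)$, say $\val(x_1)<\val(x_2)$, then $\val(x_1+x_2)=\val(x_1)$ and $\mylcp(x_1+x_2)=\mylcp(x_1)$ by Definition \ref{def:pseudo}(4), and one checks the defining conditions of $\Gamma_2$ transfer from $x_1$. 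If $\val(x_1)=\val(x_2)=:g$ and $g>g_{\min}$, then $\val(x_1+x_2)\geq g>g_{\min}$, done. If $\val(x_1)=\val(x_2)=g_{\min}$, then $\mylcp(x_1),\mylcp(x_2)\in M$; when $\mylcp(x_1)+\mylcp(x_2)\neq0$, Definition \ref{def:pseudo}(4) gives $\val(x_1+x_2)=g_{\min}$ and $\mylcp(x_1+x_2)=\mylcp(x_1)+\mylcp(x_2)\in M$; when $\mylcp(x_1)+\mylcp(x_2)=0$, Corollary \ref{cor:psudo_lemlem} gives $\val(x_1+x_2)>g_{\min}$, so again $x_1+x_2\in\Gamma_2(S,M)$.

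The main obstacle I anticipate is not any single case but rather being careful that $\Gamma_2$ only needs to be treated in the regime $A=B$, $g_{\min}\geq e$; if one forgets this and tries to handle general $A$, the condition $\val(x)>g_{\min}$ interacts badly with units of valuation $<e$. Once that reduction is made explicit (via the remark following Definition \ref{def:char_two1}), every remaining step is a routine application of the pseudo-angular-component axioms in Definition \ref{def:pseudo}(2),(4) and Corollary \ref{cor:psudo_lemlem}, with characteristic two entering only through the ambient structural corollaries. I would also remark that the argument for $\Gamma_2$'s addition closure is formally parallel to the proof of Proposition \ref{prop:ring1}, with the role of the ``properness of $M$'' hypothesis there being played here by Corollary \ref{cor:psudo_lemlem} plus the observation that valuation strictly above $g_{\min}$ always lands back in $\Gamma_2$.
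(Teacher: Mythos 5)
Your proposal is correct and follows essentially the same route as the paper: part (1) via condition (ii) of well-behavedness for squares and Lemma \ref{lem:well-behaved01} for sums, and part (2) by the same case analysis as Proposition \ref{prop:ring1}, with the case $\mylcp(x_1)+\mylcp(x_2)=0$ absorbed by Corollary \ref{cor:psudo_lemlem}. Your explicit remarks that $\mathfrak S_{A,\min}\neq\emptyset$ forces $A=B$ and that Corollary \ref{cor:char_two1} would be circular here are correct and merely make explicit what the paper leaves implicit.
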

\begin{proof}
We demonstrate the assertion (1). It is a routine to prove that 
$\Gamma_1(S)$ is closed under multiplication by the squares of elements in $A$.
Take nonzero elements $x_1,x_2\in \Gamma_1(S)$ with $x_1+x_2\neq 0$.
It follows from Lemma \ref{lem:well-behaved01} that $\val(x_1+x_2)\in S$ because
$\val(x_1+x_2)\geq\min{(\val(x_1), \val(x_2))}\in S$.

We can prove the assertion (2) similarly to Proposition \ref{prop:ring1} other than the assertion that $x_1+x_2 \in \Gamma_2(S,M)$ when $\val(x_1)=\val(x_2)$ and $\mylcp(x_1)+\mylcp(x_2)=0$.
In this case, we get $\val(x_1+x_2)>\val(x_1)$ by Corollary \ref{cor:psudo_lemlem} and we have $x_1+x_2 \in \Gamma_2(S,M)$.
\end{proof}

We give a structure theorem for quasi-quadratic modules.
\begin{theorem}[Structure theorem]\label{thm:char_two1}
Let $(K,\val)$ be a valued field whose residue class field is of characteristic two.
Assume that every strict unit in $K$ admits a square root.
Let $A$ be a subring of the valued field $K$ containing the valuation ring $B$ and $\mathcal M$ be a nonzero proper quasi-quadratic module of $A$.
The subset $S=\val(\mathcal M)$ of $\val (A)$ is well-behaved and only one of the following conditions is satisfied:
\begin{enumerate}
\item[(1)] $S$ has no smallest element, and the equality $$\mathcal M=\Gamma_1(S)$$ holds true.
\item[(2)] We have $A=B$, and $S$ contains a smallest element $g_{\min}$.
In addition, the equality $$\mathcal M=\Gamma_2(S,M_{g_{\min}}(\mathcal M))$$ holds true.
\end{enumerate}
\end{theorem}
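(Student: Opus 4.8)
The plan is to extract everything from the Basic Lemma \ref{lem:char_two1} and its corollaries, which force $\mathcal M$ to be "upward closed" with respect to valuations. Throughout write $S=\val(\mathcal M)$ (the values of nonzero elements) and $H=\val(A^\times)$. I would first record the preliminary fact that $g\geq e$ for every $g\in S$: otherwise some $x\in\mathcal M$ has $\val(x)<e$, and Corollary \ref{cor:char_two2} forces $\mathcal M=A$, contradicting properness. In particular $S\subseteq G_{\geq e}=\val(B)\subseteq\val(A)$.

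Next I would check that $S$ is well-behaved. For condition (i) of Definition \ref{def:char_two1}: if some $h\in H$ had $h>g$ for some $g\in S$, then $e\leq g<h\in H$, so convexity of $H$ (Lemma \ref{lem:convex_basic2}) gives $g\in H$, hence $g\in S\cap\val(A^\times)$; Corollary \ref{cor:char_two3} together with $\mathcal M\neq A$ then gives a contradiction, the residual possibility being $A=B$, which is impossible since then $H=\{e\}$ and $h>g\geq e$ cannot hold. For condition (ii), given $g\in S$ and $h\in\val(A)$ I would split on the position of $h$ relative to $e$. If $h=e$ there is nothing to prove; if $h>e$, then $gh>g$ lies in $\val(A)$, so choosing $x\in\mathcal M$ with $\val(x)=g$ and any $z\in A$ with $\val(z)=gh$, Corollary \ref{cor:char_two1} gives $z\in\mathcal M$, hence $gh\in S$; if $h<e$, then $h\in\val(A)\setminus G_{\geq e}=H$, so there is $y\in A^\times$ with $\val(y)=h$, whence $y^2x\in\mathcal M$ with $\val(y^2x)=h^2g<gh$, and Corollary \ref{cor:char_two1} applied to $y^2x$ and any $z\in A$ with $\val(z)=gh$ again gives $gh\in S$.

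With $S$ well-behaved, the dichotomy "$S$ has a smallest element or not" is exhaustive and mutually exclusive, so it remains to identify $\mathcal M$ in each case. If $S$ has no smallest element, I claim $\mathcal M=\Gamma_1(S)$: the inclusion $\subseteq$ is immediate, and for $\supseteq$, given nonzero $x\in\Gamma_1(S)$ I choose $g'\in S$ with $g'<\val(x)$ and $x'\in\mathcal M$ with $\val(x')=g'$, so Corollary \ref{cor:char_two1} yields $x\in\mathcal M$. If $S$ has a smallest element $g_{\min}$, I first show $A=B$: otherwise there is $w\in A^\times$ with $\val(w)<e$ (Corollary \ref{cor:cor_convex}), and taking $x\in\mathcal M$ with $\val(x)=g_{\min}$ gives $w^2x\in\mathcal M$ with $\val(w^2x)=\val(w)^2g_{\min}<g_{\min}$, contradicting minimality. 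Hence $S\in\mathfrak S_{\min}$, and by Proposition \ref{prop:ring2}(1) the set $M_{g_{\min}}(\mathcal M)$ is a quasi-quadratic module of $F$, so $\Gamma_2(S,M_{g_{\min}}(\mathcal M))$ is defined. The inclusion $\mathcal M\subseteq\Gamma_2(S,M_{g_{\min}}(\mathcal M))$ is immediate from the definitions of $S$ and of $M_{g_{\min}}(\mathcal M)$; for the reverse inclusion take nonzero $x\in\Gamma_2(S,M_{g_{\min}}(\mathcal M))$: if $\val(x)>g_{\min}$, Corollary \ref{cor:char_two1} applied to any $x'\in\mathcal M$ with $\val(x')=g_{\min}$ gives $x\in\mathcal M$; if $\val(x)=g_{\min}$ and $\mylcp(x)\in M_{g_{\min}}(\mathcal M)$, pick $x'\in\mathcal M$ with $\val(x')=g_{\min}$ and $\mylcp(x')=\mylcp(x)$, and Lemma \ref{lem:psudo_lemlem} gives $x=x'u^2$ with $u\in B^\times=A^\times$, so $x\in\mathcal M$.

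The containments are routine once the Basic Lemma corollaries are in hand; the genuinely delicate points are condition (ii) of well-behavedness when $A\neq B$, where multiplication by a unit of negative value must be routed through the square $y^2$ in order to land strictly below $g$ so that Corollary \ref{cor:char_two1} applies, and the companion observation that the existence of a smallest element in $S$ is incompatible with $A$ being strictly larger than $B$.
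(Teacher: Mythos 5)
Your proof is correct and follows essentially the same route as the paper's: properness together with Corollaries \ref{cor:char_two2} and \ref{cor:char_two3} for well-behavedness, Corollary \ref{cor:char_two1} for the upward-closure/identification steps, and Lemma \ref{lem:psudo_lemlem} at the minimal value. The only local difference is that for condition (ii) and for the step forcing $A=B$ you multiply by squares of suitably chosen elements and invoke Corollary \ref{cor:char_two1}, whereas the paper writes the relevant element of $A$ directly as a sum of two squares via Lemma \ref{lem:char_two1}; these are interchangeable uses of the same basic lemma.
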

\begin{proof}
The first target is to prove that $S$ is well-behaved.
We demonstrate the condition (i) of Definition \ref{def:char_two1} is satisfied.
Take $g \in S$ and $h \in \val(A^\times)$.
We have $g \geq e$ by Corollary \ref{cor:char_two2} because $\mathcal M$ is proper.
Assume for contradiction that  $g < h$.
We have $g \in \val(A^\times)$ because $\val(A^\times)$ is convex by Lemma \ref{lem:convex_basic2}(1).
We get $A=B$ and $g=e$ by Corollary \ref{cor:char_two3} because $\mathcal M$ is proper.
In this case, we get $h \in \val(A^\times)=\val(B^\times)=\{e\}$.
It is a contradiction to the assumption that $g<h$.

We next show that the condition (ii) holds true.
Take $g \in S$ and $h \in \val(A)$.
We want to show that $gh \in S$.
There is nothing to prove when $h=e$.
We may assume that $h \neq e$.
Take $w \in A$ and $x \in \mathcal M$ with $h=\val(w)$ and $g=\val(x)$.
We can take $u,v \in A$ with $w=u^2+v^2$ by Lemma \ref{lem:char_two1}. 
Therefore, we obtain $wx=(u^2+v^2)x \in \mathcal M$ because $\mathcal M$ is a quasi-quadratic module.
It implies that $gh =\val(wx) \in S$.
We have demonstrated that $S$ is well-behaved.

We consider the case in which $S$ does not have a smallest element and demonstrate the equality $\mathcal M=\Gamma_1(S)$.
The inclusion $\mathcal M \subseteq \Gamma_1(S)$ is obvious.
We demonstrate the opposite inclusion.
Take an element $x \in \Gamma_1(S)$.
We have $\val(x) \in S$ by the definition of $\Gamma_1(S)$.
There exists an element $y \in \mathcal M$ with $\val(x)>\val(y)$ because $S$ does not have a smallest element.
We get $x \in \mathcal M$ by Corollary \ref{cor:char_two1}.

We next consider the case in which $S$ has a smallest element $g_{\min}$.
Take $x \in \mathcal M$ with $\val(x)=g_{\min}$.
If $A \neq B$, we can get nonzero element $b \in A$ with $\val(b)<e$.
Since $b$ is a sum of squares in $A$ by Lemma \ref{lem:char_two1}, we have $bx \in \mathcal M$.
It means that $\val(b)g_{\min}=\val(bx) \in S$.
It contradicts the minimality of $g_{\min}$. 
We have demonstrated $A=B$.

The remaining task is to prove the equality $\mathcal M=\Gamma_2(S,M)$, where $M=M_{g_{\min}}(\mathcal M)$.
The inclusion $\mathcal M \subseteq \Gamma_2(S,M)$ is obvious.
We prove the opposite inclusion.
Take $x \in \mathcal M$ with $\val(x)=g_{\min}$.
We prove that $y \in \mathcal M$ for any $y \in  \Gamma_2(S,M)$.
When $\val(y)>g_{\min}=\val(x)$, we get $y \in \mathcal M$ by Corollary \ref{cor:char_two1}.
The remaining case is the case in which $\val(y)=g_{\min}$ and $\mylcp(y) \in M$.
By the definition of $M=M_{g_{\min}}(\mathcal M)$, we can take $z \in \mathcal M$ with  $\val(z)= g_{\min}$ and $\mylcp(z)=\mylcp(y)$.
We can take $u \in B^\times$ with $y=u^2z$ by Lemma \ref{lem:psudo_lemlem}.
It implies that $y \in \mathcal M$.
\end{proof}

\begin{proposition}\label{prop:char_two2}
Let $(K,\val)$ be a valued field whose residue class field is of characteristic two.
Assume that every strict unit in $K$ admits a square root.
Let $A$ be a subring of the valued field $K$ containing the valuation ring $B$.
The following assertions hold true:
\begin{enumerate}
\item[(1)] Let $S_1$ and $S_2$ be well-behaved subsets of $\val(A)$.
Then $S_1 \subseteq S_2$ or $S_2 \subseteq S_1$.
\item[(2)] Let $S_1$ and $S_2$ be well-behaved subsets of $\val(A)$.
Let $M_1$ and $M_2$ be nonzero quasi-quadratic modules of the residue class field $F$.
\begin{enumerate}
\item[(a)] We have $\Gamma_1(S_1) \cap \Gamma_1(S_2)= \Gamma_1(S_1)$ and 
$\Gamma_1(S_1)+\Gamma_1(S_2)=\Gamma_1(S_2)$ when $S_1 \subseteq S_2$;
\item[(b)] When $A=B$ and $S_1, S_2 \in \mathfrak S_{\min}$, we have
\begin{align*}
&\Gamma_2(S_1,M_1) \cap \Gamma_2(S_2,M_2) \\
&= \left\{
\begin{array}{ll}
\Gamma_2(S_1,M_1) & \text{ if } S_1 \subsetneq S_2,\\
\Gamma_2(S_1, M_1 \cap M_2) & \text{ if } S_1=S_2 \text{ and } M_1 \cap M_2 \neq \{0\},\\
\Gamma_1(S') & \text{ if } S_1=S_2 \text{ and } M_1 \cap M_2 = \{0\},
\end{array}
\right.
\end{align*}
where $g_{\min}$ is the smallest element of $S_1$ and $S'=\{g \in S_1\;|\;g > g_{\min}\}$. 
We also have 
\begin{align*}
&\Gamma_2(S_1,M_1) + \Gamma_2(S_2,M_2) 
= \left\{
\begin{array}{ll}
\Gamma_2(S_2,M_2) & \text{ if } S_1 \subsetneq S_2,\\
\Gamma_2(S_1, M_1 + M_2) & \text{ if } S_1=S_2;
\end{array}
\right.
\end{align*}
\item[(c)] When $A=B$ and $S_1 \in \mathfrak S_{\min}$, we have
\begin{align*}
&\Gamma_2(S_1,M_1) \cap \Gamma_1(S_2) 
= \left\{
\begin{array}{ll}
\Gamma_2(S_1,M_1) & \text{ if } S_1 \subseteq S_2,\\
\Gamma_1(S_2) & \text{ if } S_2 \subsetneq S_1,\\
\end{array}
\right.
\end{align*}
and 
\begin{align*}
&\Gamma_2(S_1,M_1) + \Gamma_1(S_2) 
= \left\{
\begin{array}{ll}
\Gamma_1(S_2) & \text{ if } S_1 \subseteq S_2,\\
\Gamma_2(S_1,M_1) & \text{ if } S_2 \subsetneq S_1.\\
\end{array}
\right.
\end{align*}
\end{enumerate}
\end{enumerate}
\end{proposition}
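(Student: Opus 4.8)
The plan is to deduce almost all of the statement from part (1) together with the fact that $\Gamma_1(S)$ and $\Gamma_2(S,M)$ are quasi-quadratic modules (Proposition \ref{prop:char_two1}), so that in each case of part (2) one of the two modules is visibly contained in the other; only one genuine computation remains. First I would prove (1). The key observation is that a well-behaved subset $S\subseteq\val(A)$ is \emph{upward closed}: condition (i) of Definition \ref{def:char_two1} applied to $e\in\val(A^\times)$ forces $S\subseteq G_{\geq e}$, and Lemma \ref{lem:well-behaved01} is precisely the statement that $g\in S$ whenever $g\geq h$ for some $h\in S$. Given well-behaved $S_1,S_2$ with $S_1\not\subseteq S_2$, choose $g\in S_1\setminus S_2$; for any $h\in S_2$ one cannot have $g\geq h$ (that would put $g$ in $S_2$), so $h>g$, and then $h\geq g\in S_1$ forces $h\in S_1$. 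Hence $S_2\subseteq S_1$, using only that $G$ is totally ordered.

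Next I would treat the ``containment'' cases of (2) uniformly. If $S_1\subseteq S_2$ then $\Gamma_1(S_1)\subseteq\Gamma_1(S_2)$ directly from the definition, so the intersection equals $\Gamma_1(S_1)$ and, $\Gamma_1(S_2)$ being closed under addition, the sum equals $\Gamma_1(S_2)$; this is (2)(a) (its sum identity being read as $\Gamma_1(S_1)+\Gamma_1(S_2)=\Gamma_1(S_2)$). For (2)(b) and (2)(c), where $A=B$ so $\val(B)=G_{\geq e}$, I would first record that a well-behaved set with smallest element $m$ equals $\{g\in G_{\geq e}\mid g\geq m\}$ (again by upward closure); this yields $\min S_2<\min S_1$ when $S_1\subsetneq S_2$ both have minima, and, when $S_2\subsetneq S_1$ with $\min S_1=m_1$, that $m_1\notin S_2$ and hence every element of $S_2$ exceeds $m_1$. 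In each subcase a nonzero element of the ``smaller-support'' module then has valuation strictly above the relevant minimum, so that module is contained in the other; the intersection is the smaller module and the sum the larger, once more by closure under addition. When $S_1=S_2=:S$ with $g_0:=\min S$, the identity $\Gamma_2(S,M_1)\cap\Gamma_2(S,M_2)=\Gamma_2(S,M_1\cap M_2)$ is immediate from the definition of $\Gamma_2$ (a nonzero $x$ lies in the intersection iff $\val(x)>g_0$, or $\val(x)=g_0$ and $\mylcp(x)\in M_1\cap M_2$), and, since $\mylcp$ takes values in $F^\times$, one has $\Gamma_2(S,\{0\})=\Gamma_1(\{g\in S\mid g>g_0\})$, which accounts for the line $M_1\cap M_2=\{0\}$.

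The hard part will be the remaining identity $\Gamma_2(S,M_1)+\Gamma_2(S,M_2)=\Gamma_2(S,M_1+M_2)$ for $S_1=S_2=S$; this is the only place the pseudo-angular component map enters, and the argument mirrors Lemma \ref{lem:ring3}(2) but is lighter, with no $g_{\max}$ or $[\![g]\!]$ complications. For $\subseteq$, given nonzero $x_1\in\Gamma_2(S,M_1)$ and $x_2\in\Gamma_2(S,M_2)$ with $x_1+x_2\neq0$, I would split on whether $\val(x_1)=\val(x_2)$: if not, Definition \ref{def:pseudo}(4) shows $\val(x_1+x_2)$ and $\mylcp(x_1+x_2)$ agree with those of the summand of smaller valuation, so $x_1+x_2\in\Gamma_2(S,M_i)\subseteq\Gamma_2(S,M_1+M_2)$; if $\val(x_1)=\val(x_2)>g_0$ then $\val(x_1+x_2)>g_0$; and if $\val(x_1)=\val(x_2)=g_0$ then either $\mylcp(x_1)+\mylcp(x_2)=0$, whence $\val(x_1+x_2)>g_0$ by Corollary \ref{cor:psudo_lemlem}, or $\mylcp(x_1+x_2)=\mylcp(x_1)+\mylcp(x_2)\in M_1+M_2$ by Definition \ref{def:pseudo}(4). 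For $\supseteq$, a nonzero $x\in\Gamma_2(S,M_1+M_2)$ with $\val(x)>g_0$ already lies in $\Gamma_2(S,M_1)$; if $\val(x)=g_0$, write $\mylcp(x)=c_1+c_2$ with $c_i\in M_i$, handle $c_1=0$ or $c_2=0$ at once, and otherwise realize $c_i=\mylcp(x_i)$ with $\val(x_i)=g_0$ via Definition \ref{def:pseudo}(3) (so $x_i\in B$ by Corollary \ref{cor:cor_convex} and $x_i\in\Gamma_2(S,M_i)$), deduce $x_1+x_2\neq 0$, $\val(x_1+x_2)=g_0$ and $\mylcp(x_1+x_2)=\mylcp(x)$ from Definition \ref{def:pseudo}(4), and invoke Lemma \ref{lem:psudo_lemlem} to write $x=u^2(x_1+x_2)=u^2x_1+u^2x_2$ with $u\in B^\times$, each summand landing in the appropriate $\Gamma_2$ by Proposition \ref{prop:char_two1}(2). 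I anticipate no serious obstacle beyond care with the degenerate cases; part (1) and the upward-closedness of well-behaved sets do all the real structural work.
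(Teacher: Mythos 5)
Your proposal is correct and takes essentially the same approach as the paper's proof: part (1) via upward closure of well-behaved sets (Lemma \ref{lem:well-behaved01}), the containment cases of (2) by exhibiting one module inside the other and invoking closure under addition, and the sum identity for $S_1=S_2$ by the same two-inclusion computation using Definition \ref{def:pseudo}(3)(4) and Corollary \ref{cor:psudo_lemlem}. The only cosmetic difference is in the inclusion $\Gamma_2(S,M_1+M_2)\subseteq\Gamma_2(S,M_1)+\Gamma_2(S,M_2)$: the paper realizes only $b_1$ by Definition \ref{def:pseudo}(3) and sets $x_2:=x-x_1$, whereas you realize both summands and then correct by a square unit via Lemma \ref{lem:psudo_lemlem}; both arguments work.
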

\begin{proof}
The assertion (1) is easily proven.
Assume the contrary. 
We can take $g_1 \in S_1 \setminus S_2$ and $g_2 \in S_2 \setminus S_1$.
We may assume that $g_1 < g_2$ by symmetry.
We have $g_2 \in S_1$ by Lemma \ref{lem:well-behaved01}.
Contradiction.

Our next task is to prove assertion (2).
When $S_1 \subseteq S_2$, we have $\Gamma_1(S_1) \subseteq \Gamma_1(S_2)$.
The assertion (a) is obvious from this inclusion.

We investigate the intersection and the sum of $\Gamma_2(S_1,M_1)$ and $\Gamma_2(S_2,M_2)$ discussed in assertion (b).
When $S_1 \subsetneq S_2$, their smallest elements do not coincide by Lemma \ref{lem:well-behaved01}. 
We obviously have $\Gamma_2(S_1,M_1) \subseteq \Gamma_2(S_2,M_2)$. 
If  $\Gamma_2(S_1,M_1) = \Gamma_2(S_2,M_2)$, it follows from Definition \ref{def:pseudo}(3) that $S_1=S_2$,
which is a contradiction.
Hence we have  $\Gamma_2(S_1,M_1) \subsetneq \Gamma_2(S_2,M_2)$. 
The equalities in the assertion (b) are obvious from this inclusion in this case.
We next consider the case in which $S_1=S_2$.
Let $g_{\min}$ be the smallest element of $S_1$.
The equalities on the intersection $\Gamma_2(S_1,M_1) \cap \Gamma_2(S_2,M_2)$ are not hard to derive.
We omit the details.
For the sum $\Gamma_2(S_1,M_1)+\Gamma_2(S_2,M_2)$, we first demonstrate the inclusion $\Gamma_2(S_1,M_1)+\Gamma_2(S_2,M_2) \subseteq \Gamma_2(S_1,M_1+M_2)$.
Take arbitrary elements $x_i \in \Gamma_2(S_i,M_i)$ for $i=1,2$.
We want to demonstrate $x_1+x_2 \in \Gamma_2(S_1,M_1+M_2)$.
It is obvious when at least one of $x_i$ is zero.
It is also true when $\val(x_1)\neq \val(x_2)$.
We next consider the case in which $\val(x_1)=\val(x_2) > g_{\min}$.
It follows that 
$\val(x_1+x_2)\geq\min\{\val(x_1),\val(x_2)\}>g_{\min}$. Thus we get $x_1+x_2 \in \Gamma_2(S_1,M_1+M_2)$.
We next consider the case in which $\val(x_1)=\val(x_2)=g_{\min}$ and $\mylcp(x_1)+\mylcp(x_2) \neq 0$.
It immediately follows from Definition \ref{def:pseudo}(4).
In the remaining case, we have $\val(x_1)=\val(x_2)=g_{\min}$ and $\mylcp(x_1)+\mylcp(x_2) = 0$.
We have $\val(x_1+x_2)>g_{\min}$ by Corollary \ref{cor:psudo_lemlem} in this case.
We also get $x_1+x_2 \in \Gamma_2(S_1,M_1+M_2)$.

We next prove the opposite inclusion $\Gamma_2(S_1,M_1+M_2) \subseteq \Gamma_2(S_1,M_1)+\Gamma_2(S_2,M_2)$.
Take $x \in \Gamma_2(S_1,M_1+M_2)$.
When $\val(x) > g_{\min}$, we obviously have $x \in \Gamma_2(S_1,M_1)$.
When $\val(x)=g_{\min}$ and $\mylcp(x) \in M_i$ for some $i=1,2$, we obviously have $x \in \Gamma_2(S_i,M_i)$.
The final case is the case in which $\val(x)=g_{\min}$ and $\mylcp(x) \not\in M_i$ for $i=1,2$.
We can take nonzero $b_i \in M_i$ with $b_1+b_2 = \mylcp(x)$ in this case.
We can also take $x_1 \in A$ such that $\val(x_1) = g_{\min}$ and $\mylcp(x_1)=b_1$ by Definition \ref{def:pseudo}(3).
The element $x_1$ belongs to $\Gamma_2(S_1,M_1)$.
Set $x_2 = x - x_1\neq 0$.
We have $\val(x_2)=g_{\min}$ and $\mylcp(x_2)=b_2$ by Definition \ref{def:pseudo}(4).
It means that $x_2 \in \Gamma_2(S_2,M_2)$.
We have proven the inclusion $\Gamma_2(S_1,M_1+M_2) \subseteq \Gamma_2(S_1,M_1)+\Gamma_2(S_2,M_2)$ and finished the proof of the assertion (b).

The assertion (c) is easily seen because we have $\Gamma_2(S_1, M_1) \subseteq \Gamma_1(S_2)$ when $S_1 \subseteq S_2$ and the opposite inclusion holds true otherwise
by Lemma \ref{lem:well-behaved01}.
\end{proof}

Recall that $\mathfrak X_{A}$
denotes the set of all the quasi-quadratic modules in a commutative ring $A$.
We set as follows:
\begin{eqnarray*}
\mathcal O_F&=&
(\mathfrak{S}\setminus\mathfrak{S}_{\min})
\sqcup (\mathfrak{S}_{\min}\times (\mathfrak{X}_F\setminus\{0\})).
\end{eqnarray*}

\begin{theorem}\label{thm:ring4-ch2}
Let $(K,\val)$ be a valued field whose residue class field is of characteristic two.
Assume that every strict unit in $K$ admits a square root.
Let $A$ be a subring of the valued field $K$ containing the valuation ring $B$.
The following assertions hold true:
\begin{enumerate}
\item[(1)] When $A\neq B$, the map $V:\mathfrak{X}_A\to \mathfrak{S}_A$
given by $V(\mathcal M)=\val(\mathcal M)$
is a bijection. 
\item[(2)] When $A=B$, the map $\Phi:\mathfrak{X}_A \to \mathcal O_F$ given by
\begin{align*}
&\Phi(\mathcal M) 
= \left\{
\begin{array}{ll}
\val(\mathcal M) & \text{ if } \val(\mathcal M)\not\in\mathfrak{S}_{\min} ,\\
(\val(\mathcal M), M_{g_{\min}}(\mathcal M)) & \text{ if } \val(\mathcal M)\in\mathfrak{S}_{\min}\\
\end{array}
\right.
\end{align*}
is a bijection.
\end{enumerate}

\end{theorem}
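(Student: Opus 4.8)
This theorem is the bijective repackaging of the structure theorem (Theorem~\ref{thm:char_two1}): a quasi-quadratic module of $A$ is to be reconstructed from its value set, supplemented --- in the single case $A=B$ with a smallest value --- by the associated quasi-quadratic module of $F$. The plan is, in each of the two cases, to write down an explicit candidate inverse assembled from the operators $\Gamma_1$ and $\Gamma_2$ and to verify that both composites are the identity. The ingredients are Theorem~\ref{thm:char_two1} itself, Proposition~\ref{prop:char_two1} (that $\Gamma_1(S)$ and $\Gamma_2(S,M)$ really are quasi-quadratic modules, so the candidate inverses land in $\mathfrak X_A$), Corollary~\ref{cor:char_two3} (to pin down which modules have value set meeting $\val(A^\times)$), and Definition~\ref{def:pseudo}(3),(4) (to realise prescribed values and pseudo-angular components).

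For part (1), the governing remark is that $\mathfrak S_{A,\min}=\emptyset$ when $A\neq B$, so alternative (2) of Theorem~\ref{thm:char_two1} never arises: every nonzero proper quasi-quadratic module $\mathcal M$ of $A$ satisfies $\mathcal M=\Gamma_1(\val(\mathcal M))$, with $\val(\mathcal M)$ well-behaved and possessing no smallest element. The two remaining modules are treated by hand: $\val(\{0\})=\emptyset$, and $\mathcal M=A$ corresponds to the full value set $\val(A)$, where Corollary~\ref{cor:char_two3} shows that $\val(\mathcal M)\cap\val(A^\times)\neq\emptyset$ forces $\mathcal M=A$, so $V$ is injective at the improper module. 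I would then prove that $W\colon\mathfrak S_A\to\mathfrak X_A$, $W(S)=\Gamma_1(S)$, is a two-sided inverse of $V$: $W\circ V=\mathrm{id}$ is exactly the displayed identities (applied case by case), while $V\circ W=\mathrm{id}$ holds because $\val(\Gamma_1(S))=\{g\in\val(A)\;|\;g\in S\}=S$ for every $S\subseteq\val(A)$. Well-definedness of $W$ is Proposition~\ref{prop:char_two1}(1), and well-definedness of $V$ (that $\val(\mathcal M)$ is well-behaved) is Theorem~\ref{thm:char_two1}.

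For part (2) the ring is $A=B$, so both alternatives of Theorem~\ref{thm:char_two1} can occur, and I would define $\Psi\colon\mathcal O_F\to\mathfrak X_B$ by $\Psi(S)=\Gamma_1(S)$ for $S\in\mathfrak S\setminus\mathfrak S_{\min}$ and $\Psi(S,M)=\Gamma_2(S,M)$ for $(S,M)\in\mathfrak S_{\min}\times(\mathfrak X_F\setminus\{0\})$; this is well-defined by Proposition~\ref{prop:char_two1}. The identity $\Psi\circ\Phi=\mathrm{id}$ is checked by splitting on $\mathcal M$: for $\mathcal M=\{0\}$ both sides are $\{0\}$; for $\mathcal M$ nonzero proper one simply reads off case (1) or (2) of Theorem~\ref{thm:char_two1}; and for $\mathcal M=B$ one computes $M_e(B)=\{\pi(u)\;|\;u\in B^\times\}\cup\{0\}=F$ (surjectivity of $\pi$ on $B^\times$), so $\Phi(B)=(G_{\geq e},F)$, together with $\Gamma_2(G_{\geq e},F)=\{x\in B\;|\;\val(x)\geq e\}=B$. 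For the reverse identity $\Phi\circ\Psi=\mathrm{id}$: if $S\in\mathfrak S\setminus\mathfrak S_{\min}$ then $\val(\Gamma_1(S))=S$ and $S$ still lacks a smallest element, so $\Phi(\Gamma_1(S))=S$; if $(S,M)\in\mathfrak S_{\min}\times(\mathfrak X_F\setminus\{0\})$, note that a well-behaved $S$ with smallest element $g_{\min}$ must equal $G_{\geq g_{\min}}$, whence $\val(\Gamma_2(S,M))=G_{\geq g_{\min}}=S$, and Definition~\ref{def:pseudo}(3),(4) (together with $\val(x)\geq e\Rightarrow x\in B$) give $M_{g_{\min}}(\Gamma_2(S,M))=M$, the hypothesis $M\neq\{0\}$ being used to produce an element of $\Gamma_2(S,M)$ of value $g_{\min}$. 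Well-definedness of $\Phi$ is Theorem~\ref{thm:char_two1}, using in case (2) that $g_{\min}\in\val(\mathcal M)$ yields $x\in\mathcal M$ with $\mylcp(x)\in F^\times\subseteq M_{g_{\min}}(\mathcal M)$, so this coordinate indeed lies in $\mathfrak X_F\setminus\{0\}$.

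The conceptual work has already been carried out in Theorem~\ref{thm:char_two1} and Proposition~\ref{prop:char_two1}; what remains is careful bookkeeping, and I expect the bookkeeping --- not any new idea --- to be the main obstacle. The delicate points are the two ``boundary'' modules $\{0\}$ and $A$ (respectively $B$), which lie outside the scope of the structure theorem and must be matched directly to $\emptyset$ and $\val(A)$ (respectively to $(G_{\geq e},F)$) --- in particular the small computations $M_e(B)=F$ and $\Gamma_2(G_{\geq e},F)=B$ --- and the elementary observation that a member of $\mathfrak S_{\min}$ is exactly an up-set $G_{\geq g_{\min}}$, which is precisely what makes $\val(\Gamma_2(S,M))=S$ come out correctly.
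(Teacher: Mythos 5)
Your proposal follows essentially the same route as the paper's proof: the candidate inverse is assembled from $\Gamma_1$ and $\Gamma_2$, one composite is the identity by Theorem~\ref{thm:char_two1}, and the other is verified directly using Proposition~\ref{prop:char_two1} and Definition~\ref{def:pseudo}(3),(4). Your explicit bookkeeping for the boundary modules $\{0\}$ and $A$ (resp.\ $B$, with the computations $M_e(B)=F$ and $\Gamma_2(G_{\geq e},F)=B$) is in fact slightly more careful than the paper, which applies the structure theorem to all of $\mathfrak X_A$ without singling these out; otherwise the two arguments coincide.
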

\begin{proof}
(1) It follows from Theorem \ref{thm:char_two1} that the map $V$ is well-defined.
We define the map $\Gamma_1:\mathfrak{S}_{A}\rightarrow \mathfrak{X}_A$ by
$S\mapsto \Gamma_1(S)$. The map $\Gamma_1$ is well-defined by Proposition \ref{prop:char_two1}(1).
Take a quasi-quadratic module $\mathcal M\in \mathfrak{X}_A$. By using Theorem \ref{thm:char_two1}(1),
we have $\Gamma_1(V(\mathcal M))=\mathcal M$. Fix $S\in\mathfrak{S}_A$.
We want to demonstrate that $V(\Gamma_1(S))=S$.
We obviously have $V(\Gamma_1(S))=\{\val(x)\;|\; x\in \Gamma_1(S)\}=S$
by the definition.

(2) We define the map $\Psi:\mathcal O_F\rightarrow \mathfrak{X}_A$ by sending
$S$ to $\Gamma_1(S)$ when $S\in\mathfrak{S}\setminus\mathfrak{S}_{\min}$ and by sending
$(S,M)$ to $\Gamma_2(S, M)$ when $(S,M)\in\mathfrak{S}_{\min}\times(\mathfrak{X}_F\setminus\{0\})$.
For any $S\in\mathfrak{S}\setminus\mathfrak{S}_{\min}$, it immediately follows that $\Phi(\Psi(S))=S$.
For any  $(S,M)\in\mathfrak{S}_{\min}\times (\mathfrak{X}_F\setminus\{0\})$, we have $\val(\Gamma_2(S,M))=S\in\mathfrak{S}_{\min}$ by Definition \ref{def:pseudo}(3).
We next demonstrate that $M_{g_{\min}}(\Gamma_2(S,M))=M$.
Since $M_{g_{\min}}(\Gamma_2(S,M))=\{\mylcp(x)\;|\;x\in\Gamma_2(S,M)\;\text{and}\;\val(x)=g_{\min}\}\cup\{0\}$, 
we get $M_{g_{\min}}(\Gamma_2(S,M))\subseteq M$. To show the opposite inclusion, take a nonzero element $c\in M$. By Definition 
\ref{def:pseudo}(3), there exists an element $x\in K$ with $\mylcp(x)=c$ and $\val(x)=g_{\min}$.
This implies that $c\in M_{g_{\min}}(\Gamma_2(S,M))$.
Hence we have 
$$
\Phi(\Psi(S,M))=(\val(\Gamma_2(S,M)),M_{g_{\min}}(\Gamma_2(S,M)))=(S,M).
$$

Take a quasi-quadratic module $\mathcal M\in\mathfrak{X}_A$.
We next demonstrate that $\Psi(\Phi(\mathcal M))=\mathcal M$.
When $\val(\mathcal{M})\not\in\mathfrak{S}_{\min}$, 
we have
$$
\Psi(\Phi(\mathcal M))=\Psi(\val(\mathcal M))=\Gamma_1(\val(\mathcal M))=\mathcal M
$$
from Theorem \ref{thm:char_two1}(1).
When $\val(\mathcal M)\in\mathfrak{S}_{\min}$, it follows from Theorem \ref{thm:char_two1}(2)
that
$$
\Psi(\Phi(\mathcal M))=\Psi(\val(\mathcal M),M_{g_{\min}}(\mathcal M))=
\Gamma_2(\val(\mathcal M),M_{g_{\min}}(\mathcal M))=\mathcal M.
$$
We have finished to prove that $\Phi$ and $\Psi$ are the inverses of the others.
\end{proof}

\section*{Acknowledgment} 
The authors wish to express our gratitude to the anonymous referee
for carefully reading the paper and for comments and valuable suggestions
which helped us to improve considerably the manuscript.

\end{document}